\def\xyellowspace{%
  \sbox0{\colorbox{yellow}{\strut\ }}
  \dimen0=\wd0\relax
  \hskip0pt\cleaders\box0\hskip\dimen0\hskip0pt}
\gdef\makeyellowspace{\let \xyellowspace\catcode`\ =\active}%
\def\?#1{\colorbox{yellow}{\strut#1}}
\def\urlfont{\DeclareFontFamily{OT1}{cmtt}{\hyphenchar\font='057}
              \normalfont\ttfamily \hyphenpenalty=10000}
\DeclareFontFamily{OT1}{rsfs10}{}
\DeclareFontShape{OT1}{rsfs10}{m}{n}{ <-> rsfs10 }{}
\DeclareMathAlphabet{\mathscript}{OT1}{rsfs10}{m}{n}
\DeclareMathOperator{\Hom}{Hom}     
\DeclareMathOperator{\Tors}{Tors}    
\DeclareMathOperator{\Cl}{Cl}       
\DeclareMathOperator{\Div}{Div}     
\DeclareMathOperator{\Cox}{Cox}     
\DeclareMathOperator{\Eff}{Eff}     
\DeclareMathOperator{\conv}{Conv}   
\DeclareMathOperator{\lcm}{lcm}     
\def\widebreve{\mathpalette\wide@breve}
\def\wide@breve#1#2{\sbox\z@{$#1#2$}%
     \mathop{\vbox{\m@th\ialign{##\crcr
\kern0.08em\brevefill#1{0.8\wd\z@}\crcr\noalign{\nointerlineskip}%
                    $\hss#1#2\hss$\crcr}}}\limits}
\def\brevefill#1#2{$\m@th\sbox\tw@{$#1($}%
  \hss\resizebox{#2}{\wd\tw@}{\rotatebox[origin=c]{90}{\upshape(}}\hss$}
\title[Non-calibrated $f$-processes, $D$-equivalence and HMS]{Non-calibrated framed processes, derived equivalence and Homological Mirror Symmetry}
\author[M. Rossi]{Michele Rossi}
\date{\today}
\address{Dipartimento di Matematica e Applicazioni, Universit\`a di Milano-Bicocca,\newline
Ed.~U5-Ratio, via Roberto Cozzi, 55, 20125, Milano} \email{michele.rossi@unimib.it}
\thanks{The author was partially supported by the I.N.d.A.M. as a member of the G.N.S.A.G.A.\\
 Author's ORCID:0000-0001-6191-2087}
\def \a{\alpha }
\def \b{\beta }
\def \d{\delta }
\def \l{\lambda }
\def\mm{\boldsymbol{\mu}}
\def\ll{\boldsymbol{\lambda}}
\def \D{\Delta }
\def \De{\mathcal{D}}
\def \Si{\Sigma }
\def \g{\gamma}
\def \vf{\varphi}
\def \ve{\varepsilon}
\def \ét{\'{e}tale}
\def \aa{\mathbf{a}}
\def \bb{\mathbf{b}}
\def \cc{\mathbf{c}}
\def \q{\mathbf{q}}
\def \pp{\mathbf{p}}
\def \v{\mathbf{v}}
\def \m{\mathbf{m}}
\def \x{\mathbf{x}}
\def \y{\mathbf{y}}
\def \1{\mathbf{1}}
\def \0{\mathbf{0}}
\def\P{{\mathbb{P}}}
\def\p2{\mathbb{P}^2}
\def\p3{\mathbb{P}^3}
\def\p4{\mathbb{P}^4}
\def\cv#1{\wideparen{#1}}
\def\cO{\mathcal{O}}
\def\cY{\mathcal{Y}}
\def\cX{\mathcal{X}}
\def\SL{\operatorname{SL}}
\def\Z{\mathbb{Z}}
\def\C{\mathbb{C}}
\def\R{\mathbb{R}}
\def\Q{\mathbb{Q}}
\def\N{\mathbb{N}}
\def\T{\mathbb{T}}
\def\L{\Lambda}
\def\cD{\mathcal{D}}
\def\XX{\mathbb{X}}
\def\cI{\mathcal{I}}
\def\cM{\mathcal{M}}
\def\CV{\mathcal{V}}
\def\SF{\mathcal{SF}}
\def\PSF{\mathbb{P}\mathcal{SF}}
\def\Weil{\mathcal{W}_T}
\def\U1{\mathfrak{U}^{(1)}}
\theoremstyle{plain}
\newtheorem{theorem}{Theorem}[section]
\newtheorem{proposition}[theorem]{Proposition}
\newtheorem{thm-def}[theorem]{Theorem--Definition}
\newtheorem{corollary}[theorem]{Corollary}
\newtheorem{conjecture}[theorem]{Conjecture}
\newtheorem*{claim}{Claim}
\newtheorem*{a-proposition}{Proposition}
\theoremstyle{remark}
\newtheorem{remark}[theorem]{Remark}
\newtheorem{example}[theorem]{Example}
\theoremstyle{definition}
\newtheorem{definition}[theorem]{Definition}
\newtheorem*{step I}{Step I}
\newtheorem*{step II}{Step II}
\newtheorem*{step III}{Step III}
\newtheorem*{step IV}{Step IV}
\newtheorem*{acknowledgements}{Acknowledgements}
\newcommand{\oneline}{\vskip12pt}
\newcommand{\halfline}{\vskip6pt}
\newcommand{\cy}{Ca\-la\-bi-Yau }
\newcommand{\lt}{Lib\-go\-ber-Teit\-el\-baum }
\begin{document}
\pagestyle{empty}
\DefineParaStyle{Maple Heading 1}
\DefineParaStyle{Maple Text Output}
\DefineParaStyle{Maple Dash Item}
\DefineParaStyle{Maple Bullet Item}
\DefineParaStyle{Maple Normal}
\DefineParaStyle{Maple Heading 4}
\DefineParaStyle{Maple Heading 3}
\DefineParaStyle{Maple Heading 2}
\DefineParaStyle{Maple Warning}
\DefineParaStyle{Maple Title}
\DefineParaStyle{Maple Error}
\DefineCharStyle{Maple Hyperlink}
\DefineCharStyle{Maple 2D Math}
\DefineCharStyle{Maple Maple Input}
\DefineCharStyle{Maple 2D Output}
\DefineCharStyle{Maple 2D Input}

\begin{abstract} The present paper aims to discuss three kinds of problems:
\begin{enumerate}
  \item producing some ``mirror theorem'' for the recent mirror symmetric construction, called \emph{framed} duality ($f$-duality), described in \cite{R-fTV} and \cite{R-fpCI}: this is performed from the point of view proposed by Homological Mirror Symmetry (HMS), by studying \emph{derived equivalence} ($D$-equivalence) of multiple mirror models produced by means of a, so-called, \emph{uncalibrated $f$-process};
  \item proposing a general construction giving a big number of multiple mirror models to, in principle, any projective complete intersection of non-negative Kodaira dimension: these multiple mirrors turn out to be each other connected by means of uncalibrated $f$-processes and then, after (1), $D$-equivalent or $K$-equivalent, in the sense of Kawamata \cite{Kawamata};
  \item presenting a number of evidences for the Bondal-Orlov-Kawamata conjecture that $D$-equivalence is $K$-equivalence, and viceversa.
   \end{enumerate}
      \end{abstract}
\keywords{Mirror symmetry, fan, polytope, Toric variety, Gale duality, fan matrix, weight matrix, resolution of singularities, complete intersection}
\subjclass[2010]{14J33\and 14M25\and 53D37 }

\maketitle

\tableofcontents

\section*{Introduction}

The present paper aims to discuss three kinds of problems:
\begin{enumerate}
  \item producing some ``mirror theorem'' for the recent mirror symmetric construction, called \emph{framed} duality ($f$-duality), described in \cite{R-fTV} and \cite{R-fpCI}: this is performed from the point of view proposed by Homological Mirror Symmetry (HMS), introduced by Kontsevich \cite{Kontsevich}, by studying \emph{derived equivalence} ($D$-equivalence) of multiple mirror models produced by means of a, so-called, \emph{uncalibrated $f$-process};
  \item proposing a general construction giving a large number of multiple mirror  models to, in principle, any projective complete intersection of non-negative Kodaira dimension: these multiple mirrors turn out to be each other connected by means of uncalibrated $f$-processes and then, after (1), $D$-equivalent or $K$-equivalent, in the sense of Kawamata \cite{Kawamata};
  \item presenting a number of evidences for the Bondal-Orlov-Kawamata conjecture that $D$-equivalence is $K$-equivalence, and viceversa.
\end{enumerate}

More precisely, starting from (3), Bondal-Orlov and Kawamata formulated the following

\begin{conjecture}[Conj.~1.2 in \cite{Kawamata} and Conj.~4.4 in \cite{Bondal-Orlov}]\label{conj:Kawamata}
  Let $X$ and $Y$ be birationally equivalent smooth algebraic varieties. Then the following are equivalent.
  \begin{itemize}
    \item[$(D)$] There is an equivalence of triangulated categories between bounded derived categories of coherent sheaves
        \begin{equation*}
          \cD(X):=\cD^b(\rm{Coh}(X))\ \cong\ \cD^b(\rm{Coh}(Y))=:\cD(Y)
        \end{equation*}
    \item[$(K)$] There exists a smooth projective variety $Z$ and birational morphisms
    \begin{equation*}
      \xymatrix{&Z\ar[dl]_-f\ar[dr]^-g&\\
                X&&Y}
    \end{equation*}
    such that $f^*K_X\sim g^*K_Y$, where $\sim$ denotes linear equivalence of divisors in $Z$.
  \end{itemize}
\end{conjecture}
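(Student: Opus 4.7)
This statement is a long-standing open conjecture, so a complete proof is not currently available; the best that can be done is to outline a strategy and verify it on families of examples, which is precisely the programme undertaken in this paper. For the implication $(K)\Rightarrow(D)$ the natural plan is to produce a Fourier--Mukai functor
\[
\Phi_{\mathcal{K}}\colon \cD(X)\longrightarrow \cD(Y),\qquad \mathcal{K}\in \cD(X\times Y),
\]
whose kernel $\mathcal{K}$ is extracted from the structure sheaf $\cO_Z$ pushed to $X\times Y$ along $(f,g)$. The hypothesis $f^*K_X\sim g^*K_Y$ is then used to match relative dualizing sheaves, and thereby to verify that the adjoints of $\Phi_{\mathcal{K}}$ are quasi--inverses. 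This recipe is known to work for standard flops (Bondal--Orlov), for three-dimensional flops (Bridgeland), for generalized Mukai flops and for toric $K$-equivalent birational morphisms (Kawamata), and one would attempt an analogous case-by-case argument for the multiple mirror models produced in this paper by an uncalibrated $f$-process.

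For the reverse implication $(D)\Rightarrow(K)$, the first step is to invoke Orlov's representability theorem to present any equivalence $\cD(X)\cong\cD(Y)$ as $\Phi_{\mathcal{K}}$ for some $\mathcal{K}\in \cD(X\times Y)$. The plan is then to analyse the support of $\mathcal{K}$: generically on $X$ the object $\mathcal{K}|_{\{x\}\times Y}$ should behave as a point-like sheaf on $Y$, so that the resulting correspondence $X\dashrightarrow Y$ is birational (this is elementary in the Calabi--Yau case and, by restriction to canonical cohomology, can be propagated to the general projective setting under non-negative Kodaira dimension). The common resolution $Z$ is then produced as a resolution of the closure of the graph, and the equality $f^*K_X\sim g^*K_Y$ should be read off by tracking how $\Phi_{\mathcal{K}}$ acts on $\omega_X$ and $\omega_Y$, using that equivalences commute with Serre functors.

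The main obstacle, and the reason the conjecture is still open, is exactly the direction $(D)\Rightarrow(K)$ outside the cases where extra geometric data is available (toric, abelian, Calabi--Yau, low-dimensional). Extracting a genuine birational map from the sole datum of a Fourier--Mukai kernel requires controlling the support geometry of $\mathcal{K}$, which in general can be singular and of embedded dimension; even granting birationality, showing that the pulled-back canonical divisors coincide on a \emph{smooth projective} common resolution is a subtle discrepancy calculation. For this reason the paper does not attempt a uniform proof but rather accumulates evidence: it produces pairs $(X,Y)$ arising from uncalibrated $f$-processes for which either $(D)$ or $(K)$ can be verified directly, and checks the other property a posteriori, so that each confirmed instance is an independent corroboration of Conjecture~\ref{conj:Kawamata}.
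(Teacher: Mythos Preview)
Your assessment is correct and matches the paper's treatment: the statement is labeled as a \emph{conjecture}, not a theorem, and the paper provides no proof of it. The paper explicitly notes that it has been proved by Bridgeland only in dimension $3$ and that Kawamata discusses evidence for it; the remainder of the paper then accumulates further evidence via the multiple mirror constructions (Theorems~\ref{thm:Dequivalenza}, \ref{thm:Dequivalenza^A}, \ref{thm:Kequivalenza3}, \ref{thm:Kequivalenzad}, \ref{thm:Kequiv-generale}, \ref{thm:D}, \ref{thm:Dequivalenza^cy}, \ref{thm:Malter-type}, \ref{thm:Kequivalenza_cy}), exactly as you describe in your final paragraph. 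Your outline of the Fourier--Mukai strategy for $(K)\Rightarrow(D)$ and the Serre-functor/support argument for $(D)\Rightarrow(K)$ goes beyond what the paper itself says about the general conjecture, but is a reasonable summary of the known partial approaches.
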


In particular, $X$ and $Y$ are called \emph{$D$-equivalent} if they satisfy $(D)$ and \emph{$K$-equivalent} if they satisfy $(K)$. This conjecture has been proven by Bridgeland when $\dim X=3=\dim Y$. \cite{Bridgeland}.
In \cite{Kawamata}, Kawamata discusses some evidences for this conjecture.

In this paper, a number of examples and (conjectural) evidences for this conjecture are presented via $f$-duality and HMS: namely they are given by Theorems~\ref{thm:Dequivalenza}, \ref{thm:Dequivalenza^A}, \ref{thm:Kequivalenza3}, \ref{thm:Kequivalenzad}, \ref{thm:Kequiv-generale}, \ref{thm:D}, \ref{thm:Dequivalenza^cy}, \ref{thm:Malter-type}, \ref{thm:Kequivalenza_cy}\,. Actually these results give evidences for a generalization of Conjecture~\ref{conj:Kawamata} to a mildly singular setup, namely

\begin{conjecture}\label{conj:BOKgen}
  Let $X$ and $Y$ be birationally equivalent normal $\Q$-Gorenstein projective varieties. Then the following are equivalent.
  \begin{itemize}
    \item[$(D')$] Their canonical covering stacks $\cX$ and $\cY$ (see Definition~\ref{def:K-stack}) admit equivalent derived categories of bounded complexes of coherent orbifold sheaves
  \begin{equation*}
    \cD^b(\cX)\cong\cD^b(\cY)
  \end{equation*}
    \item[$(K')$] The birational map $X\stackrel{\cong}{\dashrightarrow}Y$ is \emph{crepant} (see Definition~\ref{def:crepante}).
  \end{itemize}
\end{conjecture}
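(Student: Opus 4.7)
The plan is to treat the two implications separately, and in both directions to pass through the canonical covering stacks $\cX$ and $\cY$, which serve as smooth Deligne--Mumford resolutions of the $\Q$-Gorenstein varieties $X$ and $Y$. A preliminary step, which I would settle first, is that a crepant birational map $X\dashrightarrow Y$ in the sense of Definition~\ref{def:crepante} lifts canonically to a birational map $\cX\dashrightarrow\cY$ which is classically $K$-equivalent in the sense of Conjecture~\ref{conj:Kawamata}; once this is in place, the statement reduces to a stacky version of the Bondal--Orlov--Kawamata conjecture for smooth DM stacks, with the extra bookkeeping of working with orbifold coherent sheaves.

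For the direction $(K')\Rightarrow(D')$, I would exploit the resulting stacky $K$-equivalence to produce a common smooth resolution $p:\mathcal{Z}\to\cX$, $q:\mathcal{Z}\to\cY$ with $p^{*}K_{\cX}\sim q^{*}K_{\cY}$, and construct a Fourier--Mukai kernel from the structure sheaf $\mathcal{O}_{\mathcal{Z}}$ pushed to $\cX\times\cY$ via $(p,q)$. The crepancy condition kills the relative canonical twist appearing in the adjoint computation, so that fully faithfulness should follow along Bridgeland--King--Reid and Kawamata lines: unconditionally in dimension three (by Bridgeland's theorem applied to the coarse moduli, suitably corrected by the orbifold data), and in higher dimension under the natural assumption that the stacky $K$-equivalence decomposes into standard flops and divisorial contractions. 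Essential surjectivity then follows from a standard generation argument on perfect complexes.

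For the direction $(D')\Rightarrow(K')$, which is the real heart of the matter, I would pursue two complementary strategies. A Hochschild/motivic one: an equivalence $\cD^{b}(\cX)\cong\cD^{b}(\cY)$ forces isomorphism of Hochschild (co)homology and, via the orbifold Hochschild--Kostant--Rosenberg theorem, equality of stringy Hodge numbers and stringy $E$-functions; combined with a Kontsevich-style motivic integration argument performed on a common log-resolution, this should force the two discrepancy divisors to agree and hence crepancy of the birational map. A point-like/moduli one, in parallel: transport the skyscrapers of closed points of $\cX$ through the equivalence and show, after a perverse $t$-structure tilt, that they form a family parametrising $\cY$ together with a compatible canonical bundle, mimicking at the stacky level Bridgeland's perverse point-sheaf construction.

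The main obstacle will be this second implication in dimension at least four, where even the non-stacky Conjecture~\ref{conj:Kawamata} is open: there is currently no known mechanism to extract a birational model of $Y$ from $\cD^{b}(\cY)$ and simultaneously control the pulled-back canonical divisors under the equivalence. For this reason, rather than attempting a frontal proof, the strategy actually adopted in the present paper is to construct a large and structurally coherent family of multiple mirror models connected by uncalibrated $f$-processes and to verify $(D')$ and $(K')$ independently on each of them, thereby accumulating the evidence announced in the Introduction while leaving the conjecture itself open.
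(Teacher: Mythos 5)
This statement is a \emph{conjecture}, not a theorem, and the paper does not offer a proof of it: Conjecture~\ref{conj:BOKgen} is stated as the natural extension of the Bondal--Orlov--Kawamata Conjecture~\ref{conj:Kawamata} to the $\Q$-Gorenstein setting, and the paper only accumulates evidence for it through the families of $K$-equivalent and $D$-equivalent mirror models produced by uncalibrated $f$-processes (Theorems~\ref{thm:smallK&D-equivalenza}, \ref{thm:Dequivalenza}, \ref{thm:Dequivalenza^A}, \ref{thm:Kequivalenza3}, \ref{thm:Kequivalenzad}, \ref{thm:Kequiv-generale}, \ref{thm:D}, \ref{thm:Dequivalenza^cy}, \ref{thm:Kequivalenza_cy}, and Theorem~\ref{thm:Malter-type}). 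You correctly identify this in your final paragraph, which is the right reading of the paper's stance.

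Your speculative sketch of a would-be proof is thoughtful, but two remarks are worth making. First, for $(K')\Rightarrow(D')$ your plan is essentially Kawamata's own (decompose a crepant birational map into flops and apply \cite[Thm.~4.6, Thm.~6.5]{Kawamata}); this works in dimension three, as the paper uses repeatedly, but remains open in general, which you acknowledge. Second, the motivic/Hochschild argument you propose for $(D')\Rightarrow(K')$ has the implication reversed at a key step: motivic integration on a common log-resolution shows that $K$-equivalence \emph{implies} equality of stringy invariants (Kontsevich, Batyrev), not that equality of such invariants implies crepancy of a given birational map. So even granting the Hochschild--Kostant--Rosenberg input from the derived equivalence, this chain of reasoning does not close; the actual hard direction remains entirely open, as you correctly say at the end. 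Since the paper makes no attempt to prove the conjecture, there is nothing in the source to compare your sketch against beyond the observation that your concluding paragraph matches the paper's actual strategy of assembling examples and evidence rather than attempting a general argument.
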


In particular, $X$ and $Y$ are called \emph{$D$-equivalent} if they satisfy $(D')$ and \emph{$K$-equivalent} if they satisfy $(K')$. Putting together results by Orlov \cite{Orlov} and Kawamata \cite{Kawamata}, one can also argue the following

\begin{conjecture}\label{conj:sing}
    Let $X$ and $Y$ be birationally equivalent normal $\Q$-Gorenstein projective varieties admitting \emph{sufficiently similar} singularities. Then $D$-equivalence for $X$ and $Y$ is equivalent to say that there exists an equivalence between the associated categories of singularities $$\cD_{sg}(X)\cong\cD_{sg}(Y)$$
\end{conjecture}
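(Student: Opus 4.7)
The natural strategy is to bridge the two statements through Orlov's localization theorem, which for a $\Q$-Gorenstein projective variety $X$ (via its canonical covering stack $\cX$, Definition~\ref{def:K-stack}) gives a Verdier quotient
\begin{equation*}
\mathrm{Perf}(\cX)\ \hookrightarrow\ \cD^b(\cX)\ \twoheadrightarrow\ \cD_{sg}(\cX),
\end{equation*}
and analogously on the $Y$-side. Hence the plan is to factor Conjecture~\ref{conj:sing} into two implications: produce an equivalence $\cD_{sg}(X)\cong\cD_{sg}(Y)$ by showing that any derived equivalence of type $(D')$ preserves perfect complexes and thus descends; and conversely recover $(D')$ from an equivalence of singularity categories together with the birational datum $X\dashrightarrow Y$ and the ``sufficiently similar'' hypothesis on singularities.

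\textbf{Forward direction $(D')\Rightarrow\cD_{sg}(\cX)\cong\cD_{sg}(\cY)$.} First I would verify that the equivalence $\Phi:\cD^b(\cX)\to\cD^b(\cY)$ is implemented by a Fourier--Mukai kernel that is relatively perfect on both factors. In the setting of this paper such an equivalence always arises from a crepant roof $\cX\leftarrow\mathcal{Z}\rightarrow\cY$ produced by an uncalibrated $f$-process; because crepancy forces $K_\mathcal{Z}$ to pull back from both sides, $\cO_\mathcal{Z}$ is relatively perfect with respect to each projection and $\Phi$ preserves $\mathrm{Perf}$ on either side. It therefore descends to $\overline{\Phi}:\cD_{sg}(\cX)\to\cD_{sg}(\cY)$. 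A second comparison step identifies $\cD_{sg}(\cX)$ with $\cD_{sg}(X)$ over the Gorenstein locus, the discrepancy along the non-Gorenstein locus being controlled precisely by the orbifold canonical cover; the similarity-of-singularities hypothesis matches these contributions on the two sides and the equivalence descends to $\cD_{sg}(X)\cong\cD_{sg}(Y)$.

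\textbf{Reverse direction.} The converse is substantially harder and, in my opinion, the crucial step. Birationality furnishes a common open subset $U\subseteq X,Y$ on which both varieties are identified, so $\cD^b(U)$ is canonical. The goal is to glue this canonical identification across the exceptional loci using the assumed equivalence $\cD_{sg}(X)\cong\cD_{sg}(Y)$. I would proceed by Orlov's semiorthogonal description of $\cD^b(\cX)$ as assembled from a ``smooth'' part controlled by a common resolution and a ``singular'' part controlled by $\cD_{sg}(\cX)$. Under the similarity hypothesis the formal (or \'etale) neighbourhoods of the singular loci of $X$ and $Y$ are matched by the birational map, which should allow one to show that the two decompositions glue to a global equivalence $\cD^b(\cX)\cong\cD^b(\cY)$. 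In essence, the converse implication reduces to Conjecture~\ref{conj:BOKgen}, and so it is conditional on the mildly singular Bondal--Orlov--Kawamata conjecture in the relevant generality.

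\textbf{Main obstacle.} The principal difficulty is making precise the notion of \emph{sufficiently similar singularities}: it must be strong enough to transport local singularity data along the birational map in a way compatible with the global derived categories, yet weak enough to cover all of the mildly singular models produced by uncalibrated $f$-processes in the main theorems of the paper. A reasonable candidate is to require an \'etale-local (or analytic-local) identification of the singular loci together with an isomorphism of their canonical covers; under such a hypothesis both directions reduce formally to Conjecture~\ref{conj:BOKgen}, which is itself the main object of study in this paper.
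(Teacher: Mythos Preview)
The statement you are attempting to prove is labelled as a \emph{Conjecture} in the paper, and the paper does not provide a proof of it. Immediately after stating Conjecture~\ref{conj:sing}, the author writes that ``what does mean `sufficiently similar' singularities in the previous statement will be specified in any particular considered setup,'' referring the reader to Conjectures~\ref{conj:sings}, \ref{conj:sing'} and \ref{conj:D}. Those contextualized versions are themselves left as conjectures; the only unconditional result in this direction is Theorem~\ref{thm:Malter-type}, which establishes the singularity-category equivalence in one very specific situation (two partial resolutions governed by fans in $\PSF(\L_\aa)$) by invoking a theorem of Herbst--Walcher. So there is nothing in the paper for your proposal to be compared against: you are proposing an argument for an open conjecture.

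Regarding the content of your sketch: the forward direction rests on the claim that a derived equivalence $\Phi:\cD^b(\cX)\to\cD^b(\cY)$ automatically preserves perfect complexes. This is exactly the subtle point---in general a Fourier--Mukai equivalence between singular (or stacky) varieties need not preserve $\mathrm{Perf}$, and your appeal to a ``crepant roof $\cX\leftarrow\mathcal{Z}\rightarrow\cY$'' assumes a geometric origin for $\Phi$ that is not part of the hypothesis $(D')$. The paper's formulation of $D$-equivalence is purely categorical, so you would need an independent argument that $\Phi(\mathrm{Perf}(\cX))\subseteq\mathrm{Perf}(\cY)$. For the converse, you correctly identify that it essentially reduces to Conjecture~\ref{conj:BOKgen}, which is itself open; this is consistent with the paper's treatment, where these statements are presented as a web of mutually reinforcing conjectures rather than theorems. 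Your final paragraph is the honest assessment: until ``sufficiently similar'' is made precise, neither direction can be carried through, and the paper deliberately leaves this vague at the general level.
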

Recall that the category of singularites $\cD_{sg}(X)$ is, by definition, the Verdier quotient of $\cD^b(X)$ by the full subcategory $\text{Perf}(X)$ of perfect objects. What does mean ``sufficiently similar'' singularities in the previous statement will be specified in any particular considered setup: see Conjectures~\ref{conj:sings}, \ref{conj:sing'} and \ref{conj:D}.

For a further approach to an extension of Conjecture~\ref{conj:Kawamata} to a singular setup the reader can also consider the Kawamata logarithmic extension given by \cite[Conj.~2.2]{Kaw05} and, in the 3-dimensional case, Chen generalization of Bridgeland argument \cite{Chen}.

\subsection{Mirror theorems for framed duality}
All this evidence is obtained by performing mirror theorems of kind (1), by means of construction of kind (2) answering to the open problem \cite[8.1]{R-fTV}. In fact, in \cite{R-fTV} and \cite{R-fpCI} a generalization of Batyrev-Borisov duality was proposed, as a correspondence between \emph{framed toric varieties} (see \S~\ref{ssez:ftv}). For instance, one can think of a Fano toric variety as a complete toric variety framed by an anti-canonical divisor. The couple given by a complete toric variety and a strictly effective torus invariant Weil divisor $(X,D)$ is a framed toric variety (ftv). If the framing divisor admits a \emph{partition} $D=\sum_{k=1}^l D_k$ we have a partitioned ftv (see the definition in \S~\ref{ssez:ftv}). Then \emph{framed duality} ($f$-duality) is a correspondence between (partitioned) framed toric varieties restricting to give Batyrev-Borisov duality when $X$ is Gorenstein and $D=-K_X$. In general it is not an involutive process, that is, squaring $f$-duality does not give back the starting ftv. If this is the case, we have a, so called, \emph{calibrated} $f$-process, otherwise we have an non-calibrated (\emph{uncalibrated}) $f$-process (see \S~\ref{ssez:calibrato}). First of all, we verify consistency of calibration with HMS (see Theorem~\ref{thm:smallK&D-equivalenza}). Then a mirror theorem for $f$-mirror symmetry can be obtained by checking consistency with HMS of an uncalibrated $f$-process, that is:

\begin{theorem}[Mirror Theorem]\label{thm:mirror}
  Consider an uncalibrated $f$-process
  $$(X,D)\rightsquigarrow (X^\vee,D^\vee)\rightsquigarrow (X',D')$$
  Let $Y$ and $Y'$ be sufficiently general elements $Y\in|D|$ and $Y'\in|D'|$. Then $Y$ and $Y'$ are $D$-equivalent, up to some kind of resolution of singularities.
\end{theorem}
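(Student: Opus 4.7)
The natural strategy is to try to reduce the claimed $D$-equivalence to a $K$-equivalence (in Kawamata's sense) between (resolutions/covering stacks of) the hypersurfaces $Y$ and $Y'$, and then invoke the known half of Conjecture~\ref{conj:Kawamata} (e.g.\ Bridgeland in dimension $3$, Kawamata's results for crepant birational contractions in higher dimension, and their orbifold analogues that motivate Conjecture~\ref{conj:BOKgen}). In other words, the plan is: (i) construct an explicit birational correspondence between $Y$ and $Y'$; (ii) show it is crepant, perhaps only after passing to canonical covers; (iii) translate this into the derived statement.

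Step (i) should be carried out combinatorially at the level of the ambient toric varieties. The recipe defining the $f$-process (in \cite{R-fTV}, \cite{R-fpCI}) works by operating on fan/weight matrices, so one should first read off how the ambient toric varieties $X$ and $X'$ are related by the double application $(X,D)\rightsquigarrow(X^\vee,D^\vee)\rightsquigarrow(X',D')$. The expectation, following the Batyrev--Borisov template, is that $X$ and $X'$ share a common toric roof $\widetilde{Z}\to X$, $\widetilde{Z}\to X'$ (coming from a sufficiently fine refinement of both fans), and that the pullbacks $f^*D$ and $g^*D'$ to $\widetilde{Z}$ coincide as divisor classes (this is where the framing datum is essential: uncalibration permits $X\neq X'$ but the framing divisors are cooked up so as to pull back compatibly). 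Restricting to a general member $Y\in|D|$ and $Y'\in|D'|$, the proper transforms $\widetilde{Y}, \widetilde{Y}'\subset\widetilde{Z}$ should agree, giving the birational correspondence $Y\dashrightarrow Y'$ through a common resolution $\widetilde{Y}=\widetilde{Y}'$.

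Step (ii) is an adjunction computation on $\widetilde{Z}$. Assuming the framing divisors have been chosen with $K_X+D$ and $K_{X'}+D'$ admitting the same pullback to $\widetilde{Z}$ (as one would expect from the invariance of the toric anticanonical class under flips and divisorial contractions internal to the secondary fan), adjunction on each of $Y, Y'$ gives canonical classes whose pullbacks to $\widetilde{Y}$ coincide. This is precisely the Kawamata crepancy condition. If $Y$ and $Y'$ are not smooth, one would replace them by the canonical covering stacks of Definition~\ref{def:K-stack} to get the orbifold version $(K')$ of Conjecture~\ref{conj:BOKgen}, which explains the cautious phrase \emph{up to some kind of resolution of singularities} in the statement.

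Step (iii) is where one must invoke external results: Bridgeland's theorem gives the implication $K\Rightarrow D$ in dimension $3$, and Kawamata's results in \cite{Kawamata} cover several higher-dimensional cases (toroidal, terminal, small). The main obstacle is thus twofold: first, verifying rigorously that the combinatorics of an uncalibrated $f$-process really does produce a common toric resolution together with the desired numerical equality of pulled-back framings (this is a purely combinatorial check on fan matrices but is delicate because uncalibration was defined precisely to allow $(X,D)\neq(X',D')$); second, bridging the gap between $K$-equivalence and $D$-equivalence in dimensions beyond those where it is known unconditionally, which forces one to either restrict to low-dimensional complete intersections or to rely on the conjectural framework of Conjectures~\ref{conj:BOKgen} and \ref{conj:sing}---consistent with the paper's declared aim of producing \emph{evidences} rather than a blanket theorem.
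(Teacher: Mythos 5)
Your route --- exhibit a crepant birational correspondence between $Y$ and $Y'$, then invoke Bridgeland/Kawamata to convert $K$-equivalence into $D$-equivalence --- is indeed the one the paper follows. Note, though, that the Mirror Theorem has no single self-contained proof in the paper: it is realized case by case (Theorems~\ref{thm:Dequivalenza}, \ref{thm:Dequivalenza^A}, \ref{thm:D}, \ref{thm:Dequivalenza^cy}), precisely because the decisive $K\Rightarrow D$ implication is only available unconditionally in the 3-dimensional setting of \cite[Thm.~4.6, Thm.~6.5]{Kawamata}, which you correctly flag. The concrete geometric picture is also slightly different from your ``common toric roof'': in the paper's recipe $X'$ is itself a blowup $\phi\colon X'\to X$ at a set of torus-fixed points (one ambient dominates the other), $Y'$ is the \emph{strict} transform of $Y$, and the common roof only enters later, as a fibre product of partial desingularizations $\widehat{Y}$, $\widehat{Y}'$ over $Y$ (cf.\ the proof of Theorem~\ref{thm:Dequivalenza} and Remark~\ref{rem:triangolo}).

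The substantive gap is in your Step~(ii). You justify the equality $f^*(K_X+D)=g^*(K_{X'}+D')$ by appealing to ``invariance of the toric anticanonical class under flips and divisorial contractions internal to the secondary fan''. Flips (small $\Q$-factorial modifications) do preserve the anticanonical class, but divisorial contractions do not --- and in an uncalibrated $f$-process $X'$ and $X$ differ precisely by a divisorial blowup, not a small modification. The paper never proves such an invariance. What actually produces crepancy is one of two mechanisms: either the Calabi--Yau hypothesis on $Y$ and $Y'$ (so $K_{\widehat{Y}}\sim_\Q 0\sim_\Q K_{\widehat{Y}'}$ and crepancy is automatic, as in Theorem~\ref{thm:Dequivalenza} and Theorem~\ref{thm:Dequivalenza^cy}), or, in the non-CY case, restricting both models to the \emph{same} partial resolution level $\PSF(\widehat{\L}^{A\cap A'})$ so that the two resolutions differ only by a small $\Q$-factorial modification (Theorems~\ref{thm:Kequiv-generale} and \ref{thm:D}, Remark~\ref{rem:noCY}); in that case crepancy comes from the adjunction computation carried out in the proof of Theorem~\ref{thm:smallK&D-equivalenza}(2), not from any a priori compatibility of the framings. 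Without one of these hypotheses the pullbacks need not match, so your Step~(ii) would fail as written.
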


\subsection{Multiple mirrors}
This is where construction (2) comes into play. Mirror symmetry was born as a physical duality. But a mathematical translation of the word \emph{duality} has a stronger meaning including an involutive  behaviour which is not implied by the physical definition. Polar duality giving rise to Batyrev-Borisov mirror symmetry further strengthened the idea of mirror symmetry as an involutive correspondence making, in a sense, exceptional the occurrence of multiple mirrors phenomena. On the contrary, $f$-mirror symmetry shows that multiple mirrors can occur just by changing the framing inside the same linear equivalence class of divisors, so that one should think of mirror symmetry as a sort of web. The just mentioned method of producing multiple mirrors does not work for a \cy complete intersection, as there is a unique choice of framing inside the anti-canonical class. In this paper we will show how producing a big number of multiple mirrors, for nearly all projective complete  intersection of non-negative Kodaira dimension, hence for \cy complete intersections, too. All these multiple mirrors turn out to be birational equivalent and, moreover, $K$-equivalent and, in the 3-dimensional case, also $D$-equivalent, hence giving all the evidences for Conjecture~\ref{conj:BOKgen} mentioned above.
In the different examples and setup here considered, we will prove a contextualization of the following

\begin{theorem}[Metatheorem]\label{thm:metateorema}
    Given a projective complete intersection $Y$, let $Y^\vee_{BB}$ be the mirror model of $Y$ obtained by means of a calibrated $f$-process. Then it belongs to a suitable list, denoted $\cM$, of mirror models of $Y$, such that for every $Y^\vee\in \cM$
    \begin{itemize}
      \item[(i)] there exists an uncalibrated $f$-process connecting the three of them, that is,
      \begin{equation*}
        Y^\vee\stackrel{\text{$f$-dual}}{\rightsquigarrow}Y\stackrel{\text{$f$-dual}}{\rightsquigarrow}Y^\vee_{BB}
      \end{equation*}
      \item[(ii)] $Y^\vee$ and $Y^\vee_{BB}$ are $K$-equivalent,
      \item[(iii)] if $Y$ is a (Calabi-Yau) threefold then $Y^\vee$ and $Y^\vee_{BB}$ are also $D$-equivalent.
    \end{itemize}
\end{theorem}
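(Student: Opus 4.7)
The plan is to establish the metatheorem in each concrete setup by separating the three assertions and treating them in a natural order: first constructing the list $\cM$, then verifying the uncalibrated $f$-process in (i), then the geometric crepancy in (ii), and finally invoking dimension-$3$ input for (iii).

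To construct $\cM$, I would start from the ambient toric picture underlying $Y$: a complete intersection is cut out inside a toric variety $X$ by a partitioned framing divisor $D=\sum_k D_k$, where each $D_k$ corresponds to one of the defining sections. The freedom producing multiple mirrors is the freedom to change the partition of $D$ (and, when needed, to replace $X$ by a birational toric model supporting the same linear equivalence class). Each admissible partition determines a framed toric pair, hence by $f$-duality an ambient toric variety $X^\vee$ and a mirror candidate $Y^\vee \in |D^\vee|$; the set of all such $Y^\vee$ (for a fixed controlled family of admissible partitions) is the list $\cM$. By construction $Y^\vee_{BB}$ is one of these, the one distinguished by the calibrated choice of partition.

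For (i), given any $Y^\vee\in\cM$ I would trace the $f$-process backwards: the pair $(X^\vee,D^\vee)$ is by definition $f$-dual to $(X,D)$, and applying $f$-duality a second time to $(X,D)$ produces the Batyrev--Borisov mirror $(X^\vee_{BB},D^\vee_{BB})$, whose general section is $Y^\vee_{BB}$. Hence the three pairs are linked by
\begin{equation*}
(X^\vee,D^\vee)\;\stackrel{\text{$f$-dual}}{\rightsquigarrow}\;(X,D)\;\stackrel{\text{$f$-dual}}{\rightsquigarrow}\;(X^\vee_{BB},D^\vee_{BB}),
\end{equation*}
and the composition fails to be the identity precisely because the two partitions of $D$ differ; this is the definition of an \emph{uncalibrated} $f$-process recalled in \S\ref{ssez:calibrato}.

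For (ii), I would exhibit an explicit crepant birational correspondence between $Y^\vee$ and $Y^\vee_{BB}$. The toric ambient varieties $X^\vee$ and $X^\vee_{BB}$ share the same fan matrix up to the combinatorial data encoding the partition, so they are related by a sequence of toric flops / wall-crossings inside a common GIT secondary fan. Choose a common toric resolution (or $\Q$-factorial model) $\widetilde X$ dominating both; by standard toric intersection theory the relative canonical divisors are controlled by the partition data and cancel out in the sum, so the induced birational map $\widetilde X\dashrightarrow X^\vee,X^\vee_{BB}$ is crepant. A transversality argument for sufficiently general sections then lifts the strict transforms of $Y^\vee$ and $Y^\vee_{BB}$ to a common smooth (or canonically covered) subvariety $Z\subset \widetilde X$ realizing the diagram of Conjecture~\ref{conj:BOKgen} $(K')$, possibly after passing to canonical covering stacks when $Y^\vee$ or $Y^\vee_{BB}$ is only $\Q$-Gorenstein.

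For (iii), once $Y$ (hence generically $Y^\vee$ and $Y^\vee_{BB}$) is a smooth threefold, Bridgeland's theorem \cite{Bridgeland} says that $K$-equivalence and $D$-equivalence coincide, so (ii) gives (iii) for free; alternatively, Theorem~\ref{thm:mirror} gives $D$-equivalence directly, providing a cross-check consistent with Conjecture~\ref{conj:Kawamata}.

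The main obstacle I expect is (ii): controlling the crepancy of the birational map between the two toric ambient models in the presence of singularities, and arranging a genuine transversality statement for the complete intersections. When $X^\vee$ or $X^\vee_{BB}$ is not Gorenstein one is forced to work with canonical covering stacks and to interpret (ii) in the orbifold sense of Definition~\ref{def:crepante}; producing a common crepant stack-theoretic resolution dominating both sides, and checking that the strict transforms of $Y^\vee$ and $Y^\vee_{BB}$ remain sufficiently general to form the required roof, is where most of the case-by-case work will be concentrated in the theorems listed in the introduction.
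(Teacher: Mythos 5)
Your overall architecture (build $\cM$, check the $f$-process, check crepancy, then invoke a dimension-$3$ result) matches the paper's template, and you correctly note that the proof is carried out case by case rather than once and for all. But there are three concrete gaps.

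First, the construction of $\cM$. You propose to vary the \emph{partition} of the framing divisor $D_\aa$, but the paper explicitly discards this route for Calabi--Yau complete intersections: the anti-canonical class has a unique framing, so changing the partition produces nothing. The paper's $\cM$ is built with the partition $\aa=\sum_k\aa_k$ \emph{fixed}: one chooses a sub-fan-matrix $W$ of $\L_\aa$ satisfying assumptions (A)--(C) of \S~\ref{sssez:LT-ipotesi}, which yields a finite quotient $X=\P(\aa)/G$, and then for each subset $A\subseteq\cI^W$ of the complementary column indices one takes the blowup $\XX_\aa^A\to X$ and the induced partitioned ftv $(\XX_\aa^A,\bb^A)$. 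The members $Y^\vee_A$ of $\cM$ are the generic complete intersections in these $\XX_\aa^A$; the freedom is in the column-deletion, not the partition. Relatedly, the $f$-process for $Y^\vee_A$ is uncalibrated not because a partition differs, but because $(\XX_\aa^A,\bb^A)$ and $(\XX_\aa,\bb)$ are not isomorphic even up to small $\Q$-factorial modifications (they differ by \emph{divisorial} blowups).

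Second, in (ii) you claim $X^\vee$ and $X^\vee_{BB}$ ``share the same fan matrix up to the combinatorial data encoding the partition, so they are related by a sequence of toric flops / wall-crossings inside a common GIT secondary fan.'' That is false: $\L_\aa^A$ and $\L_\aa^{A'}$ have different numbers of columns, hence do not live in a common secondary fan, and the map $\XX_\aa^A\dashrightarrow\XX_\aa^{A'}$ is not an isomorphism in codimension one. The paper's proofs (e.g.\ of Theorem~\ref{thm:Kequivalenza3} and Theorem~\ref{thm:Kequiv-generale}) first pass to the common blowup $\XX_\aa^{A\cap A'}$, stellar-subdivide the chosen fans $\Si\in\PSF(\widehat{\L}^A)$, $\Si'\in\PSF(\widehat{\L}^{A'})$ to land in $\PSF(\widehat{\L}^{A\cap A'})$, and only then perform wall-crossings; the roof $Z$ is obtained by iterated fibre products along this chain. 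Your flop-only picture misses the blowup step entirely and would not produce a crepant roof.

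Third, for (iii) you appeal to Bridgeland's theorem as if $Y^\vee$ and $Y^\vee_{BB}$ were smooth, but they are in general only quasi-smooth complete intersections in singular (non-Gorenstein, $\Q$-factorial) toric ambient spaces and carry quotient singularities. The paper does not use Bridgeland here: statements such as Theorem~\ref{thm:Dequivalenza^A}, Theorem~\ref{thm:D} and Theorem~\ref{thm:Dequivalenza^cy} apply Kawamata's Thm.~4.6 and Thm.~6.5 of \cite{Kawamata} to the \emph{canonical covering stacks} $\cY_A$, $\cY_{A'}$, after first decomposing the crepant birational map into flops and then obtaining the stacky $D$-equivalence $\cD^b(\cY_A)\cong\cD^b(\cY_{A'})$. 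Invoking Bridgeland at this point is a genuine gap, and citing Theorem~\ref{thm:mirror} would be circular, since the Metatheorem is precisely the vehicle by which the paper establishes that Mirror Theorem.

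Putting it together: the skeleton of your argument is right, but the construction of $\cM$ is based on the wrong source of freedom, the $K$-equivalence step skips a necessary blowup before wall-crossing, and the $D$-equivalence step needs the canonical covering stack / Kawamata machinery rather than Bridgeland.
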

The \cy hypothesis in brackets means that it can be dropped if a suitable common level of desingularization for both $Y^\vee$ and $Y^\vee_{BB}$ is chosen: see e.g. Theorem~\ref{thm:D} and Remark~\ref{rem:noCY}.

Let me say that, in constructing such a \emph{suitable list of mirror models}, I was stimulated by interesting papers by Favero and Kelly \cite{Favero-Kelly} and Malter \cite{Malter}. In particular, I am referring to Remarks~3.8 and 4.10 in Malter's paper. Namely, in 1993, Libgober and Teitelbaum \cite{LT} proposed a mirror model for the projective \cy threefold given as the complete intersection of two cubic hypersurfaces in $\P^5$, generalizing the Greene-Plesser approach \cite{GP} for the 5-tic threefold. Later, Batyrev and Borisov proposed their general mirror symmetric construction \cite{Batyrev94}, \cite{Borisov}, \cite{BB}. In spite of the fact that the Greene-Plesser (GP) mirror model and the Batyrev one coincide in the hypersurface case, for complete intersection this fact does no more hold, as Libgober-Teitelbaum (LT) and Batyrev-Borisov (BB) mirror models look to be quite different \cy manifolds. This fact proposes a first evident example of multiple mirrors, although the different behaviour of the hypersurface case with respect to the case of complete intersection sounds a bit unnatural. Actually, we will show that these two mirror models turn out to be linked by a crepant birational morphism (see \S~\ref{sssez:birational_33} and in particular Proposition~\ref{prop:birat3,3}). Then they are birational and, moreover, $K$-equivalent (see Theorem~\ref{thm:Kequivalenza3}) and $D$-equivalent (see Theorem~\ref{thm:Dequivalenza} and Corollary~\ref{cor:D-equiv_3,3}), so proving, in this setup, Theorem~\ref{thm:metateorema} for the particular list $\cM=\{Y^\vee_{LT},Y^\vee_{BB}\}$. Their $D$-equivalence has been also partially proved by Malter \cite[Thm.~2.23]{Malter}: see the following Theorem~\ref{thm:Malter 2.23} and considerations given in Remark~\ref{rem:noMalter}.

The particular Libgober-Teitelbaum construction generalizes in several directions allowing us
\begin{itemize}
  \item[(a)] to propose a (non-unique) generalized LT-mirror model associated to \emph{almost} every complete intersection of projective hypersurfaces, actually beyond the \cy constraint; that is a different mirror model with respect to the one obtained by a calibrated $f$-mirror process, as in \cite{R-fpCI}, and still denoted by BB-mirror model, although Batyrev-Borisov duality cannot be applied in such a generalized setup: this is the construction proposed in \S~\ref{ssez:mirror-costruzione} and in particular in \S~\ref{ssez:mirrors}, where the meaning of the word \emph{almost} is clarified by Assumptions (A), (B) and (C) in \S~\ref{sssez:LT-ipotesi}; in \S~\ref{ssez:LTmirrors} this construction is explicitly studied for the \cy threefold $Y_{2,2,3}$ given as the complete intersection of two hyperquadrics and a cubic hypersurface in $\P^6$;
  \item[(b)] to show the existence of an uncalibrated $f$-process connecting the LT-mirror with the $BB$-mirror: this is done in several setups with Proposition~\ref{prop:LTdual}, Remark~\ref{rem:f-picture}, Proposition~\ref{prop:LTdual_cubics}, Remark~\ref{rem:f-picture3}, Theorem~\ref{thm:CY_d,d} and \S~\ref{sssez:LT-ipotesi}, \ref{ssez:mirrors}, \S~\ref{ssez:BB_2,2,3} and \S~\ref{ssez:LTmirrors}, so proving item (i) of Theorem~\ref{thm:metateorema}, in every considered setup, for the list $\cM=\{Y^\vee_{LT},Y^\vee_{BB}\}$;
  \item[(c)] to show that the birational morphism linking the BB-mirror and the LT-mirror factorizes through a potentially big number of, so called, \emph{intermediate mirror models}, so interestingly enriching the multiple mirror picture attached to a projective complete intersection: just to give an idea of what means \emph{potentially big}, a complete intersection of $l$ hypersurfaces in $\P^n$ and satisfying assumptions (A), (B) and (C) in \S~\ref{sssez:LT-ipotesi}, admits at least $2^{(l-1)(n+1)}$ distinct multiple mirrors; for each intermediate mirror model there exists an uncalibrated $f$-process landing to the BB-mirror model; this is performed in several setups, namely \S~\ref{sssez:mirrorintermedi3,3} and Proposition~\ref{prop:mirrorintermedi3,3}, \S~\ref{sssez:mirrorintermedid,d} and Proposition~\ref{prop:mirrorintermedid,d}, \S~\ref{ssez:mirrors}(2), \S~\ref{sssez:mirrorintermedi223}; clearly these results give an extension of item (i) in Theorem~\ref{thm:metateorema} to the big list $\cM$ of intermediate mirrors;
  \item[(d)] all multiple mirrors, presented in the previous item (c), are $K$-equivalent and, in the 3-dimensional \cy cases, also $D$-equivalent, so proving the Mirror Theorem~\ref{thm:mirror} and items (ii) and (iii) in Theorem~\ref{thm:metateorema}, in the considered setups, for the list $\cM$ of all the intermediate mirrors; namely, for $K$-equivalence consider Theorems \ref{thm:Kequivalenza3}, \ref{thm:Kequivalenzad}, \ref{thm:Kequiv-generale} and \ref{thm:Kequivalenza_cy}, while for $D$-equivalence in dimension 3 consider Theorems~\ref{thm:Dequivalenza}, \ref{thm:Dequivalenza^A}, \ref{thm:D} and \ref{thm:Dequivalenza^cy}; finally evidences for Conjecture~\ref{conj:sing} are given in Theorem~\ref{thm:Malter-type}.
\end{itemize}

\subsection{Metatheorem vs Mirror Theorem} Item (iii) of Theorem~\ref{thm:metateorema} proves the mirror theorem~\ref{thm:mirror} in the particular case that the considered projective complete intersection $Y$ is a \cy threefold. Taking into account Conjectures~\ref{conj:Kawamata} and \ref{conj:BOKgen}, item (ii) of Theorem~\ref{thm:metateorema} gives a conjectural approach to the mirror theorem~\ref{thm:mirror}, beyond the constraint to be a \cy threefold. $K$-equivalence guaranteed by Theorem~\ref{thm:metateorema} is proved, in the several setups here proposed, by analyzing birational maps linking ambient toric varieties and then restricting those maps to embedded mirror models and their strict transforms. The \cy condition makes \emph{crepant} these restricted birational maps, then allowing us, in the 3-dimensional case, to directly apply Kawamata's arguments \cite{Kawamata} to definitively prove the conjectured $D$-equivalence. Often, the \cy condition can be bypassed by the choice of a suitable level of \emph{common partial resolution of singularities} for the involved multiple mirror models: this phenomenon is described in \S~\ref{ssez:KvsD} and in particular by Theorems~\ref{thm:Kequiv-generale} and \ref{thm:D} and Remark~\ref{rem:noCY}.

\oneline
The present paper is organized as follows. In \S~\ref{sez:preliminari} needed preliminaries and notation are quickly recalled and fixed. A first result concerning the HMS consistency of a calibrated $f$-process is here proved (see Theorem~\ref{thm:smallK&D-equivalenza}). \S~\ref{sez:Malter} is a warm up, devoted to describe the general strategy in the easiest and lowest dimensional cases of the elliptic curve $Y_{2,2}\subset\P^4$ and the \cy threefold $Y_{3,3}\subset\P^5$, also considered by Malter in \cite{Malter}. In \S~\ref{sez:d,d} a first level of generalization on the degree/dimension $d$ is introduced, by studying multiple mirrors of the \cy complete intersection $Y_{d,d}\subset\P^{2d-1}$. The general recipe is presented and described in \S~\ref{sez:LTgeneral} and checked in the particular cases of Kodaira positive dimensional complete intersections $Y_{2,2,3}\subset\P^5$ and $Y_{3,4,5}\subset\P^8$. The more general results and evidences on $K$-equivalence, $D$-equivalence and their interplays among the different mirror models are here finally discussed. The final \S~\ref{sez:2,2,3} is devoted to giving an application of general techniques and results previously analyzed in the particular case of the \cy threefold $Y_{2,2,3}\subset\P^6$\,. Appendices \ref{app:A}, \ref{app:B} and \ref{app:C} explicitly describe some characteristic data of multiple mirrors described \S~\ref{sez:Malter}.

 \begin{acknowledgements}
   It is a pleasure to thank T.~Kelly and his PhD student A.~Malter for interesting correspondence we had: with the latter after his paper \cite{Malter} appeared on the arXiv, and with the former in the occurrence of the online workshop \emph{Toric Fano Varieties and beyond}, December 4, 2020. It is the occasion to thank also the workshop organizers G.~Bini and D.~Iacono. In particular, the employment of non-calibrated $f$-processes, which are the main objects studied in the present paper, answers a stimulating question of G.~Bini, related with the open problem \cite[8.1]{R-fTV}.

 I also wish to thank Geometry staffs of both the Maths Departments of the University and the Polytechnic of Turin and in particular the participants in the study seminar on Derived Categories organized during the winter 2021/22: the present paper could never have appeared without that series of inspiring events.

 Many thanks also go to M.~Kalck, who pointed out some inaccuracies in an early version of this paper.

Many computations and proofs' prototypes have been partially performed by means of several Maple routines, mostly of them jointly written with L.~Terracini, and some of them based on the Maple package \texttt{Convex} \cite{Convex}.

Seven months later than the first version of the present paper appeared on the ArXiv, M.~Pochekay published the first version of \cite{Pochekay}, which is part of his PHD thesis, written under the supervision of his advisor D.~Eriksson. Results in \S~\ref{ssez:cubics} of the present paper should be compared with Pochekay's constructions and in particular with \S~5 in \cite{Pochekay}. It is my pleasure to thank both D.Eriksson and M.Pochekay for interesting discussions we had in occasion of the official defense of the Pochekay thesis. Many thanks for invitation and warm hospitality in G\"{o}teborg.
 \end{acknowledgements}

\section{Preliminaries and Notation}\label{sez:preliminari}

For foundational concepts on toric varieties and general notation we completely refer the reader to \S~1 and \S~2 in \cite{R-fTV} and references there cited. For the reader convenience we just recall the following few facts.

\subsection{Cox quotient presentation, fan matrix and weight matrix}\label{ssez:Cox} Consider a complete toric variety $X$ of dimension $n$ with Picard number $r$, and
\begin{equation}\label{div}
  \xymatrix{0\ar[r]&M\ar[r]^-{div}& \Weil(X)\ar[r]^-{cl}&\Cl(X)\ar[r]&0}
\end{equation}
be the Weil divisor exact sequence \cite[Thm.~4.1.3]{CLS}, where $\Weil(X)\cong\Z^{n+r}$ is the free group of torus invariant Weil divisors and $\Cl(X)$ the class group. A \emph{fan matrix} $V$ of $X$ is the transposed matrix of a representative matrix $V^T$ of the homomorphism $div$. A weight matrix $Q$ of $X$ is a \emph{Gale dual} matrix of $V$. In particular, given a representation $\Cl(X)\cong\Z^r\oplus\Tors(\Cl(X))$, being $\Tors(\Cl(X))$ the canonical torsion subgroup of $\Cl(X)$, the class homomorphism $cl$ is represented, for the torsion landing part, by a \emph{torsion matrix} $T$ and, for the free landing part, by a weight matrix $Q$ \cite[\S~3]{RT-QUOT},\cite{RT-Erratum}. Dualizing (\ref{div}) over $\C^*$ one obtains the multiplicative dual exact sequence
\begin{equation*}
  \xymatrix{1\ar[r]&\Hom(\Cl(X), \C^*)\ar[r]^-{\exp(cl^\vee)}&\Hom(\Weil(X),\C^*)\ar[r]&\Hom(M,\C^*)\ar[r]&1}
\end{equation*}
where $\Hom(M,\C^*)\cong N\otimes\C^*=\T^n$ is the torus acting on $X$, that is,
\begin{equation*}
  \T^n\cong X\setminus\bigcup_{i=1}^{n+r} D_i
\end{equation*}
being $D_i$ the torus invariant prime divisors of $X$ obtained as the closure of the torus orbit of special points of rays generated by columns of the fan matrix $V$ and freely generating $\Weil(X)$. Under the natural multiplicative action of $\Hom(\Weil(X),\C^*)\cong\T^{n+r}$ on $\C^{n+r}$, the monomorphism $\exp(cl^\vee)$ induces an action of the quasi-torus $\Hom(\Cl(X), \C^*)\cong\T^r\times\mm$ on $\C^{n+r}$ so that
\begin{equation*}
  X\cong (\C^{n+r}\setminus Z_X)/(\T^r\times\mm)
\end{equation*}
being $Z_X$ the unstable locus of the multiplicative action of
\begin{equation}\label{azione}
  \exp(cl^\vee)(\Hom(\Cl(X), \C^*)\cong\exp(Q)(\T^r)\times \exp(T)(\mm)
\end{equation}
We will say that \emph{the weight matrix $Q$ and the torsion matrix $T$ determine the quasi-torus action of $\T^r\times\mm$ on the characteristic space $\C^{n+r}\setminus Z_X$}, as their columns give exponents of the quasi-torus action (\ref{azione}).  \\
For further details the interested reader is referred to the original Cox's paper \cite{Cox}.

\subsection{Small $\Q$-factorial resolutions of a complete toric variety}\label{ssez:SF} Consider a complete toric variety $X$ with a corresponding fan matrix $V$. We will denote by
\begin{itemize}
  \item $\SF(V)$ the set of simplicial fans $\Si$ admitting as 1-skeleton $\Si(1)$ the set of rays generated by all the columns of $V$, that is,
      \begin{equation*}
        \Si(1)=\{\langle\v_i\rangle\,|\,\forall\,i=1,\ldots,n+r\quad\v_i\ \text{is the $i$-th column of}\ V\}
      \end{equation*}
  \item $\PSF(V)$ the subset of $\SF(V)$ giving rise to a \emph{projective} $\Q$-factorial toric variety.
\end{itemize}
Since $X$ is complete, by a suitable simplicial subdivision of its maximal cones one obtains a fan $\Si\in\SF(V)$ giving a refinement of the fan of $X$ obtained without adding any further ray. Calling $\widehat{X}(\Si)$ the $\Q$-factorial, complete, toric variety defined by $\Si$, there is an induced birational morphism
\begin{equation*}
  \xymatrix{\psi_\Si:\widehat{X}(\Si)\ar[r]&X}
\end{equation*}
which is a \emph{small}, partial, resolution of singularities of $X$.

\subsection{Framed and partitioned framed toric varieties}\label{ssez:ftv} Given a complete toric variety $X$ of dimension $n$ and Picard number $r$, let us call $D_1,\ldots,D_{n+r}$ the prime torus invariant divisors generating $\Weil(X)$, as above. Then, we introduced the following notions:
\begin{enumerate}
  \item \cite[Def. 2.1]{R-fTV} a \emph{framing} of $X$ is defined as a strictly effective divisor $D_\aa=\sum_{i=1}^{n+r}a_i D_i$, that is, $a_i>0$ for every $i$; the couple $(X,D_\a)$, often also denoted $(X,\aa)$, is called a \emph{framed toric variety} (ftv);
  \item \cite[Def. 6.1]{R-fTV}, \cite[Def. 1.4]{R-fpCI} A \emph{partition} of a given framing $D_\aa$ is the datum of a partition
  $$\exists\,l\in \N:\quad I_1\cup\cdots\cup I_l=\{1,\ldots,m\}\ ,\quad\forall\,i\ I_i\ne\emptyset\ ,\quad\forall\,i\neq j\quad I_i\cap I_j =\emptyset$$
   and divisors $D_{\aa_1},\ldots,D_{\aa_l}$ such that
  $$\forall\,k=1,\ldots,l\quad D_{\aa_k}:=\sum_{i\in I_k}a_iD_i$$
  Clearly $D_\aa=\sum_{k=1}^l D_{\aa_k}$, that is, $\aa=\sum_{k=1}^l \aa_k$\,. The toric variety $X$ endowed with a \emph{partitioned framing} $\aa=\sum_{k=1}^l\aa_k$ is called a \emph{partitioned ftv} and denoted by $(X,\aa=\sum_{k=1}^l\aa_k)$\,.
\end{enumerate}

\subsection{Calibrated and uncalibrated $f$-processes}\label{ssez:calibrato} Given a ftv $(X,\aa)$ there is a unique \emph{framed dual} ($f$-dual) ftv $(\XX_\aa,\bb)$ described by construction 2.1.1 in \cite{R-fTV}. Analogously, given a partitioned ftv $(X,\aa=\sum_{k=1}^l\aa_k)$ there is a unique $f$-dual partitioned ftv $(\cv{\XX}_\aa,\cv{\bb}=\sum_{k=1}^l\bb_k)$, assigned by algorithm 1.1.1 in \cite{R-fpCI}.

\begin{remark}
  In the following we will consider only partitioned ftv, then the partitioned $f$-dual will be denoted by $(\XX_\aa,\bb= \sum_{k=1}^l\bb_k)$, for ease, but one should always recall that $\cv{\XX}_\aa\neq\XX_\aa$ and $\cv{\bb}\neq\bb$: notice that the latter are framing on distinct toric varieties.
\end{remark}

By definition, we call a (partitioned) \emph{$f$-process} the double application of (partitioned) $f$-duality. This gives rise to a third (partitioned) ftv $(\XX_\bb, \cc= \sum_{k=1}^l\cc_k)$, that is
\begin{equation}\label{f-processo}
  \xymatrix{(X,\aa)\ar@{~>}[rr]^-{\text{$f$-process}}\ar@{~>}[dr]_-{\text{$f$-dual}}&&(\XX_\bb, \cc)\\
            &(\XX_\aa,\bb)\ar@{~>}[ur]_-{\text{$f$-dual}}}
\end{equation}

\begin{definition}[calibrated $f$-process, see Def.~2.14 in \cite{R-fTV} and Def.~1.7 in \cite{R-fpCI}]\label{def:calibrato}
   Given a (partitioned) $f$-process (\ref{f-processo}), let $V$ and $\L$ be fan matrices of $X$ and $\XX_\bb$, respectively. The $f$-process is called \emph{calibrated} if  there exist $\Xi\in\SF(V)$ and $\Xi'\in\SF(\L)$ such that
  $$\left(\widehat{X}(\Xi),\psi_\Xi^*D_\aa\right)\ {\cong}\ \left(\widehat{X}'(\Xi'),\psi_{\Xi'}^*D'_{\cc}\right)$$
  are isomorphic framed toric varieties, and
  $$\psi_\Xi:\widehat{X}(\Xi)\longrightarrow X\quad\text{and}\quad\psi_{\Xi'}:\widehat{X}'(\Xi')\longrightarrow \XX_\bb$$
  are small $\Q$-factorial resolutions of $\Xi$ and $\Xi'$, respectively.

  On the contrary, if the calibration condition is not satisfied, the $f$-process is referred to as \emph{uncalibrated}.
\end{definition}

\subsubsection{Calibration and HMS} The Definition~\ref{def:calibrato} of a calibrated $f$-process is consistent with HMS, at least conjecturally. The crucial point is the following

\begin{theorem}\label{thm:smallK&D-equivalenza}
  Let $X$ be a complete toric variety, $V$ a fan matrix of $X$ and $D_\aa=\sum_{k=1}^lD_{\aa_k}$ a partitioned framing on $X$.
  For any simplicial refinement $\Si,\Si'\in\SF(V)$ of the fan of $X$, let
  \begin{equation*}
    \psi_\Si:\widehat{X}(\Si)\longrightarrow X\quad\text{and}\quad\psi_{\Si'}:\widehat{X}'(\Si')\longrightarrow X
  \end{equation*}
  be the associated small $\Q$-factorial resolutions. Then:
  \begin{enumerate}
    \item $\widehat{X}$ and $\widehat{X}'$ are $K$-equivalent;
    \item $(\widehat{X},\psi_\Si^*D_\aa=\sum_{k=1}^l\psi_\Si^*D_{\aa_k})$ and $(\widehat{X}',\psi_{\Si'}^*D_\aa=\sum_{k=1}^l\psi_{\Si'}^*D_{\aa_k})$ are partitioned ftv and, for generic
        \begin{eqnarray*}
          \widehat{Y} &=& \bigcap_{k=1}^l \widehat{Y}_k\subset \widehat{X}\quad\text{with}\ \forall\,k\ \widehat{Y}_k\in|\psi_\Si^*D_{\aa_k}| \\
          \widehat{Y}' &=& \bigcap_{k=1}^l \widehat{Y}'_k\subset \widehat{X}'\quad\text{with}\ \forall\,k\ \widehat{Y}'_k\in|\psi_{\Si'}^*D_{\aa_k}|
        \end{eqnarray*}
       then $\widehat{Y}$ and $\widehat{Y}'$ are $K$-equivalent;
    \item if $\dim X=3$ then $\widehat{X}$ and $\widehat{X}'$ are $D$-equivalent;
    \item if $\dim\widehat{Y}=\dim\widehat{Y}'=3$ then $\widehat{Y}$ and $\widehat{Y}'$ are $D$-equivalent.
  \end{enumerate}
  \end{theorem}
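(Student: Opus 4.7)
The plan is to establish items $(1)$–$(4)$ in order, using as the key toric input the fact that every $\psi_\Si:\widehat{X}(\Si)\to X$ with $\Si\in\SF(V)$ is a \emph{small} birational morphism: the $1$-skeleton is unchanged in passing from the fan of $X$ to $\Si$, so no new ray --- and hence no exceptional divisor --- is introduced by the refinement.

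For $(1)$, I would first dominate both small resolutions by a common smooth projective toric model $Z$, obtained as a smooth projective refinement of any simplicial common refinement of $\Si$ and $\Si'$, with induced birational morphisms $f:Z\to\widehat{X}(\Si)$ and $g:Z\to\widehat{X}(\Si')$ satisfying $\psi_\Si\circ f=\psi_{\Si'}\circ g$. Because small toric morphisms place the prime torus-invariant divisors of source and target in natural bijection, a direct Cox-ring/support function computation yields $f^{*}K_{\widehat{X}(\Si)}=g^{*}K_{\widehat{X}(\Si')}$ on $Z$, which is by definition the $K$-equivalence.

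For $(2)$, the pullback $\psi_\Si^{*}:|D_{\aa_k}|\to|\psi_\Si^{*}D_{\aa_k}|$ is a bijection on general members since $\psi_\Si$ is an isomorphism in codimension $1$, and analogously for $\Si'$. A generic choice of $Y_k\in|D_{\aa_k}|$ on $X$ therefore pins down both $\widehat{Y}_k$ and $\widehat{Y}'_k$ simultaneously as strict transforms, and the complete intersections $\widehat{Y},\widehat{Y}'$ are birational via a map which is an isomorphism in codimension $1$ inherited from the ambient one. Taking the strict transform $\widetilde{Y}\subset Z$, smooth for sufficiently general data by a Bertini-type argument, adjunction gives
\[
  K_{\widehat{Y}}=\bigl(K_{\widehat{X}(\Si)}+\psi_\Si^{*}D_\aa\bigr)\big|_{\widehat{Y}},
\]
and the analogous formula for $\widehat{Y}'$. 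Restricting the equality from $(1)$, combined with the equality $f^{*}\psi_\Si^{*}D_\aa=g^{*}\psi_{\Si'}^{*}D_\aa$ (both sides being the pullback to $Z$ of the same Weil divisor on $X$), to $\widetilde{Y}$ yields the required $K$-equivalence of $\widehat{Y}$ and $\widehat{Y}'$.

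For $(3)$ and $(4)$, with $K$-equivalence at one's disposal, I would in dimension $3$ appeal to Bridgeland's theorem \cite{Bridgeland} in the smooth case, and to Chen's generalization \cite{Chen} already cited in the introduction in order to handle the merely $\Q$-factorial (ambient) or $\Q$-Gorenstein (complete intersection) situation. A parallel route for $(3)$ is to factor the birational map $\widehat{X}(\Si)\dashrightarrow\widehat{X}(\Si')$ as a sequence of elementary toric bistellar flips and to compose the Fourier--Mukai equivalences known to be associated to each such elementary toric $3$-fold flop. The main obstacle is precisely items $(3)$ and $(4)$: Bridgeland's original argument uses smoothness in a nontrivial way, through the deformation theory of the Fourier--Mukai kernel, so verifying that Chen's extension really applies in the present $\Q$-factorial or $\Q$-Gorenstein context --- or, alternatively, that the elementary flop equivalences compose to a global derived equivalence once one restricts to the complete intersection $\widehat{Y}$ --- will be the heart of the proof.
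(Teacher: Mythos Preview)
Your approach to items $(1)$ and $(2)$ is correct and, if anything, a bit more streamlined than the paper's. The paper decomposes the s$\Q$m $\widehat{X}(\Si)\dashrightarrow\widehat{X}'(\Si')$ into a finite chain of wall-crossing flops, resolves each elementary flop by a blow-up of the flopping locus, and checks $K$-equivalence step by step along the chain (with the global roof $\widetilde{X}$ obtained only afterwards by iterated fibre products; see Remark~\ref{rem:triangolo}). Your single common smooth toric refinement $Z$ achieves the same conclusion in one stroke, since both $\widehat{X}$ and $\widehat{X}'$ share the same $1$-skeleton and hence $K_{\widehat{X}}=-\sum_i D_i=K_{\widehat{X}'}$ as Weil divisors, so their pullbacks to $Z$ agree. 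The paper's flop decomposition is not wasted effort, however: it is exactly what is reused for $(3)$ and $(4)$.

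The genuine gap is in $(3)$ and $(4)$. You correctly identify that Bridgeland's theorem \cite{Bridgeland} requires smoothness and then defer the $\Q$-factorial case to Chen's extension, but you yourself flag this as unresolved. The paper does not go through Bridgeland or Chen at all. Instead, it uses the flop factorisation already obtained for $(1)$ together with Kawamata's results: \cite[Thm.~4.6]{Kawamata} guarantees that a crepant birational map between $3$-dimensional $\Q$-factorial varieties with at most canonical singularities decomposes into flops in Kawamata's sense, and \cite[Thm.~6.5]{Kawamata} then provides, for each such flop, an equivalence of derived categories of the \emph{canonical covering stacks}. This last point is essential: the notion of $D$-equivalence used throughout the paper is condition $(D')$ of Conjecture~\ref{conj:BOKgen}, i.e.\ an equivalence $\cD^b(\cX)\cong\cD^b(\cY)$ at the level of canonical covering stacks, which is precisely Kawamata's framework. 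Your proposed route via Bridgeland/Chen targets $\cD^b$ of the underlying varieties and would not directly yield the stacky statement even if the technical hypotheses were met. Replace the Bridgeland/Chen appeal with the pair of Kawamata theorems and the argument closes.
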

  \begin{remark}
    Beyond the dimensional constraints in items (3) and (4) of the previous statement, by Conjecture~\ref{conj:BOKgen}, one should expect that $D$-equivalences $\widehat{X}\sim_\cD \widehat{X}'$ and $\widehat{Y}\sim_\cD\widehat{Y}'$ hold in general, as  a consequence of items (1) and (2).
  \end{remark}

  \begin{proof}[Proof of Thm.~\ref{thm:smallK&D-equivalenza}]
    The birational equivalence between $\widehat{X}$ and $\widehat{X}'$ given by
     \begin{equation*}
       \xymatrix{\widehat{X}(\Si)\ar@{-->}[rr]^\vf\ar[dr]_{\psi_\Si}&&\widehat{X}'(\Si')\ar[dl]^{\psi_{\Si'}}\\
                    &X&}
     \end{equation*}
     is an isomorphism in codimension 1, also called a \emph{small $\Q$-factorial modification} (s$\Q$m) between $\Q$-factorial projective toric varieties. Recalling the geometry of the secondary fan, $\vf$ is obtained by a finite (non unique) sequence of wall-crossings and so it is a finite sequence of flops:
 \begin{equation*}
   \exists\ s\in\N\ :\ \vf=\vf_1\circ\cdots\circ\vf_s
 \end{equation*}
 For any $i=1,\ldots,s$\,, $\vf_i$ either is the identity or replaces one facet $\tau_i$, between neighboring maximal cones of the fan $\Si$, with a different facet $\tau'_i$ between neighboring maximal cones of the fan $\Si'$. Then, there is a chain of commutative diagrams
  \begin{equation}\label{flop-catena}
  \resizebox{1\hsize}{!}{$
    \xymatrix{&\widehat{X}_s\ar[dl]^-{\text{\ blowups}\ \tau_s\cap\tau'_s}_-{f_s}\ar[dr]^-{g_s}&&\cdots\ar[dl]\ar[dr]&&\widehat{X}_1\ar[dl]^-{\text{\ blowups}\ \tau_1\cap\tau'_1}_-{f_1}\ar[dr]^-{g_1}&\\
    \widehat{X}(\Si)\ar[rr]^-{\vf_s}\ar[rd]_{\text{contract}\,\tau_s}&&
    \widehat{X}^{(s-1)}(\Si_{s-1})\ar[dl]^{\text{contract}\,\tau'_s}\ar[r]^-{\vf_{s-1}}
    \ar[dr]&\cdots\ar[r]^-{\vf_{2}}&
    \widehat{X}^{(1)}(\Si_{1})\ar[dl] \ar[rd]_{\text{contract}\,\tau_1} \ar[rr]^-{\vf_1}\ar[rd]&&\widehat{X}'(\Si')\ar[dl]^{\text{contract}\,\tau'_1}\\
             &X_s&&\cdots&&X_1&        }$}
  \end{equation}
  where $\widehat{X}_i=\widehat{X}^{(i)}(\Si_i)\times_{X_i}\widehat{X}^{(i-1)}(\Si_{i-1})$, for $i=1,\ldots,s$, and setting $\widehat{X}^{(0)}=\widehat{X}'$ and $\widehat{X}^{(s)}=\widehat{X}$\,. Moreover
  $$\Si=\Si_s\ ,\ \Si_{s-1}\ ,\ \ldots\ ,\ \Si_1\ ,\ \Si_0=\Si'$$
  is a sequence of intermediate simplicial fans corresponding, by Gale duality, to a chosen sequence of wall crossings, that is a chain of pairwise neighboring full-dimensional chambers, connecting the chambers determined by $\Si$ and $\Si'$ inside the secondary fan. Notice that, for any $i=1\ldots,s$, the blow up of $\tau_i\cap\tau'_{i}$, say $\theta_i:\widehat{X}_i\to X_i$, is a divisorial one, giving rise to a number of exceptional divisors, say $D_{N+1},\ldots,D_{N+h}$, being
  $$N=|\Si_{i-1}(1)|=|\Si_{i}(1)|$$
  the cardinality of associated 1-skeletons. Therefore, from (\ref{flop-catena}) one gets
  \begin{equation*}
    \forall\,i=1,\ldots,n\quad f_i^*K_{\widehat{X}^({i-1})}\sim_\Q-\sum_{i=1}^N D_i=K_{\widehat{X}_i}+\sum_{j=1}^h D_{N+j}\sim_\Q g_i^*K_{\widehat{X}^{(i)}}
  \end{equation*}
  meaning that, for any $i$, $\widehat{X}^{({i})}$ and $\widehat{X}^{({i-1})}$ are $K$-equivalent. Then $\widehat{X}$ and $\widehat{X}'$ are $K$-equivalent,
   hence proving item (1).

   The first part of item (2) follows by the fact that both $\psi_\Si$ and $\psi_{\Si'}$ are small resolutions (see Prop.~1.8 and Cor.~2.12 in \cite{R-fTV}). To show that $\widehat{Y}$ and $\widehat{Y}'$ are $K$-equivalent, restrict birational morphisms $f_i,\,g_i$ to the embedded complete intersections. More precisely, call $\psi_i:\widehat{X}^{(i)}\to X$ the small resolution determined by the simplicial subdivision $\Si_i$ of the fan of $X$.  In particular, $\psi_s=\psi_\Si$ and $\psi_0=\psi_{\Si'}$. Consider the strict transform $\widehat{Y}^{(i)}:=\psi_i^*(Y)$ of the complete intersection $Y\subset X$\,. Thinking of $D_j$ as the prime divisor given by the closure of the torus orbit of the special point of the ray $\rho_j$ and observing that $1$-skeletons of the fans of $\widehat{X}^{(i)},\, \widehat{X}^{(i-1)},\,X$ coincide, by abuse of notation, we will denote by the same letter $D_j$, the homologous prime divisor on the three toric varieties, that is $\psi_i^*D_j=D_j=\psi_{i-1}^*D_j$\,.
   For every $i$, let $r_i\in\N\setminus\{0\}$ the minimum positive integer such that $r_iK_{\widehat{X}^{(i)}}$ is Cartier. Set $r:=\lcm(r_i,r_{i-1})$ so that, by adjunction, for $l=i-1,i$
   \begin{equation*}
     \omega_{\widehat{Y}^{(l)}}^{\otimes r}\cong (\omega_{\widehat{X}^{(l)}}^{\otimes r}\otimes\cO_{\widehat{X}^{(l)}}(rD_\aa))|_{\widehat{Y}^{(l)}}
     \cong\cO_{\widehat{X}^{(l)}}\left(r\sum_{j=1}^N(a_j-1)D_j\right)|_{\widehat{Y}^{(l)}}
   \end{equation*}
   Therefore,  taking into account the strict transform $\widehat{Y}_i:=f_i^*(\widehat{Y}^{(i)})\cong g_i^*(\widehat{Y}^{(i-1)})$,
   \begin{eqnarray*}
     (f_i^*\omega_{\widehat{Y}^{(i)}})^{\otimes r}&\cong&\cO_{\widehat{X}_i}
     \left(r\sum_{j=1}^N(a_j-1)f^*_i D_j\right)|_{\widehat{Y}_i}\\
     &\cong& \cO_{\widehat{X}_i}
     \left(r\sum_{j=1}^N(a_j-1)g^*_i D_j\right)|_{\widehat{Y}_i}\cong (g^*_i\omega_{\widehat{Y}^{(i-1)}})^{\otimes r}
   \end{eqnarray*}
so that $f_i^*K_{\widehat{Y}^{(i)}}\sim_\Q g_i^*K_{\widehat{Y}^{(i-1)}}$, that is, $\widehat{Y}^{(i)}$ and $\widehat{Y}^{(i-1)}$ are $K$-equivalent for any $i$, by Definition~\ref{def:crepante}. Then $\widehat{Y}$ and $\widehat{Y}'$ are $K$-equivalent, definitively proving item (2).

  Finally, items (3) and (4) follow by applying Kawamata results \cite[Thm.~4.6]{Kawamata}, decomposing the birational equivalence $\vf$ into a sequence of flops (in the sense of \cite[Def.~4.5]{Kawamata}), and \cite[Thm.~6.5]{Kawamata}.
  \end{proof}

  \begin{remark}\label{rem:triangolo} If the reader is interested in getting the top variety of \emph{triangles} realizing $K$-equivalences stated in items (1) and (2) of previous Theorem~\ref{thm:smallK&D-equivalenza}, then he has to going on, by doing fibred products $\widehat{X}_i\times_{\widehat{X}^(i)} \widehat{X}_{i-1}$ until obtaining a top toric variety $\widetilde{X}$ and triangles
  \begin{equation*}
    \xymatrix{&\widetilde{X}\ar[dl]_f\ar[dr]^g&\\
            \widehat{X}&&\widehat{X}'}\quad\begin{array}{c}
                                             \text{restricting to} \\
                                             \text{embedded c.i.} \\
                                             \Longrightarrow
                                           \end{array}\quad
    \xymatrix{&\widetilde{Y}\ar[dl]_f\ar[dr]^g&\\
            \widehat{Y}&&\widehat{Y}'}
  \end{equation*}
  such that, by construction, $f^*K_{\widehat{X}}\sim_\Q g^*K_{\widehat{X}'}$ and $f^*K_{\widehat{Y}}\sim_\Q g^*K_{\widehat{Y}'}$\,.

  \end{remark}

\subsection{Framing polytopes}\label{ssez:politopi} Let $X$ be a complete toric variety of dimension $n$ and Picard number $r$ and let $V$ be a fan matrix of $X$. Let $D_\aa=\sum_{i=1}^{n+r} a_i D_i\in \Weil(X)$ be a framing of $X$. The associated polytope $\Delta_{D_\alpha}$ is also denoted by $\Delta_\alpha$ and is defined as
\begin{equation*}
  \D_\aa:=\{\m\in M\otimes\R\,|\, V^T\cdot \m\ge -\aa\}
\end{equation*}
where $\ge$ means that $V^T\cdot \m +\aa$ is a vector whose entries are non-negative. In general, $\D_\aa$ is a polytope whose vertices lives in $M\otimes\Q$.
The \emph{integer part} of $\D_\aa$ is by definition
\begin{equation*}
  [\D_\aa]:=\conv(M\cap\D_\aa)
\end{equation*}
Given a polytope $\D$ such that $\0$ is a relative interior point of $\D$, one can construct a complete toric variety $\XX$ \emph{spanned by $\D$} by considering the fan obtained by projecting from $\0$ every facet of $\D$ and than taking every subcone of these maximal projecting cones (for further details the interested reader is referred to \cite[Prop.~1.4]{R-fTV}).

\subsection{Vectors, subvectors, matrices, submatrices}\label{ssez:vettori} In the following, we adopt the following notation
\begin{enumerate}
  \item $\0_n:=\left(
                \begin{array}{ccc}
                  0 & \cdots & 0 \\
                \end{array}
              \right)\in \C^n$ and $\0_n^T$ will denote its transpose, that is, the zero column vector;
  \item $\1_n:=\left(
                \begin{array}{ccc}
                  1 & \cdots & 1 \\
                \end{array}
              \right)\in \C^n$ and $\1_n^T$ will denote its transpose;
  \item $\0_{m,n}$ is the zero matrix in $\C^{m,n}$;
  \item $\1_{m,n}$ is the matrix in $\C^{m,n}$ whose entries are all equal to 1;
  \item given a matrix $M=\left(
                            \begin{array}{ccc}
                              M^1 & \cdots & M^n \\
                            \end{array}
                          \right)\in \C^{m,n}$ and a sublist $A\subseteq\{1,\ldots,n\}$,
                          $M_A$ will denote the submatrix whose columns are indexed by $A$, that is
                          \begin{equation*}
                            M_A=(M^i\,|\,i\in A)
                          \end{equation*}
                          while $M^A$ will denote the complementary submatrix of $M_A$ in $M$.
\end{enumerate}

\section{Malter's examples and framed mirror symmetry}\label{sez:Malter}

As a warm up, let us start by discussing in detail examples treated by Malter in \cite{Malter}, that are the complete intersection of two quadrics in $\P^3$ and the complete intersection of two cubics in $\P^5$: they will be driving examples, in which introducing all main strategies and results.

\subsection{The complete intersections of two quadrics in $\P^3$}\label{ssez:quadrics}
We treat in detail this example, although main Theorems~\ref{thm:mirror} and \ref{thm:metateorema} become trivial in this case: but low dimension allows us to introducing main techniques and performing needed computations in an easier context. Moreover, this example has not been actually treated by Malter, but just extrapolated as an easier consequence (see Thm.~4.8 and Remark~4.9 in \cite{Malter}).

Let $Q_1,Q_2$ be two generic quadrics in $\P^3$. Calling $D_i=\CV(x_i),\ 1\le i\le 4,$ the four prime torus invariant divisors of $\P^3$, we can assume $Q_1\sim D_1+D_2$ and $Q_2\sim D_3+D_4$. The complete intersection
\begin{equation*}
  Y=Y_{2,2}=Q_1\cap Q_2 \subset \P^3
\end{equation*}
is a smooth elliptic curve corresponding to the choice of the nef partitioned framing of $\P^3$
\begin{equation*}
  \aa=(1,1,1,1)=\aa_1+\aa_2\ ,\quad \text{with}\quad \aa_1=(1,1,0,0)\,,\,\aa_2=(0,0,1,1)
\end{equation*}

\subsubsection{The Batyrev-Borisov mirror}
Applying Batyrev-Borisov duality is the same as applying framed duality to the partitioned framed toric variety (ftv) $(\P^3,\aa=\aa_1+\aa_2)$, so getting the following

\begin{proposition}\label{prop:BBdual}
  The $f$-dual partitioned ftv of $(\P^3,\aa=\aa_1+\aa_2)$ is the partitioned ftv $(\XX,\bb=\bb_1+\bb_2)$ where $\XX$ is the complete toric variety whose fan is spanned by the faces of the polytope
  \begin{equation}\label{Lambda}
    \D:=\conv\left( \begin {array}{cccccccc} 1&-1&-1&-1&2&0&0&0\\ -1&1&-1&-1&0&2&0&0\\ 0&0&2&0&-1&-1&1&-1\end {array} \right)
  \end{equation}
  and $\bb_1=(1,1,1,1,0,0,0,0)\,,\,\bb_2=(0,0,0,0,1,1,1,1)$. In particular, the $f$-mirror family of the family $\{Y\}_{Q_1,Q_2}$ is the Batyrev-Borisov mirror given by the family of elliptic curves $Y^\vee_{BB}=\CV(p_{1,\psi},p_{2,\psi})_{\psi}$ with
  \begin{eqnarray*}
    p_{1,\psi}&=& {x_{{1}}}^{2}{x_{{5}}}^{2}+{x_{{2}}}^{2}{x_{{6}}}^{2}+\psi x_{{1}}x_{{2}}x_{{3}}x_{{4}}\in \Cox(\XX)\\
    p_{2,\psi}&=& {x_{{3}}}^{2}{x_{{7}}}^{2}+{x_{{4}}}^{2}{x_{{8}}}^{2}+\psi x_{{5}}x_{{6}}x_{{7}}x_{{8}}\in \Cox(\XX)
  \end{eqnarray*}
  being $\Cox(\XX)=\C[x_1,\ldots,x_8]_{\Cl(\XX)}$.
\end{proposition}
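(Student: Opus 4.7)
The plan is to apply directly the partitioned $f$-duality algorithm of \cite[Algorithm 1.1.1]{R-fpCI} to the datum $(\P^3,\aa=\aa_1+\aa_2)$. Since $\aa=-K_{\P^3}$ and the partition is nef on a Gorenstein Fano toric variety, the partitioned $f$-dual must coincide with the classical Batyrev--Borisov dual nef partition, so the statement reduces to an explicit verification of polytopes, rays and sections.

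First, I would compute the two partition polytopes on the $\P^3$ side,
\begin{equation*}
\D_{\aa_k}=\{\m\in M_\R \,:\,\langle\m,\v_i\rangle\ge -(\aa_k)_i,\ i=1,\ldots,4\},\quad k=1,2,
\end{equation*}
where $\v_1,\ldots,\v_4$ are the rays of the fan of $\P^3$. A direct inspection of the four defining inequalities shows that the vertices of $\D_{\aa_1}$ are precisely the first four columns of the matrix in \eqref{Lambda} and those of $\D_{\aa_2}$ are the last four. Hence $\D=\conv\bigl(\D_{\aa_1}\cup\D_{\aa_2}\bigr)$ is reflexive and its face fan is the fan of the toric variety $\XX$; setting $\bb_k=\sum_{j\in I_k}E_j$ with $I_1=\{1,\ldots,4\}$ and $I_2=\{5,\ldots,8\}$ produces the announced partition $\bb=\bb_1+\bb_2$, which is anticanonical because the eight columns of $\D$ sum to $\0$.

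Next, I would identify the generic sections of the line bundles $\cO_\XX(D_{\bb_k})$. By the standard lattice-point description of toric sections, each lattice point $\m\in\D_{\aa_k}\cap M$ gives a monomial $\prod_{j=1}^{8}x_j^{\langle\m,\w_j\rangle+(\bb_k)_j}$ in the Cox ring of $\XX$, where $\w_j$ denotes the $j$-th column of $\D$. The four vertices of $\D_{\aa_1}$ yield, after a short exponent computation, the two pure-power monomials $x_1^2x_5^2$ and $x_2^2x_6^2$, while the unique interior lattice point $\0\in\D_{\aa_1}\cap M$ produces the mixed monomial $x_1x_2x_3x_4$; the entirely analogous analysis on $\D_{\aa_2}$ yields $x_3^2x_7^2$, $x_4^2x_8^2$ and $x_5x_6x_7x_8$. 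A generic pencil combination with parameter $\psi$ then gives exactly the two Laurent polynomials $p_{1,\psi},\,p_{2,\psi}$ of the statement, and their common zero locus in $\XX$ is the $f$-mirror family of the elliptic curves $Y_{2,2}\subset\P^3$.

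The main obstacle is a careful bookkeeping of the partition data: attaching the correct Cox variable $x_j$ to each new ray of $\XX$ so that the sections of $D_{\bb_k}$ carry the expected exponents, and matching the vertex monomials to the displayed pure powers. This is purely combinatorial and is entirely governed by the partitioned $f$-duality procedure of \cite{R-fpCI}; no further geometric input is needed once the identification of the two partition polytopes with the left and right halves of \eqref{Lambda} has been recorded.
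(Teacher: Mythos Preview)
Your overall strategy is correct and coincides with the paper's: apply Algorithm~1.1.1 of \cite{R-fpCI}, compute $\D_{\aa_1},\D_{\aa_2}$, take $\D=\conv(\D_{\aa_1},\D_{\aa_2})$, read off the rays of $\XX$ and the induced partition $\bb=\bb_1+\bb_2$, and then determine the mirror polynomials from lattice points. The paper does exactly this, recording $\L^T\cdot V$ to obtain $\bb_1,\bb_2$ and the exponent matrices $M_i=\L^T\cdot\L_{\bb_i}+B_i$.

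There is, however, a genuine slip in your monomial step. Sections of $\cO_{\XX}(D_{\bb_k})$ are indexed by lattice points of the polytope
\[
\D_{\bb_k}=\{\m\in M_\R:\ \langle\m,\w_j\rangle\ge -(\bb_k)_j\ \text{for all }j\},
\]
not by lattice points of $\D_{\aa_k}$. These are different objects: $\D_{\aa_1}$ is a $3$-simplex with many lattice points (for instance $(0,-1,0)$, $(-1,0,1)$, $(-1,-1,1)$, \dots), and $\0$ lies on its boundary facet $m_3=0$, so your description ``four vertices collapse to two pure powers and the unique interior point $\0$ gives the mixed term'' cannot be made to work. If you plug the vertex $(1,-1,0)$ of $\D_{\aa_1}$ into your formula you get $\langle(1,-1,0),\w_6\rangle+(\bb_1)_6=-2$, a negative exponent. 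The paper instead computes $\D_{\bb_1}=\conv\{(0,0,0),(0,1,0),(1,0,0)\}$ and $\D_{\bb_2}=\conv\{(-1,-1,-1),(0,0,0),(0,0,1)\}$, each containing exactly three lattice points, and these produce precisely the three monomials in $p_{1,\psi}$ and $p_{2,\psi}$. Once you replace $\D_{\aa_k}$ by $\D_{\bb_k}$ in your second paragraph, your argument goes through and matches the paper's.
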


\begin{proof}
  The proof follows Algorithm~1.1.1 in \cite{R-fpCI}. What follows is sketched in the upper part of Fig.~\ref{Fig1}. Recalling \S~\ref{ssez:politopi}, let $\D_{\aa_i}$ be the polytope associated with the divisor $D_{\aa_i}$, then
  \begin{equation*}
    \D_{\aa_1}=\conv\left( \begin {array}{cccc} 1&-1&-1&-1\\ -1&1&-1&-1\\ 0&0&2&0\end {array} \right)\ ,\ \D_{\aa_2}=\conv\left( \begin {array}{cccc} 2&0&0&0\\ 0&2&0&0\\ -1&-1&1&-1\end {array} \right)
  \end{equation*}
  giving $\D=\conv(\D_{\aa_1},\D_{\aa_2})$. Calling $\L$ the fan matrix of $\XX$, that is the matrix whose columns are given by vertices of $\D$, and $V$ the fan matrix of $\P^3$, that is
  \begin{equation*}
    V= \left( \begin {array}{cccc} 1&0&0&-1\\ 0&1&0&-1\\ 0&0&1&-1\end {array} \right)
  \end{equation*}
  then
  \begin{equation*}
    \L^T\cdot V=\left( \begin {array}{cccc} 1&-1&0&0\\ \noalign{\medskip}-1&1&0&0\\ \noalign{\medskip}-1&-1&2&0\\ \noalign{\medskip}-1&-1&0&2\\ \noalign{\medskip}2&0&-1&-1\\ \noalign{\medskip}0&2&-1&-1\\ \noalign{\medskip}0&0&1&-1\\ \noalign{\medskip}0&0&-1&1\end {array} \right)
  \end{equation*}
  so giving $\bb_1$ and $\bb_2$ as in the statement. Call $D_{\bb_i}$ the divisor of $\XX$ determined by $\bb_i$ and $\D_{\bb_i}$ the associated polytope, so getting
  \begin{equation*}
    \D_{\bb_1}=\conv\left( \begin {array}{ccc} 0&0&1\\ \noalign{\medskip}0&1&0\\ \noalign{\medskip}0&0&0\end {array} \right)\ ,\ \D_{\bb_2}=\conv\left( \begin {array}{ccc} -1&0&0\\ \noalign{\medskip}-1&0&0\\ \noalign{\medskip}-1&0&1\end {array} \right)
  \end{equation*}
  In particular, lattice points of the Newton polytope of $p_{i,\psi}$, that are exponents of momomials in $p_{i,\psi}$, are given by the columns of the matrix $M_i=\L^T\cdot\L_{\bb_i}+B_i$, where $\L_{\bb_i}$ is the matrix whose columns are given by lattice elements in $\D_{\bb_i}$, that is, vertices of $\D_{\bb_i}$, and
  $B_i=(\bb_i^T,\bb_i^T,\bb_i^T)$, so that
  \begin{equation*}
    M_1=\left( \begin {array}{ccc} 1&0&2\\ \noalign{\medskip}1&2&0\\ \noalign{\medskip}1&0&0\\ \noalign{\medskip}1&0&0\\ \noalign{\medskip}0&0&2\\ \noalign{\medskip}0&2&0\\ \noalign{\medskip}0&0&0\\ \noalign{\medskip}0&0&0\end {array} \right)\ ,\ M_2=\left( \begin {array}{ccc} 0&0&0\\ \noalign{\medskip}0&0&0\\ \noalign{\medskip}0&0&2\\ \noalign{\medskip}2&0&0\\ \noalign{\medskip}0&1&0\\ \noalign{\medskip}0&1&0\\ \noalign{\medskip}0&1&2\\ \noalign{\medskip}2&1&0\end {array} \right)
  \end{equation*}
  The proof that $Y^\vee_{BB}$ is an elliptic curve, that is a smooth complete intersection, is deferred 1  to Remark~\ref{rem:iso_quadric}.
\end{proof}

\subsubsection{The Libgober-Teitelbaum mirror} Consider the complete, $\Q$-factorial, toric variety $X=\P^3/\mu_4$, where $\mu_4$ is the group of 4-th roots of unity acting on $\P^3$ as follows
\begin{eqnarray*}
  &&\xymatrix{\ \mu_4\times \P^3\ \ar[rrrr]&&&&\ \P^4}\\
  &&\xymatrix{(\eta,[x_1:x_2:x_3:x_4])\ar@{|->}[r]&[x_1:\eta^2 x_2: \eta^3 x_3: \eta x_4]}
\end{eqnarray*}
A fan matrix of $X$ is given by
\begin{equation}\label{W}
  W=\left( \begin {array}{cccc} 2&0&-1&-1\\ \noalign{\medskip}0&2&-1&-1\\ \noalign{\medskip}-1&-1&2&0\end {array} \right)
\end{equation}
Then consider the partitioned framing $\cc=\cc_1+\cc_2$ with
\begin{equation*}
  \cc_1=(1,1,0,0)\ ,\ \cc_2=(0,0,1,1)
\end{equation*}
Notice that this gives a partition of the canonical divisor $K_X$ which is not a Cartier divisor. Then Batyrev-Borisov duality cannot be applied to $X$, as it is not Gorenstein. But $f$-duality can be applied to the partitioned ftv $(X,\cc=\cc_1+\cc_2)$.

\begin{proposition}\label{prop:LTdual}
  The $f$-dual partitioned ftv of $(X,\cc=\cc_1+\cc_2)$ is the nef partitioned ftv $(\P^3,\aa=\aa_1+\aa_2)$, with $\aa_1=(1,1,0,0)\,,\,\aa_2=(0,0,1,1)$. In particular, the family of elliptic curves given by complete intersections $\{Y\}_{Q_1,Q_2}$ in $\P^3$ is the $f$-mirror family of the family of elliptic curves  $Y^\vee_{LT}=\CV(q_{1,\psi},q_{2,\psi})$ with
  \begin{eqnarray*}
    q_{1,\psi}&=& {x_{{1}}}^{2}+{x_{{2}}}^{2}+\psi x_{{3}}x_{{4}}\in \Cox(X)\\
    q_{2,\psi}&=& \psi {x_{{1}}x_{{2}}+{x_{{3}}}^{2}+{x_{{4}}}^{2}}\in \Cox(X)
  \end{eqnarray*}
  being $\Cox(X)=\C[x_1,\ldots,x_4]_{\Cl(X)}$.
\end{proposition}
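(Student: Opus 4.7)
The plan is to follow Algorithm~1.1.1 of \cite{R-fpCI} in a manner completely analogous to (and inverse of) the computation performed in the proof of Proposition~\ref{prop:BBdual}, with the crucial difference that $X=\P^3/\mu_4$ is not Gorenstein, so the polytopes attached to the framings $\cc_1,\cc_2$ will in general acquire fractional vertices and one must work with their integer hulls in the sense of \S~\ref{ssez:politopi}.

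First I would cut out the two polytopes $\D_{\cc_i}=\{\m\in M\otimes\R\,|\,W^T\m\ge -\cc_i\}$ from the fan matrix (\ref{W}); a short facet-by-facet analysis identifies each as a rational tetrahedron, and a direct enumeration produces its three lattice points. I would then pass to the integer polytope $[\D]$, in the sense of \S~\ref{ssez:politopi}, of $\D=\conv(\D_{\cc_1}\cup\D_{\cc_2})$: gathering the lattice contributions of both polytopes, $[\D]$ turns out to be precisely the standard $3$-simplex whose four vertices are the columns of the fan matrix $V$ of $\P^3$. This identifies the $f$-dual toric variety as $\XX_\cc\cong\P^3$. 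Reading off from which $[\D_{\cc_k}]$ each vertex originates and matching with the partition convention of Algorithm~1.1.1 yields the induced partitioned framing $\aa=\aa_1+\aa_2$ with the components displayed in the statement.

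For the Laurent polynomial part, I would apply the monomial recipe displayed in the second half of the proof of Proposition~\ref{prop:BBdual}: each lattice point $\m\in M\cap\D_{\cc_i}$ contributes the monomial in $\Cox(X)$ whose exponent vector is $W^T\m+\cc_i^T$, i.e.\ the corresponding column of $M_i=W^T\cdot W_{\cc_i}+C_i$, where $W_{\cc_i}$ has columns the elements of $M\cap\D_{\cc_i}$ and $C_i$ has all columns equal to $\cc_i^T$. Direct evaluation on the three lattice points of each polytope recovers the two trinomial sections $q_{1,\psi}$ and $q_{2,\psi}$ of the statement. The only delicate point is the passage $\D\rightsquigarrow[\D]$: without it one would pick up several extra rational vertices and land on a $\Q$-factorial toric variety of much higher Picard rank instead of on $\P^3$; with it, the rest of the argument is entirely mechanical.
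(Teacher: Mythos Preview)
Your approach is correct and matches the paper's essentially step for step: compute the rational tetrahedra $\D_{\cc_i}$, pass to the integer hull $[\conv(\D_{\cc_1},\D_{\cc_2})]$ to recover the $V$-simplex (hence $\P^3$), extract the dual partitioned framing, and read off the monomials of $q_{i,\psi}$ from the three lattice points of each $\D_{\cc_i}$. The only cosmetic difference is that the paper obtains the dual framing $\aa_1,\aa_2$ by computing the matrix $V^T\cdot W$ directly rather than by tracking which $[\D_{\cc_k}]$ each vertex of $[\D]$ originates from; note also that your exponent formula $M_i=W^T\cdot W_{\cc_i}+C_i$ is indeed the correct one for sections in $\Cox(X)$ (the paper writes $V^T$ there, which is a slip).
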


\begin{proof} What follows is sketched by the right part of Fig.~\ref{Fig1}. Recalling notation on polytopes resumed in \S~\ref{ssez:politopi}, polytopes associated with divisors $D_{\cc_1}, D_{\cc_2}$ of $X$ are given by
  \begin{equation*}
    \D_{\cc_1}=\left( \begin {array}{cccc} 1/2&-1/2&0&-1\\ \noalign{\medskip}-1/2&1/2&0&-1\\ \noalign{\medskip}0&0&1&-1\end {array} \right)\ ,\ \D_{\cc_2}= \left( \begin {array}{cccc} 1&0&1/2&-1/2\\ \noalign{\medskip}0&1&1/2&-1/2\\ \noalign{\medskip}0&0&1&-1\end {array} \right)
  \end{equation*}
  Then
  \begin{eqnarray*}
    \cv{\D}_\cc&=&\conv(\D_{\cc_1},\D_{\cc_2})=\left( \begin {array}{cccccccc} 1&0&1/2&-1/2&1/2&-1/2&0&-1\\ \noalign{\medskip}0&1&1/2&-1/2&-1/2&1/2&0&-1\\ \noalign{\medskip}0&0&1&-1&0&0&1&-1\end {array} \right)\\
    &\Longrightarrow& [\cv{\D}_\cc]=\D_V:=\conv\left( \begin {array}{cccc} 1&0&0&-1\\ 0&1&0&-1\\ 0&0&1&-1\end {array} \right)
  \end{eqnarray*}
  Since the toric variety spanned by the faces of $\D_V$ is clearly $\P^3$, the latter is the $f$-dual toric variety we are looking for. For the dual framing notice that
  \begin{equation*}
    V^T\cdot W= \left( \begin {array}{cccc} 2&0&-1&-1\\ \noalign{\medskip}0&2&-1&-1\\ \noalign{\medskip}-1&-1&2&0\\ \noalign{\medskip}-1&-1&0&2\end {array} \right)
  \end{equation*}
  so giving the nef partitioned framing $\aa=\aa_1+\aa_2$, with $\aa_i$ as given in the statement.
  In particular, lattice points of the Newton polytope of $q_{i,\psi}$, that are exponents of momomials in $q_{i,\psi}$, are given by the columns of the matrix $M^\vee_i=V^T\cdot\L_{\cc_i}+C_i$, where $\L_{\cc_i}$ is the matrix whose columns are given by lattice elements in $\D_{\cc_i}$ and
  $C_i=(\cc_i^T,\cc_i^T,\cc_i^T)$, so that, up to a column permutation,
\begin{equation*}
  M^\vee_1=\left(\begin {array}{ccc} 0&0&2\\ \noalign{\medskip}0&2&0\\ \noalign{\medskip}1&0&0\\ \noalign{\medskip}1&0&0\end {array}\right)\ ,\quad M^\vee_2=\left(\begin {array}{ccc} 0&1&0\\ \noalign{\medskip}0&1&0\\ \noalign{\medskip}0&0&2\\ \noalign{\medskip}2&0&0\end {array}\right)
\end{equation*}
  The proof that $Y^\vee_{LT}$ is an elliptic curve, that is a smooth complete intersection, is deferred to Remark~\ref{rem:iso_quadric}.
\end{proof}
\begin{figure}
\begin{center}
\includegraphics[width=12truecm]{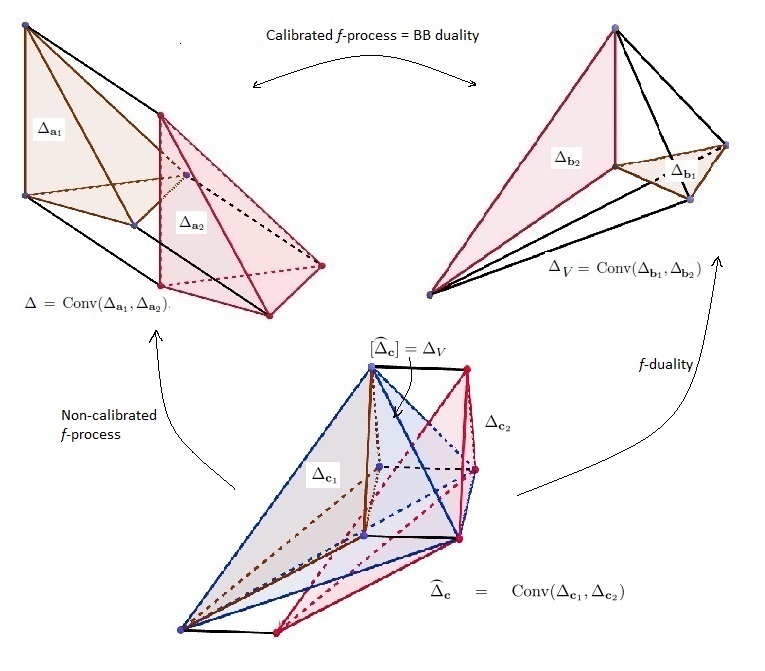}
\caption{\label{Fig1} Interplay of calibrated and non-calibrated framed processes connecting the LT-mirror and the BB-mirror.}
\end{center}
\end{figure}

\begin{remark}\label{rem:f-picture}
  Recalling Def.~1.7 in \cite{R-fpCI} of a calibrated partitioned $f$-process, the one given by
  \begin{equation*}
    (\P^3,\aa=\aa_1+\aa_2)\leftrightsquigarrow (\XX,\bb=\bb_1+\bb_2)
  \end{equation*}
  as described in Proposition~\ref{prop:BBdual} and sketched in the upper part of Fig.~\ref{Fig1}, is clearly calibrated because it is the Batyrev-Borisov duality. On the other hand the partitioned $f$-process
  \begin{equation*}
    (X,\cc=\cc_1+\cc_2)\rightsquigarrow (\P^3,\aa=\aa_1+\aa_2)\rightsquigarrow (\XX,\bb=\bb_1+\bb_2)
  \end{equation*}
  is a non-calibrated one, \emph{connecting the two multiple mirrors of the family $\{Y\}_{Q_1,Q_2}$ in $\P^3$}. In \cite[Thm.~4.8]{Malter}, Malter stated that these two mirrors are derived equivalent, in the sense that there exists an equivalence of triangulated categories
  $$\De ^b\left(Y_{LT}\right)\cong\De ^b\left(Y_{BB}\right)$$
  between their bounded derived categories of coherent sheaves.  These categories are expected to be equivalent to the Fukaya category of the complete intersection $Y_{2,2}\subset \P^3$, by the Homological Mirror Symmetry (HMS) conjecture. Putting all together we get an example in which \emph{framed mirror symmetry is consistent with the HMS conjecture}, that is, a proof of the mirror theorem~\ref{thm:mirror} when restricted to the two mirrors $Y^\vee_{BB}$ and $Y^\vee_{LT}$,
  although what will be observed in the next \S~\ref{sssez:birational_22} trivializes this example.
\end{remark}

\subsubsection{Birational link between BB-mirror and LT-mirror}\label{sssez:birational_22} Consider the two matrices $\L$ and $W$, as given by displays (\ref{Lambda}) and (\ref{W}), respectively, the former being obtained as the matrix whose columns are the vertices of the polytope $\D$. These are fan matrices of the ambient complete toric varieties $\XX$ and $X$, respectively. It is evident that the columns of $W$ are the four central ones of the matrix $\L$, so meaning that:
 \begin{enumerate}
   \item \emph{$\XX$ is the blow up in 4 distinct points of $X$, $\phi:\XX\to X$, the 4 points given by $[1:0:0:0],[0:1:0:0],[0:0:1:0],[0:0:0:1]$, in Cox's coordinates; exceptional divisors are the closure of torus orbits of  special points of rays generated the 4 remaining columns of $\L$.}
 \end{enumerate}
 The crucial fact is the following

 \begin{proposition}
   The BB-mirror $Y^\vee_{BB}$ is the strict transform of the LT-mirror $Y^\vee_{LT}$ under the blow up $\phi:\XX\to X$, that is, $$\phi^*Y^\vee_{LT}:=\phi^{-1}_*(Y^\vee_{LT})=\overline{\phi^{-1}(Y^\vee_{LT})}$$
 \end{proposition}

 \begin{proof} Recalling \S~\ref{ssez:Cox}, as Cox quotients one has
   \begin{equation*}
     \XX=(\C^8\setminus Z_\L)/[(\C^*)^5\times\mu_2]\ ,\ X=(\C^4\setminus Z_W)/(\C^*\times\mu_4)
   \end{equation*}
   where $Z_\L\subset\C^8$ and $Z_W=\{\0\}\subset\C^4$ are closed subsets determined by the fans of $\XX$ and $X$, respectively, and quotients are taken with respect to the following actions:
   \begin{equation*}
     \xymatrix{\a:[(\C^*)^5\times\mu_2]\times(\C^8\setminus Z_\L)\ar[r]&\C^8\setminus Z_\L}\quad\text{with}
   \end{equation*}
   \begin{equation*}
     \a(\ll,\ve,\x)=(\l_1x_1,\l_2x_2,\l_3x_3,\l_1\l_2\l_3\l_4^2x_4,\ve\l_2\l_3\l_4x_5,\l_1\l_3\l_4x_6,\ve\l_1\l_2\l_4^2\l_5x_7,\l_5x_8)
   \end{equation*}
   \begin{equation*}
     \xymatrix{\b:(\C^*\times\mu_4)\times(\C^4\setminus Z_W)\ar[r]&\C^4\setminus Z_W}\quad\text{with}
   \end{equation*}
   \begin{equation*}
     \b(\l,\eta,\y)=(\l y_1,\eta^2\l y_2,\eta^3\l y_3,\eta\l y_4)
   \end{equation*}
   In fact, actions $\a$ and $\b$ are determined by weight and torsion matrices
   \begin{eqnarray}\label{actions_22}
     \text{for action}\ \a  &:& Q_{\L}= \left( \begin {array}{cccccccc} 1&0&0&1&0&1&1&0\\
     \noalign{\medskip}0&1&0&1&1&0&1&0\\ \noalign{\medskip}0&0&1&1&1&1&0&0\\
     \noalign{\medskip}0&0&0&2&1&1&2&0\\ \noalign{\medskip}0&0&0&0&0&0&1&1\end {array} \right)\ ,\\ \nonumber
     &&T_\L=\left( \begin {array}{cccccccc} 0_2&0_2&0_2&0_2&1_2&0_2&1_2&0_2\end {array} \right) \\
     \nonumber
     \text{for action}\ \b &:& Q_W=\left( \begin {array}{cccc} 1&1&1&1\end {array} \right)\ ,\\
     \nonumber
     && T_W=\left( \begin {array}{cccc} 0_4&2_4&3_4&1_4\end{array} \right)
   \end{eqnarray}
   (columns of these matrices give exponents of quasi-tori actions $\a$ and $\b$).

   The birational morphism $\phi$ admits a global lifting $\widehat{\phi}$ induced by the projection on the central four coordinates, that is $\y=\widehat{\phi}(\x)=(x_3,x_4,x_5,x_6)$, such that the following diagram commutes
   \begin{equation*}
     \xymatrix{\C^8\setminus Z_\L\ar[r]^{\widehat{\phi}}\ar[d]_{/(\C^*)^3\times\mu_2}&\C^4\setminus Z_W\ar[d]^{/\C^*\times\mu_4}\\
                \XX\ar[r]^{\phi}&X}
   \end{equation*}
   Consider a generic fibre $Y^\vee_{BB}=\CV(p_{1,\psi},p_{2,\psi})$ in the BB-mirror family. Notice that one can assume that Cox coordinates of a point $p\in Y^\vee_{BB}$ necessarily have to satisfy the condition $x_1x_2x_7x_8\neq 0$: let us defer this check to the following Remark~\ref{rem:iso_quadric}, as in particular this means that $Y^\vee_{BB}$ does not meet any exceptional divisor of the blow up $\phi$. Then, one can set $x_1=x_2=x_7=x_8=1$ by choosing
   \begin{equation*}
     \l_1=x_1^{-1}\ ,\ \l_2=x_2^{-1}\ ,\  \l_5=x_8^{-1}\ ,\ \l_4^2=\pm x_1x_2x_8x_7^{-1}
   \end{equation*}
   Therefore $\phi(Y^\vee_{BB})=\CV(q_{1,\psi}(\y),q_{2,\psi}(\y))/(\C^*\times \mu_4)=Y^\vee_{LT}\subset X$\,.
 \end{proof}

 \begin{remark}\label{rem:iso_quadric}
   In particular, \emph{the birational morphism $\phi$ restricts to give an isomorphism over a generic fiber $Y^\vee_{BB}$ of the BB-family,} as $Y^\vee_{BB}$ does not meet any of the four exceptional divisors of the blow up $\phi$. In fact, fibers of both the BB and LT families are given by smooth complete intersections, hence elliptic curves by adjunction. In fact, critical points of
   \begin{equation*}
     p=(p_{1,\psi},p_{2,\psi}): \C^8\longrightarrow\C^2\quad,\quad q=(q_{1,\psi},q_{2,\psi}): \C^4\longrightarrow\C^2
   \end{equation*}
   belong to the unstable loci $Z_\L$ and $Z_W$, respectively. The latter is immediate as the only critical point of $q$ is $\0\in\C^4$ which is also the unique point of $Z_W$. For the former, notice that critical points of $p$ are given by
   \begin{eqnarray*}
     &\mathcal{C}=\{(0,0,x_{{3}},x_{{4}},x_{{5}},x_{{6}},x_{{7}},x_{{8}}) ,(0,x_{{2}},0,x_{{4}},x_{{5}},0,x_{{7}},x_{{8}}) , (0,x_{{2}},x_{{3}},0,x_{{5}},0,x_{{7}},x_{{8}}),&\\
     &     (x_{{1}},0,0,x_{{4}},0,x_{{6}},x_{{7}},x_{{8}}),( x_{{1}},0,x_{{3}},0,0,x_{{6}},x_{{7}},x_{{8}}),&\\
     &(x_{{1}},x_{{2}},0,0,0,0,x_{{7}},x_{{8}}), (x_{{1}},x_{{2}},x_{{3}},0,x_{{5}},0,0,x_{{8}})(x_{{1}},x_{{2}},0,x_{{4}},0,x_{{6}},x_{{7}},0), &\\ &(x_{{1}},x_{{2}},0,x_{{4}},x_{{5}},0,x_{{7}},0) , (x_{{1}},x_{{2}},x_{{3}},x_{{4}},x_{{5}},x_{{6}},0,0)\}&
     \end{eqnarray*}
     and the unstable locus $Z_\L$ is the closed subset determined by the irrelevant ideal
     \begin{eqnarray*}
       \mathfrak{I}= &(x_{{1}}x_{{5}}x_{{6}}x_{{7}}x_{{8}},x_{{2}}x_{{5}}x_{{6}}x_{{7}}x_{{8}},
       x_{{1}}x_{{3}}x_{{5}}x_{{7}},x_{{1}}x_{{4}}x_{{5}}x_{{8}},&\\
       &x_{{1}}x_{{2}}x_{{3}}x_{{4}}x_{{7}},x_{{2}}x_{{3}}x_{{6}}x_{{7}},
       x_{{1}}x_{{2}}x_{{3}}x_{{4}}x_{{8}},x_{{2}}x_{{4}}x_{{6}}x_{{8}})&
     \end{eqnarray*}
     One can easily check that $\mathcal{C}\subset\mathcal{V}(\mathfrak{I})=Z_\L$\,, so proving that $Y^\vee_{BB}$ and $Y^\vee_{LT}$ are quasi-smooth complete intersections. Moreover, there is the following inclusion of ideals
     \begin{equation*}
       \mathfrak{I}\subseteq (p_{1,\psi},p_{2,\psi},x_1x_2x_7x_8)
     \end{equation*}
     so proving that $Y^\vee_{BB}$ does not meet any of the four exceptional divisors of $\phi$.
     For smoothness, it is now enough checking that $Y^\vee_{LT}$ does not meet the ramification of the $\mu_4$-action, so giving the smoothness of both $Y^\vee_{LT}$ and $Y^\vee_{BB}=\phi^{-1}(Y^\vee_{LT})$. This is clear by recalling the torsion matrix $T_W$ in (\ref{actions_22}), so getting four points of ramification 4, given by the blown up points listed in \ref{sssez:birational_22}~(1), and the lines $y_1=y_2=0$ and $y_3=y_4=0$, composed by points of ramification 2. Notice that these two lines cannot meet $Y^\vee_{BB}$.

     In other terms, what is here observed is that
      \begin{itemize}
        \item \emph{the two mirrors models of Batyrev-Borisov and Libgober-Teitelbaum of the complete intersection $Y_{2,2}\subset\P^3$ are actually the same mirror.}
      \end{itemize}
      As a pleonastic conclusion, this fact obviously implies that $Y^\vee_{BB}$ and $Y^\vee_{LT}$ are $K$-equivalent, in the sense of Kawamata \cite{Kawamata}, and $D$-equivalent, as stated by Malter.
 \end{remark}

 \subsubsection{Intermediate mirrors of $Y_{2,2}\subset\P^3$}\label{sssez:mirrorintermedi} Taking into account what has been just observed in the previous Remark~\ref{rem:iso_quadric}, the following loses its meaning in the present case $d=2$. But it will be of considerable interest for $d\ge 3$. For this reason, I decided to firstly propose the following construction for $d=2$, where every check can be easily performed.

 Starting from the $BB$-mirror and considering the set of exceptional rays in the fan of $\XX$, with respect to the blow up $\phi:\XX\to X$, namely generated by columns $1,2,7,8$ in the fan matrix $\L$, one can produce a number of further mirror models of the projective complete intersection $Y_{2,2}$, in a sense \emph{intermediate} between the BB and the LT ones. More precisely, recalling notation introduced in \S~\ref{ssez:vettori}, the following result holds

 \begin{proposition}\label{prop:mirrorintermedi}
   For any subset $A\subset\{1,2,7,8\}$ the complete toric variety $\XX^A$, whose fan matrix is the submatrix $\L^A$ of $\L$, is the blow up, say $\phi^A:\XX^A\to X$, in $4-|A|$ points of $X$. Calling $\bb^A=\bb_1^A+\bb_2^A$ the partitioned framing of $\XX^A$ obtained by removing entries indexed by $A$ from the $BB$ partitioned framing $\bb=\bb_1+\bb_2$ of $\XX$, one obtains a partitioned ftv $(\XX^A,\bb^A=\bb_1^A+\bb_2^A)$ whose f-dual partitioned ftv is $(\P^3,\aa=\aa_1+\aa_2)$. In particular, the family of elliptic curves given by complete intersection $Y_{2,2}\subset\P^3$ is the f-mirror family of the family of elliptic curves $Y^\vee_A=\mathcal{V}(p_{1,\psi}^A,p_{2,\psi}^A)$ with $p_{i,\psi}^A\in\Cox(\XX^A)=\C[x_1,\ldots,x_{8-|A|}]_{\Cl(\XX^A)}$.
 \end{proposition}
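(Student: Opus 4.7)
My plan is to establish the three parts of the proposition—the blow-up structure of $\XX^A\to X$, the well-definition of $(\XX^A,\bb^A)$ as a partitioned ftv, and the identification of its $f$-dual with $(\P^3,\aa=\aa_1+\aa_2)$—by reducing to the two extreme endpoints already treated in Propositions~\ref{prop:BBdual} (the BB case $A=\emptyset$) and~\ref{prop:LTdual} (the LT case $A=\{1,2,7,8\}$).

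For the geometric claim, I would first observe that columns $3,4,5,6$ of $\L$ coincide, up to permutation, with the columns $w_3,w_4,w_1,w_2$ of $W$, so the fan associated with $\L^A$ is obtained from the fan of $X$ by appending exactly the rays indexed by $\{1,2,7,8\}\setminus A$. A direct computation places each of the four exceptional rays in the relative interior of a unique maximal cone of $X$: $(1,-1,0)\in\Relint(\cone(w_1,w_3,w_4))$, $(-1,1,0)\in\Relint(\cone(w_2,w_3,w_4))$, $(0,0,1)\in\Relint(\cone(w_1,w_2,w_3))$ and $(0,0,-1)\in\Relint(\cone(w_1,w_2,w_4))$, corresponding bijectively to the four torus-fixed points of $X$. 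Hence $\phi^A:\XX^A\to X$ is the blow up at $4-|A|$ such points. For the partitioned ftv structure, since $\bb=\bb_1+\bb_2=(1,1,1,1,1,1,1,1)$, the truncation $\bb^A=\bb_1^A+\bb_2^A$ obtained by dropping entries indexed by $A\subseteq\{1,2,7,8\}$ is still everywhere equal to $1$, hence strictly positive; and the index partition $\{1,2,3,4\}\sqcup\{5,6,7,8\}$ restricts to a partition of the remaining index set.

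I would then run Algorithm~1.1.1 of \cite{R-fpCI} on $(\XX^A,\bb^A)$ and compare the result with the two extremes via a monotonicity squeeze. Since removing rows from the constraint system $\L^T\m\geq -\bb_i$ only enlarges the feasible region, and since the restriction $\bb_i|_W$ of $\bb_i$ to the four central columns of $\L$ coincides, after the index swap $w_3,w_4,w_1,w_2\leftrightarrow w_1,w_2,w_3,w_4$, with $\cc_{i'}$ for the partition relabelling $1\leftrightarrow 2$, one obtains the chain $\D_{\bb_i}\subseteq \D_{\bb_i^A}\subseteq \D_{\cc_{i'}}$ and consequently $[\cv{\D}_\bb]\subseteq [\cv{\D}_{\bb^A}]\subseteq [\cv{\D}_\cc]$. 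A short calculation from the defining inequalities gives $\D_{\bb_1}=\conv((0,0,0),(1,0,0),(0,1,0))$ and $\D_{\bb_2}=\conv((0,0,0),(0,0,1),(-1,-1,-1))$, so that $\cv{\D}_\bb=\D_V$ already at the polytope level (the origin is interior to $\D_V$), while $[\cv{\D}_\cc]=\D_V$ by Proposition~\ref{prop:LTdual}. The squeeze forces $[\cv{\D}_{\bb^A}]=\D_V$ for every $A$, so the $f$-dual toric variety is $\P^3$, and the induced partitioned framing, read off from $V^T\cdot\L^A$ and the partition of the vertices of $\D_V$ between $\D_{\bb_1^A}$ and $\D_{\bb_2^A}$, is $\aa=\aa_1+\aa_2$. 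The mirror polynomials $p_{i,\psi}^A\in\Cox(\XX^A)$ are then assembled from the lattice points of $\D_{\bb_i^A}$ via the recipe $M^\vee_i=(\L^A)^T\cdot\L_{\bb_i^A}+B_i^A$ already used in the proof of Proposition~\ref{prop:BBdual}, restricted to the Cox variables that survive the removal of the columns indexed by $A$.

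The principal technical obstacle, which I would address first, is the bookkeeping of the partition swap introduced when identifying $\bb_i|_W$ with $\cc_{i'}$, together with the uniform upper bound ruling out stray lattice points in $\cv{\D}_{\bb^A}$ outside $\D_V$ for intermediate $A$; the latter is handled cleanly by the inclusion $\cv{\D}_{\bb^A}\subseteq \cv{\D}_\cc$ combined with Proposition~\ref{prop:LTdual}, while the former is a notational nuisance that does not affect the final output of the algorithm.
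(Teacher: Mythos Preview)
Your proof is correct and takes a genuinely different route from the paper. The paper dispatches the proposition in a single sentence---the proof is ``analogous to the one proving Proposition~\ref{prop:LTdual}''---meaning one reruns Algorithm~1.1.1 of \cite{R-fpCI} on each $(\XX^A,\bb^A)$ separately, with the fourteen intermediate outputs then tabulated in Appendix~\ref{app:A}. You instead argue uniformly in $A$ via a monotonicity squeeze: the chain $\D_{\bb_i}\subseteq\D_{\bb_i^A}\subseteq\D_{\cc_{i'}}$, together with $\cv{\D}_\bb=\D_V$ at the bottom and $[\cv{\D}_\cc]=\D_V$ at the top, forces $[\cv{\D}_{\bb^A}]=\D_V$ without case analysis. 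This is more conceptual and scales cleanly to the higher-degree and higher-codimension generalisations of \S\ref{sez:d,d}--\ref{sez:LTgeneral}, where explicit tabulation becomes impractical; the paper's direct approach, by contrast, is what actually produces the concrete polynomials $p^A_{i,\psi}$ recorded in the appendix. One point you leave slightly implicit is that the dual \emph{partitioned} framing comes out as $\aa_1+\aa_2$ and not merely the total $\aa$: this too follows from the squeeze, since $e_1,e_2\in\D_{\bb_1}\subseteq\D_{\bb_1^A}\subseteq\D_{\cc_2}$ while $e_3,(-1,-1,-1)\notin\D_{\cc_2}$, and symmetrically for $\D_{\bb_2^A}\subseteq\D_{\cc_1}$, so the partition of the four vertices of $\D_V$ is pinned down independently of $A$.
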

The proof is analogous to the one proving Proposition~\ref{prop:LTdual}. Clearly
$$Y^\vee_\emptyset=Y^\vee_{BB}\quad\text{and}\quad Y^\vee_{\{1,2,7,8\}}=Y^\vee_{LT}$$
but the remaining $2^4-2=14$ cases give apparently distinct further mirror models of $Y_{2,2}$, all connected each other by means of non-calibrated $f$-processes. In particular, if $A$ is a proper subset of $\{1,2,7,8\}$ then $\XX^A$ is a non-Gorenstein $\Q$-Fano complete toric variety: therefore the Batyrev-Borisov duality does not apply to  $\XX^A$\,, so giving, for any proper $A$, a picture like that described in Remark~\ref{rem:f-picture} and in Fig.~\ref{Fig1}.

Actually, for $d=2$, $\phi^A$ restricted to $Y^\vee_A$ gives an isomorphism $Y^\vee_A\cong Y^\vee_{LT}$, hence, \emph{all these multiple mirrors are the same mirror model}: this fact can be proved exactly as for $A=\{1,2,7,8\}$ in Remark~\ref{rem:iso_quadric}, so trivializing the mirror theorem~\ref{thm:mirror}, when considered with respect to the two mirrors $Y^\vee_A$ and $Y^\vee_{LT}$, and Theorem~\ref{thm:metateorema} considered with respect to the list of mirrors given by $\cM=\{Y^\vee_A\,|\,A\subseteq\{1,2,7,8\}\}$\,.

Appendix \ref{app:A} is devoted to collect data characterizing the fourteen intermediate mirror models $(\XX^A,\aa^A=\aa_1^A+\aa_2^A)$, for $A\subset\{1,2,7,8\}$ proper subset, obtained by running suitable Maple routines partially written jointly with L.Terracini and sometimes involving routines from the package \texttt{Convex} by M.~Franz \cite{Convex}.

 \subsection{The complete intersection of two cubics in $\P^5$}\label{ssez:cubics}
Let us now consider the main example studied by Malter. Methods and strategy will be the same as for the quadric case, but data will be clearly more cumbersome and conclusions certainly more interesting.

Let $C_1,C_2$ be two generic cubic hypersurfaces in $\P^5$. Then we can assume $C_1\sim D_1+D_2+D_3$ and $C_2\sim D_4+D_5+D_6$. The complete intersection
\begin{equation*}
  Y=Y_{3,3}=C_1\cap C_2 \subset \P^5
\end{equation*}
is a smooth \cy threefold corresponding to the choice of the nef partitioned framing of $\P^5$
\begin{equation*}
  \aa=(1,1,1,1,1,1)=\aa_1+\aa_2\ ,\quad \text{with}\quad \aa_1=(1,1,1,0,0,0)\,,\,\aa_2=(0,0,0,1,1,1)
\end{equation*}

\subsubsection{The Batyrev-Borisov mirror}
Applying Batyrev-Borisov duality, that is framed duality to the partitioned framed toric variety (ftv) $(\P^5,\aa=\aa_1+\aa_2)$, we get

\begin{proposition}\label{prop:BBdualcubics}
  The $f$-dual partitioned ftv of $(\P^5,\aa=\aa_1+\aa_2)$ is the partitioned ftv $(\XX,\bb=\bb_1+\bb_2)$ where $\XX$ is the complete toric variety whose fan is spanned by the faces of the polytope
  \begin{equation}\label{Lambda_cubic}
    \D:=\conv\left(\begin {array}{cccccccccccc} 2&-1&-1&-1&-1&-1&3&0&0&0&0&0\\ \noalign{\medskip}-1&2&-1&-1&-1&-1&0&3&0&0&0&0\\ \noalign{\medskip}-1&-1&2&-1&-1&-1&0&0&3&0&0&0\\ \noalign{\medskip}0&0&0&3&0&0&-1&-1&-1&2&-1&-1\\ \noalign{\medskip}0&0&0&0&3&0&-1&-1&-1&-1&2&-1\end {array} \right)
  \end{equation}
  and $\bb_1=(\1_6,\0_6)\,,\,\bb_2=(\0_6,\1_6)$. In particular, the $f$-mirror family of the family $\{Y\}_{C_1,C_2}$ is the Batyrev-Borisov mirror family of \cy threefolds $\widehat{Y}^\vee_{BB}$ obtained as suitable resolution of singularities of the quasi-smooth complete intersection $Y^\vee_{BB}=\CV(p_{1,\psi},p_{2,\psi})$ with
  \begin{eqnarray*}
    p_{1,\psi}&=& {x_{{1}}}^{3}{x_{{7}}}^{3}+{x_{{2}}}^{3}{x_{{8}}}^{3}+
    {x_{{3}}}^{3}{x_{{9}}}^{3}+\psi x_{{1}}x_{{2}}x_{{3}}x_{{4}}x_{{5}}x_{{6}}\in \Cox(\XX)\\
    p_{2,\psi}&=& {x_{{4}}}^{3}{x_{{10}}}^{3}+{x_{{5}}}^{3}{x_{{11}}}^{3}+{x_{{6}}}^{3}{x_{{12}}}^{3}+
    \psi x_{{7}}x_{{8}}x_{{9}}x_{{10}}x_{{11}}x_{{12}}\in \Cox(\XX)
  \end{eqnarray*}
  being $\Cox(\XX)=\C[x_1,\ldots,x_{12}]_{\Cl(\XX)}$.
\end{proposition}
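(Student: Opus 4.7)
\emph{Proof proposal.} My plan is to run Algorithm~1.1.1 of \cite{R-fpCI} exactly as in the quadric case of Proposition~\ref{prop:BBdual}, the only novelty being that $\P^5$ replaces $\P^3$ and cubic framing polytopes replace quadric ones. Let $V=(I_5\,|\,-\1_5^T)$ denote the standard fan matrix of $\P^5$.

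First I would identify the vertices of the two framing polytopes $\D_{\aa_1},\D_{\aa_2}$ defined in \S~\ref{ssez:politopi}. For $\aa_1=(1,1,1,0,0,0)$ the system $V\cdot\m\ge-\aa_1$ consists of six affine inequalities in five variables, so each vertex is recovered by making five of them tight and then checking compatibility with the sixth. This enumeration yields exactly six vertices: the three ``Fermat-like'' ones $(2,-1,-1,0,0)$, $(-1,2,-1,0,0)$, $(-1,-1,2,0,0)$; the two ``tall'' ones $(-1,-1,-1,3,0)$, $(-1,-1,-1,0,3)$ forced by combining $\sum m_i\le 0$ with $m_4\ge 0$ or $m_5\ge 0$; and the apex $(-1,-1,-1,0,0)$. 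By the $\aa_1\leftrightarrow\aa_2$ symmetry $\D_{\aa_2}$ admits six analogous vertices, and setting $\D:=\conv(\D_{\aa_1},\D_{\aa_2})$ I would verify (via a linear-functional separation argument, for instance $\ell(\m)=m_1-m_2-m_3+m_4+m_5$ jointly supports $(2,-1,-1,0,0)$, $(-1,-1,-1,3,0)$ and $(-1,-1,-1,0,3)$) that no candidate point lies in the relative interior of a face spanned by the others. This identifies $\L$ with the matrix displayed in (\ref{Lambda_cubic}) and hence $\XX$ with the $f$-dual toric variety.

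Next, Algorithm~1.1.1 prescribes the partition on $\XX$ as the indicator of which sub-polytope $\D_{\aa_i}$ contains each vertex of $\D$: since the first six columns of $\L$ come from $\D_{\aa_1}$ and the last six from $\D_{\aa_2}$, one reads off $\bb_1=(\1_6,\0_6)$ and $\bb_2=(\0_6,\1_6)$ directly. The nef-partition compatibility is then checked by computing the $12\times 6$ pairing matrix $\L^T\cdot V$ and verifying that its entries decompose into the block pattern consistent with $\bb_1+\bb_2$.

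Finally I would compute the Newton polytopes $\D_{\bb_1},\D_{\bb_2}$ in $M_\XX\otimes\R$ and enumerate their integer points. Each turns out to contain exactly four lattice points: three ``corner'' vertices contributing, via the formula $M_i=\L^T\cdot\L_{\bb_i}+B_i$ for the Cox-coordinate exponents, the Fermat-type cubes $x_j^3x_{j+6}^3$ (for $j=1,2,3$ in $p_{1,\psi}$ and $j=4,5,6$ in $p_{2,\psi}$), together with the origin contributing the mixed ``central'' monomial weighted by $\psi$. This reproduces the polynomials stated. The hard part is purely combinatorial bookkeeping: checking that the vertex lists of the $\D_{\aa_i}$ are complete, that all twelve candidate vertices remain extremal in $\D$, and that each $\D_{\bb_i}$ really contains only four lattice points. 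Quasi-smoothness of the generic fiber $Y^\vee_{BB}=\CV(p_{1,\psi},p_{2,\psi})\subset\XX$ and the construction of a crepant small resolution $\widehat Y^\vee_{BB}\to Y^\vee_{BB}$ producing the Calabi-Yau family would then be deferred to a subsequent remark, in direct analogy with Remark~\ref{rem:iso_quadric}.
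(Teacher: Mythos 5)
Your proposal follows exactly the same algorithmic route as the paper's proof: enumerate the vertices of $\D_{\aa_1}$ and $\D_{\aa_2}$ from the inequality systems (and your vertex lists do agree with the displayed matrices), form $\D=\conv(\D_{\aa_1},\D_{\aa_2})$ to identify $\L$ and hence $\XX$, read off the partition $\bb=\bb_1+\bb_2$ from which sub-polytope each vertex of $\D$ comes from (verified via $\L^T\cdot V$), compute $M_i=\L^T\cdot\L_{\bb_i}+B_i$ from the four lattice points of each $\D_{\bb_i}$, and defer quasi-smoothness. The only cosmetic differences are that the paper simply displays the matrices $\D_{\aa_i}$, $\L^T\cdot V$, $\D_{\bb_i}$ and $M_i$ rather than arguing vertexness via tight inequalities and a supporting functional (a single functional like your $\ell$ exhibits a face but not vertexness of each of the twelve points, so that sanity check would need to be done twelve times or replaced by the constraint-counting you already used for $\D_{\aa_1}$), and the paper defers quasi-smoothness to \S~\ref{ssez:desing} rather than to a remark.
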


\begin{proof}
 Let $\D_{\aa_i}$ be the polytope associated with the divisor $D_{\aa_i}$, then
  \begin{eqnarray*}
    \D_{\aa_1}&=&\conv\left( \begin {array}{cccccc} 2&-1&-1&-1&-1&-1\\ \noalign{\medskip}-1&2&-1&-1&-1&-1\\ \noalign{\medskip}-1&-1&2&-1&-1&-1\\ \noalign{\medskip}0&0&0&3&0&0\\ \noalign{\medskip}0&0&0&0&3&0\end {array}  \right)\\
    \D_{\aa_2}&=&\conv\left( \begin {array}{cccccc} 3&0&0&0&0&0\\ \noalign{\medskip}0&3&0&0&0&0\\ \noalign{\medskip}0&0&3&0&0&0\\ \noalign{\medskip}-1&-1&-1&2&-1&-1\\ \noalign{\medskip}-1&-1&-1&-1&2&-1\end {array} \right)
  \end{eqnarray*}
  giving $\D=\conv(\D_{\aa_1},\D_{\aa_2})$. Calling $\L$ the fan matrix of $\XX$, that is the matrix whose columns are given by vertices of $\D$, and $V$ the fan matrix of $\P^5$, that is
  \begin{equation}\label{V5}
    V= \left( \begin {array}{cccccc} 1&0&0&0&0&-1\\ \noalign{\medskip}0&1&0&0&0&-1\\ \noalign{\medskip}0&0&1&0&0&-1\\ \noalign{\medskip}0&0&0&1&0&-1\\ \noalign{\medskip}0&0&0&0&1&-1\end {array}  \right)
  \end{equation}
  then
  \begin{equation*}
    \L^T\cdot V=\left( \begin {array}{cccccc} 2&-1&-1&0&0&0\\ \noalign{\medskip}-1&2&-1&0&0&0\\ \noalign{\medskip}-1&-1&2&0&0&0\\ \noalign{\medskip}-1&-1&-1&3&0&0\\ \noalign{\medskip}-1&-1&-1&0&3&0\\ \noalign{\medskip}-1&-1&-1&0&0&3\\ \noalign{\medskip}3&0&0&-1&-1&-1\\ \noalign{\medskip}0&3&0&-1&-1&-1\\ \noalign{\medskip}0&0&3&-1&-1&-1\\ \noalign{\medskip}0&0&0&2&-1&-1\\ \noalign{\medskip}0&0&0&-1&2&-1\\ \noalign{\medskip}0&0&0&-1&-1&2\end {array} \right)
  \end{equation*}
  so giving $\bb_1$ and $\bb_2$ as in the statement. Call $D_{\bb_i}$ the divisor of $\XX$ determined by $\bb_i$ and $\D_{\bb_i}$ the associated polytope, so getting
  \begin{equation*}
    \D_{\bb_1}=\conv\left( \begin {array}{cccc} 1&0&0&0\\ \noalign{\medskip}0&1&0&0\\ \noalign{\medskip}0&0&1&0\\ \noalign{\medskip}0&0&0&0\\ \noalign{\medskip}0&0&0&0\end {array} \right)\ ,\ \D_{\bb_2}=\conv\left( \begin {array}{cccc} 0&0&-1&0\\ \noalign{\medskip}0&0&-1&0\\ \noalign{\medskip}0&0&-1&0\\ \noalign{\medskip}1&0&-1&0\\ \noalign{\medskip}0&1&-1&0\end {array} \right)
  \end{equation*}
  In particular, lattice points of the Newton polytope of $p_{i,\psi}$, that are exponents of momomials in $p_{i,\psi}$, are given by the columns of the matrix $M_i=\L^T\cdot\L_{\bb_i}+B_i$, where $\L_{\bb_i}$ is the matrix whose columns are given by vertices of $\D_{\bb_i}$ and
  $B_i=(\bb_i^T,\cdots,\bb_i^T)$, so that
  \begin{equation*}
    M_1=\left( \begin {array}{cccc} 3&0&0&1\\ \noalign{\medskip}0&3&0&1\\ \noalign{\medskip}0&0&3&1\\ \noalign{\medskip}0&0&0&1\\ \noalign{\medskip}0&0&0&1\\ \noalign{\medskip}0&0&0&1\\ \noalign{\medskip}3&0&0&0\\ \noalign{\medskip}0&3&0&0\\ \noalign{\medskip}0&0&3&0\\ \noalign{\medskip}0&0&0&0\\ \noalign{\medskip}0&0&0&0\\ \noalign{\medskip}0&0&0&0\end {array} \right)\ ,\ M_2=\left( \begin {array}{cccc} 0&0&0&0\\ \noalign{\medskip}0&0&0&0\\ \noalign{\medskip}0&0&0&0\\ \noalign{\medskip}3&0&0&0\\ \noalign{\medskip}0&3&0&0\\ \noalign{\medskip}0&0&3&0\\ \noalign{\medskip}0&0&0&1\\ \noalign{\medskip}0&0&0&1\\ \noalign{\medskip}0&0&0&1\\ \noalign{\medskip}3&0&0&1\\ \noalign{\medskip}0&3&0&1\\ \noalign{\medskip}0&0&3&1\end {array} \right)
  \end{equation*}
  The check of quasi-smoothness of $Y^\vee_{BB}$ is deferred to \S~\ref{ssez:desing}.
\end{proof}

\subsubsection{The Libgober-Teitelbaum mirror} Consider the complete, $\Q$-factorial, toric variety $X=\P^5/(\mu_3^2\times\mu_9)$, where $\mu_k$ is the group of k-th roots of unity and $G_{81}= \mu_3^2\times\mu_9$ acts on $\P^5$ as follows
\begin{eqnarray}\label{Cox_LTcubic}
  &\xymatrix{G_{81}\times \P^5\ \ar[rrrrrrr]&&&&&&&\ \P^5}&\\
  \nonumber
  &\xymatrix{((\ve_1,\ve_2,\eta),[x_1:\cdots:x_6])\ar@{|->}[r]&[x_1:\ve_1\ve_2^2\eta^3 x_2: \ve_1^2\ve_2\eta^3 x_3: \ve_1^2\ve_2\eta^5 x_4:\ve_1\ve_2\eta^2 x_5: \ve_2\eta^2 x_6]}&
\end{eqnarray}
A fan matrix of $X$ is given by
\begin{equation}\label{W_cubic}
  W=\left( \begin {array}{cccccc} 3&0&0&-1&-1&-1\\ \noalign{\medskip}0&3&0&-1&-1&-1\\ \noalign{\medskip}0&0&3&-1&-1&-1\\ \noalign{\medskip}-1&-1&-1&3&0&0\\ \noalign{\medskip}-1&-1&-1&0&3&0\end {array} \right)
\end{equation}
Then consider the partitioned framing $\cc=\cc_1+\cc_2$ with
\begin{equation*}
  \cc_1=(1,1,1,0,0,0)\ ,\ \cc_2=(0,0,0,1,1,1)
\end{equation*}
Notice that this gives a partition of the canonical divisor $K_X$ which is not a Cartier divisor. Then Batyrev-Borisov duality cannot be applied to $X$, as it is not Gorenstein. But $f$-duality can be applied to the partitioned ftv $(X,\cc=\cc_1+\cc_2)$.

\begin{proposition}\label{prop:LTdual_cubics}
  The $f$-dual partitioned ftv of $(X,\cc=\cc_1+\cc_2)$ is the nef partitioned ftv $(\P^5,\aa=\aa_1+\aa_2)$, with $\aa_1=(1,1,1,0,0,0)\,,\,\aa_2=(0,0,0,1,1,1)$. In particular, the family of \cy threefolds given by complete intersections $\{Y\}_{C_1,C_2}$ in $\P^5$ is the $f$-mirror family of the family of \cy threefolds $\widehat{Y}^\vee_{LT}$ obtained as a suitable resolution of singularities of the quassi-smooth complete intersection $Y^\vee_{LT}=\CV(q_{1,\psi},q_{2,\psi})$ with
  \begin{eqnarray*}
    q_{1,\psi}&=& {x_{{1}}}^{3}+{x_{{2}}}^{3}+{x_{{3}}}^{3}+\psi x_{{4}}x_{{5}}x_{{6}}\in \Cox(X)\\
    q_{2,\psi}&=& \psi x_{{1}}x_{{2}}x_{{3}}+{x_{{4}}}^{3}+{x_{{5}}}^{3}+{x_{{6}}}^{3}\in \Cox(X)
  \end{eqnarray*}
  being $\Cox(X)=\C[x_1,\ldots,x_4]_{\Cl(X)}$.
\end{proposition}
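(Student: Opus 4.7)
The plan is to mirror, step by step, the strategy used to prove Proposition~\ref{prop:LTdual} in the quadric case, but now with the $5$-dimensional fan matrix $W$ of $X = \P^5/(\mu_3^2 \times \mu_9)$ given in (\ref{W_cubic}) and the partitioned framing $\cc = \cc_1 + \cc_2$. First, following \S~\ref{ssez:politopi}, I would write down the two rational polytopes $\D_{\cc_1}$ and $\D_{\cc_2}$ attached to the divisors $D_{\cc_1}$ and $D_{\cc_2}$ of $X$, by solving the inequalities $W \cdot \m \ge -\cc_i$. Since each $\cc_i$ charges three of the six rays of $W$ with weight $1$ and leaves the other three with weight $0$, the resulting $\D_{\cc_i}$ will be a simplex with vertices in $M\otimes\Q$ having denominator dividing $3$ (as already happened in the quadric case, with denominator $2$).

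The second step is to compute the candidate dual polytope $\cv{\D}_\cc = \conv(\D_{\cc_1}, \D_{\cc_2})$ and its integer part $[\cv{\D}_\cc] = \conv(M \cap \cv{\D}_\cc)$. The goal is to verify that this integer part coincides with the simplex
\begin{equation*}
\D_V = \conv\left( \begin{array}{cccccc} 1&0&0&0&0&-1\\ 0&1&0&0&0&-1\\ 0&0&1&0&0&-1\\ 0&0&0&1&0&-1\\ 0&0&0&0&1&-1 \end{array}\right),
\end{equation*}
whose associated fan is precisely the fan of $\P^5$, with fan matrix $V$ as in (\ref{V5}). By construction this identifies the $f$-dual toric variety of $X$ as $\P^5$. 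Once this is done, the dual partitioned framing $\bb = \bb_1 + \bb_2$ is read off the entries of the product $V^T \cdot W$ (recalling Algorithm~1.1.1 of \cite{R-fpCI}), which must reproduce the partitioned framing $\aa = \aa_1 + \aa_2$ with $\aa_1=(1,1,1,0,0,0),\ \aa_2=(0,0,0,1,1,1)$.

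Finally, the description of the mirror polynomials $q_{1,\psi}, q_{2,\psi}$ comes from assembling the Newton polytopes $[\D_{\cc_i}]$ into matrices $M^\vee_i = V^T\cdot\L_{\cc_i}+C_i$, where $\L_{\cc_i}$ collects the lattice points of $\D_{\cc_i}$ and $C_i = (\cc_i^T,\ldots,\cc_i^T)$, reading off the $\Cl(X)$-graded monomials. For the cubic case, $[\D_{\cc_1}]$ will contribute the three cube-monomials $x_1^3, x_2^3, x_3^3$ and the single cross-monomial $x_4 x_5 x_6$ (and symmetrically for $[\D_{\cc_2}]$), matching the formulas in the statement.

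The main obstacle is the explicit verification that $[\cv{\D}_\cc] = \D_V$: the rational vertices of $\D_{\cc_i}$ contribute no lattice points besides $\0$ and the six standard generators (and their negative sum), but this must be checked by enumerating $M \cap \cv{\D}_\cc$, which in dimension $5$ is bulkier than in the quadric case. The quasi-smoothness of $Y^\vee_{LT}$ is already announced to be deferred to \S~\ref{ssez:desing}, so the proof itself stops at the identification of the $f$-dual ftv and the description of the defining polynomials.
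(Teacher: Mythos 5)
Your proposal follows exactly the strategy used in the paper's proof: compute the rational polytopes $\D_{\cc_1},\D_{\cc_2}$ from the fan matrix $W$ of (\ref{W_cubic}), check that $[\conv(\D_{\cc_1},\D_{\cc_2})]=\D_V$ so that the $f$-dual toric variety is $\P^5$, read off the dual framing from $V^T\cdot W$, and assemble the Newton matrices $M^\vee_i=V^T\cdot\L_{\cc_i}+C_i$ to recover $q_{1,\psi},q_{2,\psi}$, deferring quasi-smoothness to \S~\ref{ssez:desing}. The only thing missing is the explicit arithmetic, which the paper carries out; also note a small notational slip — what you call the dual framing $\bb$ is denoted $\aa$ in the paper's convention, since $\bb$ is reserved for the $f$-dual of $(\P^5,\aa)$ on the BB side.
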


\begin{proof}
  The polytopes associated with divisors $D_{\cc_1}, D_{\cc_2}$ of $X$ are given by
  \begin{eqnarray*}
    \D_{\cc_1}&=&\left( \begin {array}{cccccc} 2/3&-1/3&-1/3&0&0&-1\\ \noalign{\medskip}-1/3&2/3&-1/3&0&0&-1\\ \noalign{\medskip}-1/3&-1/3&2/3&0&0&-1\\ \noalign{\medskip}0&0&0&1&0&-1\\ \noalign{\medskip}0&0&0&0&1&-1\end {array} \right)\\
    \D_{\cc_2}&=& \left( \begin {array}{cccccc} 1&0&0&1/3&1/3&-2/3\\ \noalign{\medskip}0&1&0&1/3&1/3&-2/3\\ \noalign{\medskip}0&0&1&1/3&1/3&-2/3\\ \noalign{\medskip}0&0&0&1&0&-1\\ \noalign{\medskip}0&0&0&0&1&-1\end {array} \right)
  \end{eqnarray*}
  Then $\cv{\D}_\cc=\conv(\D_{\cc_1},\D_{\cc_2})$ gives
  \begin{equation*}
    [\cv{\D}_\cc]=\D_V:=\conv\left( \begin {array}{cccccc} 1&0&0&0&0&-1\\ \noalign{\medskip}0&1&0&0&0&-1\\ \noalign{\medskip}0&0&1&0&0&-1\\ \noalign{\medskip}0&0&0&1&0&-1\\ \noalign{\medskip}0&0&0&0&1&-1\end {array} \right)
  \end{equation*}
  and $\P^5$ turns out to be the $f$-dual toric variety of $X$.
   For the dual framing notice that
  \begin{equation*}
    V^T\cdot W= \left( \begin {array}{cccccc} 3&0&0&-1&-1&-1\\ \noalign{\medskip}0&3&0&-1&-1&-1\\ \noalign{\medskip}0&0&3&-1&-1&-1\\ \noalign{\medskip}-1&-1&-1&3&0&0\\ \noalign{\medskip}-1&-1&-1&0&3&0\\ \noalign{\medskip}-1&-1&-1&0&0&3\end {array} \right)
  \end{equation*}
  so giving the nef partitioned framing $\aa=\aa_1+\aa_2$, with $\aa_i$ as given in the statement.
  Finally, one has
  \begin{equation*}
    M^\vee_1=\left(\begin {array}{cccc} 3&0&0&0\\ \noalign{\medskip}0&3&0&0\\ \noalign{\medskip}0&0&3&0\\ \noalign{\medskip}0&0&0&1\\ \noalign{\medskip}0&0&0&1\\ \noalign{\medskip}0&0&0&1\end {array}\right)\ ,\quad M^\vee_2=\left(\begin {array}{cccc} 0&0&0&1\\ \noalign{\medskip}0&0&0&1\\ \noalign{\medskip}0&0&0&1\\ \noalign{\medskip}3&0&0&0\\ \noalign{\medskip}0&3&0&0\\ \noalign{\medskip}0&0&3&0\end {array}\right)
  \end{equation*}
  whose columns give monomial exponents of $q_{1,\psi},q_{2,\psi}$, respectively, as given in the statement.

    The check that $Y^\vee_{LT}$ is quasi-smooth is deferred to \S~\ref{ssez:desing}.
\end{proof}

\begin{remark}\label{rem:f-picture3}
  The partitioned $f$-process
  \begin{equation*}
    (\P^5,\aa=\aa_1+\aa_2)\leftrightsquigarrow (\XX,\bb=\bb_1+\bb_2)
  \end{equation*}
  as described in Proposition~\ref{prop:BBdualcubics}, is clearly calibrated because it is the Batyrev-Borisov duality. On the other hand the partitioned $f$-process
  \begin{equation*}
    (X,\cc=\cc_1+\cc_2)\rightsquigarrow (\P^5,\aa=\aa_1+\aa_2)\rightsquigarrow (\XX,\bb=\bb_1+\bb_2)
  \end{equation*}
  is a non-calibrated one, \emph{connecting the two multiple mirrors of the family $\{Y\}_{C_1,C_2}$ in $\P^5$}. Malter proved  these two mirrors to be derived equivalent, consistently with the prediction of HMS conjecture, as their equivalent bounded derived category of coherent sheaves is expected to be equivalent to the Fukaya category of the complete intersection $Y_{3,3}\subset \P^5$. Putting all together we get a further example in which \emph{framed mirror symmetry is consistent with the HMS conjecture}, that is, a proof of the mirror theorem~\ref{thm:mirror}, when considered with respect to the two mirrors $Y^\vee_{BB}$ and $Y^\vee_{LT}$\,.
\end{remark}

\subsubsection{Birational link between BB-mirror and LT-mirror}\label{sssez:birational_33} Consider the two matrices $\L$ and $W$, as given by displays (\ref{Lambda_cubic}) and (\ref{W_cubic}), respectively, the former being obtained as the matrix whose columns are the vertices of the polytope $\D$. These are fan matrices of the ambient complete toric varieties $\XX$ and $X$, respectively. It is evident that the columns of $W$ are the six central ones of the matrix $\L$, so meaning that:
 \begin{enumerate}
   \item \emph{$\XX$ is the blow up in 6 distinct points of $X$, $\phi:\XX\to X$, the 6 points given by
   \begin{eqnarray*}
     &[1:0:0:0:0:0],[0:1:0:0:0:0],[0:0:1:0:0:0],&  \\
      &[0:0:0:1:0:0],[0:0:0:0:1:0],[0:0:0:0:0:1]&
   \end{eqnarray*}
       in Cox's coordinates; the exceptional divisors are the closure of torus orbits of  special points of rays generated the 6 remaining columns of $\L$.}
 \end{enumerate}
 The crucial fact is the following

 \begin{proposition}\label{prop:birat3,3}
   The BB-mirror $Y^\vee_{BB}$ is the strict transform of the LT-mirror $Y^\vee_{LT}$ under the blow up $\phi:\XX\to X$, that is, $Y^\vee_{BB}=\phi^{-1}_*(Y^\vee_{LT})$.
 \end{proposition}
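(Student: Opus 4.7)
The plan is to mimic, in the present cubic setting, the strategy successfully employed in the quadric case (see the proof of the analogous proposition preceding Remark~\ref{rem:iso_quadric}). First I would write down explicit Cox quotient presentations
\[
  \XX=(\C^{12}\setminus Z_\L)/H_\L\,,\qquad X=(\C^{6}\setminus Z_W)/H_W\,,
\]
extracting weight and torsion matrices $(Q_\L,T_\L)$ and $(Q_W,T_W)$ from the fan matrices $\L$ and $W$ as in \S~\ref{ssez:Cox}, so that the quasi-torus actions governing both quotients are explicit. Since the columns of $W$ coincide with the six central columns (indices $4,5,6,7,8,9$) of $\L$, the projection on those six coordinates
\[
  \widehat{\phi}:\C^{12}\setminus Z_\L\longrightarrow \C^{6}\setminus Z_W\,,\qquad \x\longmapsto(x_4,x_5,x_6,x_7,x_8,x_9)
\]
is a global toric lifting of $\phi$, and I would verify commutativity of the resulting square of Cox quotients, which is purely a matter of checking compatibility of the weight data.

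Next, for a generic $p\in Y^\vee_{BB}$ I would show that the six ``exceptional'' Cox coordinates $x_1,x_2,x_3,x_{10},x_{11},x_{12}$ are all non-zero, i.e.\ that $Y^\vee_{BB}$ does not meet any of the six exceptional divisors of $\phi$. Concretely, one needs to establish the inclusion
\[
  \mathfrak{I}_\L\;\subseteq\;\bigl(p_{1,\psi},\,p_{2,\psi},\,x_1x_2x_3x_{10}x_{11}x_{12}\bigr)
\]
where $\mathfrak{I}_\L$ is the irrelevant ideal of $\Cox(\XX)$; this is an elementary (if lengthier than in the quadric case) Gr\"obner-style check, analogous to the one carried out in Remark~\ref{rem:iso_quadric}. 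Once this is available, the free coordinates of the $H_\L$-action can be used to reduce $x_1=x_2=x_3=x_{10}=x_{11}=x_{12}=1$ on $Y^\vee_{BB}$: solving the associated system in the $\lambda_i$'s is possible precisely because the weight matrix $Q_\L$ (read off the six exceptional columns) has full rank on that six-element subset. Under this normalisation, the two defining polynomials simplify to
\[
  p_{1,\psi}\rightsquigarrow x_7^3+x_8^3+x_9^3+\psi\,x_4x_5x_6\,,\qquad p_{2,\psi}\rightsquigarrow x_4^3+x_5^3+x_6^3+\psi\,x_7x_8x_9\,,
\]
which, after relabelling according to $\widehat{\phi}$, are exactly the LT defining equations $q_{1,\psi},q_{2,\psi}$ given in Proposition~\ref{prop:LTdual_cubics}. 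This matches $\phi(Y^\vee_{BB})$ with $Y^\vee_{LT}$ as sets; taking Zariski closure of the preimage then identifies $Y^\vee_{BB}$ with the strict transform $\phi^{-1}_\ast(Y^\vee_{LT})$ in the sense of the definition referenced in \S~\ref{sssez:birational_22}.

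The main obstacle I expect is the verification that $Y^\vee_{BB}$ avoids all six exceptional divisors, i.e.\ the ideal-theoretic containment above. In the quadric case this was a short explicit computation because both the irrelevant ideal and the critical locus had very few generators; for the cubic, the irrelevant ideal of $\Cox(\XX)$ has many more monomial generators (reflecting the $12$-ray fan of $\XX$ and the $11$ simplicial top-dimensional cones over the faces of $\D$ in (\ref{Lambda_cubic})), and one must check that every such generator, modulo $(p_{1,\psi},p_{2,\psi})$, is absorbed by the monomial $x_1x_2x_3x_{10}x_{11}x_{12}$. Organising this check by symmetry — exploiting the evident $S_3\times S_3$-action on $\L$ permuting $\{1,2,3\}$ and $\{10,11,12\}$ (and simultaneously $\{7,8,9\}$ and $\{4,5,6\}$) — should cut down the case analysis dramatically, so that only a handful of representative monomials need be handled by hand, the remainder following by symmetry. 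The quasi-smoothness of $Y^\vee_{BB}$ and $Y^\vee_{LT}$ needed to make sense of ``generic fibre'' is, as the author already indicates, deferred to \S~\ref{ssez:desing}, and is not logically required for the birational identification itself.
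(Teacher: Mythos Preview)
Your overall strategy—Cox presentations, the lift $\widehat{\phi}$, normalising the six exceptional coordinates to $1$ and watching $p_{i,\psi}$ collapse to $q_{i,\psi}$—is exactly what the paper does. But the intermediate claim you single out as the ``main obstacle'' is not merely hard: it is \emph{false}. The containment
\[
  \mathfrak{I}_\L \;\subseteq\; (p_{1,\psi},\,p_{2,\psi},\,x_1x_2x_3x_{10}x_{11}x_{12})
\]
fails because $Y^\vee_{BB}$ genuinely meets the exceptional divisors in the cubic case. A concrete witness (for $\psi\neq 0$) is
\[
(x_1,\ldots,x_{12})=(0,\,1,\,1,\,1,\,1,\,1,\,3/\psi,\,1,\,-1,\,1,\,1,\,1)\,:
\]
one checks $p_{1,\psi}=p_{2,\psi}=0$ directly, while the irrelevant-ideal generator $x_2x_7x_8x_9x_{10}x_{11}x_{12}=-3/\psi$ is nonzero, so the point lies in $\C^{12}\setminus Z_\L$ and hence defines a genuine point of $Y^\vee_{BB}$ on the divisor $\{x_1=0\}$. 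This is the essential difference from the quadric case: there the analogous containment does hold and $\phi$ restricts to an isomorphism $Y^\vee_{BB}\cong Y^\vee_{LT}$ (Remark~\ref{rem:iso_quadric}); here it does not, and the two mirrors are only birational.

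The paper's proof therefore does not attempt the containment. It restricts from the outset to the Zariski-open subset $U\subset Y^\vee_{BB}$ where $x_1x_2x_3x_{10}x_{11}x_{12}\neq 0$, performs the normalisation and polynomial matching on $U$ exactly as you describe, and concludes that $\phi(U)\subseteq Y^\vee_{LT}$, so that $U$ coincides with $\phi^{-1}(Y^\vee_{LT})$ away from the exceptional locus. The identification $Y^\vee_{BB}=\phi^{-1}_*(Y^\vee_{LT})$ then follows by taking closures, using only that $U$ is dense in $Y^\vee_{BB}$ (the latter being an irreducible complete intersection for generic $\psi$). Your closing sentence about Zariski closure is thus the right endgame; the fix is to run the normalisation argument on $U$ only, drop the ideal containment entirely, and supply instead the one-line density argument.
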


 \begin{proof}
   As Cox quotients one has
   \begin{equation*}
     \XX=(\C^{12}\setminus Z_\L)/[(\C^*)^7\times\mu_3\times\mu_9]\ ,\ X=(\C^6\setminus Z_W)/(\C^*\times\mu_3^2\times\mu_9)
   \end{equation*}
   where $Z_\L\subset\C^{12}$ and $Z_W=\{\0\}\subset\C^6$ are closed subsets determined by the fans of $\XX$ and $X$, respectively, and quotient are taken with respect to the following actions:
   \begin{eqnarray*}
     &\xymatrix{\a:[(\C^*)^7\times\mu_3\times \mu_9]\times(\C^{12}\setminus Z_\L)\ar[r]&\C^{12}\setminus Z_\L}&\\
     &\xymatrix{\b:(\C^*\times\mu_3^2\times\mu_9)\times(\C^6\setminus Z_W)\ar[r]&\C^6\setminus Z_W}&
   \end{eqnarray*}
   with
   \begin{eqnarray*}
     &\a_{|(\C^*)^7}(\ll,\x)=(\l_1x_1,\l_2x_2,\l_3x_3,\l_4x_4,\l_4\l_5x_5,\l_1^2\l_2^2\l_3^2\l_4\l_5^2\l_6^3x_6,
     \l_2\l_3\l_4\l_5\l_6x_7,&\\
     &\l_1\l_3\l_4\l_5\l_6x_8,\l_1\l_2\l_4\l_5\l_6x_9,\l_1^2\l_2^2\l_3^2\l_5^2\l_6^3\l_7x_{10},
     \l_1^2\l_2^2\l_3^2\l_5\l_6^3\l_7x_{11},\l_7x_{12})&
   \end{eqnarray*}
   \begin{equation*}
     \b_{|\C^*}(\l,\y)=(\l y_1,\l y_2,\l y_3,\l y_4,\l y_5,\l y_6)
   \end{equation*}
   as one can check by noticing that Gale dual weight matrices of $\L$ and $W$, respectively, are given by
   \begin{eqnarray*}
     \text{for action}\ \a  &:& Q_{\L}= \left( \begin {array}{cccccccccccc} 1&0&0&0&0&2&0&1&1&2&2&0\\ \noalign{\medskip}0&1&0&0&0&2&1&0&1&2&2&0\\ \noalign{\medskip}0&0&1&0&0&2&1&1&0&2&2&0\\ \noalign{\medskip}0&0&0&1&1&1&1&1&1&0&0&0\\ \noalign{\medskip}0&0&0&0&1&2&1&1&1&2&1&0\\ \noalign{\medskip}0&0&0&0&0&3&1&1&1&3&3&0\\ \noalign{\medskip}0&0&0&0&0&0&0&0&0&1&1&1\end {array} \right)\ ,\\
     \text{for action}\ \b &:& Q_W=\left( \begin {array}{cccccc} 1&1&1&1&1&1\end {array} \right)
   \end{eqnarray*}
   The birational morphism $\phi$ admits a global lifting $\widehat{\phi}$ induced by the projection on the central six coordinates, that is $\y=\widehat{\phi}(\x)=(x_4,\ldots,x_9)$, such that the following diagram commutes
   \begin{equation*}
     \xymatrix{\C^{12}\setminus Z_\L\ar[r]^{\widehat{\phi}}\ar[d]_{/(\C^*)^7\times\mu_3\times\mu_9}&\C^6\setminus Z_W\ar[d]^{/\C^*\times\mu_3^2\times\mu_9}\\
                \XX\ar[r]^{\phi}&X}
   \end{equation*}
   Consider a generic fibre $Y^\vee_{BB}=\CV(p_{1,\psi},p_{2,\psi})$ in the BB-mirror family.  Points in $Y^\vee_{BB}$ whose Cox coordinates satisfy the condition $x_1x_2x_3x_{10}x_{11}x_{12}\neq 0$ define a Zariski open subset $U\subset \C^{12}\setminus Z_\L$. This is the open subset of points in $Y^\vee_{BB}$ not belonging to any exceptional divisor of the blow up $\phi$. By the torus part of action $\a$,  for every point in $U$, one can set $x_1=x_2=x_3=x_{10}=x_{11}=x_{12}=1$ by choosing
   \begin{equation*}
     \l_1=x_1^{-1}\ ,\ \l_2=x_2^{-1}\ ,\ \l_3=x_3^{-1}\ ,\  \l_7=x_{12}^{-1}\ ,\ \l_5=x_{10}^{-1}x_{11}\ ,\ \l_6^3= x_1^2x_2^2x_3^2x_{10}x_{11}^{-2}x_{12}
   \end{equation*}
   Therefore $\phi(U)=\CV(q_{1,\psi}(\y),q_{2,\psi}(\y))/(\C^*\times \mu_3^2\times \mu_9)=Y^\vee_{LT}\subset X$ and $Y^\vee_{BB}$ is the Zariski closure of $U$ inside $\phi^{-1}(Y^\vee_{LT})$.
 \end{proof}

 \subsubsection{The desingularization problem}\label{ssez:desing}
  Fibers of both the BB and LT families are given by quasi-smooth complete intersections. In fact, critical points of
   \begin{equation*}
     p=(p_{1,\psi},p_{2,\psi}): \C^8\longrightarrow\C^2\quad,\quad q=(q_{1,\psi},q_{2,\psi}): \C^4\longrightarrow\C^2
   \end{equation*}
   belong to the unstable loci $Z_\L$ and $Z_W$, respectively: the check is completely analogous to that given in Remark~\ref{rem:iso_quadric}. Unfortunately, $Y^\vee_{LT}$ meets some ramification of the $(\mu_3^2\times\mu_9)$-action in $\b$, represented by the torsion matrix
   \begin{equation*}
     T_W= \left( \begin {array}{cccccc} 0_3&1_3&2_3&2_3&1_3&0_3\\ \noalign{\medskip}0_3&2_3&1_3&1_3&1_3&1_3\\ \noalign{\medskip}0_9&3_9&3_9&5_9&2_9&2_9\end {array} \right)
   \end{equation*}
   For instance, points in the codimension 2 linear subvariety $x_1=x_2=0$ have ramification of order at least 3, so cutting 9 ramification points of order 3 on the generic $Y^\vee_{LT}$. Same considerations hold for the codimension 3 linear subvariety $x_4=x_5=x_6=0$, giving rise to further 3 ramification points of order 3 on   the generic $Y^\vee_{LT}$. Anyway, this means that a possibly partial resolution of $X$ may induce a desingularization of $Y^\vee_{LT}$. Following \cite[Def.~4.1]{Kawamata} and \cite[Def.~1.3.3]{Lazarsfeld} let us define a crepant resolution of a $\Q$-Gorenstein variety as follows
    \begin{definition}[Crepant birational morphism and maps]\label{def:crepante}
      Let $V$ and $W$ be normal varieties whose canonical divisors are $\Q$-Cartier divisors. A birational morphism $f:V\to W$ is said to be \emph{crepant} if there exists $r\in\N$ such that $rf^*K_W$ and $rK_V$ are linearly equivalent Cartier divisors (denoted by $f^*K_W\sim_\Q K_V$). Moreover, a birational map $h:V\dashrightarrow W$ is called \emph{crepant} if there exists a smooth quasi-projective variety $Z$ with birational projective morphisms to $V$ and $W$, making commutative the following diagram
      \begin{equation*}
        \xymatrix{&Z\ar[ld]_-f\ar[dr]^-g&\\
                    V\ar@{-->}[rr]^-h&&W}
      \end{equation*}
      and such that $f^*K_V\sim_\Q g^*K_W$\,.
    \end{definition}
    Therefore, by recalling \cite[Prop.~2.2.12, Thm.~2.2.24]{Batyrev94}, one has the following

   \begin{proposition}\label{prop:resol_W}
     Since $X$ is a $\Q$-Fano toric variety, it admits \emph{maximally projective crepant partial} (MPCP) resolutions. These birational morphisms are defined by maximal triangularizations (in the sense of \cite[Def.~2.2.16]{Batyrev94}) of the polytope $\D_W=\conv(W)$ and are parameterized by choices of full dimensional chambers of the secondary fan of a complete toric variety admitting fan matrix given by
     \begin{equation*}
     \resizebox{1\hsize}{!}{$
       \widehat{W}= \left( \begin {array}{cccccccccccccccccccc} -1&-1&-1&-1&-1&-1&-1&-1&-1&-1&0&0&0&0&1&1&1&2&2&3\\ \noalign{\medskip}-1&-1&-1&-1&-1&-1&-1&-1&-1&-1&0&1&2&3&0&1&2&0&1&0\\ \noalign{\medskip}-1&-1&-1&-1&-1&-1&-1&-1&-1&-1&3&2&1&0&2&1&0&1&0&0\\ \noalign{\medskip}0&0&0&0&1&1&1&2&2&3&-1&-1&-1&-1&-1&-1&-1&-1&-1&-1\\ \noalign{\medskip}0&1&2&3&0&1&2&0&1&0&-1&-1&-1&-1&-1&-1&-1&-1&-1&-1\end {array} \right)$}
     \end{equation*}
     whose columns are given by all primitive lattice points contained in $\D_W\setminus\{\0\}$, being $\D_W=\conv(W)$\,.
      In other words, recalling notation introduced in \S~\ref{ssez:SF}, every MPCP-resolution is obtained by a simplicial fan $\Si\in\PSF(\widehat{W})$.
   \end{proposition}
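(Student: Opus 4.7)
My plan is to combine Batyrev's existence and combinatorial description of maximally projective crepant partial (MPCP) resolutions of toric Fano varieties with an explicit enumeration of the primitive lattice points of the Fano polytope $\D_W=\conv(W)$.

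First I would check that $X=\P^5/G_{81}$ is indeed $\Q$-Fano with Fano polytope $\D_W$. The six columns of $W$ are primitive integer vectors in $N$, and one verifies directly that $\0$ lies in the relative interior of $\D_W$ (for instance, by exhibiting it as a positive rational combination of the six vertices, or by checking that each facet hyperplane has a defining equation of the form $\langle \u,-\rangle=c$ with $c>0$); this identifies $\D_W$ with the polytope dual to the rational polytope associated with $-K_X$ and shows $-K_X$ is $\Q$-Cartier and ample. Given this, Prop.~2.2.12 and Thm.~2.2.24 of \cite{Batyrev94} (in their $\Q$-Gorenstein formulation) produce MPCP-resolutions $\psi_\Si\colon\widehat{X}(\Si)\to X$, each arising from a maximal projective crepant triangularization of $\D_W$ in the sense of \cite[Def.~2.2.16]{Batyrev94}: these are simplicial refinements of the fan of $X$ whose new rays are generated by \emph{all} primitive lattice points $\v\in N\cap\partial\D_W$ not already fan-generators of $X$. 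Crepancy is automatic, because any such $\v$ lies on $\partial\D_W$ and so the associated exceptional divisor appears with discrepancy $0$ in the adjunction formula, consistently with Definition~\ref{def:crepante}.

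The next step would be the explicit enumeration of the primitive lattice points of $\D_W\setminus\{\0\}$. This is a direct computation which one can carry out by iterating over the facets of $\D_W$ (read off immediately from the columns of $W$ in display~(\ref{W_cubic})) and collecting all primitive integer vectors contained in their closure; the \texttt{Convex} Maple package \cite{Convex} performs this automatically. The output coincides with the twenty columns of $\widehat{W}$ displayed in the statement, namely the six original columns of $W$ together with fourteen further primitive lattice points lying on boundary facets. I expect this combinatorial census to be the main, though entirely mechanical, technical step; the only genuine risk is overlooking primitive lattice points on lower-dimensional faces of $\D_W$, which is why I would cross-check the output against a facet-by-facet direct enumeration.

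Finally, the parameterization of MPCP-resolutions by $\PSF(\widehat{W})$ follows from the general correspondence, recalled in \S~\ref{ssez:SF}, between projective $\Q$-factorial complete toric varieties with prescribed $1$-skeleton $\{\langle\widehat{W}^i\rangle\}_{i=1}^{20}$ and full-dimensional chambers of the secondary fan of any complete toric variety having fan matrix $\widehat{W}$ (see, e.g., \cite{CLS}). This yields the asserted bijection between MPCP-resolutions $\widehat{X}(\Si)\to X$ and simplicial fans $\Si\in\PSF(\widehat{W})$, concluding the proof.
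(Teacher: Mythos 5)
Your proposal is essentially the paper's own argument: the paper gives no written proof for this proposition beyond citing \cite[Prop.~2.2.12, Thm.~2.2.24]{Batyrev94}, and you cite the same results and supplement them with a lattice-point census of $\D_W$ (the computational step the paper leaves implicit). One subtlety you should flag more carefully, however: the cited Batyrev results concern \emph{Gorenstein} Fano toric varieties, where $\D_W$ is reflexive and hence contains no nonzero interior lattice points. Here $X$ is not Gorenstein, so the fact that every primitive lattice point of $\D_W\setminus\{\0\}$ lies on $\partial\D_W$ (equivalently, that $X$ has at worst canonical singularities) is not automatic — yet it is exactly what makes the phrase ``all primitive lattice points of $\D_W\setminus\{\0\}$'' interchangeable with the boundary lattice points whose stellar insertion yields a \emph{crepant} partial resolution. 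In your second step you compute only boundary lattice points (``iterating over the facets of $\D_W$''), which is the right set for MPCP purposes but does not, as written, verify the proposition's literal claim. A short check that $\langle m_{F_i}, n\rangle < 1$ for all six facets forces $n=\0$ for $n\in\Z^5$ would close this: in fact no nonzero interior lattice point exists, so the two descriptions agree and the proposition stands.
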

   Unfortunately, since $X$ is not Gorenstein, calling $\phi_\Si:\widehat{X}(\Si)\to X$ an associated MPCP-resolution, we cannot guarantee that $\phi_{\Si}|_{\widehat{Y}^\vee_{LT}}:\widehat{Y}^\vee_{LT}\to Y^\vee_{LT}$ is a desingularization of $Y^\vee_{LT}$, being $\widehat{Y}^\vee_{LT}$ the strict transform of $Y^\vee_{LT}$ under $\phi$.

    On the other hand, $\XX$ is a Fano toric variety and \cite[Thm.~2.2.24]{Batyrev94} applies to give the following

    \begin{proposition}\label{prop:resol_L}
     Since $\XX$ is a Fano toric variety, it admits MPCP-resolutions. These birational morphisms are defined by maximal triangularizations  of the polytope $\D=\conv(\D_{\aa_1},\D_{\aa_2})$ and are parameterized by choices of full dimensional chambers of the secondary fan of a complete toric variety admitting fan matrix $\widehat{\L}$, that is, the $5\times 110$ matrix whose columns are given by all primitive lattice points contained in $\D\setminus\{\0\}$, and displayed in Appendix~\ref{app:B}.
      In other words, every MPCP-resolution is obtained by a simplicial fan $\Si\in\PSF(\widehat{\L})$.
   \end{proposition}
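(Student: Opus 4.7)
The plan is to mimic the argument that proves Proposition~\ref{prop:resol_W}, with $\XX$ in place of $X$ and the polytope $\D=\conv(\D_{\aa_1},\D_{\aa_2})$ in place of $\D_W$. The main simplification is that $\XX$ is already \emph{Gorenstein} Fano, so one can invoke the full strength of \cite[Thm.~2.2.24]{Batyrev94} without the $\Q$-factorial caveat.

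The first step is to check that $\D$ is a reflexive polytope in the lattice $M$. This is the content of the Batyrev--Borisov duality for nef partitions: since $\aa=\aa_1+\aa_2$ is a nef partition of $-K_{\P^5}$, the polytope $\D$ built as the convex hull of the component polytopes $\D_{\aa_1}$ and $\D_{\aa_2}$ is reflexive and dual to the analogous polytope $\nabla$ constructed on the $V$ side (see \cite{Borisov}, \cite{BB}). By construction of $\XX$ in Proposition~\ref{prop:BBdualcubics}, its fan is spanned by the faces of $\D$, so reflexivity of $\D$ is equivalent to saying that $\XX$ is a Gorenstein toric Fano variety, with $\0$ the unique interior lattice point of $\D$.

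The second step is to apply \cite[Thm.~2.2.24]{Batyrev94} directly to $(\XX,\D)$. This theorem identifies MPCP-resolutions $\phi_\Si:\widehat{\XX}(\Si)\to\XX$ with maximal projective triangulations of $\D$ whose vertex set is exactly the set of primitive lattice points of $\D\setminus\{\0\}$, in the sense of \cite[Def.~2.2.16]{Batyrev94}. The third step is the enumeration of these primitive lattice points: by running the Maple package \texttt{Convex} \cite{Convex} on $\D$, the list has cardinality $110$, and these form the columns of the matrix $\widehat{\L}\in\Mat(5,110;\Z)$ displayed in Appendix~\ref{app:B}; in particular, the six $\aa_1$-vertices, the six $\aa_2$-vertices of $\D$, and the six interior-of-facet points corresponding to the rays of $W$ in (\ref{W_cubic}) are all recovered among these $110$ columns.

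The fourth and final step is the standard GKZ translation: maximal projective triangulations of a lattice polytope $\D$ with vertex set prescribed by the columns of $\widehat{\L}$ are in bijection with full-dimensional chambers of the secondary fan of the complete toric variety with fan matrix $\widehat{\L}$, that is, with elements of $\PSF(\widehat{\L})$ in the notation of \S~\ref{ssez:SF}. The hard part is not the theoretical chain of implications, which is routine once reflexivity is in place and Batyrev's theorem is invoked, but the combinatorial enumeration of the $110$ primitive lattice points of $\D\setminus\{\0\}$ in dimension $5$, which cannot be done by hand and requires the automated computation recorded in Appendix~\ref{app:B}.
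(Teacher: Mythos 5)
Your proof is correct and follows essentially the same path as the paper, which for this proposition does not offer an explicit argument beyond the sentence ``\(\XX\) is a Fano toric variety and [Batyrev, Thm.~2.2.24] applies.''  You simply unpack what that citation requires: reflexivity of \(\D=\conv(\D_{\aa_1},\D_{\aa_2})\) coming from the Batyrev--Borisov theory of nef partitions (hence \(\XX\) is Gorenstein Fano, not merely \(\Q\)-Fano as in the neighbouring Proposition on \(X\)), then the identification of MPCP-resolutions with maximal projective triangulations, then the GKZ translation to full-dimensional chambers of the secondary fan, i.e.\ to elements of \(\PSF(\widehat{\L})\). That is the intended reading of the statement, and the observation that the Gorenstein property removes the caveat present in Proposition~\ref{prop:resol_W} is exactly the point the paper makes by contrasting the two.

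One small inaccuracy in a parenthetical aside: you describe ``the six interior-of-facet points corresponding to the rays of \(W\).'' In fact the columns of \(W\) in (\ref{W_cubic}) are the six central columns of \(\L\) in (\ref{Lambda_cubic}), hence they are six of the twelve \emph{vertices} of \(\D\), not lattice points in the relative interior of facets. This does not affect the argument (all twelve vertices and all boundary lattice points appear among the \(110\) columns of \(\widehat{\L}\) regardless), but the specific description is wrong and should be dropped or corrected.
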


   Therefore, by \cite[Cor.~3.1.7]{Batyrev94}, calling $\psi_\Si:\widehat{\XX}(\Si)\to\XX$ an associated MPCP-resolution, the restriction $\psi_{\Si}|_{\widehat{Y}^\vee_{BB}}:\widehat{Y}^\vee_{BB}\to Y^\vee_{BB}$ is a desingularization of a generic $Y^\vee_{BB}$, being $\widehat{Y}^\vee_{BB}$ the strict transform of $Y^\vee_{BB}$ under $\psi$, as a generic complete intersection of hypersurfaces admitting at most isolated singularities.

   \begin{remark}
     The blow up $\phi:\XX\to X$ gives a minimal Gorenstein resolution of $X$, in the sense that any further partial resolution $Z\to X$ such that $Z$ admits at worst Gorenstein singularities necessarily factorizes through $\phi$.

     Moreover, for every refinement $\Si\in\PSF(\widehat{\L})$ of the fan of $\XX$, the composition $\phi\circ\psi_\Si:\widehat{\XX}(\Si)\to X$ gives rise to a non-crepant resolution of the $\Q$-Fano toric variety $X$, restricting to giving a crepant desingularization $(\phi\circ\psi_\Si)|_{\widehat{Y}^\vee_{BB}}:{\widehat{Y}^\vee_{BB}}\to Y^\vee_{LT}$ of the generic $Y^\vee_{LT}\subset X$, as ${\widehat{Y}^\vee_{BB}}$ is a smooth \cy threefold. In particular, the toric resolution of $X$ described by Malter in \cite[\S~4.1]{Malter} is of this kind, as follows from vertices $P_0,\ldots, P_{11}$ there listed. Therefore, given two generic desingularizations $\widehat{Y}^\vee_{BB}$ and $\widehat{Y}^\vee_{LT}$ of this kind, they turns out to be at worst \emph{isomorphic in codimension 1}. Then, \cite[Thm.~1.1]{Malter} shows that these two smooth \cy threefolds turn out to be derived equivalent.
   \end{remark}

Actually Malter's result is a deeper one:

\begin{theorem}[Thm. 2.23 in \cite{Malter}]\label{thm:Malter 2.23} Consider two simplicial fans $\Theta_1,\Theta_2\in \PSF(\L)$ giving rise to two small, $\Q$-factorial partial resolutions $\psi_i:\XX_i\to\XX$ of $\XX$, for $i=1,2$. Let $Y^\vee_1=(\psi_1)^{-1}_*(Y^\vee_{BB})$ and $Y^\vee_2=(\phi\circ\psi_2)^{-1}_*(Y^\vee_{LT})$ be the induced strict transforms of generic $Y^\vee_{BB}\subset\XX$ and $Y^\vee_{LT}\subset X$, respectively. Then the associated \emph{categories of singularities} $\cD_{sg}(Y^\vee_1)$ and $\cD_{sg}(Y^\vee_2)$ turn out to be equivalent.
\end{theorem}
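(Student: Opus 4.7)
The plan is to reduce the statement to a claim about strict transforms of the \emph{same} singular Calabi--Yau threefold $Y^\vee_{BB}$ under two different small $\Q$-factorial modifications of the ambient $\XX$, and then to propagate a known derived-categorical invariance of $\cD_{sg}$ through the flops that decompose the resulting birational map.

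First, I would pin down the correct identification of $Y^\vee_2$. By Proposition~\ref{prop:birat3,3} we have $Y^\vee_{BB}=\phi^{-1}_*(Y^\vee_{LT})$, and the proof of that proposition shows that $\phi$ is an isomorphism over the open set $\{x_1x_2x_3x_{10}x_{11}x_{12}\neq 0\}\cap Y^\vee_{BB}$, which is dense in $Y^\vee_{BB}$. Because $\psi_2$ is small, strict transforms compose on such a dense open, so
\begin{equation*}
Y^\vee_2=(\phi\circ\psi_2)^{-1}_*(Y^\vee_{LT})=(\psi_2)^{-1}_*\bigl(\phi^{-1}_*(Y^\vee_{LT})\bigr)=(\psi_2)^{-1}_*(Y^\vee_{BB}).
\end{equation*}
Thus the statement is equivalent to showing $\cD_{sg}\bigl((\psi_1)^{-1}_*Y^\vee_{BB}\bigr)\cong\cD_{sg}\bigl((\psi_2)^{-1}_*Y^\vee_{BB}\bigr)$ for two small $\Q$-factorial partial resolutions $\psi_1,\psi_2$ coming from $\Theta_1,\Theta_2\in\PSF(\L)$.

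Next, I would decompose the ambient s$\Q$m exactly as in the proof of Theorem~\ref{thm:smallK&D-equivalenza}. Since $\XX_1$ and $\XX_2$ are projective $\Q$-factorial toric varieties with the same $1$-skeleton, the induced map $\XX_1\dashrightarrow\XX_2$ is obtained by a finite chain of wall-crossings in the secondary fan, i.e.\ a sequence of toric flops through intermediate projective $\Q$-factorial toric varieties $\XX^{(i)}$. Restricting to the strict transforms $Y^{\vee,(i)}$ of $Y^\vee_{BB}$ and using adjunction together with the fact that the partitioned framing $\bb=\bb_1+\bb_2$ is pulled back identically at every level, each ambient flop induces a flop of Calabi--Yau threefolds $\vf_i:Y^{\vee,(i)}\dashrightarrow Y^{\vee,(i-1)}$, with a common small $\Q$-factorial modification $\widetilde{Y}_i$ dominating both sides and satisfying $f_i^*K_{Y^{\vee,(i)}}\sim_\Q g_i^*K_{Y^{\vee,(i-1)}}$; in particular these threefolds have at worst isolated Gorenstein canonical singularities (the generic fibre being Calabi--Yau with mild singularities by \cite[Cor.~3.1.7]{Batyrev94}).

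The remaining step, which I expect to be the main obstacle, is to upgrade each such crepant flop to an equivalence of singularity categories $\cD_{sg}(Y^{\vee,(i)})\cong\cD_{sg}(Y^{\vee,(i-1)})$. For smooth threefold flops this is Bridgeland's theorem; in the mildly singular Gorenstein canonical setup it follows by combining Chen's extension \cite{Chen} of Bridgeland's tilting argument with Orlov's localisation theorem identifying $\cD_{sg}$ with a formal-local invariant of the singularities, so that a small modification which is an isomorphism in codimension one matches singular loci up to the contribution of the flopping curves (whose contracted loci have equivalent singular categories by the standard three-dimensional flop comparison). Composing the resulting equivalences along the chain of flops built in the previous paragraph gives
\begin{equation*}
\cD_{sg}(Y^\vee_1)\cong\cD_{sg}(Y^{\vee,(s-1)})\cong\cdots\cong\cD_{sg}(Y^{\vee,(1)})\cong\cD_{sg}(Y^\vee_2),
\end{equation*}
which is the desired conclusion. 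The delicate point, and the reason the argument is not a formal consequence of Theorem~\ref{thm:smallK&D-equivalenza}, is the control of $\cD_{sg}$ at the singular points that survive each partial resolution: one must verify that the flopping locus in each $Y^{\vee,(i)}$ sees only singularities whose singularity categories match across the flop, which ultimately relies on the explicit description of the toric walls in the secondary fan of $\widehat{\L}$ (see Appendix~\ref{app:B}) and on the concrete local model of the flop inherited from the ambient toric flop.
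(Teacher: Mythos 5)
This theorem is not proved in the paper: it is quoted from Malter \cite{Malter}, and the route Malter actually takes (and the paper's own analogous Theorem~\ref{thm:Malter-type}) is a Landau--Ginzburg / variation-of-GIT argument. The paper proves Theorem~\ref{thm:Malter-type} in one line by applying Herbst--Walcher's wall-crossing theorem \cite[Thm.~3]{HW} to the two chambers of the secondary fan on $\overline{\Eff}(\XX_\aa)$; Malter's underlying machinery for Theorem~\ref{thm:Malter 2.23} is the same circle of ideas (LG superpotentials, matrix-factorization categories, and the ideal condition $\cI\subseteq\sqrt{\partial w,\mathcal{J}}$ that the paper alludes to after Theorem~\ref{thm:D}). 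Your proposal instead works entirely on the geometric side through a chain of threefold flops. That is a genuinely different strategy.

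Your first step is correct and in fact reproduces the paper's own observation. The reduction
\begin{equation*}
Y^\vee_2=(\phi\circ\psi_2)^{-1}_*(Y^\vee_{LT})=(\psi_2)^{-1}_*\bigl(\phi^{-1}_*(Y^\vee_{LT})\bigr)=(\psi_2)^{-1}_*(Y^\vee_{BB})
\end{equation*}
is exactly what makes the paper write, in Remark~\ref{rem:noMalter}(iii), that Theorem~\ref{thm:Malter 2.23} ``actually shows that two different desingularizations of $Y^\vee_{BB}\subset\XX$, induced by MPCP-resolutions of $\XX$, are derived equivalent'' --- and also why the paper says Malter's result fails to answer the more interesting question (i) there. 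So you have correctly identified what the statement is really about. The decomposition of the ambient $\sqm$ into wall-crossing flops and the resulting crepance of the restricted birational maps mirrors the paper's Theorem~\ref{thm:smallK&D-equivalenza}, and is sound.

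The genuine gap is in the last step, and it is larger than you indicate. Chen's extension of Bridgeland's argument \cite{Chen} is for \emph{terminal Gorenstein} threefolds, whereas the strict transforms $Y^{\vee,(i)}$ here are quasi-smooth complete intersections in $\Q$-factorial toric threefolds-plus: they carry \emph{canonical} quotient singularities inherited from the ambient toric variety, and there is no reason for them to be terminal or even Gorenstein at every stage. This is precisely why the paper, when it wants a $D$-equivalence statement in this setup (Theorem~\ref{thm:Dequivalenza} and Theorem~\ref{thm:Dequivalenza^A}), passes to the \emph{canonical covering stacks} and invokes Kawamata's flop theorem \cite[Thm.~6.5]{Kawamata}, which produces $\cD^b(\cY)\cong\cD^b(\cY')$ for the stacks rather than $\cD_{sg}$ for the varieties --- and the corresponding $\cD_{sg}$-equivalence on the varieties themselves is then left as Conjecture~\ref{conj:sings}, not proved. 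Your appeal to ``Orlov's localisation'' to match singularity categories across each flop is not a formal step: the singular loci of $Y^{\vee,(i)}$ and $Y^{\vee,(i-1)}$ differ precisely on the flopping locus, so one needs a local comparison of singularity categories along that locus, which is exactly the hard content the LG/VGIT route sidesteps. You flag this as ``the main obstacle,'' which is honest, but absent an argument there the proposal does not close; and the paper itself does not claim to have one, which is why Theorem~\ref{thm:Malter 2.23} is cited rather than reproved.
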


Recall that the category of singularites $\cD_{sg}(Y^\vee_i)$ is, by definition, the Verdier quotient of $\cD^b(Y^\vee_i)$ by the full subcategory $\text{Perf}(Y^\vee_i)$ of perfect objects. Then, an argument by Favero and Kelly \cite[Prop.~4.7]{Favero-Kelly} gives the above stated derived equivalence of $\widehat{Y}^\vee_{BB}$ and $\widehat{Y}^\vee_{LT}$, as a consequence of Theorem~\ref{thm:Malter 2.23}, without explicitly passing through effective desingularizations.

\begin{remark}\label{rem:noMalter}
  The comparison of Propositions \ref{prop:resol_W} and \ref{prop:resol_L}  is naturally reflected in a question to which Malter's results fail to give an answer.
  \begin{itemize}
    \item[(i)] Consider fans $\Theta_1\in\PSF(\L)$ and $\Theta_2\in\PSF(W)$ giving rise to small, $\Q$-factorial, projective resolutions
        $$\psi_1:\XX_1\to\XX\quad\text{and}\quad \phi_2:X_2\to X$$
        of $\XX$ and $X$, respectively. Let $Y^\vee_1=(\psi_1)^{-1}_*(Y^\vee_{BB})$ and $Y_2=(\phi_2)^{-1}_*(Y^\vee_{LT})$ be the associated strict transforms of generic $Y^\vee_{BB}\subset\XX$ and $Y^\vee_{LT}\subset X$, respectively.\\
        Is there an equivalence of their categories of singularities $$\cD_{sg}(Y_1^\vee)\stackrel{?}{\cong}\cD_{sg}(Y^\vee_2)$$
  \end{itemize}
  By \cite{Favero-Kelly}, this question can be reformulated in milder and more esplicite terms as follows:
  \begin{itemize}
    \item[(ii)] assume there exists a refinement $\Si\in\PSF(\widehat{W})$ of the fan of $X$, such that the associated toric resolution $\phi_\Si:\widehat{X}(\Si)\to X$ induces a desingularization $$\widehat{Y}^\vee_{LT}=(\phi_\Si)^{-1}_*(Y^\vee_{LT})\to Y^\vee_{LT}$$
        and consider the desingularization
        $$\widehat{Y}^\vee_{BB}=(\psi_{\Si'})^{-1}_*(Y^\vee_{BB})\to Y^\vee_{BB}$$
        induced by the choice of a refinement $\Si'\in\PSF(\widehat{\L})$ of the fan of $\XX$, with  associated toric resolution $\psi_{\Si'}:\widehat{\XX}(\Si')\to \XX$.

        \noindent Are $\widehat{Y}^\vee_{BB}$ and $\widehat{Y}^\vee_{LT}$ derived equivalent?
  \end{itemize}
  By HMS, at least the answer to (ii) should be affermative.
Notice that the existence of a smooth strict transform $\widehat{Y}^\vee_{LT}\subset\widehat{X}(\Si)$ is enough likely, as the six points blown up by $\phi:\XX\to X$ do not belong to $Y^\vee_{LT}$. If this assumption is satisfied then a derived equivalence $\cD^b(\widehat{Y}^\vee_{BB})\cong \cD^b(\widehat{Y}^\vee_{LT})$ cannot be deduced from Theorem~\ref{thm:Malter 2.23}. In other words, by \cite{Favero-Kelly}, Theorem~\ref{thm:Malter 2.23} actually shows that:
 \begin{itemize}
   \item[(iii)] two different desingularizations of $Y^\vee_{BB}\subset\XX$, induced by MPCP-resolutions of $\XX$, are derived equivalent.
 \end{itemize}
 \end{remark}

Anyway, an affermative answer to (ii) can be obtained by means of results by Kawamata \cite{Kawamata}. Let us first recall the definition of \emph{canonical covering stack}:

\begin{definition}[see Def.\,6.1 in \cite{Kawamata}]\label{def:K-stack} Let $Y$ be a normal quasiprojective variety whose canonical divisor $K_Y$ is a $\Q$-Cartier divisor. Each point $x\in Y$ has an open neighborhood $U_x$ such that $m_xK_Y$ is a principal Cartier divisor on $U_x$, for a minimum postive integer $m_x$\,. The \emph{canonical covering} $\pi_x:\widetilde{U}_x\to U_x$ is a finite morphism of degree $m_x$ from a normal variety which is \'{e}tale in codimension 1 and such that $K_{U_x}$ is a Cartier divisor. The canonical coverings are \'{e}tale locally uniquely determined, thus we can define the \emph{canonical covering stack $\mathcal{Y}$} as the stack above $Y$ given by the collection of canonical coverings $\pi_x:\widetilde{U}_x\to U_x$\,.
\end{definition}

\begin{theorem}\label{thm:Dequivalenza}
  For any choice $\Si\in\PSF(\widehat{W}),\,\Si'\in\PSF(\widehat{\L})$ consider the induced (possibly partial) desingularizations
  $$\widehat{Y}^\vee_{LT}=(\phi_\Si)^{-1}_*(Y^\vee_{LT})\to Y^\vee_{LT}\quad\text{and}\quad\widehat{Y}^\vee_{BB}=(\psi_{\Si'})^{-1}_*(Y^\vee_{BB})\to Y^\vee_{BB}$$
  and their canonical covering stacks $\cY,\,\cY'$. Then there exists an equivalence of triangulated categories
  \begin{equation*}
    \cD^b(\cY)\cong\cD^b(\cY')
  \end{equation*}
  between their derived categories of bounded complexes of coherent orbifold sheaves.
\end{theorem}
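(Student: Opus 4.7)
The strategy combines the birational link of Proposition~\ref{prop:birat3,3}, Theorem~\ref{thm:smallK&D-equivalenza}, and a stacky application of Kawamata's derived equivalence for $K$-equivalent threefolds \cite[Thm.~6.5]{Kawamata}.

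First I would fix a convenient pair of fans. For arbitrary $\Si_1, \Si_2 \in \PSF(\widehat{W})$, apply Theorem~\ref{thm:smallK&D-equivalenza} to the toric variety with fan matrix $\widehat{W}$: the resulting strict transforms of $Y^\vee_{LT}$ are $K$-equivalent by item (2) of that theorem, and for Calabi-Yau threefolds this lifts to $D$-equivalence of their canonical covering stacks (by item (4) applied to the smooth covers, or via the stacky extension discussed in the remark following Theorem~\ref{thm:smallK&D-equivalenza} and Conjecture~\ref{conj:BOKgen}). The analogous reduction holds for $\Si'_1, \Si'_2 \in \PSF(\widehat{\L})$. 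It therefore suffices to establish the statement for one specific well-chosen pair $(\Si_0, \Si'_0)$.

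Next I would choose a compatible pair. Since the columns of $W$ are six of those of $\L$, one has $\D_W \subseteq \D$, hence $\widehat{W} \subseteq \widehat{\L}$ as sets of primitive lattice vectors. Pick any $\Si_0 \in \PSF(\widehat{W})$ and build $\Si'_0 \in \PSF(\widehat{\L})$ as a projective simplicial refinement of $\Si_0$ obtained by inserting the six exceptional rays of $\phi$ (columns $1,2,3,10,11,12$ of $\L$), together with whatever further rays of $\widehat{\L} \setminus \widehat{W}$ are needed to preserve projectivity and simpliciality. The blow-up then lifts to a birational toric morphism $\widehat{\phi} : \widehat{\XX}(\Si'_0) \to \widehat{X}(\Si_0)$ and, by Proposition~\ref{prop:birat3,3}, restricts to a birational morphism $\widehat{\phi}|_{\widehat{Y}^\vee_{BB}} : \widehat{Y}^\vee_{BB} \to \widehat{Y}^\vee_{LT}$. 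The Calabi-Yau condition (the strict transforms have $\Q$-trivial canonical class by adjunction, as $\bb_1 + \bb_2 \sim -K_\XX$ on the Fano variety $\XX$ and $\cc_1 + \cc_2 \sim -K_X$ on the $\Q$-Fano variety $X$) forces this restricted birational morphism to be crepant in the sense of Definition~\ref{def:crepante}.

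Finally I would pass to canonical covering stacks and invoke Kawamata at the stacky level. The crepant birational morphism $\widehat{Y}^\vee_{BB} \to \widehat{Y}^\vee_{LT}$ induces a $K$-equivalence of the canonical covering stacks $\cY'$ and $\cY$ of Definition~\ref{def:K-stack}, which are three-dimensional smooth Deligne-Mumford stacks \'{e}tale-locally modelled on smooth threefold covers. Decomposing this $K$-equivalence into a sequence of flops between the smooth local covers, in the style of the proof of Theorem~\ref{thm:smallK&D-equivalenza}(3)-(4), and applying Kawamata's Theorem~6.5 in \cite{Kawamata} equivariantly to each local model, one patches Fourier-Mukai kernels across the covers to obtain the desired equivalence $\cD^b(\cY) \cong \cD^b(\cY')$ of derived categories of coherent orbifold sheaves.

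The principal obstacle is the crepancy analysis in the middle step together with its stacky lift: one must control the (possibly non-Gorenstein) singularity indices appearing along the strict transforms, guarantee that the local canonical covers transform coherently under the lifted blow-up $\widehat{\phi}$, and verify that Kawamata's flop-by-flop decomposition survives on the smooth Deligne-Mumford threefold covers. Once these compatibilities are established the proof follows as sketched.
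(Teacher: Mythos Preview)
Your approach is essentially correct but takes an unnecessarily long route compared with the paper, and the final step mischaracterises how Kawamata's result is invoked.

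The paper's proof is remarkably short: for \emph{arbitrary} $\Si\in\PSF(\widehat{W})$ and $\Si'\in\PSF(\widehat{\L})$, both $\widehat{Y}^\vee_{BB}$ and $\widehat{Y}^\vee_{LT}$ admit crepant birational morphisms down to the common target $Y^\vee_{LT}$, namely $\phi\circ\psi_{\Si'}$ and $\phi_\Si$ respectively. Crepancy is immediate from the Calabi--Yau condition. The fibred product $Z=\widehat{Y}^\vee_{BB}\times_{Y^\vee_{LT}}\widehat{Y}^\vee_{LT}$ then realises a crepant birational map $f:\widehat{Y}^\vee_{BB}\dashrightarrow\widehat{Y}^\vee_{LT}$ between three-dimensional projective varieties with at most canonical singularities (by \cite[Prop.~2.2.2,\,2.2.4]{Batyrev94}); Kawamata's \cite[Thm.~4.6]{Kawamata} decomposes $f$ into flops, and \cite[Thm.~6.5]{Kawamata} delivers the equivalence of canonical covering stacks directly.

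Your reduction to a specific compatible pair $(\Si_0,\Si'_0)$ and the construction of a lifted toric morphism $\widehat{\phi}$ are therefore superfluous: the paper never needs $\Si'$ to refine $\Si$, because it works over the common base $Y^\vee_{LT}$ rather than trying to build a direct morphism at the resolved level. More importantly, your description of ``applying Kawamata's Theorem~6.5 equivariantly to each local model'' and then ``patching Fourier--Mukai kernels across the covers'' misreads that theorem. Kawamata's \cite[Thm.~6.5]{Kawamata} is already a global statement at the level of canonical covering stacks of $\Q$-factorial threefolds with canonical singularities; one applies it once per flop in the decomposition, and the composite gives the global equivalence. No local patching argument is needed, and attempting one would be both harder and unmotivated.
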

\begin{corollary}\label{cor:D-equiv}
  If $\widehat{Y}^\vee_{LT}$ and $\widehat{Y}^\vee_{BB}$ are smooth, then there is an equivalence of triangulated categories $\cD^b(\widehat{Y}^\vee_{LT})\cong \cD^b(\widehat{Y}^\vee_{BB})$\,.
\end{corollary}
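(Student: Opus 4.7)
The plan is to reduce the statement to an instance of Kawamata's derived equivalence theorem for $K$-equivalent $\Q$-Gorenstein projective threefolds \cite[Thm.~6.5]{Kawamata}, after identifying $\widehat{Y}^\vee_{LT}$ and $\widehat{Y}^\vee_{BB}$ as two crepant (partial) resolutions of \emph{the same} singular Calabi--Yau threefold.

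First, I would verify that none of the six points blown up by $\phi:\XX\to X$ (listed in \S~\ref{sssez:birational_33}(1)) lies on a generic $Y^\vee_{LT}$: substituting each such Cox--coordinate vector into the defining equations $q_{1,\psi}$ and $q_{2,\psi}$ of Proposition~\ref{prop:LTdual_cubics} yields a nonzero constant. Combined with Proposition~\ref{prop:birat3,3}, this forces $\phi^{-1}(Y^\vee_{LT})$ to coincide scheme-theoretically with the strict transform $Y^\vee_{BB}$, and the restriction $\phi|_{Y^\vee_{BB}}:Y^\vee_{BB}\stackrel{\sim}{\to} Y^\vee_{LT}$ to be an isomorphism. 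Denote the resulting singular Calabi--Yau threefold by $Y_0$.

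Next I would show that both $\widehat{Y}^\vee_{LT}\to Y_0$ and $\widehat{Y}^\vee_{BB}\to Y_0$ (the latter composed with $\phi|_{Y^\vee_{BB}}$) are crepant $\Q$-Gorenstein partial resolutions. For the first, this follows from Proposition~\ref{prop:resol_W} and \cite[Cor.~3.1.7]{Batyrev94}. For the second, I would run the same adjunction computation already performed inside the proof of Theorem~\ref{thm:smallK&D-equivalenza}(2), applied to the anti-canonical partitioned framing $\bb=\bb_1+\bb_2$ on the Fano toric variety $\XX$. Forming the fibre product $\widetilde{Y}:=\widehat{Y}^\vee_{LT}\times_{Y_0}\widehat{Y}^\vee_{BB}$ and resolving its singularities as in Remark~\ref{rem:triangolo} produces a common top variety $Z$ with projections $f,g$ satisfying $f^*K_{\widehat{Y}^\vee_{LT}}\sim_\Q g^*K_{\widehat{Y}^\vee_{BB}}$, which is precisely the $K$-equivalence of Definition~\ref{def:crepante}.

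Finally, I would invoke \cite[Thm.~6.5]{Kawamata} to conclude the derived equivalence $\cD^b(\cY)\cong\cD^b(\cY')$ of the canonical covering stacks of two $K$-equivalent $\Q$-Gorenstein projective threefolds; Corollary~\ref{cor:D-equiv} then follows when the partial resolutions are actually smooth, as the covering stacks collapse to the underlying varieties. The main obstacle I anticipate is the compatibility between Kawamata's stacky machinery and the specific $\Q$-Gorenstein singularities left over by the MPCP resolution of the non-Gorenstein $\Q$-Fano $X$ on the LT side: one must check that the decomposition of the $K$-equivalence into a flop sequence lifts to the canonical covering stacks $\cY,\cY'$ so that each flop remains crepant at the stack level, but this is precisely the setting Kawamata's theorem was designed for.
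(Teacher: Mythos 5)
Your argument is correct and follows essentially the same route as the paper's: produce crepant birational morphisms from both $\widehat{Y}^\vee_{LT}$ and $\widehat{Y}^\vee_{BB}$ down to the same singular threefold ($Y^\vee_{LT}$, or what you call $Y_0$), decompose the resulting crepant birational map into a flop sequence via \cite[Thm.~4.6]{Kawamata}, and invoke \cite[Thm.~6.5]{Kawamata}. The paper reaches the same endpoint a bit more quickly: it asserts the crepancy of $\phi\circ\psi_{\Si'}$ restricted to $\widehat{Y}^\vee_{BB}$ directly from the Calabi--Yau property of both source and target (trivial canonical bundles make the pullback condition automatic), whereas you verify it by explicitly checking that the six blown-up centers avoid $Y^\vee_{LT}$ and then isolating the isomorphism $\phi|_{Y^\vee_{BB}}:Y^\vee_{BB}\to Y^\vee_{LT}$; your observation that $\widehat{Y}^\vee_{BB}\times_{Y^\vee_{LT}}\widehat{Y}^\vee_{LT}$ may need a further resolution before it satisfies the smoothness requirement in Definition~\ref{def:crepante} is a small refinement of the paper's wording, which cites the fiber product itself as the triangle's top. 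Neither difference changes the logic: once the crepant birational map between smooth projective Calabi--Yau threefolds is in hand, Kawamata's flop decomposition and derived-equivalence theorem close the argument exactly as you describe, and for smooth varieties the canonical covering stacks indeed reduce to the varieties themselves.
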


\begin{proof}[Proof of Thm.\,\ref{thm:Dequivalenza}] Recall we have crepant birational morphisms
\begin{equation*}
  \xymatrix{\widehat{Y}^\vee_{BB}\ar[dr]_-{\phi\circ\psi_{\Si'}}&&
  \widehat{Y}^\vee_{LT}\ar[dl]^-{\phi_\Si}\\
                &Y^\vee_{LT}&}
\end{equation*}
giving rise to a crepant birational map $f:\widehat{Y}^\vee_{BB}\dashrightarrow\widehat{Y}^\vee_{LT}$ between 3-dimensional projective varieties admitting at most canonical singularities, by \cite[Prop.~2.2.2,\,2.2.4]{Batyrev94}: the variety $Z$ in Definition~\ref{def:crepante} is given by $\widehat{Y}^\vee_{BB}\times_{Y^\vee_{LT}}\widehat{Y}^\vee_{LT}$. Then, by \cite[Thm.~4.6]{Kawamata}, $f$ decomposes into a sequence of flops (in the sense of \cite[Def.~4.5]{Kawamata}), and the statement follows immediately by applying \cite[Thm.~6.5]{Kawamata}.
\end{proof}
Notice that Theorem~\ref{thm:Dequivalenza} and Corollary~\ref{cor:D-equiv} represent the mirror theorem~\ref{thm:mirror} when considered with respect to the two mirrors $\widehat{Y}^\vee_{LT}$ and $\widehat{Y}^\vee_{BB}$\,.

\subsubsection{Intermediate mirrors of $Y_{3,3}\subset\P^5$}\label{sssez:mirrorintermedi3,3} Let us here taking into account the construction of intermediate mirror models for $d=3$, recalling \S~\ref{sssez:mirrorintermedi}.

 Starting from the $BB$-mirror and considering the set of exceptional rays in the fan of $\XX$, with respect to the blow up $\phi:\XX\to X$, namely generated by columns $1,2,3,10,11,12$ in the fan matrix $\L$, one can produce a number of further mirror models of the projective complete intersection $Y_{3,3}$, intermediate between the BB and the LT ones. Then, Proposition~\ref{prop:mirrorintermedi} admits the following analogue

 \begin{proposition}\label{prop:mirrorintermedi3,3}
   For any subset $A\subset\{1,2,3,10,11,12\}$ the complete toric variety $\XX^A$, whose fan matrix is the submatrix $\L^A$ of $\L$, is the blow up, say $\phi^A:\XX^A\to X$, in $6-|A|$ points of $X$. Calling $\bb^A=\bb_1^A+\bb_2^A$ the partitioned framing of $\XX^A$ obtained by removing entries indexed by $A$ from the $BB$ partitioned framing $\bb=\bb_1+\bb_2$ of $\XX$, one obtains a partitioned ftv $(\XX^A,\bb^A=\bb_1^A+\bb_2^A)$ whose f-dual partitioned ftv is $(\P^5,\aa=\aa_1+\aa_2)$. In particular, the family of \cy threefolds given by complete intersection $Y_{3,3}\subset\P^5$ is the f-mirror family of the family $Y^\vee_A=\mathcal{V}(p_{1,\psi}^A,p_{2,\psi}^A)$ with $p_{i,\psi}^A\in\Cox(\XX^A)=\C[x_1,\ldots,x_{12-|A|}]_{\Cl(\XX^A)}$.
 \end{proposition}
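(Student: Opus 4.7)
The plan is to mimic, uniformly in $A$, the procedure carried out for the two extremal cases $A=\emptyset$ (Proposition~\ref{prop:BBdualcubics}) and $A=\{1,2,3,10,11,12\}$ (Proposition~\ref{prop:LTdual_cubics}), exploiting the fact that $W$ is the submatrix of $\L$ given by the central six columns and that the six excluded columns generate the exceptional rays of $\phi:\XX\to X$ described in \S~\ref{sssez:birational_33}. The first task is to identify $\XX^A$ geometrically. Since $\L^A$ is obtained from $\L$ by deleting $|A|$ of the six exceptional columns, $\L^A$ contains $W$ as a submatrix plus the remaining $6-|A|$ exceptional rays; the fan of $\XX^A$ is then the star subdivision of the fan of $X$ along these $6-|A|$ rays, each of which corresponds, via Gale duality and the Cox quotient description recalled in \S~\ref{ssez:Cox}, to one of the six torus fixed points listed in \S~\ref{sssez:birational_33}(1). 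This yields the projective birational morphism $\phi^A:\XX^A\to X$ as a blow up at those $6-|A|$ points, and it factors the blow up $\phi$ as $\phi=\phi^A\circ\phi_A$ where $\phi_A:\XX\to\XX^A$ blows up the remaining $|A|$ points.

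Next I would run the f-duality algorithm (Algorithm~1.1.1 in \cite{R-fpCI}) on the partitioned ftv $(\XX^A,\bb^A=\bb_1^A+\bb_2^A)$. Set $V$ as in~(\ref{V5}). One computes the polytopes $\D_{\bb_k^A}\subset M_{\XX^A}\otimes\R$, observes that their convex hull is the restriction of $\cv{\D}_\bb$ obtained by removing the deleted vertices, and takes the integer hull. The point to check, and the one where I would invest the bulk of the computational effort, is the identity
\begin{equation*}
[\conv(\D_{\bb_1^A},\D_{\bb_2^A})]=\D_V=\conv\left(\begin{array}{cccccc}1&0&0&0&0&-1\\0&1&0&0&0&-1\\0&0&1&0&0&-1\\0&0&0&1&0&-1\\0&0&0&0&1&-1\end{array}\right),
\end{equation*}
so that the f-dual ambient variety is $\P^5$ for every $A$. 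This follows once one notices that the deleted columns are always among the vertices that lie outside the lattice simplex $\D_V$, and that after deletion the lattice points of the remaining convex hull continue to coincide with those of $\D_V$: indeed the columns of $V$ (up to torus action) already appear among the lattice points of $\conv(\D_{\cc_1},\D_{\cc_2})$ in the LT case treated in Proposition~\ref{prop:LTdual_cubics}, and enlarging the hull by adding a subset of the BB exceptional vertices cannot shrink its integer part.

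Once the ambient variety is identified, the dual partitioned framing is obtained from the block matrix $V^T\cdot\L^A$, which is precisely the submatrix of $V^T\cdot\L$ (computed inside the proof of Proposition~\ref{prop:BBdualcubics}) consisting of the rows indexed by the complement of $A$; it still displays the block form giving the same $\aa_1=(1,1,1,0,0,0),\,\aa_2=(0,0,0,1,1,1)$, since the deleted rows are among those whose contribution to $\aa$ is already absorbed by the remaining ones, and the nef-partition structure of $\aa$ on $\P^5$ is thus preserved. Finally, the Newton polytopes of $p_{k,\psi}^A$ are read off from the matrices $M_k^A=(\L^A)^T\cdot\L_{\bb_k^A}+B_k^A$ with $B_k^A=((\bb_k^A)^T,\ldots,(\bb_k^A)^T)$, in complete analogy with the BB and LT computations.

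The main obstacle is the uniform verification of the integer-hull identity above, since one cannot simply invoke either extremal case: for the $2^6-2=62$ proper intermediate subsets $A$ the convex hull $\conv(\D_{\bb_1^A},\D_{\bb_2^A})$ is a genuinely new polytope, neither the lattice hull $\D$ of the BB case nor the rational hull $\cv{\D}_\cc$ of the LT case. I would dispatch this either by the monotonicity argument sketched above (enlarging a rational polytope by lattice vertices already contained in its integer hull preserves the integer hull) or, when a clean abstract argument is not at hand, by a direct check handled by the \texttt{Convex}-based Maple routines mentioned in the acknowledgements, exactly as for the listed data of Appendix~\ref{app:A} in the warm up $d=2$ case.
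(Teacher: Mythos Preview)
Your overall plan is the same as the paper's: it simply says ``the proof is analogous to the one proving Proposition~\ref{prop:LTdual_cubics}'' and defers the explicit data to Appendix~\ref{app:C}. Your expansion of this into a concrete recipe (identify $\XX^A$ as a partial blow up, run Algorithm~1.1.1, verify the integer hull, read off the dual framing) is exactly right.

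There is, however, a genuine confusion in your polytope argument that you should straighten out. Removing columns indexed by $A$ from $\L$ means removing \emph{inequalities} defining $\D_{\bb_k}$, so $\D_{\bb_k}\subseteq\D_{\bb_k^A}\subseteq\D_{\cc_k}$ (the last inclusion because $W$ is always a submatrix of $\L^A$ and $\bb_k^A$ restricts to $\cc_k$). Hence
\[
\conv(\D_{\bb_1},\D_{\bb_2})\ \subseteq\ \conv(\D_{\bb_1^A},\D_{\bb_2^A})\ \subseteq\ \conv(\D_{\cc_1},\D_{\cc_2}),
\]
and taking integer parts gives $\D_V\subseteq[\conv(\D_{\bb_1^A},\D_{\bb_2^A})]\subseteq\D_V$ by Propositions~\ref{prop:BBdualcubics} and~\ref{prop:LTdual_cubics}. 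This sandwich is the clean, uniform justification you are after. Your phrasing (``enlarging the hull by adding a subset of the BB exceptional vertices'') has the direction reversed: adding exceptional rays in $N$ \emph{shrinks} the associated polytopes in $M$, not the other way around, and your aside about ``removing the deleted vertices from $\cv{\D}_\bb$'' is not the right picture either.

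Two smaller slips: $V^T\cdot\L^A$ is the submatrix of $V^T\cdot\L$ obtained by deleting \emph{columns} (not rows) indexed by $A$; and be careful that the columns of $W$ in (\ref{W_cubic}) correspond to columns $7,8,9,4,5,6$ of $\L$ (in that order), not simply $4$--$9$, so any bookkeeping of $\bb^A$ versus $\cc$ must respect this reindexing.
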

The proof is analogous to the one proving Proposition~\ref{prop:LTdual_cubics}. Clearly
$$Y^\vee_\emptyset=Y^\vee_{BB}\quad\text{and}\quad Y^\vee_{\{1,2,3,10,11,12\}}=Y^\vee_{LT}$$
but the remaining $2^6-2=62$ cases give distinct further mirror models of $Y_{3,3}$, all connected each other by means of non-calibrated $f$-processes. In particular, if $A$ is a proper subset of $\{1,2,3,10,11,12\}$ then $\XX^A$ is a non-Gorenstein $\Q$-Fano complete toric variety: therefore the Batyrev-Borisov duality does not apply to  $\XX^A$\,, so giving, for any proper $A$, a picture like that described in Remark~\ref{rem:f-picture} and in Fig.~\ref{Fig1}. In particular, Proposition~\ref{prop:mirrorintermedi3,3} give a proof of item (i) in Theorem~\ref{thm:metateorema}, for the complete intersection $Y_{3,3}$ and list of mirrors given by
\begin{equation*}
    \cM=\{Y^\vee_A\,|\,A\subseteq{\{1,2,3,10,11,12\}}\}
\end{equation*}

Appendix \ref{app:C} is devoted to collect data characterizing the 64 mirror models $(\XX^A,\aa^A=\aa_1^A+\aa_2^A)$, for $A\subseteq\{1,2,3,10,11,12\}$\,.

\subsubsection{$D$-equivalence and $K$-equivalence}
The proof of the following result goes exactly as the one proving the analogous (and included for $A=\emptyset$ and $A'=\{1,2,3,10,11,12\}$) Theorem~\ref{thm:Dequivalenza}.

\begin{theorem}\label{thm:Dequivalenza^A}
  Let $A,A'$ be two subsets of $\{1,2,3,10,11,12\}$ and let $\widehat{\L}^A$ be the matrix whose columns are given by all the primitive lattice points contained in $\conv(\L^A)\setminus\{\0\}$, and analogously for $\widehat{\L}^{A'}$.  For any choice $$\Si\in\PSF(\widehat{\L}^A)\ ,\quad\Si'\in\PSF(\widehat{\L}^{A'})$$
  consider the induced (possibly partial) desingularizations $$\widehat{Y}^\vee_{A}=(\psi_\Si)^{-1}_*(Y^\vee_{A})\to Y^\vee_{A}\ ,\quad\widehat{Y}^\vee_{A'}=(\psi_{\Si'})^{-1}_*(Y^\vee_{A'})\to Y^\vee_{A'}$$
  and their canonical covering stacks $\cY_A,\,\cY_{A'}$. Then there exists an equivalence of triangulated categories
  \begin{equation*}
    \cD^b(\cY_A)\cong\cD^b(\cY_{A'})
  \end{equation*}
  between their derived categories of bounded complexes of coherent orbifold sheaves.
\end{theorem}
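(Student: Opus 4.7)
The plan is to follow the proof of Theorem~\ref{thm:Dequivalenza} essentially verbatim, extending it from the two extreme cases $A=\emptyset$ (giving $Y^\vee_{BB}$) and $A=\{1,2,3,10,11,12\}$ (giving $Y^\vee_{LT}$) to arbitrary subsets $A,A'\subseteq\{1,2,3,10,11,12\}$, using $Y^\vee_{LT}\subset X$ as the common birational target for all intermediate mirrors.

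First, I would realize each intermediate model as sitting above the LT-mirror by a crepant birational morphism. Namely, for each such $A$ I consider the tower of toric birational morphisms
\begin{equation*}
\widehat{X}(\Si)\;\xrightarrow{\;\psi_\Si\;}\;\XX^A\;\xrightarrow{\;\phi^A\;}\;X,
\end{equation*}
where $\phi^A$ is the toric blow--up at the $6-|A|$ torus--fixed points of $X$ prescribed by Proposition~\ref{prop:mirrorintermedi3,3}, and $\psi_\Si$ is the MPCP partial resolution corresponding to the chosen $\Si\in\PSF(\widehat{\L}^A)$ (the analogue, for $\L^A$, of the statement of Proposition~\ref{prop:resol_L}). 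By the very argument used to establish Proposition~\ref{prop:birat3,3} and recalled in the last paragraph of \S~\ref{ssez:desing}, none of the six torus--fixed points of $X$ lies on a generic $Y^\vee_{LT}$; therefore the exceptional locus of $\phi^A$ does not meet the strict transform of $Y^\vee_{LT}$, the identification $\widehat{Y}^\vee_A=(\phi^A\circ\psi_\Si)^{-1}_*(Y^\vee_{LT})$ holds, and the restricted morphism
\begin{equation*}
(\phi^A\circ\psi_\Si)\big|_{\widehat{Y}^\vee_A}\colon\widehat{Y}^\vee_A\longrightarrow Y^\vee_{LT}
\end{equation*}
is a crepant birational morphism of 3-folds with at worst canonical singularities. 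Here the Calabi--Yau character of $Y_{3,3}$ is essential: it forces the discrepancies of the (ambiently non--crepant) morphism $\phi^A\circ\psi_\Si$ to vanish along the mirror by adjunction, exactly as in the argument entering the proof of Theorem~\ref{thm:smallK&D-equivalenza}(2).

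Applying this construction to both $A$ and $A'$ yields a commutative diagram
\begin{equation*}
\xymatrix{\widehat{Y}^\vee_A\ar[dr]\ar@{-->}[rr]^-{f}&&\widehat{Y}^\vee_{A'}\ar[dl]\\
&Y^\vee_{LT}&}
\end{equation*}
whose two outer arrows are crepant birational morphisms; hence $f$ itself is a crepant birational map between 3--dimensional projective varieties with canonical singularities. At this stage I would invoke, in exactly the same way as for Theorem~\ref{thm:Dequivalenza}, Kawamata's decomposition theorem \cite[Thm.~4.6]{Kawamata} to write $f$ as a finite composition of flops (in the sense of \cite[Def.~4.5]{Kawamata}), and then apply \cite[Thm.~6.5]{Kawamata} to each elementary flop to obtain equivalences of the bounded derived categories of coherent orbifold sheaves on the associated canonical covering stacks. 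Composing these equivalences gives the desired $\cD^b(\cY_A)\cong\cD^b(\cY_{A'})$.

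The main obstacle I anticipate lies in the crepancy verification of the previous paragraph, i.e.\ in confirming that the CY adjunction argument carries over uniformly across all $A$ despite the ambient morphism $\phi^A$ being non-crepant (since $\XX^A$ is $\Q$-Fano but not Gorenstein whenever $A\subsetneq\{1,2,3,10,11,12\}$). Concretely, one must check that the six points blown up by $\phi^A$ genuinely miss a generic $Y^\vee_{LT}$ for every $A$ (this is inherited from the case $A=\emptyset$ treated in Proposition~\ref{prop:birat3,3} via the factorization $\phi=\phi^A\circ(\XX\to\XX^A)$), and that the MPCP data encoded by $\widehat{\L}^A$ indeed restricts to a crepant desingularization on the Calabi--Yau mirror. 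Without the Calabi--Yau hypothesis on $Y_{3,3}$ only $K$-equivalence would survive, not $D$-equivalence, echoing the distinction made in \S~\ref{ssez:KvsD}.
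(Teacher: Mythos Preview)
Your proposal is correct and follows essentially the same approach as the paper: the paper states that the proof ``goes exactly as the one proving the analogous \ldots\ Theorem~\ref{thm:Dequivalenza}'', and you have faithfully reproduced that argument, routing both $\widehat{Y}^\vee_A$ and $\widehat{Y}^\vee_{A'}$ down to $Y^\vee_{LT}$ via crepant morphisms and then invoking Kawamata's flop decomposition \cite[Thm.~4.6]{Kawamata} together with \cite[Thm.~6.5]{Kawamata}. Your additional discussion of why the restricted morphisms are crepant (via the Calabi--Yau adjunction and the fact that the blown--up points miss $Y^\vee_{LT}$) makes explicit what the paper leaves implicit.
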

\begin{corollary}\label{cor:D-equiv_3,3}
  If $\widehat{Y}^\vee_{A}$ and $\widehat{Y}^\vee_{A'}$ are smooth, then there is an equivalence of triangulated categories $\cD^b(\widehat{Y}^\vee_{A})\cong\cD^b(\widehat{Y}^\vee_{A'})$\,.
\end{corollary}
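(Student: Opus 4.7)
The plan is to reduce the statement directly to Theorem~\ref{thm:Dequivalenza^A} by showing that the canonical covering stacks of a smooth $\Q$-Gorenstein variety collapse to the variety itself. If I can establish this reduction, the corollary becomes immediate.

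First, I would unwind Definition~\ref{def:K-stack} in the smooth case. Let $\widehat{Y}$ be any smooth variety in the setup (so $\widehat{Y}=\widehat{Y}^\vee_A$ or $\widehat{Y}^\vee_{A'}$). Smoothness implies that $K_{\widehat{Y}}$ is a Cartier divisor, hence at each point $x\in\widehat{Y}$ the minimum positive integer $m_x$ with $m_x K_{\widehat{Y}}$ Cartier on a neighborhood is simply $m_x=1$. Therefore the local canonical covering $\pi_x:\widetilde{U}_x\to U_x$ is the identity, and the stack $\cY$ obtained by gluing these canonical coverings is canonically isomorphic to the variety $\widehat{Y}$ itself (viewed as a trivial stack). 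Consequently
\begin{equation*}
  \cD^b(\cY_A)\cong\cD^b(\widehat{Y}^\vee_A),\qquad \cD^b(\cY_{A'})\cong\cD^b(\widehat{Y}^\vee_{A'}),
\end{equation*}
where on the right-hand side we have the ordinary bounded derived categories of coherent sheaves.

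With this identification in hand, I apply Theorem~\ref{thm:Dequivalenza^A}, which is the main input and provides an equivalence $\cD^b(\cY_A)\cong\cD^b(\cY_{A'})$ of derived categories of coherent orbifold sheaves. Composing this equivalence with the two identifications above yields the desired $\cD^b(\widehat{Y}^\vee_A)\cong\cD^b(\widehat{Y}^\vee_{A'})$.

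The statement is essentially a translation between stacks and schemes, and the only delicate point is the functoriality of the identification $\cD^b(\cY)\cong\cD^b(\widehat{Y})$ for smooth $\widehat{Y}$: one must be confident that the equivalence constructed in Theorem~\ref{thm:Dequivalenza^A} (via Kawamata's decomposition of the crepant birational map into flops and the application of \cite[Thm.~6.5]{Kawamata}) is truly compatible with the reduction from coherent orbifold sheaves to ordinary coherent sheaves in the smooth case. This compatibility is built into Kawamata's framework, since his result is already phrased so that it specialises to the classical derived equivalence of Bondal--Orlov--Bridgeland type when no stacky structure is present; so I expect no genuine obstacle, and the proof is essentially a one-line corollary.
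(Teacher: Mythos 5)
Your proof is correct and matches the argument the paper implicitly intends: smoothness forces $m_x=1$ at every point, so the canonical covering stack collapses to the variety itself, and the corollary follows by composing the resulting identifications with the equivalence from Theorem~\ref{thm:Dequivalenza^A}. The ``delicate point'' you raise at the end is a non-issue, since you are merely post-composing a fixed equivalence of categories with two categorical isomorphisms $\cD^b(\cY_A)\cong\cD^b(\widehat{Y}^\vee_A)$ and $\cD^b(\cY_{A'})\cong\cD^b(\widehat{Y}^\vee_{A'})$; no compatibility or functoriality of the flop decomposition is needed.
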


We can then prove the following

\begin{theorem}\label{thm:Kequivalenza3}
    Assume same hypothesis as in Theorem~\ref{thm:Dequivalenza^A}. Then $\widehat{Y}^\vee_A$ and $\widehat{Y}^\vee_{A'}$ are $K$-equivalent.
\end{theorem}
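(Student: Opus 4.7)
The plan is to construct a smooth common roof dominating both $\widehat{Y}^\vee_A$ and $\widehat{Y}^\vee_{A'}$ and to verify the crepancy condition of Definition~\ref{def:crepante} by a direct adjunction argument, mirroring the strategy of the proof of Theorem~\ref{thm:smallK&D-equivalenza}(2). The starting observation, extending Proposition~\ref{prop:birat3,3} to arbitrary $A\subseteq\{1,2,3,10,11,12\}$, is that the blow-down $\phi^A\colon\XX^A\to X$ restricts to an isomorphism $\phi^A|_{Y^\vee_A}\colon Y^\vee_A\stackrel{\cong}{\to} Y^\vee_{LT}$, since the torus-fixed points contracted by $\phi^A$ do not meet the generic $Y^\vee_{LT}$; the same holds for $A'$. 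Consequently both $\widehat{Y}^\vee_A$ and $\widehat{Y}^\vee_{A'}$ are (possibly partial) resolutions of the same Calabi--Yau threefold $Y^\vee_{LT}$.

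Next, I would work inside the common lattice $N\cong\Z^5$ in which both $\Si\in\PSF(\widehat{\L}^A)$ and $\Si'\in\PSF(\widehat{\L}^{A'})$ live as complete projective simplicial fans refining the fan of $X$. Choose a projective simplicial fan $\widetilde{\Si}$ with 1-skeleton $\Si(1)\cup\Si'(1)$ (augmented if necessary to achieve projectivity) that simultaneously refines $\Si$ and $\Si'$; such a $\widetilde{\Si}$ exists by standard toric refinement. The resulting projective $\Q$-factorial toric variety $\widetilde{\XX}:=X_{\widetilde{\Si}}$ comes with projective birational toric morphisms $p\colon\widetilde{\XX}\to\widehat{\XX}(\Si)$ and $q\colon\widetilde{\XX}\to\widehat{\XX}(\Si')$. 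Letting $\widetilde{Y}$ denote the strict transform of $Y^\vee_{LT}$ in $\widetilde{\XX}$ and $h\colon Z\to\widetilde{Y}$ a resolution of singularities, the compositions $f:=p|_{\widetilde{Y}}\circ h$ and $g:=q|_{\widetilde{Y}}\circ h$ provide birational morphisms from a smooth projective variety $Z$ to $\widehat{Y}^\vee_A$ and to $\widehat{Y}^\vee_{A'}$, as required by Definition~\ref{def:crepante}.

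To establish $f^*K_{\widehat{Y}^\vee_A}\sim_\Q g^*K_{\widehat{Y}^\vee_{A'}}$, I would reuse the adjunction computation of the proof of Theorem~\ref{thm:smallK&D-equivalenza}(2). The essential input is that the framings $\bb^A$ and $\bb^{A'}$ are both inherited from the all-ones vector $\bb=(\1_6,\1_6)$, so that $D_{\bb^A}\sim -K_{\XX^A}$ and analogously on $\XX^{A'}$; whence $Y^\vee_A$ and $Y^\vee_{A'}$ are both Calabi--Yau with $K\sim_\Q 0$. Decomposing each of the refinements $\widetilde{\Si}\to\Si$ and $\widetilde{\Si}\to\Si'$ into a chain of wall-crossings plus divisorial toric blow-ups (in analogy with the chain~\eqref{flop-catena}), and applying the adjunction identity $rK_{\widehat{Y}^{(l)}}\sim\sum_j(a_j-1)rD_j|_{\widehat{Y}^{(l)}}$ at each step, one would show that both pullbacks $f^*K_{\widehat{Y}^\vee_A}$ and $g^*K_{\widehat{Y}^\vee_{A'}}$ coincide on $Z$ with the same $\Q$-divisor, determined only by the ray data of $\widetilde{\Si}$ together with the all-ones framing.

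The main technical obstacle is the non-Gorenstein nature of $\XX^A$ when $A$ is a proper nonempty subset: Batyrev's MPCP crepancy theorem does not apply directly on the ambient, and the partial resolution $\widehat{Y}^\vee_A\to Y^\vee_A$ need not be crepant, so that $K_{\widehat{Y}^\vee_A}$ may be a nontrivial exceptional $\Q$-divisor. The argument nevertheless succeeds because the all-ones framing forces the adjunction discrepancy at each new ray to be computed purely from the decomposition of that ray inside the face fan of $\conv(W)$ (i.e.\ the fan of the common base $X$), so the two pullback computations --- one routed through $\widehat{\XX}(\Si)$, the other through $\widehat{\XX}(\Si')$ --- necessarily produce the same $\Q$-divisor on $Z$. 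Making this path-independence precise, by carefully tracking pullback coefficients at the rays in $\widetilde{\Si}(1)\setminus\Si(1)$ versus $\widetilde{\Si}(1)\setminus\Si'(1)$, is the only genuinely new bookkeeping step relative to the Gorenstein argument of Theorem~\ref{thm:smallK&D-equivalenza}.
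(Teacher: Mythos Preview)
Your overall architecture is sound and your opening observation that $\phi^A|_{Y^\vee_A}\colon Y^\vee_A\stackrel{\cong}{\to}Y^\vee_{LT}$ is correct (the six blown-up torus-fixed points miss $Y^\vee_{LT}$), but the paper organizes the argument differently and thereby sidesteps the discrepancy issue you flag as the main obstacle. Rather than building one common simplicial refinement of $\Si$ and $\Si'$ and then tracking pullback coefficients ray by ray, the paper first lifts both fans to the level of $\widehat{\L}^{A\cap A'}$ via stellar subdivision, obtaining $\widetilde{\Si},\widetilde{\Si}'\in\PSF(\widehat{\L}^{A\cap A'})$ with the \emph{same} $1$-skeleton; the induced divisorial blowups $\widetilde{\psi},\widetilde{\psi}'$ restrict to crepant morphisms on the embedded mirrors because every model in the picture is Calabi--Yau. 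The remaining birational equivalence $\vf\colon\widehat{\XX}^{A\cap A'}(\widetilde{\Si})\dashrightarrow\widehat{\XX}^{A\cap A'}(\widetilde{\Si}')$ is then a genuine s$\Q$m, and the wall-crossing plus fibred-product machinery of Theorem~\ref{thm:smallK&D-equivalenza} and Remark~\ref{rem:triangolo} applies verbatim to produce the common roof $Z$ and the relation $(\widetilde{\psi}\circ f)^*K_{\widehat{Y}^\vee_A}\sim_\Q(\widetilde{\psi}'\circ g)^*K_{\widehat{Y}^\vee_{A'}}$.

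Your concern that $K_{\widehat{Y}^\vee_A}$ might be a nontrivial exceptional $\Q$-divisor is largely misplaced in this Calabi--Yau setting: the ambients $\XX^A$ are $\Q$-Fano, their MPCP resolutions are crepant by definition (the ``C'' in MPCP), and with the all-ones framing $\bb^A$ the adjunction identity you invoke collapses to $K_{\widehat{Y}^\vee_A}\sim_\Q 0$ at every stage. So your proposed ``path-independence bookkeeping at the rays of $\widetilde{\Si}(1)\setminus\Si(1)$ versus $\widetilde{\Si}(1)\setminus\Si'(1)$'' is unnecessary, and as written it is not precise enough to stand as a proof. If you insist on your direct route, the honest way to close is simply to observe $f^*K_{\widehat{Y}^\vee_A}\sim_\Q 0\sim_\Q g^*K_{\widehat{Y}^\vee_{A'}}$ on $Z$; what the paper's $A\cap A'$ reduction buys is that this triviality becomes visible step by step, via a clean reduction to the already-proven s$\Q$m case, rather than through an appeal to a global vanishing.
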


\begin{proof}
  Start with the LT-mirror model $Y^\vee_{LT}\subset X$ and the choice of subsets $A$ and $A'$ in $\{1,2,3,10,11,12\}$, and consider the blowups
  \begin{equation*}
    \xymatrix{\XX^A\ar[dr]_-{\phi^A}&&\XX^{A'}\ar[dl]^-{\phi^{A'}}\\
                    &X&}
  \end{equation*}
  Recalling the construction of matrices $\widehat{\L}^A$ and $\widehat{\L}^{A'}$ given in Theorem~\ref{thm:Dequivalenza^A}, the choice of $\Si\in\PSF(\widehat{\L}^A)$ and $\Si'\in\PSF(\widehat{\L}^{A'})$ gives two further birational morphisms
  \begin{equation*}
    \xymatrix{\psi_\Si:\widehat{\XX}^A(\Si)\ar[r]&\XX^A}\ ,\ \xymatrix{\psi_{\Si'}:\widehat{\XX}^{A'}(\Si')\ar[r]&\XX^{A'}}
  \end{equation*}
  which, composed with the previous ones, give a commutative diagram of birational maps between toric varieties descending to give an analogous diagram between embedded mirror models
  \begin{equation*}
    \xymatrix{\widehat{\XX}^A(\Si)\ar@{-->}[rr]^-{\vf^A_{A'}}_-{\cong}
    \ar[dr]_-{\psi_\Si\circ\phi^A}&&\widehat{\XX}^{A'}(\Si')
    \ar[dl]^-{\psi_{\Si'}\circ\phi^{A'}}\\
                    &X&}\quad \Longrightarrow\quad
                    \xymatrix{\widehat{Y}^\vee_A\ar@{-->}[rr]^-{\vf^A_{A'}}_-{\cong}\ar[dr]_-{\psi_\Si\circ\phi^A}&&\widehat{Y}^\vee_{A'}
    \ar[dl]^-{\psi_{\Si'}\circ\phi^{A'}}\\
                    &Y^\vee_{LT}&}
  \end{equation*}
 Consider the blow up $\phi^{A\cap A'}:\XX^{A\cap A'}\to X$\,. Clearly $\XX^{A\cap A'}=\XX^A\times_X \XX^{A'}$. By stellar subdivision of fans
 \begin{itemize}
   \item $\Si$ with respect to the new rays generated by columns of $\widehat{\L}^{A\cap A'}$ indexed by $A'\setminus A$
   \item $\Si'$ with respect to the new rays generated by columns of $\widehat{\L}^{A\cap A'}$ indexed by $A\setminus A'$
 \end{itemize}
 one obtains two fans $\widetilde{\Si}$ and $\widetilde{\Si}'$ in $\PSF(\widehat{\L}^{A\cap A'})$ (for the details apply \S~3.4.1 and Lemma 4 in \cite{RT-Pic}) with birational maps
 \begin{equation*}
   \xymatrix{\widetilde{\psi}:\widehat{\XX}^{A\cap A'}\left(\widetilde{\Si}\right)\ar[r]&\widehat{\XX}^A(\Si)}\quad,
   \quad\xymatrix{\widetilde{\psi}':\widehat{\XX}^{A\cap A'}\left(\widetilde{\Si}'\right)\ar[r]&\widehat{\XX}^{A'}(\Si')}
 \end{equation*}
 which are both divisorial blowups restricting to crepant birational morphisms between the embedded mirror models,
 so giving the following commutative diagram of birational maps and morphisms:
 \begin{equation}\label{bir.lift}
   \xymatrix{\widetilde{Y}^\vee\ar[d]_-{\widetilde{\psi}}\ar@{-->}[r]^-{\vf}_-{\cong}&
   \widetilde{Y'}^\vee\ar[d]^-{\widetilde{\psi}'}\\
   \widehat{Y}^\vee_A\ar@{-->}[r]^-{\vf^A_{A'}}_-{\cong}&\widehat{Y}^\vee_{A'}}
 \end{equation}
 The birational equivalence $\vf:\widehat{\XX}^{A\cap A'}(\widetilde{\Si})\dashrightarrow\widehat{\XX}^{A\cap A'}(\widetilde{\Si}')$ is a s$\Q$m between $\Q$-factorial projective toric varieties: recall the proof of Theorem~\ref{thm:smallK&D-equivalenza} and Remark~\ref{rem:triangolo} and go on in a similar way. Namely, $\vf$ is obtained by a finite (non unique) sequence of wall-crossings and so it is a finite sequence of flops:
 \begin{equation*}
   \exists\ s\in\N\ :\ \vf^A_{A'}=\vf_1\circ\cdots\circ\vf_s
 \end{equation*}
 For any $i=1,\ldots,s$\,, $\vf_i$ either is the identity or replaces one facet $\tau_i$, between neighboring maximal cones of the fan $\widetilde{\Si}$, with a different facet $\tau'_i$ between neighboring maximal cones of the fan $\widetilde{\Si}'$. Then, there is a chain of commutative diagrams
  \begin{equation*}
  \resizebox{1\hsize}{!}{$
    \xymatrix{&\XX_s\ar[dl]^-{\text{\ blowups of}\ \tau_s\cap\tau'_s}\ar[dr]&&\cdots\ar[dl]\ar[dr]&&\XX_1\ar[dl]^-{\text{\ blowups of}\ \tau_1\cap\tau'_1}\ar[dr]&\\
    \widehat{\XX}^{A\cap A'}(\widetilde{\Si})\ar[rr]^-{\vf_s}\ar[rd]_{\text{contract}\,\tau_s}&&
    \widehat{\XX}^{A\cap A'}(\widetilde{\Si}_{s-1})\ar[dl]^{\text{contract}\,\tau'_s}\ar[r]^-{\vf_{s-1}}
    \ar[dr]&\cdots\ar[r]^-{\vf_{2}}&
    \widehat{\XX}^{A\cap A'}(\widetilde{\Si}_{1})\ar[dl] \ar[rd]_{\text{contract}\,\tau_1} \ar[rr]^-{\vf_1}\ar[rd]&&\widehat{\XX}^{A\cap A'}(\widetilde{\Si}')\ar[dl]^{\text{contract}\,\tau'_1}\\
             &V_s&&\cdots&&V_1&        }$}
  \end{equation*}
  where $\XX_i=\widehat{\XX}^{A\cap A'}(\widetilde{\Si}_i)\times_{V_i}\widehat{\XX}^{A\cap A'}(\widetilde{\Si}_{i-1})$, for $i=1,\ldots,s$, and
  $$\widetilde{\Si}=\widetilde{\Si}_s\ ,\ \widetilde{\Si}_{s-1}\ ,\ \ldots\ ,\ \widetilde{\Si}_1\ ,\ \widetilde{\Si}_0=\widetilde{\Si}'$$
  is a sequence of intermediate simplicial fans.  By taking successive fibred products
   $$\XX_i\times_{\widehat{\XX}^{A\cap A'}(\widetilde{\Si}_{i})}\XX_{i-1}$$
   and going on in this way, one finally obtains a dominant toric variety $\widehat{\XX}$ with birational morphisms $f$ and $g$
  \begin{equation*}
    \xymatrix{&\widehat{\XX}\ar[dr]^g\ar[dl]_f&\\
               \widehat{\XX}^{A\cap A'}(\widetilde{\Si})\ar[d]_-{\widetilde{\psi}}\ar@{-->}[rr]^-{\vf}_-{\cong}&&
                \widehat{\XX}^{A\cap A'}(\widetilde{\Si}')\ar[d]_-{\widetilde{\psi}'}\\
               \widehat{\XX}^A(\Si)\ar@{-->}[rr]^-{\vf^A_{A'}}_-{\cong}
    &&\widehat{\XX}^{A'}(\Si') }
  \end{equation*}
  Let us now restrict this picture to embedded mirror models $\widehat{Y}^\vee_A$ and $\widehat{Y}^\vee_{A'}$ and consider the birational transform $(\widetilde{\psi}\circ f)^{-1}_*(\widehat{Y}^\vee_A)=Z=(\widetilde{\psi}'\circ g)^{-1}_*(\widehat{Y}^\vee_{A'})$\,.

   Then we get the diagram
     \begin{equation*}
    \xymatrix{&Z\ar[dl]_{\widetilde{\psi}\circ f}\ar[dr]^{\widetilde{\psi}'\circ g}&\\
                \widehat{Y}^\vee_A\ar@{-->}[rr]^-{\vf^A_{A'}}_-{\cong}
                &&\widehat{Y}^\vee_{A'}
                    }
  \end{equation*}
  with
  $$(\widetilde{\psi}\circ f)^*K_{\widehat{Y}^\vee_A}\sim_\Q  (\widetilde{\psi}'\circ g)^*K_{\widehat{Y}^\vee_{A'}}$$
\end{proof}

\begin{remark}
  Notice that previous Theorems~\ref{thm:Dequivalenza^A} and \ref{thm:Kequivalenza3} shows Theorem~\ref{thm:metateorema} for $\cM=\{Y^\vee_A\,|\,A\subseteq\{1,2,3,10,11,12\}\}$\,.
\end{remark}

Recalling Malter Theorem~\ref{thm:Malter 2.23} and question (i) in Remark~\ref{rem:noMalter}, the following statement implies a clear rewriting of Conjecture~\ref{conj:sing} in the present setup.

\begin{conjecture}\label{conj:sings}
Given $A,A'\subseteq\{1,2,3,10,11,12\}$ assume same hypothesis as in Theorem~\ref{thm:Dequivalenza^A}. Consider the two fans $\widetilde{\Si},\widetilde{\Si}'\in\PSF(\widehat{\L}^{A\cap A'})$ constructed by stellar subdivision of $\Si\in\PSF(\widehat{\L}^A)$ and $\Si'\in\PSF(\widehat{\L}^{A'})$, respectively, and the induced diagram of birational maps (\ref{bir.lift}). Call $\widetilde{\cY}$ and $\widetilde{\cY}'$ the canonical covering stacks of $\widetilde{Y}^\vee$ and $\widetilde{Y'}^\vee$, respectively. Then there exist equivalences of triangulated categories
   \begin{eqnarray*}
    \cD^b\left(\widetilde{\cY}\right)&\cong&\cD^b\left(\widetilde{\cY}'\right)\\
    \cD_{sg}\left(\widetilde{Y}^\vee\right)&\cong&\cD_{sg}\left(\widetilde{Y'}^\vee\right)
  \end{eqnarray*}
  In particular, if both $\widetilde{Y}^\vee$ and $\widetilde{Y'}^\vee$ are smooth, then there is an equivalence of triangulated categories $\cD^b(\widetilde{Y}^\vee)\cong\cD^b(\widetilde{Y'}^\vee)$\,.
\end{conjecture}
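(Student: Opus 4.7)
The plan is to reduce the statement to Theorem~\ref{thm:smallK&D-equivalenza} applied with fan matrix $V := \widehat{\L}^{A\cap A'}$. By construction both $\widetilde{\Si}$ and $\widetilde{\Si}'$ lie in $\PSF(\widehat{\L}^{A\cap A'})$, so the $\Q$-factorial projective toric varieties $\widehat{\XX}^{A\cap A'}(\widetilde{\Si})$ and $\widehat{\XX}^{A\cap A'}(\widetilde{\Si}')$ share the same $1$-skeleton and differ only by a small $\Q$-factorial modification. The partitioned framings induced by pullback from $\XX^{A\cap A'}$ are supported on this shared $1$-skeleton, so $\widetilde{Y}^\vee$ and $\widetilde{Y'}^\vee$ arise as generic complete intersections cut out by the pullbacks of the \emph{same} partitioned framing. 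Item~(2) of Theorem~\ref{thm:smallK&D-equivalenza} then records the $K$-equivalence already implicit in Theorem~\ref{thm:Kequivalenza3}.

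For the equivalence $\cD^b(\widetilde{\cY})\cong \cD^b(\widetilde{\cY}')$, I would argue exactly as in the proof of Theorem~\ref{thm:Dequivalenza}. Since $Y_{3,3}$ is Calabi-Yau and the ambient s$\Q$m is crepant, $\widetilde{Y}^\vee$ and $\widetilde{Y'}^\vee$ are three-dimensional projective Calabi-Yau varieties with at worst canonical singularities (by \cite[Prop.~2.2.2,~2.2.4]{Batyrev94} applied to the toric ambients, combined with the adjunction computation carried out in the proof of Theorem~\ref{thm:smallK&D-equivalenza}(2)); the induced birational map $\vf:\widetilde{Y}^\vee\dashrightarrow\widetilde{Y'}^\vee$ is crepant by the same computation. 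Invoking \cite[Thm.~4.6]{Kawamata} to decompose $\vf$ into a finite sequence of Kawamata flops and then \cite[Thm.~6.5]{Kawamata} yields the desired equivalence of derived categories of bounded complexes of coherent orbifold sheaves on the canonical covering stacks. The smooth case is an immediate corollary, since $\widetilde{\cY}$ and $\widetilde{\cY}'$ then collapse to $\widetilde{Y}^\vee$ and $\widetilde{Y'}^\vee$ themselves.

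The harder half is the equivalence $\cD_{sg}(\widetilde{Y}^\vee)\cong \cD_{sg}(\widetilde{Y'}^\vee)$ of singularity categories. My approach would exploit the roof construction inside the proof of Theorem~\ref{thm:Kequivalenza3}: a finite chain of fibered products of the intermediate ambient toric varieties $\widehat{\XX}^{A\cap A'}(\widetilde{\Si}_i)$ yields a dominating $\Q$-factorial toric variety $\widetilde{\XX}$ with birational morphisms $f,g$ to $\widehat{\XX}^{A\cap A'}(\widetilde{\Si})$ and $\widehat{\XX}^{A\cap A'}(\widetilde{\Si}')$; the common strict transform $Z\subset \widetilde{\XX}$ of a generic complete intersection then provides a Calabi-Yau roof with crepant projections to $\widetilde{Y}^\vee$ and $\widetilde{Y'}^\vee$ satisfying $f^*K_{\widetilde{Y}^\vee}\sim_\Q g^* K_{\widetilde{Y'}^\vee}$. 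An iterated application of \cite[Prop.~4.7]{Favero-Kelly} across each individual toric wall-crossing $\vf_i$, combined with Orlov's identification of the singularity category with a stable category of matrix factorizations, should then transport the equivalence flop-by-flop through the sequence.

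The principal obstacle is precisely this last step. Favero--Kelly handle very specific toric setups and Orlov--Kuznetsov-type theorems handle the Gorenstein or complete intersection case cleanly, but a single toric flop between $\Q$-factorial Calabi-Yau threefolds with non-Gorenstein canonical singularities may alter the perfect subcategory $\mathrm{Perf}$ in a delicate way. Controlling this requires a careful local analysis of the affine toric charts participating in each wall-crossing $\vf_i$, together with verifying that the canonical covering stack structure of Definition~\ref{def:K-stack} is compatible with $f_*$ and $g^*$ on each flopped pair; this is precisely the technical gap that keeps the statement conjectural rather than provable outright.
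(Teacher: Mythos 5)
The statement you are trying to prove is labelled a \emph{Conjecture} in the paper, and the paper offers no proof of it; the only ground covered there is the remark, appended immediately after the statement, that Theorem~\ref{thm:Malter 2.23} settles the case $A=A'=\emptyset$. You are therefore right to have stopped short of claiming a complete argument, and your last paragraph correctly identifies the reason the statement remains open. Two further observations are worth recording. First, your claim that the stacky equivalence $\cD^b(\widetilde{\cY})\cong\cD^b(\widetilde{\cY}')$ should follow from the Kawamata machinery (canonical singularities, crepant roof over $Y^\vee_{A\cap A'}$, decomposition into flops via \cite[Thm.~4.6, 6.5]{Kawamata}) is consistent with what the paper does in Theorems~\ref{thm:Dequivalenza}, \ref{thm:Dequivalenza^A} and \ref{thm:D}; so the genuinely conjectural content is concentrated, as you say, in the singularity-category half and in the compatibility between the two halves predicted by Conjecture~\ref{conj:sing}. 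Second, the tool you propose for that half (iterating \cite[Prop.~4.7]{Favero-Kelly} wall by wall, plus Orlov's matrix-factorization dictionary) is a slightly different route from the one the paper actually deploys in the closest proved analogue, Theorem~\ref{thm:Malter-type}, which relies on Herbst--Walcher \cite[Thm.~3]{HW}. There the argument succeeds precisely because the two fans $\Si',\Si''$ live in $\PSF(\L_\aa)$ for a \emph{single} fan matrix, so the corresponding chambers sit in one secondary fan; in the present $Y_{3,3}$ setting, $\widetilde{\Si}$ and $\widetilde{\Si}'$ arise by stellar subdivision from fans built on the distinct matrices $\widehat{\L}^A$ and $\widehat{\L}^{A'}$, and it is exactly this mismatch—rather than the non-Gorenstein flop geometry per se—that prevents a direct application of either the Herbst--Walcher or the Favero--Kelly statement. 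Naming that fan-matrix mismatch as the precise obstruction would sharpen your last paragraph and bring it fully in line with the paper's own presentation of why the statement is left open.
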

Notice that Theorem~\ref{thm:Malter 2.23} proves this conjecture when $A=A'=\emptyset$.

 \section{The complete intersection $Y_{d,d}\subset\P^{2d-1}$}\label{sez:d,d}
 After the warm up given by previous \S~\ref{ssez:quadrics} and \S~\ref{ssez:cubics}, we can now start with a first generalization on the degree/dimension $d$, to study the family of complete intersections of bi-degree $(d,d)$ in $\P^{2d-1}$, whose generic element gives a smooth \cy $(2d-3)$-fold $Y\subset\P^{2d-1}$. Unfortunately, as Malter observes in \cite[Rem.~4.9]{Malter}, for $d\ge 4$ the Libgober-Teitelbaum mirror $Y^\vee_{LT}=\mathcal{V}(q_{1,\psi},q_{2,\psi})$ is no more quasi-smooth. In fact,
 \begin{equation*}
   q_{1,\psi}=\sum_{i=1}^d x_i^d + \psi\prod_{j=1}^d x_{d+j}\quad,\quad q_{2,\psi}=\sum_{i=1}^d x_{d+i}^d + \psi\prod_{j=1}^d x_{j}
 \end{equation*}
 so giving that, for instance, $(0,\ldots,0,x_{d+3},\ldots,x_{2d})$ with $\sum_{k=d+3}^{2d} x_k^d=0$ is a singular point of $Y^\vee_{LT}$. For this reason, Malter's argument does no more work to prove a generalization on $d$ of Theorem~\ref{thm:Malter 2.23}. Moreover, both $Y$ and its mirrors are no more 3-dimensional, so that Kawamata results in \cite{Kawamata} do not more hold to prove the conjectured $D$-equivalence of $\widehat{Y}_{LT}$ and $\widehat{Y}_{BB}$.

  Nevertheless, the geometric argumentation described above for $2\le d \le 3$, still holds for $d\ge 4$ allowing us to conclude the following

 \begin{theorem}\label{thm:CY_d,d}
   For any positive integer $d\ge 2$, the family of \cy projective complete intersections $Y_{d,d}\subset\P^{2d-1}$ admits two mirror families given by
   \begin{itemize}
     \item the one obtained by Batyrev-Borisov duality and given by a suitable resolution $\widehat{Y}^\vee_{BB}$ of
         \begin{equation*}
           Y^\vee_{BB}=\mathcal{V}(p_{1,\psi},p_{2,\psi})\ \text{with}\ \left\{\begin{array}{cc}
                                                                        p_{1,\psi}= & \sum_{i=1}^d x_i^dx_{2d+i}^d +\psi\prod_{j=1}^{2d}x_j\\
                                                                        p_{2,\psi}= & \sum_{i=1}^d x_{d+i}^dx_{3d+i}^d +\psi\prod_{j=1}^{2d}x_{2d+j}
                                                                      \end{array}\right.
         \end{equation*}
         embedded in a $(2d-1)$-dimensional complete toric variety $\XX$ of Picard number $2d+1$,
     \item the one obtained by a generalized Libgober-Teitelbaum construction and given by a suitable resolution $\widehat{Y}^\vee_{LT}$ of
         \begin{equation*}
           Y^\vee_{LT}=\mathcal{V}(q_{1,\psi},q_{2,\psi})\ \text{with}\ \left\{\begin{array}{cc}
                                                                        q_{1,\psi}= & \sum_{i=1}^d x_i^d + \psi\prod_{j=1}^d x_{d+j}\\
                                                                        q_{2,\psi}= & \sum_{i=1}^d x_{d+i}^d + \psi\prod_{j=1}^d x_{j}
                                                                      \end{array}\right.
         \end{equation*}
         embedded in a suitable quotient $X=\P^{2d-1}/G$, by the action of a finite group $G\cong \Tors(\Cl(X))$.
   \end{itemize}
      They give two partitioned framed toric varieties, $(\XX,\bb=\bb_1+\bb_2)$ and $(X,\cc=\cc_1+\cc_2)$, respectively, with $\bb_1=(\1_{2d},\0_{2d}),\bb_2=(\0_{2d},\1_{2d}),\cc_1=(\1_d,\0_d),\cc_2=(\0_d,\1_d)$\,, connected by a non-calibrated $f$-process
      \begin{equation*}
        \xymatrix{&(\P^{2d-1},\aa=\aa_1+\aa_2)\ar@{<~>}[dr]^-{\hskip 2truecm\text{calibrated $f$-process $=$ BB-duality}}&\\
                    (X,\cc=\cc_1+\cc_2)\ar@{~>}[rr]_
                    -{\text{non-calibrated $f$-process}}\ar@{~>}[ru]^-{\text{$f$-duality}\quad}&&(\XX,\bb=\bb_1+\bb_2)}
      \end{equation*}
      where $\aa_1=(\1_d,\0_d)$ and $\aa_2=(\0_d,\1_d)$ (recall notation given in \S~\ref{ssez:vettori}).\\
       Moreover, $\XX$ turns out to be the blow up of $X$ in the $2d$ distinct points $$P_i=[0:\cdots:0:\overbrace{1}^i:0:\cdots:0]\ ,\quad 1\le i\le 2d$$
       say $\phi:\XX\to X$, and $Y^\vee_{BB}$ is the strict transform of $Y^\vee_{LT}$ under $\phi$.
 \end{theorem}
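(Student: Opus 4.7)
The plan is to generalize verbatim, in three parallel steps, the explicit computations performed for $d=2$ in \S~\ref{ssez:quadrics} and for $d=3$ in \S~\ref{ssez:cubics}. Crucially, none of those arguments used the dimensional constraints $d\le 3$: they rest on algorithms that run for arbitrary $d\ge 2$.

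\textbf{BB side.} First I would run Algorithm~1.1.1 of \cite{R-fpCI} on the nef partitioned ftv $(\P^{2d-1},\aa=\aa_1+\aa_2)$. The framing polytopes $\D_{\aa_1}$ and $\D_{\aa_2}$ generalize, with a transparent block pattern, those exhibited for $d=3$; their convex hull $\D$ has $4d$ vertices, giving the $(2d-1)\times 4d$ fan matrix $\L$ of $\XX$. The product $\L^T\cdot V$, with $V$ the standard fan matrix of $\P^{2d-1}$, then produces the dual framing $\bb=\bb_1+\bb_2$, and the Newton polytopes $\D_{\bb_i}$ computed as the columns of $\L^T\cdot\L_{\bb_i}+B_i$ encode the exponents of $p_{1,\psi},p_{2,\psi}$. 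The count $4d-(2d-1)=2d+1$ fixes the Picard number.

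\textbf{LT side.} Next I would define $X$ by writing down its fan matrix $W$ as the natural $(2d-1)\times 2d$ block generalization of the matrices used for $d=2,3$: the upper-left $d\times d$ block is $dI_d$, the lower-right $(d-1)\times d$ block is the first $d-1$ rows of $dI_d$, and the two off-diagonal blocks are filled with $-1$'s. The Smith normal form of $W$ extracts the torsion $G=\Tors(\Cl(X))$ and exhibits $X=\P^{2d-1}/G$ together with its action in Cox coordinates. Applying $f$-duality to $(X,\cc=\cc_1+\cc_2)$ is then algorithmic: the integer part $[\conv(\D_{\cc_1},\D_{\cc_2})]$ collapses to the standard simplex $\D_V$, so the $f$-dual ambient variety is $\P^{2d-1}$; the product $V^T\cdot W$ returns $\aa=\aa_1+\aa_2$, and the Newton polytopes $[\D_{\cc_i}]$ produce the claimed Laurent polynomials $q_{1,\psi},q_{2,\psi}$.

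\textbf{Blow-up and strict transform.} A columnwise inspection shows that the $2d$ central columns of $\L$ reproduce exactly the columns of $W$, while the remaining $2d$ columns generate new rays. This is the combinatorial signature of a toric blow-up $\phi:\XX\to X$ contracting $2d$ exceptional divisors onto the torus-fixed points $P_1,\ldots,P_{2d}$. For the strict transform assertion I would repeat the Cox-coordinate argument of Proposition~\ref{prop:birat3,3}: let $\widehat{\phi}$ be the global lifting induced by projection onto the central $2d$ Cox coordinates. On the Zariski open subset $U\subset Y^\vee_{BB}$ where the $2d$ ``exceptional'' coordinates $x_1,\ldots,x_d,x_{3d+1},\ldots,x_{4d}$ are simultaneously nonzero, the torus part of the weight matrix $Q_\L$ supplies enough parameters $\lambda_k$ to normalize all of them to $1$; the defining polynomials $p_{i,\psi}$ then collapse to $q_{i,\psi}$ in the central variables, giving $\phi(U)=Y^\vee_{LT}$ and hence $Y^\vee_{BB}=\overline{U}=\phi^{-1}_*(Y^\vee_{LT})$. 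The only serious obstacle is the linear-algebra bookkeeping in this last step: one must check that the $\lambda_k$'s can indeed be solved for as (possibly $d$-th root) monomials in the chosen exceptional Cox coordinates, and that the descent to the quasi-tori is compatible with both quotients. For $d=3$ this appeared as the cube root $\lambda_6^3=x_1^2x_2^2x_3^2x_{10}x_{11}^{-2}x_{12}$; for general $d$ one expects an analogous $d$-th-root identity, readable off the structure of $Q_\L$, which is a routine check rather than a geometric obstruction.
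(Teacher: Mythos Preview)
Your proposal is correct and follows essentially the same approach as the paper: generalize the explicit block-matrix computations of \S\ref{ssez:quadrics}--\S\ref{ssez:cubics} to arbitrary $d$, running the $f$-duality algorithm on both $(\P^{2d-1},\aa)$ and $(X,\cc)$ and then reading off the blow-up from the inclusion of $W$ as the central $2d$ columns of $\L$. Two minor remarks: (i) your verbal description of the block structure of $W$ has the diagonal and off-diagonal blocks swapped relative to the paper's convention (the paper puts $-\1_{d,d}$ upper-left and $dI_d$ upper-right, since $W$ is literally the middle columns of $\L$); (ii) for the strict-transform step the paper is actually less detailed than you propose---it simply asserts that the Cox-coordinate argument of Proposition~\ref{prop:birat3,3} ``obviously extends'' to $d\ge 4$ without writing out the $\lambda_k$ normalization, so your more careful treatment would in fact strengthen the exposition.
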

 \begin{proof}
   Ideas are the same as for the previous cases $d=2,3$. Start with polytopes associated with $D_{\aa_i}$, that are
    \begin{equation*}
      \D_{\aa_1}=\conv\left(
                        \begin{array}{ccccc}
                        &\vline&&&\\
                          d\,I_d-\1_{d,d}&\vline &  & -\1_{d,d}& \\
                          &\vline&&&\\
                          \hline
                          &\vline&&\vline&\\
                          \0_{d-1,d} & \vline & d\,I_{d-1} & \vline & \0_{d-1}^T \\
                          &\vline&&\vline&\\
                        \end{array}
                      \right)
    \end{equation*}
    \begin{equation*}
      \D_{\aa_2}=\conv\left(
                        \begin{array}{ccccc}
                        &\vline&&&\\
                          d\,I_d&\vline &  & \0_{d,d}& \\
                          &\vline&&&\\
                          \hline
                          &\vline&&\vline&\\
                          -\1_{d-1,d} & \vline & d\,I_{d-1}-\1_{d-1,d-1} & \vline & -\1_{d-1}^T \\
                          &\vline&&\vline&\\
                        \end{array}
                      \right)
    \end{equation*}
  giving $\D=\conv(\D_{\aa_1},\D_{\aa_2})$. Calling $\L$ the fan matrix of $\XX$, that is the matrix whose columns are given by vertices of $\D$, and $V$ the fan matrix of $\P^{2d-1}$,
  then
  \begin{equation*}
    \L^T\cdot V=\left(\begin{array}{ccc}
    &\vline&\\
       d\,I_d-\1_{d,d}&\vline &\0_{d,d} \\
       &\vline&\\
        \hline
        &\vline&\\
     -\1_{d,d}&\vline&d\,I_d\\
     &\vline&\\
        \hline
        &\vline&\\
        d\,I_d& \vline&-\1_{d,d}\\
        &\vline&\\
        \hline
        &\vline&\\
         \0_{d,d}&\vline& d\,I_d-\1_{d,d}\\
         &\vline&\\
        \end{array}
      \right)
  \end{equation*}
  so giving $\bb_1$ and $\bb_2$ as in the statement. Therefore
  \begin{equation*}
      \L_{\bb_1}=\left(\begin{array}{cccc}
      &&\vline&\\
           &I_d&\vline&\0^T_d  \\
           &&\vline&\\
           \hline
           &&\vline&\\
           &\0_{d-1,d}&\vline&\0^T_{d-1}\\
           &&\vline&
      \end{array}
      \right)\ ,\quad\L_{\bb_2}=\left(\begin{array}{cccccc}
      &&\vline&&\vline&\\
           &\0_{d,d-1}&\vline&-\1^T_d&\vline&\0^T_d  \\
           &&\vline&&\vline&\\
           \hline
           &&\vline&&\vline&\\
           &I_{d-1}&\vline&-\1^T_{d-1}&\vline&\0^T_{d-1}\\
           &&\vline&&\vline&
      \end{array}
      \right)
  \end{equation*}
  and calling $M_i=\L^T\cdot\L_{\bb_i}+B_i$, with $B_i=(\bb_i^T,\cdots,\bb_i^T)$, one obtains
  \begin{equation*}
    M_1=\left(\begin{array}{ccc}
    &\vline&\\
       d\,I_d&\vline &\1_d^T\\
       &\vline&\\
        \hline
        &\vline&\\
     \0_{d,d}&\vline&\1_d^T\\
     &\vline&\\
        \hline
        &\vline&\\
        d\,I_d& \vline&\0_{d}^T\\
        &\vline&\\
        \hline
        &\vline&\\
         \0_{d,d}&\vline& \0_{d}^T\\
         &\vline&\\
        \end{array}
      \right)\ ,\quad M_2=\left(\begin{array}{ccc}
      &\vline&\\
       \0_{d,d}&\vline& \0_{d}^T \\
       &\vline&\\
        \hline
        &\vline&\\
        d\,I_d& \vline&\0_{d}^T\\
        &\vline&\\
        \hline
        &\vline&\\
        \0_{d,d}&\vline&\1_d^T\\
        &\vline&\\
        \hline
        &\vline&\\
         d\,I_d&\vline &\1_d^T \\
         &\vline&\\
        \end{array}
      \right)
  \end{equation*}
so that $p_{1,\psi}$ and $p_{2,\psi}$
are as given in the statement.

Consider now the $2d$ central columns of $\L$, giving the fan matrix
\begin{equation*}
    W=\left(\begin{array}{ccccc}
    &&&\vline&\\
        &-\1_{d,d} && \vline &d\,I_d \\
        &&&\vline&\\
        \hline
        &\vline&&\vline&\\
       d\,I_{d-1}  & \vline&\0^T_{d-1}&\vline&-\1_{d-1,d}\\
       &\vline&&\vline&\\
    \end{array}\right)
\end{equation*}
of the complete and $\Q$-factorial toric variety $X=\P^{2d-1}/G$, being $G$ a finite group: this fact follows by observing that $Q=\left(\1_{2d}\right)$ is a Gale dual matrix of $W$, hence a weight matrix of $X$, meaning that $\P^{2d-1}$ is the universal covering of $X$ and $G=\Tors(\Cl(X))$ (see e.g. \cite[Thm.~2.2]{RT-QUOT}). The partitioned framing $\cc=\cc_1+\cc_2$ for $X$ gives the associated polytopes
\begin{eqnarray*}
  \D_{\cc_1}&=&\conv\left(
                          \begin{array}{ccccc}
                          &\vline&&\vline&\\
                           d\,I_d -(1/d)\1_{d,d} &\vline & \0_{d,d-1} & \vline & -\1_d^T \\
                           &\vline&&\vline&\\
                           \hline
                           &\vline&&\vline&\\
                            \0_{d-1,d} & \vline & I_{d-1} & \vline & -\1_{d-1}^T \\
                            &\vline&&\vline&\\
                          \end{array}
                        \right)\\
  \D_{\cc_2}&=&\conv\left(\begin{array}{ccccc}
  &\vline&&\vline&\\
                           I_d &\vline & (1/d)\1_{d,d-1} & \vline & -(d-1)/d\,\1_d^T \\
                           &\vline&&\vline&\\
                           \hline
                           &\vline&&\vline&\\
                            \0_{d,d} & \vline & I_{d-1} & \vline & -\1_{d-1}^T \\
                            &\vline&&\vline&\\
                          \end{array}
                        \right)
\end{eqnarray*}
Then $[\conv(\D_{\cc_1},\D_{\cc_2})]=\D_V$ where $V$ is the fan matrix of $\P^{2d-1}$ given by
\begin{equation*}
  V=\left(
      \begin{array}{ccc}
        I_{2d-1} & \vline & -\1_{2d-1}^T \\
      \end{array}
    \right)
\end{equation*}
For the dual framing notice that
  \begin{equation*}
    V^T\cdot W= \left( \begin {array}{ccc}
                            &\vline&\\
                        d\,I_d&\vline&-\1_{d,d}\\
                        &\vline&\\
                        \hline
                        &\vline&\\
                        -\1_{d,d}&\vline&d\,I_d\\
                        &\vline&\\
                        \end {array} \right)
  \end{equation*}
  so giving the nef partitioned framing $\aa=\aa_1+\aa_2$, with $\aa_i$ as given in the statement.
  Finally, one has
  \begin{equation*}
    M^\vee_1=\left(\begin{array}{ccc}
                        &\vline&\\
                     d\,I_d & \vline & \0_d^T \\
                     &\vline&\\
                     \hline
                     &\vline&\\
                     \0_{d,d} & \vline & \1_{d}^T\\
                     &\vline&\\
                   \end{array}\right)\ ,
    \quad M^\vee_2=\left(\begin {array}{ccc}
    &\vline&\\
    \0_{d,d}&\vline&\1_{d}^T \\
    &\vline&\\
     \hline
     &\vline&\\
     d\,I_d&\vline&\0_d^T\\
     &\vline&\\
      \end{array}
     \right)
  \end{equation*}
  whose columns give monomial exponents of $q_{1,\psi},q_{2,\psi}$, respectively, as given in the statement.

  For the last part of the statement, one observes that the fan matrix $W$ is composed by the $2d$ central columns of the fan matrix $\L$. Therefore one gets the divisorial blow up $\phi:\XX\to X$ by the same argument applied for $d=2,3$ whose geometric properties obviously extend to cases $d\ge 4$\,.
 \end{proof}

 \begin{remark}
   With the aid of some Maple subroutines, I could check quasi-smoothness of $Y^\vee_{BB}$ for $2\le d\le 5$, although $Y^\vee_{LT}$ stops to be quasi-smooth for $d\ge 3$. It looks quite likely that $Y^\vee_{BB}$ would be still quasi-smooth for any bigger degree $d$, although the cumbersome combinatorics of the critical locus $\mathcal{C}$ and the irrelevant ideal $\mathfrak{I}$, for general $d$, makes difficult to produce a rigorous proof of this fact.
 \end{remark}

 \subsection{Intermediate mirrors of $Y_{d,d}\subset\P^{2d-1}$}\label{sssez:mirrorintermedid,d} As done for $d=2,3$ in \S~\ref{sssez:mirrorintermedi} and \S~\ref{sssez:mirrorintermedi3,3}, respectively, it is possible produce a number of further mirror models of $Y_{d,d}\subset\P^{2d-1}$.

 Starting from the $BB$-mirror and considering the set of exceptional rays in the fan of $\XX$, with respect to the blow up $\phi:\XX\to X$, namely generated by columns $1,\ldots, d,3d+1,\ldots,4d$ in the fan matrix $\L$, one can produce a number of further mirror models of the projective complete intersection $Y_{d,d}$, intermediate between the BB and the LT ones. Then, Propositions~\ref{prop:mirrorintermedi} and \ref{prop:mirrorintermedi3,3} admit the following generalization.

\begin{proposition}\label{prop:mirrorintermedid,d}
   Call $\mathcal{I}_d=\{1,\ldots, d,3d+1,\ldots,4d\}$. For any subset $A\subset\cI_d$ the complete toric variety $\XX^A$, whose fan matrix is the submatrix $\L^A$ of $\L$, is the blow up, say $\phi^A:\XX^A\to X$, in $2d-|A|$ points of $X$. Calling $\bb^A=\bb_1^A+\bb_2^A$ the partitioned framing of $\XX^A$ obtained by removing entries indexed by $A$ from the $BB$ partitioned framing $\bb=\bb_1+\bb_2$ of $\XX$, one obtains a partitioned ftv $(\XX^A,\bb^A=\bb_1^A+\bb_2^A)$ whose f-dual partitioned ftv is $(\P^{2d-1},\aa=\aa_1+\aa_2)$. In particular, the family of \cy varieties given by complete intersection $Y_{d,d}\subset\P^{2d-1}$ is the f-mirror family of the family $Y^\vee_A=\mathcal{V}(p_{1,\psi}^A,p_{2,\psi}^A)$ with $p_{i,\psi}^A\in\Cox(\XX^A)=\C[x_1,\ldots,x_{4d-|A|}]_{\Cl(\XX^A)}$.
 \end{proposition}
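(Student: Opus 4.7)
The plan is to follow the pattern established by Propositions~\ref{prop:BBdualcubics} and \ref{prop:LTdual_cubics} for the extremal cases $A=\emptyset$ (BB-duality) and $A=\mathcal{I}_d$ (generalized LT-duality), already extended to arbitrary $d$ in Theorem~\ref{thm:CY_d,d}, and to interpolate between them via a sandwich argument on polytopes. First I would identify $\XX^A$ as a toric blow up: by the last part of Theorem~\ref{thm:CY_d,d}, the $2d$ central columns of $\L$ form the fan matrix $W$ of $X$, while the $2d$ columns indexed by $\mathcal{I}_d$ generate the exceptional rays of $\phi:\XX\to X$ at the torus-fixed points $P_1,\ldots,P_{2d}$. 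Since each such ray refines a single maximal cone of the fan of $X$ independently of the others, removing any subset $A\subseteq\mathcal{I}_d$ of them preserves completeness and simpliciality of the fan, so that $\XX^A$ is the toric blow up of $X$ at the $2d-|A|$ points indexed by $\mathcal{I}_d\setminus A$, yielding $\phi^A:\XX^A\to X$ as a partial version of $\phi$.

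Second, I would compute the $f$-dual of $(\XX^A,\bb^A)$ by Algorithm~1.1.1 in \cite{R-fpCI}. Setting
\begin{equation*}
  \D_{\bb_i^A}:=\{\m\in M\otimes\R\,|\,(\L^A)^T\m\ge -\bb_i^A\}\,,\quad i=1,2,
\end{equation*}
the key observation is the sandwich
\begin{equation*}
  \D_{\bb_i}\;\subseteq\;\D_{\bb_i^A}\;\subseteq\;\D_{\cc_i}\,,
\end{equation*}
which holds because $\L^A$ sits between $W$ and $\L$ as a set of columns (more than $W$, fewer than $\L$) and because the restriction of $\bb^A$ to the central columns coincides with $\cc$, a direct check from the block structure of $\bb$ displayed in the proof of Theorem~\ref{thm:CY_d,d}. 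Monotonicity of the convex hull and of the integer-part operation $[\cdot]=\conv(M\cap\cdot)$ then gives
\begin{equation*}
  \D_V\;=\;[\conv(\D_{\bb_1},\D_{\bb_2})]\;\subseteq\;[\conv(\D_{\bb_1^A},\D_{\bb_2^A})]\;\subseteq\;[\conv(\D_{\cc_1},\D_{\cc_2})]\;=\;\D_V,
\end{equation*}
the two outer equalities being exactly the BB- and LT-computations already carried out in Theorem~\ref{thm:CY_d,d}. Hence the $f$-dual toric variety is $\P^{2d-1}$, uniformly in $A$.

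Third, I would read off the dual framing from the pairing submatrix $V^T\cdot\L^A$ of $V^T\cdot\L$: the explicit block pattern computed in Theorem~\ref{thm:CY_d,d} shows that the framing coefficient assigned to the $j$-th column of $V$ depends only on which $\D_{\aa_k}$-block the paired column of $\L$ belongs to, and this block structure is preserved under any column deletion indexed by $A$; hence the dual partitioned framing is again $\aa=\aa_1+\aa_2$. The monomials $p_{i,\psi}^A\in\Cox(\XX^A)$ defining $Y^\vee_A=\mathcal{V}(p_{1,\psi}^A,p_{2,\psi}^A)$ are then obtained from the Newton polytopes $\D_{\bb_i^A}$ via the matrix $M_i^A=(\L^A)^T\L_{\bb_i^A}+B_i^A$, exactly as in the cases $d=2,3$. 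The main obstacle I expect is the combinatorial verification that the restriction of $\bb^A$ to $W$'s columns agrees with $\cc$ on the nose, so that the right inclusion in the sandwich really has $\D_{\cc_i}$ as its upper bound; once this check is in place, the argument closes uniformly in $A$ and avoids redoing the integer-part computation for each of the $2^{2d}$ choices of $A$.
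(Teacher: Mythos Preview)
Your argument is correct. The paper's own proof is the one-line remark ``The proof is analogous to lower degree cases'', which points back to the explicit polytope computations carried out for the LT-dual in Propositions~\ref{prop:LTdual}, \ref{prop:LTdual_cubics} and Theorem~\ref{thm:CY_d,d}: compute $\D_{\bb_i^A}$ directly, take the integer part of the convex hull, and recognize $\D_V$.

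Your sandwich argument is a genuinely different and tidier route. Rather than recomputing $[\conv(\D_{\bb_1^A},\D_{\bb_2^A})]$, you squeeze it between the two extremal cases $A=\emptyset$ and $A=\mathcal{I}_d$, both of which are already settled in Theorem~\ref{thm:CY_d,d} (the lower equality $\conv(\D_{\bb_1},\D_{\bb_2})=\D_V$ is implicit in the calibration of the BB $f$-process and follows at once from the explicit $\L_{\bb_i}$ displayed there; the upper equality $[\conv(\D_{\cc_1},\D_{\cc_2})]=\D_V$ is stated verbatim). The monotonicity of $[\conv(\,\cdot\,)]$ then closes the sandwich uniformly for all $2^{2d}$ choices of $A$. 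What you gain is a single argument replacing $2^{2d}$ implicit verifications; what you give up is an explicit description of the Newton polytopes $\D_{\bb_i^A}$, hence of the polynomials $p_{i,\psi}^A$---but the statement does not ask for these. The combinatorial check you flag (that $\bb^A$ restricted to the central columns agrees with $\cc$) is indeed the only thing that needs care, and it follows immediately from the block form $\bb_1=(\1_{2d},\0_{2d})$, $\bb_2=(\0_{2d},\1_{2d})$, $\cc_1=(\1_d,\0_d)$, $\cc_2=(\0_d,\1_d)$ recorded in Theorem~\ref{thm:CY_d,d}.
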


 The proof is analogous to lower degree cases. Clearly
$Y^\vee_\emptyset=Y^\vee_{BB}$ and $Y^\vee_{\cI_d}=Y^\vee_{LT}$\,,
but the remaining $2^{2d}-2$ cases give distinct further mirror models of $Y_{d,d}$, everyone connected to the calibrated $f$-dual $Y^\vee_{BB}$ by means of a non-calibrated $f$-process. In particular, if $A$ is a proper subset of $\cI_d$ then $\XX^A$ is a non-Gorenstein $\Q$-Fano complete toric variety: therefore the Batyrev-Borisov duality does not apply to  $\XX^A$\,, so giving, for any proper subset $A$, a picture like that described in Remark~\ref{rem:f-picture} and in Fig.~\ref{Fig1}.

\subsection{$K$-equivalence vs $D$-equivalence} Unfortunately, Theorem~\ref{thm:Dequivalenza^A} cannot be generalized to degrees $d\ge 4$, as the needed Kawamata results hold in dimension 3. On the contrary, Theorem~\ref{thm:Kequivalenza3} generalizes immediately, with the same proof, so getting the following

\begin{theorem}\label{thm:Kequivalenzad}
  Let $A,A'$ be two subsets of $\cI_d$ and let $\widehat{\L}^A$ be the matrix whose columns are given by all the primitive lattice points contained in $\conv(\L^A)\setminus\{\0\}$, and analogously for $\widehat{\L}^{A'}$.  For any choice $$\Si\in\PSF(\widehat{\L}^A)\ ,\quad\Si'\in\PSF(\widehat{\L}^{A'})$$
  consider the induced (possibly partial) desingularizations $$\widehat{Y}^\vee_{A}=(\psi_\Si)^{-1}_*(Y^\vee_{A})\to Y^\vee_{A}\ ,\quad\widehat{Y}^\vee_{A'}=(\psi_{\Si'})^{-1}_*(Y^\vee_{A'})\to Y^\vee_{A'}$$
  Then $\widehat{Y}^\vee_{A}$ and $\widehat{Y}^\vee_{A'}$ are $K$-equivalent.
\end{theorem}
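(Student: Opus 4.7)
The plan is to run the argument of Theorem~\ref{thm:Kequivalenza3} verbatim, observing that the only place where the dimensional hypothesis $d=3$ was used in that proof was to invoke Kawamata's flop decomposition in dimension~$3$ for \emph{$D$-equivalence}, while the $K$-equivalence statement follows from a purely birational/toric construction that is dimension-agnostic. So first I would assemble the usual diagram of blow--ups
\begin{equation*}
  \xymatrix{\XX^A\ar[dr]_-{\phi^A}&&\XX^{A'}\ar[dl]^-{\phi^{A'}}\\
            &X&}
\end{equation*}
and, for $\Sigma\in\PSF(\widehat{\L}^A),\ \Sigma'\in\PSF(\widehat{\L}^{A'})$, the associated small projective $\Q$-factorial modifications $\psi_\Sigma:\widehat{\XX}^A(\Sigma)\to\XX^A$ and $\psi_{\Sigma'}:\widehat{\XX}^{A'}(\Sigma')\to\XX^{A'}$, getting an induced birational equivalence $\vf^A_{A'}:\widehat{\XX}^A(\Sigma)\dashrightarrow\widehat{\XX}^{A'}(\Sigma')$ which, restricted to the strict transforms, gives the birational map $\widehat{Y}^\vee_A\dashrightarrow\widehat{Y}^\vee_{A'}$.

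Next I would pass to the common blow--up $\phi^{A\cap A'}:\XX^{A\cap A'}=\XX^A\times_X\XX^{A'}\to X$ and produce, by the stellar subdivisions of $\Sigma$ (resp.~$\Sigma'$) along the extra rays of $\widehat{\L}^{A\cap A'}$ indexed by $A'\setminus A$ (resp.~$A\setminus A'$) recalled in \S~3.4.1 and Lemma~4 of \cite{RT-Pic}, two projective simplicial fans $\widetilde{\Sigma},\widetilde{\Sigma}'\in\PSF(\widehat{\L}^{A\cap A'})$. The induced birational morphisms $\widetilde{\psi}$ and $\widetilde{\psi}'$ are divisorial blow--ups whose exceptional divisors have coefficients $a_j-1=0$ in the framing partition $\bb^{A\cap A'}=\bb^{A\cap A'}_1+\bb^{A\cap A'}_2$, because the new rays lie outside the support of the framing; hence they restrict to crepant maps between the embedded mirror models, exactly as in the proof of Theorem~\ref{thm:smallK&D-equivalenza}(2).

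At this point the birational equivalence $\vf:\widehat{\XX}^{A\cap A'}(\widetilde{\Sigma})\dashrightarrow\widehat{\XX}^{A\cap A'}(\widetilde{\Sigma}')$ is an s$\Q$m between $\Q$-factorial projective toric varieties with the same $0$--skeleton. By the geometry of the secondary fan, $\vf=\vf_1\circ\cdots\circ\vf_s$ decomposes into a finite sequence of wall crossings; I would build the chain of commutative diagrams of the proof of Theorem~\ref{thm:smallK&D-equivalenza} and iterate the fibered product $\XX_i\times_{\widehat{\XX}^{A\cap A'}(\widetilde{\Sigma}_i)}\XX_{i-1}$ to produce a single dominating toric variety $\widehat{\XX}$ with two birational morphisms $f,g$ to $\widehat{\XX}^{A\cap A'}(\widetilde{\Sigma})$ and $\widehat{\XX}^{A\cap A'}(\widetilde{\Sigma}')$.

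Finally I would restrict to the embedded \cy complete intersections. Setting $Z=(\widetilde{\psi}\circ f)^{-1}_*(\widehat{Y}^\vee_A)=(\widetilde{\psi}'\circ g)^{-1}_*(\widehat{Y}^\vee_{A'})$, the $K$-equivalence claim reduces to verifying
\begin{equation*}
  (\widetilde{\psi}\circ f)^*K_{\widehat{Y}^\vee_A}\sim_\Q (\widetilde{\psi}'\circ g)^*K_{\widehat{Y}^\vee_{A'}},
\end{equation*}
which follows cone-by-cone from adjunction and the toric canonical formula $K=-\sum D_j$, exactly as computed in the proof of Theorem~\ref{thm:smallK&D-equivalenza}(2): each wall crossing contributes an identity on both sides because the framing coefficients at the exceptional rays are $1$, and the \cy condition $\sum a_j=\deg V$ ensures that the restricted pullbacks of canonical divisors agree up to $\Q$-linear equivalence. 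The main --- and essentially only --- conceptual point to verify carefully is that the stellar subdivisions and the subsequent flop decompositions stay within $\PSF(\widehat{\L}^{A\cap A'})$, i.e.~that no new rays outside $\widehat{\L}^{A\cap A'}$ are needed; everything else is routine transcription of the $d=3$ proof.
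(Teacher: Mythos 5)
Your proof plan is correct and follows essentially the same route as the paper: the paper explicitly states that Theorem~\ref{thm:Kequivalenza3} ``generalizes immediately, with the same proof,'' and your plan is a faithful transcription of that proof (common blow--up $\XX^{A\cap A'}=\XX^A\times_X\XX^{A'}$, stellar subdivision of $\Si,\Si'$ to fans in $\PSF(\widehat{\L}^{A\cap A'})$, decomposition of the resulting s$\Q$m into wall crossings, iterated fibered products to a dominant toric variety, and restriction to the embedded complete intersections). One small inaccuracy in your framing: the proof of Theorem~\ref{thm:Kequivalenza3} never actually invokes the dimension-three hypothesis or Kawamata's flop result --- those enter only in the $D$-equivalence statements (Theorem~\ref{thm:Dequivalenza} and Theorem~\ref{thm:Dequivalenza^A}); the $K$-equivalence argument is dimension-free from the start, which is precisely why the paper can assert the immediate generalization. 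Your concluding caveat about staying inside $\PSF(\widehat{\L}^{A\cap A'})$ is a fair point that the paper addresses only by citing \cite{RT-Pic}, so raising it is appropriate, though not a genuine gap in the argument.
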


Taking into account Conjectures~\ref{conj:Kawamata}, \ref{conj:BOKgen}, one naturally gets the following

\begin{conjecture}\label{conj:K<=>D,d}
  Under the same hypotheses of the previous Theorem~\ref{thm:Kequivalenzad}, let $\cY_A$ and $\cY_{A'}$ be the canonical covering stacks of $\widehat{Y}^\vee_{A}$ and $\widehat{Y}^\vee_{A'}$, respectively. Then there exists an equivalence of triangulated categories
  \begin{equation*}
    \cD^b(\cY_A)\cong\cD^b(\cY_{A'})
  \end{equation*}
  In particular, if both $\widehat{Y}^\vee_{A}$ and $\widehat{Y}^\vee_{A'}$ are smooth, then there is an equivalence of triangulated categories $\cD^b(\widehat{Y}^\vee_{A})\cong\cD^b(\widehat{Y}^\vee_{A'})$\,.
\end{conjecture}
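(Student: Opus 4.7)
The plan is to upgrade the $K$-equivalence supplied by Theorem~\ref{thm:Kequivalenzad} to a derived equivalence of canonical covering stacks, mimicking in dimension $2d-3\ge 5$ the strategy that worked for $d=3$ in Theorem~\ref{thm:Dequivalenza^A}. Recall that the proof of Theorem~\ref{thm:Kequivalenzad} produced a toric s$\Q$m $\vf:\widehat{\XX}^{A\cap A'}(\widetilde{\Si})\dashrightarrow\widehat{\XX}^{A\cap A'}(\widetilde{\Si}')$ decomposed into a finite chain $\vf=\vf_1\circ\cdots\circ\vf_s$ of wall crossings, each of which is an elementary toric flop between neighbouring chambers of the secondary fan, and observed that $\vf$ restricts to a crepant birational map between the strict transforms $\widetilde{Y}^\vee$ and $\widetilde{Y'}^\vee$. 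The problem therefore reduces to producing, for every elementary toric flop $\vf_i$, a derived equivalence between the canonical covering stacks of the corresponding strict transforms of the embedded mirrors, and then descending along the divisorial contractions $\widetilde{\psi},\widetilde{\psi}'$ to the target equivalence $\cD^b(\cY_A)\cong\cD^b(\cY_{A'})$.

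First I would work at the level of the ambient toric Deligne--Mumford stacks. Each $\vf_i$ realises a variation of GIT across a single codimension-one wall of the secondary fan, and in this $K$-equivalent toric setting the window-subcategory technology (in the spirit of Halpern-Leistner and Ballard--Favero--Katzarkov) is precisely designed to yield a derived equivalence between the bounded derived categories of coherent sheaves on the two smooth toric DM stacks on either side of the wall. Composing the resulting $s$ elementary equivalences produces a derived equivalence of the ambient stacks associated with $\widehat{\XX}^{A\cap A'}(\widetilde{\Si})$ and $\widehat{\XX}^{A\cap A'}(\widetilde{\Si}')$.

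Second I would transfer this equivalence to the embedded mirrors. Since $\widetilde{Y}^\vee$ and $\widetilde{Y'}^\vee$ are zero loci of regular sections of split line bundles on the ambient stacks, a Koszul resolution together with the compatibility of window categories with cutting down by regular sections, along the toric lines developed by Favero--Kelly \cite{Favero-Kelly}, should transport the ambient equivalence to an equivalence $\cD^b(\widetilde{\cY})\cong\cD^b(\widetilde{\cY}')$ between the canonical covering stacks of the strict transforms. A final Bridgeland--King--Reid / Kawamata-type crepant descent, performed in the stacky setting along the divisorial contractions $\widetilde{\psi},\widetilde{\psi}'$, would then furnish the required equivalence $\cD^b(\cY_A)\cong\cD^b(\cY_{A'})$ and, whenever both $\widehat{Y}^\vee_A$ and $\widehat{Y}^\vee_{A'}$ are smooth, $\cD^b(\widehat{Y}^\vee_A)\cong\cD^b(\widehat{Y}^\vee_{A'})$.

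The hard part, which is precisely what prevents the argument from being complete, will be the transfer step from the ambient toric stack to the embedded complete intersection when the latter fails to be smooth or to inherit quasi-smoothness uniformly along the chain of flops. In dimension three Kawamata bypasses this issue by decomposing any crepant birational map of threefolds with canonical singularities into \emph{standard} flops (\cite[Thm.~4.6]{Kawamata}) and invoking \cite[Thm.~6.5]{Kawamata}; in dimension $\ge 4$ no such general decomposition is known, and the cooperation of the ambient toric window-category arguments with the stacky singularities of the mirror must be checked wall by wall, keeping careful track of the canonical covering stacks of the embedded mirrors rather than of the smooth ambient stacks. This is where I expect the real work to lie, and why the statement unavoidably remains conjectural until such a stacky transfer principle is established in full generality.
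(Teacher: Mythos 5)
This statement is a \emph{conjecture} in the paper, not a theorem: the paper does not prove it and explicitly says so in the remark that follows, stating ``we are not able to prove any mirror theorem~\ref{thm:mirror} for $d\ge 4$.'' The paper's rationale for stating Conjecture~\ref{conj:K<=>D,d} is entirely deductive: Theorem~\ref{thm:Kequivalenzad} establishes the $K$-equivalence of $\widehat{Y}^\vee_{A}$ and $\widehat{Y}^\vee_{A'}$, and the Bondal--Orlov--Kawamata Conjecture~\ref{conj:BOKgen} predicts that $K$-equivalence should imply $D$-equivalence of canonical covering stacks, so the statement is simply a specialization of BOK to the multiple mirrors at hand. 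Your proposal takes a genuinely different and more ambitious route: rather than appealing to BOK as a black box, you sketch a constructive strategy via variation of GIT and window subcategories (Halpern-Leistner, Ballard--Favero--Katzarkov), proposing to build the derived equivalence wall by wall along the chain of toric flops produced in the proof of Theorem~\ref{thm:Kequivalenzad}, then transfer from ambient toric DM stacks to the embedded mirrors \`a la Favero--Kelly, and finally descend to the canonical covering stacks of the $\widehat{Y}^\vee_A$. Were this strategy to close, it would not merely reproduce the conjecture but actually prove a new instance of BOK, which is strictly more than what the paper claims. You are correct, and commendably honest, in identifying exactly where the argument stalls: in dimension three Kawamata's \cite[Thm.~4.6 and Thm.~6.5]{Kawamata} decompose a crepant birational map between canonical threefolds into standard flops, each of which is handled categorically, whereas for the $(2d-3)$-dimensional mirrors with $d\ge 4$ no analogous decomposition theorem is available, and the transfer from the ambient window equivalence to the embedded (quasi-smooth but singular) complete intersection and then to its canonical covering stack is not established in this generality. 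So your assessment matches the paper's: the statement remains conjectural, and the obstruction you name is the right one.
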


\begin{remark}
  Notice that Proposition~\ref{prop:mirrorintermedid,d} and Theorem~\ref{thm:Kequivalenzad} shows items (i) and (ii) in Theorem~\ref{thm:metateorema}, respectively, for $\cM=\{Y^\vee_A\,|\,A\subseteq\cI_d\}$\,. Moreover, item (iii) is expressed by the previous Conjecture~\ref{conj:K<=>D,d}\,. In particular, we are not able to prove any mirror theorem~\ref{thm:mirror} for $d\ge 4$, although arguments leading to state Conjecture~\ref{conj:K<=>D,d} show that the permanence of  $K$-equivalence between $\widehat{Y}^\vee_{A}$ and $\widehat{Y}^\vee_{A'}$ makes their $D$-equivalence quite likely.
\end{remark}

As above, the following statement implies a clear rewriting of Conjecture~\ref{conj:sing} in the present context.

\begin{conjecture}\label{conj:sing'}
    Given $A,A'$ be two subsets of $\cI_d$, in the same hypotheses of  Theorem~\ref{thm:Kequivalenzad}, consider the two fans $\widetilde{\Si},\widetilde{\Si}'\in\PSF(\widehat{\L}^{A\cap A'})$ constructed by stellar subdivision of $\Si\in\PSF(\widehat{\L}^A)$ and $\Si'\in\PSF(\widehat{\L}^{A'})$, respectively, and the induced diagram of birational maps (\ref{bir.lift}). Call $\widetilde{\cY}$ and $\widetilde{\cY}'$ the canonical covering stacks of $\widetilde{Y}^\vee$ and $\widetilde{Y'}^\vee$, respectively. Then there exist equivalences of triangulated categories
   \begin{eqnarray*}
    \cD^b\left(\widetilde{\cY}\right)&\cong&\cD^b\left(\widetilde{\cY}'\right)\\
    \cD_{sg}\left(\widetilde{Y}^\vee\right)&\cong&\cD_{sg}\left(\widetilde{Y'}^\vee\right)
  \end{eqnarray*}
  In particular, if both $\widetilde{Y}^\vee$ and $\widetilde{Y'}^\vee$ are smooth, then there is an equivalence of triangulated categories $\cD^b(\widetilde{Y}^\vee)\cong\cD^b(\widetilde{Y'}^\vee)$\,.
\end{conjecture}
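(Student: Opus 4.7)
The plan is to follow the same flop-by-flop pattern established for Theorem~\ref{thm:Kequivalenzad}, lifting the $K$-equivalence to a derived (or singularity-category) equivalence, and to bypass the dimension-$3$ constraint of Kawamata's \cite[Thm.~6.5]{Kawamata} by exploiting the ambient toric structure. The starting point is the factorization of the small $\Q$-factorial modification $\varphi:\widehat{\XX}^{A\cap A'}(\widetilde{\Si})\dashrightarrow\widehat{\XX}^{A\cap A'}(\widetilde{\Si}')$ into elementary wall-crossings $\varphi=\varphi_s\circ\cdots\circ\varphi_1$ already produced inside the proof of Theorem~\ref{thm:Kequivalenzad}. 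Each $\varphi_i$ is either the identity or an elementary toric flop between two $\Q$-factorial projective toric varieties sharing the same $1$-skeleton, and its restriction to the strict transform of the embedded mirror models is, by Remark~\ref{rem:triangolo} and the Calabi-Yau property of $Y_{d,d}$, a crepant birational equivalence of $(2d-3)$-dimensional Calabi-Yau varieties admitting at worst canonical singularities.

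First I would deal with the ambient level. Each elementary wall-crossing $\varphi_i$ corresponds to crossing a wall of the secondary fan of $\widehat{\L}^{A\cap A'}$, and by the toric-flop theorem of Kawamata (the $K$-equivalence established in Theorem~\ref{thm:smallK&D-equivalenza}(1) combined with a higher-dimensional analogue of \cite[Thm.~6.5]{Kawamata} for canonical covering stacks) it is expected to induce an equivalence between the bounded derived categories of orbifold sheaves on the two sides; the same statement is to be obtained in the (possibly non-Gorenstein) toric orbifold setting via the GIT window / grade-restriction-rule machinery of Ballard-Favero-Katzarkov and Halpern-Leistner, which is completely combinatorial in the toric case. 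Composing these elementary equivalences along the chain $\widetilde{\Si}=\widetilde{\Si}_s,\widetilde{\Si}_{s-1},\ldots,\widetilde{\Si}_0=\widetilde{\Si}'$ produces a derived equivalence between the canonical covering stacks of the two ambient toric varieties.

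The second step is to restrict this ambient equivalence to the embedded mirror complete intersections through the Favero-Kelly correspondence \cite{Favero-Kelly}, which identifies the singularity category of a quasi-smooth toric complete intersection with a graded matrix-factorization category attached to an LG model on the ambient toric stack whose superpotential is determined by the partitioned framing $\bb^{A\cap A'}$. Since the framing is pulled back identically along both refinements $\widetilde{\psi}$ and $\widetilde{\psi}'$, the superpotential is common to the two wall-crossing chambers, so each ambient window equivalence restricts to an equivalence of matrix-factorization categories and yields the required $\cD_{sg}(\widetilde{Y}^\vee)\cong\cD_{sg}(\widetilde{Y'}^\vee)$; the corresponding stack equivalence $\cD^b(\widetilde{\cY})\cong\cD^b(\widetilde{\cY}')$ is then obtained by the same windows, now applied directly to the embedded covering stacks. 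When $\widetilde{Y}^\vee$ and $\widetilde{Y'}^\vee$ are both smooth the singularity categories vanish and one recovers the plain equivalence $\cD^b(\widetilde{Y}^\vee)\cong\cD^b(\widetilde{Y'}^\vee)$ asserted in the last claim.

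The main obstacle is verifying that the window equivalences produced by a wall-crossing of the \emph{non-Gorenstein} $\Q$-factorial ambients descend to genuine equivalences of the canonical covering stacks, rather than merely to a semi-orthogonal embedding of one inside the other. This descent is conditional on the discrepancies of each elementary flop becoming trivial after passing to the canonical cover, which is subtle because of the non-trivial action of $G\cong\Tors(\Cl(X))$ on each chamber; a careful $G$-equivariant analysis of the balancing of the weights of the relevant contractions across each wall, in the spirit of the computation carried out for the $3$-dimensional case inside the proof of Theorem~\ref{thm:Dequivalenza^A}, is the crucial combinatorial step. This is precisely the place where one should expect the conjectural status of the statement to be removable, once the higher-dimensional Bondal-Orlov-Kawamata Conjecture~\ref{conj:Kawamata} (or its stacky version Conjecture~\ref{conj:BOKgen}) is established for toric flops in the Calabi-Yau setting.
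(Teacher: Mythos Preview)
The statement you are attempting to prove is a \emph{conjecture} in the paper, not a theorem: the paper offers no proof, only the remark that it ``implies a clear rewriting of Conjecture~\ref{conj:sing} in the present context.'' So there is no paper proof to compare against.

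Your proposal is an honest outline of a plausible attack, and you correctly identify the genuine obstruction yourself in the last paragraph: the whole argument hinges on a higher-dimensional analogue of \cite[Thm.~6.5]{Kawamata} (or an unconditional window/VGIT equivalence for the relevant non-Gorenstein toric wall-crossings) that is not currently available. The first two steps---factoring into elementary toric flops and invoking GIT-window machinery \`a la Ballard--Favero--Katzarkov/Halpern-Leistner together with the Favero--Kelly LG correspondence---are the natural moves, but the claim that each elementary wall-crossing yields an \emph{equivalence} (rather than a semi-orthogonal decomposition with nontrivial complement) is exactly where the argument becomes conjectural. In the $d=3$ case the paper closes this gap via Kawamata's 3-fold flop theorem; for $d\ge 4$ no such tool exists, which is precisely why the paper leaves the statement as a conjecture. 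Your proposal is therefore not a proof but a roadmap conditional on Conjecture~\ref{conj:BOKgen} (or a toric/VGIT substitute for it), and you should present it as such rather than as a proof.
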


\section{The construction of generalized LT-mirrors}\label{sez:LTgeneral}

The purpose of the present section is extending the generalization of \lt construction on the degree/dimension $d$ of codimension 2, projective, \cy complete intersections, given in the previous \S~\ref{sez:d,d}, as much as possible to most projective complete intersections of non-negative Kodaira dimension.

Consider the generic complete intersection
\begin{equation*}
  Y=Y_{d_1,\ldots,d_l}:=\bigcap_{k=1}^l Y_{d_k}\subset\P^n
\end{equation*}
 of $l\ge 1$ projective hypersurfaces of degree $d_1,\ldots,d_k$, respectively, such that $\sum_{k=1}^ld_k\ge n+1$, that is, $Y$ has nonnegative Kodaira dimension. Following notation introduced in \cite[\S1.3]{R-fpCI}, this means that one can consider the partitioned framing $D_\aa=\sum_k D_{\aa_k}$ of $\P^n$ such that:
 \begin{itemize}
   \item[i:] $D_{\aa_k}:=\sum_{i\in I_k}a_i D_i\in\Div_\T(\P^n)$ is an effective torus invariant divisor, given a partition $\{1,2,\ldots,n+1\}=\bigsqcup_{k=1}^l I_k$,
   \item[ii:] $Y_{d_k}$ is a sufficiently generic element of the linear system $|D_{\aa_k}|$, such that $Y$ is smooth.
 \end{itemize}
 Fix the following notation:
 \begin{eqnarray}\label{a,ak}
\nonumber
  \forall\,k=1,\ldots,l\quad \d_k&:=&d_k - m_k+1\geq 1\quad\text{with $m_k=|I_k|$}\\ \aa_k^T&:=&\underbrace{(0,\ldots,0,\overbrace{1,\ldots,1,\d_k}^{I_k},0,\ldots,0)}_{n+1}\\
\nonumber
  |\aa_k|&:=&\sum_{i=1}^{n+1}a_{ki}=m_k-1+\d_k=d_k\\
\nonumber
  |\aa|&:=&\sum_{i=1}^{n+1}a_i=\sum_{k=1}^l|\aa_k|=\sum_{k=1}^l d_k
\end{eqnarray}
Clearly, $(\P^n,\aa=\sum_{k=1}^l\aa_k)$ is a partitioned framed toric variety (ftv) \cite[Def.~6.1]{R-fTV}, \cite[Def.~1.4]{R-fpCI}, and the hypersurface $Y_{d_k}$ is linearly equivalent to $D_{\aa_k}$, which is a very ample divisor.

\subsection{Mirror models construction}\label{ssez:mirror-costruzione}

In \cite[\S1.3]{R-fpCI} we proved that $(\P^n,\aa=\sum_{k=1}^l\aa_k)$ admits a calibrated $f$-process and exhibited its $f$-mirror dual partner  $(\XX_\aa,\bb=\sum_{k=1}^l\bb_k)$.

\noindent The complete toric variety $\XX_\aa$ is determined by the complete fan $\Si_\aa$ spanned by the polytope $\D_\aa = \conv(\D_{\aa_1},\ldots,\D_{\aa_l})$, with
\begin{equation*}
\D_{\aa_k} = \conv\left(
                          \begin{array}{c}
                            d_k- a_{k,1} \\
                            -a_{k,2} \\
                            -a_{k,3}\\
                            \vdots \\
                            -a_{k,n} \\
                          \end{array}
                                        \begin{array}{c}
                                          -a_{k,1} \\
                                           d_k- a_{k,2}\\
                                          -a_{k,3}\\
                                          \vdots \\
                                          -a_{k,n} \\
                                        \end{array}
                                      \cdots
                                        \begin{array}{c}
                                          -a_{k,1} \\
                                          -a_{k,2}\\
                                          \vdots \\
                                          -a_{k,n-1} \\
                                          d_k- a_{k,n}\\
                                        \end{array}
                                        \begin{array}{c}
                                          -a_{k,1} \\
                                          -a_{k,2}\\
                                          \vdots \\
                                          -a_{k,n-1} \\
                                          -a_{k,n}\\
                                        \end{array}
                                      \right) \\
  \end{equation*}
  Recall the definition of a toric variety spanned by a polytope, given in \S~\ref{ssez:politopi}.

\begin{theorem}[Theorem 1.8 in \cite{R-fpCI}]\label{thm:BB-general}
   The partition ftv $(\P^n,\aa=\sum_{k=1}^l\aa_k)$ admits a calibrated $f$-process and its $f$-dual partner is given by the partitioned ftv $(\XX_\aa, \bb=\sum_{k=1}^l\bb_k)$ where $\XX_\aa$ is the complete toric variety spanned by the polytope $\D_\aa$ and $\bb_k$ is described by displays (13), (14), (15), (16) in the proof of \cite[Thm.~1.8]{R-fpCI}, for $k=1,\ldots,l$.

   In particular, the family of projective complete intersections $Y_{d_1,\ldots,d_l}\subset\P^n$ admits the  $f$-mirror family given by a suitable resolution $\widehat{Y}^\vee_{BB}$ of a complete intersection of $l$ hypersurfaces in the complete toric variety $\XX_\aa$, given in Cox coordinates by
         \begin{equation*}
           Y^\vee_{BB}=\mathcal{V}(p_{1,\psi},\ldots,p_{l,\psi})
         \end{equation*}
         for suitable $p_{1,\psi},\ldots,p_{l,\psi}\in\Cox(\XX_\aa)\cong\C[x_1,\ldots,x_{l(n+1)}]_{\Cl(\XX_\aa)}$\,.
 \end{theorem}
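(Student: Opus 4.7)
The plan is to apply the partitioned $f$-duality algorithm (Algorithm 1.1.1 in \cite{R-fpCI}) step-by-step to the partitioned ftv $(\P^n,\aa=\sum_{k=1}^l\aa_k)$, following exactly the pattern already executed in the low-dimensional cases (Propositions~\ref{prop:BBdual} and \ref{prop:BBdualcubics}). First I would record the polytope $\D_{\aa_k}$ associated with each partition divisor $D_{\aa_k}$, as written out in the statement, noting that its vertices are the obvious lattice vectors $d_k e_j - \aa_k^T$ for $j=1,\ldots,n$, together with the vector $-\aa_k^T$. Taking the convex hull $\D_\aa=\conv(\D_{\aa_1},\ldots,\D_{\aa_l})$ and collecting its vertices into a matrix $\L$, one gets a fan matrix of the candidate $f$-dual toric variety $\XX_\aa$, which by \S~\ref{ssez:politopi} is precisely the complete toric variety spanned by $\D_\aa$.

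Next I would verify that $\L$ is the correct fan matrix and read off the dual partitioned framing $\bb=\sum_k\bb_k$ on $\XX_\aa$. Concretely, write $V$ for the standard fan matrix of $\P^n$ (as in (\ref{V5})) and form the product $\L^T\cdot V$. Exactly as in the examples, the rows of $\L^T\cdot V$ are nonnegative on the columns indexed by $I_k$ up to the subtraction of $\bb_k^T$, so that this product provides both the entries $\bb_k$ and the shape of the $f$-dual framing. Partitioning the rows according to which cluster of vertices of $\D_\aa$ they come from (i.e.~from which $\D_{\aa_k}$) yields the partition $\bb=\sum_k\bb_k$ required by the algorithm; this is the step where one invokes displays (13)--(16) of \cite[Thm.~1.8]{R-fpCI} for the explicit form.

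For the calibration claim, I would exhibit simplicial refinements $\Xi\in\SF(V)$ and $\Xi'\in\SF(\L)$ whose induced small resolutions produce isomorphic framed toric varieties, in the sense of Definition~\ref{def:calibrato}. The natural candidate $\Xi'$ is a maximal triangulation of $\D_\aa$ subordinate to the decomposition $\D_\aa=\bigcup_k\D_{\aa_k}$, and for $\Xi$ one takes the obvious refinement induced by the facet structure of the dual polytope. The verification that $\psi_\Xi^* D_\aa$ and $\psi_{\Xi'}^* D'_{\cc}$ correspond under the induced isomorphism reduces to a combinatorial matching of rays and their coefficients, which is dictated entirely by the pairings $\L^T\cdot V$ and $V^T\cdot\L$ computed in the previous step. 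I expect this calibration check to be the main obstacle, since in the non-Gorenstein, non-Calabi--Yau setting the polytopes $\D_{\aa_k}$ may fail to be lattice polytopes at their convex-hull vertices and one must track carefully how small resolutions interact with the partition.

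Finally, to extract the defining equations $p_{k,\psi}$, I would form the monomial exponent matrices $M_k=\L^T\cdot\L_{\bb_k}+B_k$, where $\L_{\bb_k}$ collects the lattice points in $\D_{\bb_k}$ (i.e.~the polytope dual to $\bb_k$) and $B_k=(\bb_k^T,\ldots,\bb_k^T)$, exactly as done in Proposition~\ref{prop:BBdual}. Each column of $M_k$ gives the Cox-ring exponents of a monomial appearing in $p_{k,\psi}$, and the generic linear combination of these monomials is the required defining equation in $\Cox(\XX_\aa)=\C[x_1,\ldots,x_{l(n+1)}]_{\Cl(\XX_\aa)}$. Quasi-smoothness of a generic $Y^\vee_{BB}$, and hence the existence of a resolution $\widehat{Y}^\vee_{BB}$, follows from Bertini applied in the Cox quotient presentation together with an MPCP-resolution of $\XX_\aa$ in the sense of \cite[Thm.~2.2.24]{Batyrev94}, as already indicated in Proposition~\ref{prop:resol_L}.
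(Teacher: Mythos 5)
Note first that the paper does not actually supply a proof of this statement: it is imported verbatim as Theorem~1.8 of \cite{R-fpCI}, so there is no in-paper argument to compare yours against. Your reconstruction does follow, correctly, the computational pattern of Propositions~\ref{prop:BBdual} and~\ref{prop:BBdualcubics} for extracting $\XX_\aa$ from $\D_\aa=\conv(\D_{\aa_1},\ldots,\D_{\aa_l})$, for reading off the dual framing $\bb=\sum_k\bb_k$ from $\L^T\cdot V$, and for assembling the exponent matrices $M_k=\L^T\cdot\L_{\bb_k}+B_k$ that define the $p_{k,\psi}$. That much is sound.

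The genuine gap is in the calibration step, which you correctly flag as the main difficulty but then set up incorrectly. In Definition~\ref{def:calibrato}, the matrix $\L$ appearing in $\SF(\L)$ is a fan matrix of $\XX_\bb$, the result of applying partitioned $f$-duality \emph{twice}, not of $\XX_\aa$, the first $f$-dual. You use $\L$ throughout for the fan matrix of $\XX_\aa$, and your candidate $\Xi'$ is a triangulation of $\D_\aa$, the polytope spanning $\XX_\aa$. Calibration asserts that $(\P^n,\aa)$ is recovered (up to small resolution) from $(\XX_\aa,\bb)$ by a second application of Algorithm~1.1.1, so verifying it requires you to actually compute $(\XX_\bb,\cc)$ and then compare $(\P^n,\aa)$ with $(\XX_\bb,\cc)$ through fans in $\SF(V)$ and $\SF(\L_{\XX_\bb})$; your write-up never performs this second dualization, so the ``combinatorial matching of rays'' invoking $V^T\cdot\L$ and $D'_{\cc}$ has no object on the other side to match against. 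This is not a presentational slip: it is precisely the content that distinguishes a calibrated from an uncalibrated $f$-process, and in the non-Gorenstein, non-reflexive setting one cannot appeal to the involutivity of Batyrev--Borisov polar duality to get it for free.

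Two secondary points. A bare Bertini argument does not give quasi-smoothness of the generic $Y^\vee_{BB}$: the paper's worked cases (Remark~\ref{rem:iso_quadric} and \S~\ref{ssez:desing}) verify it by computing the critical locus of the defining polynomial map $\pp_\psi$ and checking containment in the unstable locus $Z_\L$, a case-dependent combinatorial check that the theorem itself deliberately leaves open in the words ``suitable resolution.'' And in the non-CY setting $\XX_\aa$ need not be Fano, so Batyrev's MPCP machinery in \cite[Thm.~2.2.24]{Batyrev94} does not apply verbatim; the theorem statement is careful not to claim an MPCP resolution exists, whereas your last paragraph tacitly assumes it does.
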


 \begin{remark}
   In the previous statement, we called $Y^\vee_{BB}$ the $f$-mirror family constructed via the calibrated $f$-process there described, just because it is the generalization of the Batyrev-Borisov mirror model obtained in the \cy case. Actually, when $Y_{d_1,\ldots,d_l}$ is a family of general type varieties, all the polytopes involved are no more reflexive, then Batyrev-Borisov duality no more applies.
 \end{remark}

 \subsubsection{The choice of LT-mirror}\label{sssez:LT-ipotesi}

Let us assume:
\begin{itemize}
  \item[(A)] $\forall\,k=1,\ldots,l\quad \text{either}\ m_k\ge 3\ \text{or}\ \aa_k=(1,1)$\,,
  \item[(B)] \emph{the fan matrix $\L_\aa$ admits a choice of $n+1=\sum_{k=1}^lm_k$ columns generating a submatrix $W$ turning out to be a fan matrix of a $\Q$-factorial, complete toric variety $X=\P(\aa)/G$, being $G$ a finite group and $\P(\aa)$ the weighted projective space whose weights are assigned by the original framing $\aa$ of $\P^n$.}
  \item[(C)] \emph{setting $\cc_k:=(\bb_k)_W$, for every $k=1,\dots,l$, the sublist of $\bb_k$ determined by entries corresponding to columns of $W$, the $f$-mirror model of the partitioned ftv $(X,\cc=\sum \cc_k)$ turns out to be precisely $(\P^n,\aa=\sum\aa_k)$, so giving rise to a non-calibrated $f$-process and to the usual picture }
      \begin{equation}\label{f-processi}
        \xymatrix{&(\P^{n},\aa=\aa_1+\aa_2)\ar@{<~>}[dr]^-{\hskip 2truecm\text{calibrated $f$-process}}&\\
                    (X,\cc=\cc_1+\cc_2)\ar@{~>}[rr]_
                    -{\text{non-calibrated $f$-process}}\ar@{~>}[ru]^-{\text{$f$-duality}\quad}&&(\XX_\aa,\bb=\bb_1+\bb_2)}
      \end{equation}
\end{itemize}

 \begin{remark}\label{rem:discussione}
 Assumption (A) is a technical one: in this case, the fan matrix $\L_\aa$ of $\XX_\aa$ admits $l(n+1)$ columns, that is, $\XX_\aa$ has Picard number $1+(l-1)(n+1)$, and it is the juxtaposition of matrices $\L_{\aa_k}$, whose columns are given by vertices of $\D_{\aa_k}$, which, in this case, are all primitive (see the proof of \cite[Thm.~1.18]{R-fpCI}). Moreover, for any $k=1,\ldots,l$, the non-trivial part of $\bb_k$ turns out to be a permutation of $[1,\d_k,\ldots,\d_k]$ (see display (16) in the above cited proof). The following Example~\ref{ex:2,2,3 in P5} studies a case in which assumption (A) is violated so that we no more can get multiple mirrors.

 Assumption (B) does not seem to be an effective assumption: in fact,  in any concrete example I carried out, I found it was always satisfied, for several different choices of $W$.
For instance, when $d=2$ a similar choice is given by $n+1$ ``central'' columns of $\L_\aa$. Notice that, being this choice non-unique, in general, we obtain \emph{multiple choices for the starting LT-mirror model}.

About the finite group $G$, in general, we can only say that there is a monomorphism
\begin{equation*}
\Tors(\Cl(X))\hookrightarrow G
\end{equation*}
where isomorphism is attained  if and only if the framing $\aa$ is, up to a permutation, a Gale dual matrix of the fan matrix $W$, that is
\begin{equation}\label{G=tors}
  \Tors(\Cl(X))\cong G\ \Longleftrightarrow\ W\cdot(\aa')^T=\0_n^T
\end{equation}
for some permutation $\aa'\sim\aa$.
This was the case in simpler cases with $l=2$, studied above. When $l\ge 3$ the situation seems to be more complicated and isomorphism (\ref{G=tors}) is never attained: see e.g. the following Example~\ref{ex:4,5,6 in P8}.

The remaining $(l-1)(n+1)$ columns of $\L_\aa$ generate as many rays in the fan of $\XX_\aa$, hence determining as many exceptional divisors of a naturally defined blow up $\phi:\XX_\aa\to X$\,.

Assumption (C) is not, in general, satisfied for any choice of the submatrix $W$ satisfying assumption (B). But, given assumption (A), I always could find a submatrix $W$ of $\L_\aa$ satisfying both assumptions (B) and (C), in any concrete example I carried out. The following Example~\ref{ex:4,5,6 in P8} is devoted to discussing  all these occurrences.
\end{remark}

 \subsubsection{Mirror models}\label{ssez:mirrors} Given assumptions (A), (B), and (C), one can then reformulate, in the present generalized setup, Theorem~\ref{thm:CY_d,d} and Proposition~\ref{prop:mirrorintermedid,d}, as follows.
  \begin{enumerate}
    \item \emph{The family of projective complete intersections $Y_{d_1,\ldots,d_l}\subset\P^n$ admits two mirror families given by
   \begin{itemize}
     \item the one obtained by a calibrated $f$-process and given by a suitable resolution $\widehat{Y}^\vee_{BB}$ of a complete intersection of $l$ hypersurfaces in the complete toric variety $\XX_\aa$, given in Cox coordinates by
         \begin{equation*}
           Y^\vee_{BB}=\mathcal{V}(p_{1,\psi},\ldots,p_{l,\psi})
         \end{equation*}
         being $p_{1,\psi},\ldots,p_{l,\psi}\in\Cox(\XX_\aa)\cong\C[x_1,\ldots,x_{l(n+1)}]_{\Cl(\XX_\aa)}$\,,
     \item the one obtained by a generalized Libgober-Teitelbaum construction and given by a suitable resolution $\widehat{Y}^\vee_{LT}$ of a complete intersection of $l$ hypersurfaces in the $\Q$-factorial complete toric variety $X=\P(\aa)/G$, given in Cox coordinates by
         \begin{equation*}
           Y^\vee_{LT}=\mathcal{V}(q_{1,\psi},\ldots,q_{l,\psi})
           \end{equation*}
         being $q_{1,\psi},\ldots,q_{l,\psi}\in\Cox(X)\cong\C[x_1,\ldots,x_{(n+1)}]_{\Cl(X)}$.
   \end{itemize}
      They give two partitioned framed toric varieties, $(\XX_\aa,\bb=\sum_{i=1}^l\bb_i)$ and $(X,\cc=\sum_{i=1}^l\cc_i)$, respectively, connected by a non-calibrated $f$-process and giving rise to a picture like (\ref{f-processi}).
             Moreover, $\XX_\aa$ turns out to be the blow up of $X$ in $(l-1)(n+1)$ distinct points,
       say $\phi:\XX_\aa\to X$, and $Y^\vee_{BB}$ is the strict transform of $Y^\vee_{LT}$ under $\phi$.}
    \item \emph{Let $\cI^W\subset\N$ be defined by integers labelling columns of $\L_\aa$ which are not columns of the submatrix $W$. For any subset $A\subset\cI^W$ the complete toric variety $\XX_\aa^A$, whose fan matrix is the submatrix $\L_\aa^A$ of $\L_\aa$, is the blow up, say $\phi^A:\XX_\aa^A\to X$, in $(l-1)(n+1)-|A|$ points of $X$. Calling $\bb^A=\sum_{i=1}^l\bb_i^A$ the partitioned framing of $\XX_\aa^A$ obtained by removing entries indexed by $A$ from the partitioned framing $\bb=\sum_i\bb_i$ of $\XX_\aa$, one obtains a partitioned ftv $(\XX_\aa^A,\bb^A=\sum_i\bb_i^A)$ whose f-dual partitioned ftv is $(\P^n,\aa=\sum_{i=1}^l\aa_i)$. In particular, the family of  projective complete intersection $Y_{d_1,\ldots,d_l}\subset\P^{n}$ is the f-mirror family of the family $Y^\vee_A=\mathcal{V}(p_{1,\psi}^A,\ldots,p_{l,\psi}^A)$ with $p_{i,\psi}^A\in\Cox(\XX_\aa^A)=\C[x_1,\ldots,x_{l(n+1)-|A|}]_{\Cl(\XX_\aa^A)}$. \\
        In particular, one has $Y^\vee_\emptyset=Y^\vee_{BB}$ and $Y^\vee_{\cI^W}=Y^\vee_{LT}$.}
  \end{enumerate}

  The following examples are given to propose evidences to the discussion of assumptions (A), (B) and (C), given in the previous \S~\ref{sssez:LT-ipotesi}.

  \begin{example}[$Y_{2,2,3}\subset\P^5$]\label{ex:2,2,3 in P5}
    The present example is aimed to give a motivation for assumption (A). In fact, the case of the complete intersection of two hyperquadrics and a cubic hypersurface in $\P^5$ corresponds to the partitioned ftv $(\P^5,\aa=\aa_1+\aa_2+\aa_3)$ with
    \begin{equation*}
      \aa_1=(1,1,0,0,0,0)\ ,\quad \aa_2=(0,0,1,1,0,0)\ ,\quad \aa_3=(0,0,0,0,1,2)
    \end{equation*}
    so that $m_3=2<3$ but $\aa_3\neq(1,1)$\,. In this case
    \begin{eqnarray*}
      \D_{\aa_3}&=&\conv\left( \begin {array}{cccccc} 3&0&0&0&0&0\\ \noalign{\medskip}0&3&0&0&0&0\\ \noalign{\medskip}0&0&3&0&0&0\\ \noalign{\medskip}0&0&0&3&0&0\\ \noalign{\medskip}-1&-1&-1&-1&2&-1\end {array} \right)\\
       \Longrightarrow\ \L_{\aa_3}&=&\left( \begin {array}{cccccc} 3&0&0&0&0&0\\ \noalign{\medskip}0&3&0&0&0&0\\ \noalign{\medskip}0&0&3&0&0&0\\ \noalign{\medskip}0&0&0&3&0&0\\ \noalign{\medskip}-1&-1&-1&-1&1&-1\end {array} \right)
    \end{eqnarray*}
    Moreover
    \begin{equation*}
    \resizebox{1\hsize}{!}{$
      \L_\aa=\left( \begin {array}{cccccccccccccccccc} 1&-1&-1&-1&-1&-1&2&0&0&0&0&0&3&0&0&0&0&0\\ \noalign{\medskip}-1&1&-1&-1&-1&-1&0&2&0&0&0&0&0&3&0&0&0&0\\ \noalign{\medskip}0&0&2&0&0&0&-1&-1&1&-1&-1&-1&0&0&3&0&0&0\\ \noalign{\medskip}0&0&0&2&0&0&-1&-1&-1&1&-1&-1&0&0&0&3&0&0\\ \noalign{\medskip}0&0&0&0&2&0&0&0&0&0&2&0&-1&-1&-1&-1&1&-1\end {array} \right)$}
    \end{equation*}
    and observe that there is a lot of sub-matrices $W$ of $\L_\aa$ satisfying assumption (B), but none of them satisfies assumption (C).

    For instance, choose
    \begin{equation*}
      W=\left( \begin {array}{cccccc} 1&-1&0&0&0&0\\ \noalign{\medskip}-1&-1&2&0&0&0\\ \noalign{\medskip}0&0&-1&-1&3&0\\ \noalign{\medskip}0&0&-1&-1&0&3\\ \noalign{\medskip}0&2&0&0&-1&-1\end {array} \right)
    \end{equation*}
    corresponding to columns of $\L_\aa$ indexed by $1, 5, 8, 12, 15, 16$. Then, calling $V$ the fan matrix (\ref{V5}) of $\P^5$, one has
    \begin{equation*}
      V^T\cdot W=\left( \begin {array}{cccccc} 1&-1&0&0&0&0\\ \noalign{\medskip}-1&-1&2&0&0&0\\ \noalign{\medskip}0&0&-1&-1&3&0\\ \noalign{\medskip}0&0&-1&-1&0&3\\ \noalign{\medskip}0&2&0&0&-1&-1\\ \noalign{\medskip}0&0&0&2&-2&-2\end {array} \right)
    \end{equation*}
    so that the partitioned framing of $X$ is given by $\cc=\cc_1+\cc_2+\cc_3$ with
    \begin{equation*}
     \cc_1=(1,1,0,0,0,0)\ ,\quad \cc_2=(0,0,1,1,0,0)\ ,\quad \cc_3=(0,0,0,0,2,2)
    \end{equation*}
    Then, the fan matrix defined by primitive reduction of vertices of the polytope $[\conv(\D_{\cc_1},\D_{\cc_2},\D_{\cc_3})]$ is
    \begin{equation*}
      \left( \begin {array}{cccccccccc} 0&1&-1&-1&0&0&1&2&0&-1\\ \noalign{\medskip}1&0&0&0&0&0&1&0&0&-1\\ \noalign{\medskip}0&0&0&0&0&1&0&0&0&-1\\ \noalign{\medskip}0&0&0&0&1&0&0&0&0&-1\\ \noalign{\medskip}0&0&0&-1&0&0&1&1&1&-1\end {array} \right)\neq V
    \end{equation*}
    and the complete intersection $Y_{2,2,3}\subset\P^5$ turns out to admit only the calibrated $f$-mirror model $Y^\vee_{BB}$.
  \end{example}

  \begin{example}[$Y_{4,5,6}\subset\P^8$]\label{ex:4,5,6 in P8} This is an example for which assumptions (A), (B) and (C) can be all satisfied but isomorphism in display (\ref{G=tors}) in  Remark~\ref{rem:discussione} is never attained.

  The intersection $Y_{4,5,6}\subset\P^8$  corresponds to the partitioned ftv $$(\P^8,\aa=\aa_1+\aa_2+\aa_3)$$
  with
    \begin{equation*}
      \aa_1=(1,1,2,0,0,0,0,0,0)\,,\ \aa_2=(0,0,0,1,1,3,0,0,0)\,,\ \aa_3=(0,0,0,0,0,0,1,1,4)
    \end{equation*}
    Threfore $\L_\aa$ is given by the following $8\times 27$ matrix
    \begin{equation*}
    \resizebox{1\hsize}{!}{$
      \left( \begin {array}{ccccccccccccccccccccccccccc} 3&-1&-1&-1&-1&-1&-1&-1&-1&5&0&0&0&0&0&0&0&0&6&0&0&0&0&0&0&0&0\\ \noalign{\medskip}-1&3&-1&-1&-1&-1&-1&-1&-1&0&5&0&0&0&0&0&0&0&0&6&0&0&0&0&0&0&0\\ \noalign{\medskip}-2&-2&2&-2&-2&-2&-2&-2&-2&0&0&5&0&0&0&0&0&0&0&0&6&0&0&0&0&0&0\\ \noalign{\medskip}0&0&0&4&0&0&0&0&0&-1&-1&-1&4&-1&-1&-1&-1&-1&0&0&0&6&0&0&0&0&0\\ \noalign{\medskip}0&0&0&0&4&0&0&0&0&-1&-1&-1&-1&4&-1&-1&-1&-1&0&0&0&0&6&0&0&0&0\\ \noalign{\medskip}0&0&0&0&0&4&0&0&0&-3&-3&-3&-3&-3&2&-3&-3&-3&0&0&0&0&0&6&0&0&0\\ \noalign{\medskip}0&0&0&0&0&0&4&0&0&0&0&0&0&0&0&5&0&0&-1&-1&-1&-1&-1&-1&5&-1&-1\\ \noalign{\medskip}0&0&0&0&0&0&0&4&0&0&0&0&0&0&0&0&5&0&-1&-1&-1&-1&-1&-1&-1&5&-1\end {array} \right) $}
    \end{equation*}
    Notice that there are lots of submatrices $W$ of $\L_\aa$ satisfying assumption (B), some of them also satisfying assumption (C), but none of them satisfying the condition $W\cdot(\aa')^T=\0_8^T$, for any permutation $\aa'\sim\aa$. In the following we will give three choices of the submatrix $W$ satisfying assumption (B): two of them satisfy also assumption (C) and so give two LT-mirror models. The third choice gives an example of a submatrix $W$ which does not satisfy assumption (C).

    As a first choice of $W$ consider the submatrix of $\L_\aa$ obtained by columns $7, 8, 9, 10$, $11, 12, 22, 23, 24$, that is
    \begin{equation*}
      W_1=\left( \begin {array}{ccccccccc} -1&-1&-1&5&0&0&0&0&0\\ \noalign{\medskip}-1&-1&-1&0&5&0&0&0&0\\ \noalign{\medskip}-2&-2&-2&0&0&5&0&0&0\\ \noalign{\medskip}0&0&0&-1&-1&-1&6&0&0\\ \noalign{\medskip}0&0&0&-1&-1&-1&0&6&0\\ \noalign{\medskip}0&0&0&-3&-3&-3&0&0&6\\ \noalign{\medskip}4&0&0&0&0&0&-1&-1&-1\\ \noalign{\medskip}0&4&0&0&0&0&-1&-1&-1\end {array} \right)
    \end{equation*}
    It is a fan matrix of
    $$X_1=\P(\q')/H_1\cong\P(\aa')/G_1\quad\text{with}\quad\begin{array}{c}
                                                \aa'=(1,1,4,1,1,2,1,1,3)\sim\aa \\
                                                \q'= (5,5,20,6,6,12,4,4,12)
                                              \end{array}$$
    and
    \begin{eqnarray*}
      H_1&\cong&\mu_2^2\times\mu_{60}^2\cong\Tors(\Cl(X_1))\\
      G_1&\cong&\mu_5^3\times\mu_6^3\times\mu_4^3\times H_1
    \end{eqnarray*}
    The action of $H_1$ on $\P(\q')$ is assigned by the torsion matrix
    \begin{equation*}
      T_{W_1}=\left( \begin {array}{ccccccccc} 1_2&0_2&1_2&0_2&0_2&0_2&1_2&1_2&0_2\\ \noalign{\medskip}1_2&0_2&1_2&0_2&0_2&0_2&1_2&0_2&1_2\\ \noalign{\medskip}0_{60}&15_{60}&30_{60}&45_{60}&45_{60}&6_{60}&46_{60}&46_{60}&28_{60}\\ \noalign{\medskip}15_{60}&0_{60}&15_{60}&30_{60}&54_{60}&12_{60}&6_{60}&16_{60}&38_{60}\end {array} \right)
    \end{equation*}
    (think its columns as exponents of corresponding primitive roots of the unity multiplying the same coordinate of $\P(\q')$)
    and that of $G_1$ is compatible with weight actions defining weighted projective spaces involved in the following diagram
     \begin{equation*}
       \xymatrix{&\P^8\ar[dr]\ar[dl]&\\
                \P(\q)\ar[dr]_-{/H_1}&&\P(\aa')\ar[ll]_{/(\mu_5^3\times\mu_6^3\times\mu_4^3)}
                \ar[ld]^{/G_1}\\
                &X_1&}
     \end{equation*}
     Evaluating $V^T\cdot W_1$, where $V=\left(
                                         \begin{array}{ccc}
                                           I_8 & \vline & -\1_8^T \\
                                         \end{array}
                                       \right)$ is the usual fan matrix of $\P^8$,
      the partitioned framing of $X_1$ turns out to be assigned by $\cc=\cc_1+\cc_2+\cc_3$ with
      \begin{equation*}
      \cc_1=(2,2,2,0,0,0,0,0,0)\,,\ \cc_2=(0,0,0,3,3,3,0,0,0)\,,\ \cc_3=(0,0,0,0,0,0,4,4,4)
    \end{equation*}
    Then one has
    \begin{equation*}
      \D_{\cc_1}=\conv\left( \begin {array}{ccccccccc} 2&0&0&1/2&1/2&1/2&1/2&1/2&-1/4\\ \noalign{\medskip}0&2&0&1/2&1/2&1/2&1/2&1/2&-1/4\\ \noalign{\medskip}0&0&1&1/2&1/2&1/2&1/2&1/2&-1/4\\ \noalign{\medskip}0&0&0&5/2&0&0&1/2&1/2&-1/4\\ \noalign{\medskip}0&0&0&0&5/2&0&1/2&1/2&-1/4\\ \noalign{\medskip}0&0&0&0&0&5/6&1/2&1/2&-1/4\\ \noalign{\medskip}0&0&0&0&0&0&3&0&-3/4\\ \noalign{\medskip}0&0&0&0&0&0&0&3&-3/4\end {array} \right)
    \end{equation*}
    \begin{equation*}
      \D_{\cc_2}=\conv\left(\begin {array}{ccccccccc} 9/5&-3/5&-3/5&0&0&0&0&0&-{\frac {9}{10}}\\ \noalign{\medskip}-3/5&9/5&-3/5&0&0&0&0&0&-{\frac {9}{10}}\\ \noalign{\medskip}-3/5&-3/5&3/5&0&0&0&0&0&-{\frac {9}{10}}\\ \noalign{\medskip}0&0&0&3&0&0&3/5&3/5&-3/10\\ \noalign{\medskip}0&0&0&0&3&0&3/5&3/5&-3/10\\ \noalign{\medskip}0&0&0&0&0&1&3/5&3/5&-3/10\\ \noalign{\medskip}0&0&0&0&0&0&{\frac {18}{5}}&0&-{\frac {9}{10}}\\ \noalign{\medskip}0&0&0&0&0&0&0&{\frac {18}{5}}&-{\frac {9}{10}}\end {array} \right)
    \end{equation*}
    \begin{equation*}
      \D_{\cc_3}=\conv\left(\begin {array}{ccccccccc} 2&-2/3&-2/3&0&0&0&0&0&-1\\ \noalign{\medskip}-2/3&2&-2/3&0&0&0&0&0&-1\\ \noalign{\medskip}-2/3&-2/3&2/3&0&0&0&0&0&-1\\ \noalign{\medskip}-2/3&-2/3&-2/3&8/3&-2/3&-2/3&0&0&-1\\ \noalign{\medskip}-2/3&-2/3&-2/3&-2/3&8/3&-2/3&0&0&-1\\ \noalign{\medskip}-2/3&-2/3&-2/3&-2/3&-2/3&4/9&0&0&-1\\ \noalign{\medskip}0&0&0&0&0&0&4&0&-1\\ \noalign{\medskip}0&0&0&0&0&0&0&4&-1\end {array} \right)
    \end{equation*}
    so that the fan matrix defined by primitive reduction of vertices of the polytope
    $$[\conv(\D_{\cc_1},\D_{\cc_3},\D_{\cc_3}]$$
    is, up to a columns permutation, the matrix $V$,
    so giving that $(\P^8,\aa_1+\aa_2+\aa_3)$ is an $f$-mirror of $(X_1,\cc_1+\cc_2+\cc_3)$.

    Furthermore one has all the intermediate mirrors determined by the choice of subsets $A\subset\cI^{W_1}$: we than obtain $2^{18}= 262\,144$ mirror models of $Y_{4,5,6}\subset\P^8$ (this time they will not be listed into some appendix!).

    In terms of defining polynomials, the $f$-calibrated mirror $Y^\vee_{BB}=Y^\vee_\emptyset$ is the complete intersection $\CV(p_{1,\psi},p_{2,\psi},p_{3,\psi})\subset\XX_\aa$ with
    \begin{eqnarray*}
      p_{1,\psi} &=& \,x_{{1}}{x_{{2}}}^{5}x_{{4}}x_{{5}}x_{{6}}x_{{7}}x_{{8}}x_{{9}}
\mbox{}{x_{{11}}}^{5}{x_{{20}}}^{6}+{x_{{3}}}^{3}{x_{{12}}}^{5}{x_{{21}}}^{6}
\mbox{}+{x_{{1}}}^{5}x_{{2}}x_{{4}}x_{{5}}x_{{6}}x_{{7}}x_{{8}}x_{{9}}\\
&&+\psi {x_{{1}}}^{2}{x_{{2}}}^{2}x_{{3}}{x_{{4}}}^{2}{x_{{5}}}^{2}{x_{{6}}}^{2}{x_{{7}}}^{2}
\mbox{}{x_{{8}}}^{2}{x_{{9}}}^{2}
\mbox{}{x_{{10}}}^{5}{x_{{19}}}^{6} \\
      p_{2,\psi} &=& {x_{{5}}}^{4}{x_{{10}}}^{2}{x_{{11}}}^{2}{x_{{12}}}^{2}{x_{{13}}}^{2}
\mbox{}{x_{{14}}}^{7}{x_{{16}}}^{2}{x_{{17}}}^{2}{x_{{18}}}^{2}{x_{{23}}}^{6}+{x_{{6}}}^{4}{x_{{15}}}^{3}{x_{{24}}}^{6}\\
\mbox{}&&+{x_{{4}}}^{4}{x_{{10}}}^{2}{x_{{11}}}^{2}{x_{{12}}}^{2}{x_{{13}}}^{7}{x_{{14}}}^{2}
\mbox{}{x_{{16}}}^{2}{x_{{17}}}^{2}{x_{{18}}}^{2}{x_{{22}}}^{6}+\psi {x_{{10}}}^{3}{x_{{11}}}^{3}{x_{{12}}}^{3}{x_{{13}}}^{3}{x_{{14}}}^{3}
\mbox{}x_{{15}}{x_{{16}}}^{3}{x_{{17}}}^{3}{x_{{18}}}^{3}  \\
      p_{3,\psi} &=& {x_{{7}}}^{4}{x_{{16}}}^{5}{x_{{19}}}^{3}{x_{{20}}}^{3}{x_{{21}}}^{3}
\mbox{}{x_{{22}}}^{3}{x_{{23}}}^{3}{x_{{24}}}^{3}{x_{{25}}}^{9}{x_{{26}}}^{3}
+{x_{{8}}}^{4}{x_{{17}}}^{5}{x_{{19}}}^{3}{x_{{20}}}^{3}{x_{{21}}}^{3}{x_{{22}}}^{3}
\mbox{}{x_{{23}}}^{3}{x_{{24}}}^{3}{x_{{25}}}^{3}{x_{{26}}}^{9}\\
\mbox{}&&+{x_{{9}}}^{4}{x_{{18}}}^{5}{x_{{27}}}^{3}+\psi{x_{{19}}}^{4}{x_{{20}}}^{4}{x_{{21}}}^{4}{x_{{22}}}^{4}{x_{{23}}}^{4}
\mbox{}{x_{{24}}}^{4}{x_{{25}}}^{4}{x_{{26}}}^{4}x_{{27}}
    \end{eqnarray*}
    which are polynomials in $\Cox(\XX_\aa)$ of degree
    \begin{eqnarray*}
      \deg(p_{1,\psi}) &=& [2, 2, 1, 2, 2, 2, 2, 2, 2, 0, 0, 0, 0, 0, 0, 0, 0, 0, 0] \\
      \deg(p_{2,\psi}) &=& [36, 36, 36, 48, 54, 46, 54, 54, 54, 18, 18, 18, 9, 18, 6, 18, 18, 18, 0] \\
      \deg(p_{3,\psi}) &=& [0,0,0,0,0,0,13,0,12,0,0,0,0,0,0,20,0,15,12]
    \end{eqnarray*}
    with respect to the grading defined by $\Cl(\XX_\aa)$\,.

    On the other hand, the LT-mirror $Y^\vee_{LT,1}=Y^\vee_{\cI^{W_1}}\subset X_1$ is the complete intersection $\CV(q'_{1,\psi},q'_{2,\psi},q'_{3,\psi})\subset X_1$ with
    \begin{eqnarray*}
      q'_{1,\psi} &=& x_{{1}}x_{{2}}x_{{3}}{x_{{4}}}^{5}+x_{{1}}x_{{2}}x_{{3}}{x_{{5}}}^{5}
\mbox{}+{x_{{6}}}^{5}+\psi{x_{{1}}}^{2}{x_{{2}}}^{2}{x_{{3}}}^{2}\\
      q'_{2,\psi} &=& {x_{{4}}}^{2}{x_{{5}}}^{2}{x_{{6}}}^{2}{x_{{7}}}^{6}
\mbox{}+{x_{{9}}}^{6}+{x_{{4}}}^{2}{x_{{5}}}^{2}{x_{{6}}}^{2}{x_{{8}}}^{6}+\psi{x_{{4}}}^{3}{x_{{5}}}^{3}{x_{{6}}}^{3} \\
      q'_{3,\psi} &=& {x_{{1}}}^{4}{x_{{7}}}^{3}{x_{{8}}}^{3}{x_{{9}}}^{3}+{x_{{2}}}^{4}{x_{{7}}}^{3}{x_{{8}}}^{3}{x_{{9}}}^{3}
\mbox{}+{x_{{3}}}^{4}+\psi{x_{{7}}}^{4}{x_{{8}}}^{4}{x_{{9}}}^{4}
    \end{eqnarray*}
    which are polynomials in $\Cox(X_1)$ of degree 60, 72 and 80, respectively, with respect to the grading defined by $\Cl(X_1)$.

    As a second choice of $W$ consider the submatrix of $\L_\aa$ obtained by columns $4, 5, 6, 16, 17, 18, 19, 20, 21$, that is
    \begin{equation*}
      W_2=\left( \begin {array}{ccccccccc} -1&-1&-1&0&0&0&6&0&0\\ \noalign{\medskip}-1&-1&-1&0&0&0&0&6&0\\ \noalign{\medskip}-2&-2&-2&0&0&0&0&0&6\\ \noalign{\medskip}4&0&0&-1&-1&-1&0&0&0\\ \noalign{\medskip}0&4&0&-1&-1&-1&0&0&0\\ \noalign{\medskip}0&0&4&-3&-3&-3&0&0&0\\ \noalign{\medskip}0&0&0&5&0&0&-1&-1&-1\\ \noalign{\medskip}0&0&0&0&5&0&-1&-1&-1\end {array} \right)
    \end{equation*}
    It is a fan matrix of
    $$X_2=\P(\q'')/H_2\cong\P(\aa'')/G_2 \quad\text{with}\quad\begin{array}{c}
                                                \aa''=(1,1,3,1,1,4,1,1,2)\sim\aa \\
                                                \q''= (6,6,18,4,4,16,5,5,10)
                                              \end{array}$$
                                              and
    \begin{eqnarray*}
      H_2&\cong&\mu_2^2\times\mu_{60}^2\cong\Tors(\Cl(X_2))\cong H_1\\
      G_2&\cong&\mu_6^3\times\mu_4^3\times\mu_5^3\times H_1\cong G_1
    \end{eqnarray*}
    The action of $H_2$ is assigned by the torsion matrix
    \begin{equation*}
      T_{W_2}=\left( \begin {array}{ccccccccc} 1_2&1_2&0_2&1_2&1_2&0_2&0_2&0_2&1_2\\ \noalign{\medskip}1_2&0_2&1_2&1_2&1_2&0_2&1_2&0_2&0_2\\ \noalign{\medskip}15_{60}&15_{60}&30_{60}&44_{60}&20_{60}&56_{60}&0_{60}&30_{60}&10_{60}\\ \noalign{\medskip}15_{60}&30_{60}&45_{60}&34_{60}&34_{60}&52_{60}&45_{60}&25_{60}&40_{60}\end {array} \right)
    \end{equation*}
    and that of $G_2$ is compatible with weight actions defining weighted projective spaces involved in the following diagram
     \begin{equation*}
       \xymatrix{&\P^8\ar[dr]\ar[dl]&\\
                \P(\q'')\ar[dr]_-{/H_2}&&\P(\aa'')\ar[ll]_{/(\mu_6^3\times\mu_4^3\times\mu_5^3)}
                \ar[ld]^{/G_2}\\
                &X_2&}
                \end{equation*}

    \begin{claim}
      $X_1$ and $X_2$ are non-isomorphic $\Q$-factorial, complete, toric varieties.
    \end{claim}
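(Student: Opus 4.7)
The plan is to distinguish $X_1$ and $X_2$ by the multiset of weights of the weighted projective spaces they respectively cover. Since both fan matrices $W_1,W_2$ are $8\times 9$ of full rank, each $X_i$ has Picard number $1$, so $\Cl(X_i)\cong\Z\oplus\Tors(\Cl(X_i))$. A direct Gale dual computation from the explicit matrices $W_1,W_2$ confirms that the free part of $[D_i]\in\Cl(X_i)$ is recorded by the weight vectors
\[
  Q_1=(5,5,20,6,6,12,4,4,12),\qquad Q_2=(6,6,18,4,4,16,5,5,10),
\]
both primitive (each with $\gcd=1$), in agreement with the presentations $X_i\cong\P(\q_i)/H_i$ in which $H_i\cong\Tors(\Cl(X_i))$ acts freely in codimension one.

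The key point is that, for any $\Q$-factorial complete toric variety $X$ of Picard number $1$ expressed as $X\cong\P(\q)/H$ with $\gcd(\q)=1$ and $H\cong\Tors(\Cl(X))$ acting in codimension one freely, the multiset of entries of $\q$ is an intrinsic invariant of $X$: these entries are, up to permutation, the classes in $\Cl(X)/\Tors\cong\Z$ of the torus-invariant prime divisors, and any toric automorphism permutes the latter. Equivalently, the Cox ring of $X$ is a $\Cl(X)$-graded polynomial ring whose induced $\Z$-grading by $\Cl(X)/\Tors$ assigns degree $q_i$ to the $i$-th Cox coordinate, so the multiset of $q_i$'s is encoded in its Hilbert series and is therefore preserved under toric isomorphism.

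It then remains only to compare the two multisets of weights, namely
\[
  \{4,4,5,5,6,6,12,12,20\}\quad\text{versus}\quad\{4,4,5,5,6,6,10,16,18\},
\]
which are manifestly different (for instance, the weight $20$ appears only in the first). Hence no toric isomorphism $X_1\cong X_2$ can exist, which proves the claim. The main (and essentially only) obstacle is the verification that the claimed multiset really is an isomorphism invariant; the remaining steps (computing Gale duals and verifying primitivity) are straightforward linear-algebra checks on the data already tabulated in Example~\ref{ex:4,5,6 in P8}.
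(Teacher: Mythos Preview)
Your proof is correct and takes a genuinely different route from the paper's. The paper argues that a toric isomorphism $X_1\cong X_2$ would force $W_2=A\,W_1\,B$ with $A\in\SL(8,\Z)$ and $B$ a $9\times9$ permutation matrix, then extends each $W_i$ to a square matrix by appending the row $(\0_8\mid 1)$ and compares the two determinants, obtaining $172\,800\neq 144\,000$. You instead pass to the Gale dual side and compare the multisets of weights, i.e.\ the degrees of the torus-invariant prime divisors in $\Cl(X_i)/\Tors\cong\Z$, observing directly that $\{4,4,5,5,6,6,12,12,20\}\neq\{4,4,5,5,6,6,10,16,18\}$.

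The two invariants are in fact closely related: since $|\Tors(\Cl(X_i))|=|H_i|=14\,400$ for both $i$, the absolute values of the nine maximal $8\times8$ minors of $W_i$ are precisely $14\,400\cdot q^{(i)}_j$, so the paper's single determinant is one entry of the list whose full multiset you compare. Your argument is more intrinsic in that the weight multiset is manifestly preserved under any fan isomorphism (no choice of a distinguished column is needed), and it makes the source of the discrepancy transparent at the level of divisor classes. The paper's route is a one-line numerical check once the determinant identity is granted, but that identity tacitly uses that the permutation $B$ fixes the last column; your approach sidesteps this by treating all columns on an equal footing.
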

    In fact, $X_1\cong X_2$ if and only if there exist a uni-modular matrix $A\in \SL(8,\Z)$ and a permutation matrix $B\in\SL(9,\Z)$ such that
    $W_2=A\cdot W_1\cdot B$\,. By adding a bottom row $(0,\ldots,0,1)$ to $W_1,W_2$ and $A$ and a right column $(0,\ldots,0,1)^T$ to $A$, one has the matricial equation
    \begin{equation*}
       \left(
                      \begin{array}{ccc}
                         &  &  \\
                         & W_2 &  \\
                         &  &  \\
                         \hline
                        \0_8 & \vline & 1 \\
                      \end{array}
                    \right) = \left(
         \begin{array}{ccccc}
            &  &  & \vline& \\
            & A &  & \vline&\0_8^T \\
            &  &  & \vline & \\
            \hline
            & \0_8 &  & \vline& 1 \\
         \end{array}
       \right)\cdot \left(
                      \begin{array}{ccc}
                         &  &  \\
                         & W_1 &  \\
                         &  &  \\
                         \hline
                        \0_8 & \vline & 1 \\
                      \end{array}
                    \right)\cdot B
    \end{equation*}
   \begin{equation*}
     \Longrightarrow\quad \left|\det \left(
                      \begin{array}{ccc}
                         &  &  \\
                         & W_1 &  \\
                         &  &  \\
                         \hline
                        \0_8 & \vline & 1 \\
                      \end{array}
                    \right)\right| =\left|\det \left(
                      \begin{array}{ccc}
                         &  &  \\
                         & W_2 &  \\
                         &  &  \\
                         \hline
                        \0_8 & \vline & 1 \\
                      \end{array}
                    \right)\right|
   \end{equation*}
   giving an absurd as $172\,800\neq 144\,000$\,.

   Anyway notice that both $X_1$ and $X_2$ have anti-canonical class
   $$[-K_{X_i}]=[74]\in\Cl(X_i)\ ,\quad i=1,2$$
   because $|\q'|=74=|\q''|$\,.

   Evaluating $V^T\cdot W_2$, the partitioned framing of $X_2$ turns out to be the same of $X_1$ so that
    \begin{equation*}
      \D_{\cc_1}=\conv\left( \begin {array}{ccccccccc} 2&0&0&1/2&1/2&-1/4&0&0&0\\ \noalign{\medskip}0&2&0&1/2&1/2&-1/4&0&0&0\\ \noalign{\medskip}0&0&1&1/2&1/2&-1/4&0&0&0\\ \noalign{\medskip}0&0&0&0&0&-3/4&2&-1/2&-1/2\\ \noalign{\medskip}0&0&0&0&0&-3/4&-1/2&2&-1/2\\ \noalign{\medskip}0&0&0&0&0&-3/4&-1/2&-1/2&1/3\\ \noalign{\medskip}0&0&0&3&0&-3/4&0&0&0\\ \noalign{\medskip}0&0&0&0&3&-3/4&0&0&0\end {array} \right)
    \end{equation*}
    \begin{equation*}
      \D_{\cc_2}=\conv\left(\begin {array}{ccccccccc} {\frac {12}{5}}&0&0&3/5&3/5&-3/10&0&0&0\\ \noalign{\medskip}0&{\frac {12}{5}}&0&3/5&3/5&-3/10&0&0&0\\ \noalign{\medskip}0&0&6/5&3/5&3/5&-3/10&0&0&0\\ \noalign{\medskip}3/5&3/5&3/5&3/5&3/5&-3/10&3&0&0\\ \noalign{\medskip}3/5&3/5&3/5&3/5&3/5&-3/10&0&3&0\\ \noalign{\medskip}3/5&3/5&3/5&3/5&3/5&-3/10&0&0&1\\ \noalign{\medskip}0&0&0&{\frac {18}{5}}&0&-{\frac {9}{10}}&0&0&0\\ \noalign{\medskip}0&0&0&0&{\frac {18}{5}}&-{\frac {9}{10}}&0&0&0\end {array} \right)
    \end{equation*}
    \begin{equation*}
      \D_{\cc_3}=\conv\left(\begin {array}{ccccccccc} 2&-2/3&-2/3&0&0&-1&-2/3&-2/3&-2/3\\ \noalign{\medskip}-2/3&2&-2/3&0&0&-1&-2/3&-2/3&-2/3\\ \noalign{\medskip}-2/3&-2/3&2/3&0&0&-1&-2/3&-2/3&-2/3\\ \noalign{\medskip}0&0&0&0&0&-1&8/3&-2/3&-2/3\\ \noalign{\medskip}0&0&0&0&0&-1&-2/3&8/3&-2/3\\ \noalign{\medskip}0&0&0&0&0&-1&-2/3&-2/3&4/9\\ \noalign{\medskip}0&0&0&4&0&-1&0&0&0\\ \noalign{\medskip}0&0&0&0&4&-1&0&0&0\end {array} \right)
    \end{equation*}
    and the fan matrix defined by primitive reduction of vertices of the polytope
    $$[\conv(\D_{\cc_1},\D_{\cc_3},\D_{\cc_3}]$$
    is, up to a columns permutation, the matrix $V$,
    so giving that $(\P^8,\aa_1+\aa_2+\aa_3)$ is a $f$-mirror of $(X_2,\cc_1+\cc_2+\cc_3)$.

    In terms of defining polynomials, the LT-mirror $Y^\vee_{LT,2}=Y^\vee_{\cI^{W_2}}\subset X_2$ is the complete intersection $\CV(q''_{1,\psi},q''_{2,\psi},q''_{3,\psi})\subset X_2$ with
    \begin{eqnarray*}
      q''_{1,\psi} &=& x_{{1}}x_{{2}}x_{{3}}{x_{{7}}}^{6}+x_{{1}}x_{{2}}x_{{3}}{x_{{8}}}^{6}+{x_{{9}}}^{6}
\mbox{}+\psi{x_{{1}}}^{2}{x_{{2}}}^{2}{x_{{3}}}^{2}\\
      q''_{2,\psi} &=& {x_{{1}}}^{4}{x_{{4}}}^{2}{x_{{5}}}^{2}{x_{{6}}}^{2}+{x_{{2}}}^{4}{x_{{4}}}^{2}{x_{{5}}}^{2}{x_{{6}}}^{2}
      +{x_{{3}}}^{4}+\psi{x_{{4}}}^{3}{x_{{5}}}^{3}{x_{{6}}}^{3} \\
      q''_{3,\psi} &=& {x_{{4}}}^{5}{x_{{7}}}^{3}{x_{{8}}}^{3}{x_{{9}}}^{3}+{x_{{5}}}^{5}{x_{{7}}}^{3}{x_{{8}}}^{3}{x_{{9}}}^{3}+{x_{{6}}}^{5} +\psi{x_{{7}}}^{4}{x_{{8}}}^{4}{x_{{9}}}^{4}
    \end{eqnarray*}
    which turn out to be still polynomials of degree 60, 72 and 80, respectively, in $\Cox(X_2)$, with respect to the grading defined by $\Cl(X_2)$.

    Furthermore there are all the intermediate mirrors determined by the choice of subsets $A\subset\cI^{W_2}$: we than obtain further $2^{18}= 262\,144$ mirror models, that is, at least $2^{19}-1= 524\,287$ mirror models of $Y_{4,5,6}\subset\P^8$ by taking into account also the mirror models coming from the previous starting LT-mirror model and considering the repetition of $Y^\vee_{BB}=Y^\vee_\emptyset$.

    As a third choice of $W$ consider the submatrix of $\L_\aa$ obtained by columns $7, 8, 9, 12, 14, 15, 19, 20, 22$, that is
    \begin{equation*}
      W_3=\left( \begin {array}{ccccccccc} -1&-1&-1&0&0&0&6&0&0\\ \noalign{\medskip}-1&-1&-1&0&0&0&0&6&0\\ \noalign{\medskip}-2&-2&-2&5&0&0&0&0&0\\ \noalign{\medskip}0&0&0&-1&-1&-1&0&0&6\\ \noalign{\medskip}0&0&0&-1&4&-1&0&0&0\\ \noalign{\medskip}0&0&0&-3&-3&2&0&0&0\\ \noalign{\medskip}4&0&0&0&0&0&-1&-1&-1\\ \noalign{\medskip}0&4&0&0&0&0&-1&-1&-1\end {array} \right)
    \end{equation*}
    It is a fan matrix of
    $$X_3=\P(\q''')/H_3\cong\P(\aa''')/G_3 \quad\text{with}\quad\begin{array}{c}
                                                \aa'''=(1,1,4,1,1,3,1,1,2)\sim\aa \\
                                                \q'''= (5,5,20,12,12,36,5,5,10)
                                              \end{array}$$
                                              and
    \begin{eqnarray*}
      H_3&\cong&\mu_2\times\mu_{12}\times\mu_{120}\cong\Tors(\Cl(X_3))\\
      G_3&\cong&\mu_5^3\times\mu_12^3\times\mu_5^3\times H_3
    \end{eqnarray*}
    so that assumtion (B) turns out to be satisfied by this further choice.

    Evaluating $V^T\cdot W_3$, the partitioned framing of $X_3$ is now different, namely
    \begin{equation*}
      \cc_1=(2,2,2,0,0,0,0,0,0)\,,\ \cc_2=(0,0,0,3,3,1,0,0,0)\,,\ \cc_3=(0,0,0,0,0,0,4,4,4)
    \end{equation*}
    It suffices to observe that
    \begin{equation*}
      \D_{\cc_1}=\conv\left( \begin {array}{ccccccccc} 2&0&0&0&0&0&1/2&1/2&-1/4\\ \noalign{\medskip}0&2&0&0&0&0&1/2&1/2&-1/4\\ \noalign{\medskip}0&0&1&1&1&1&1/2&1/2&-1/4\\ \noalign{\medskip}0&0&0&1&0&0&1/2&1/2&-1/4\\ \noalign{\medskip}0&0&0&1&2&1&1/2&1/2&-1/4\\ \noalign{\medskip}0&0&0&1&1&4/3&1/2&1/2&-1/4\\ \noalign{\medskip}0&0&0&0&0&0&3&0&-3/4\\ \noalign{\medskip}0&0&0&0&0&0&0&3&-3/4\end {array} \right)
    \end{equation*}
    to conclude that fourth and fifth vertices of this polytope contribute two columns of the fan matrix coming from $[\D_{\cc_1},\D_{\cc_2},\D_{\cc_3}]$ by primitive reduction of vertices, which are not columns of $V$. Hence the $f$-mirror partner of $(X_3,\cc_1+\cc_2+\cc_3)$ cannot be $(\P^8,\aa_1+\aa_2+\aa_3)$,  that is, assumption (C) is not satisfied.
  \end{example}

  \begin{remark}
   In principle, the construction producing either $X_1$ or $X_2$ into the previous Example~\ref{ex:4,5,6 in P8}, can be repeated for every projective complete intersection $Y_{d_1,\ldots,d_l}\subset\P^n$ of nonnegative Kodaira dimension, satisfying assumptions (A), (B) and (C), so getting at least $2^{(l-1)(n+1)}$ mirror models, as stated in item (c) in the Introduction.
  \end{remark}

\subsection{$K$-equivalence vs $D$-equivalence}\label{ssez:KvsD} About $K$-equivalence,  Theorem~\ref{thm:Kequivalenzad} cannot be generalized in the same shape as, in the general case, it is no more possible to guarantee that the restriction to strict transforms $\widehat{Y}^\vee_{A}$ and $\widehat{Y}^\vee_{A'}$ of resolutions $\psi_\Si$ and $\psi_{\Si'}$, induced by the choice of fans $\Si\in\PSF(\widehat{\L}^A)$ and $\Si'\in\PSF(\widehat{\L}^{A'})$, are crepant morphisms: in general they give divisorial blowups of $Y^\vee_A$ and $Y^\vee_{A'}$, respectively. Then, we necessarily have to restrict our considerations to the resolution level given by fans in $\PSF(\widehat{\L}^{A\cap A'})$, to get the following

   \begin{theorem}[$K$-equivalence]\label{thm:Kequiv-generale}
  Let $A,A'$ be two subsets of $\cI^W$ and let $\widehat{\L}^{A\cap A'}$ be the matrix whose columns are given by all the primitive lattice points contained in $\conv(\L^{A\cap A'})\setminus\{\0\}$.  Consider any choice $$\Si\in\PSF(\widehat{\L}^{A\cap A'})\ ,\quad\Si'\in\PSF(\widehat{\L}^{A\cap A'})$$
  giving (possibly partial) desingularizations $$\widehat{Y}^\vee_{A}=(\psi_\Si)^{-1}_*(Y^\vee_{A})\to Y^\vee_{A}\ ,\quad\widehat{Y}^\vee_{A'}=(\psi_{\Si'})^{-1}_*(Y^\vee_{A'})\to Y^\vee_{A'}$$
  Then $\widehat{Y}^\vee_{A}$ and $\widehat{Y}^\vee_{A'}$  are $K$-equivalent.
   \end{theorem}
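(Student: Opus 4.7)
The plan is to adapt the strategy of Theorem~\ref{thm:Kequivalenza3} to the present more general (possibly non-Calabi--Yau) setting, taking advantage of the simplification that here both $\Si$ and $\Si'$ already live in the same set $\PSF(\widehat{\L}^{A\cap A'})$. This is precisely the ``common partial resolution of singularities'' level mentioned in the introduction, which replaces the preliminary stellar subdivision step used in Theorem~\ref{thm:Kequivalenza3} to pull separate fans $\Si\in\PSF(\widehat{\L}^A)$, $\Si'\in\PSF(\widehat{\L}^{A'})$ back into a common ambient refinement.

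First, I would observe that $\widehat{\XX}^{A\cap A'}(\Si)$ and $\widehat{\XX}^{A\cap A'}(\Si')$ are $\Q$-factorial projective toric varieties sharing the same 1-skeleton, so the identity on rays induces a small $\Q$-factorial modification
\begin{equation*}
  \vf:\widehat{\XX}^{A\cap A'}(\Si)\dashrightarrow\widehat{\XX}^{A\cap A'}(\Si')
\end{equation*}
which is an isomorphism in codimension 1. By the geometry of the secondary fan, $\vf$ decomposes into a finite sequence of wall crossings $\vf=\vf_1\circ\cdots\circ\vf_s$, with each $\vf_i$ a flop between neighboring maximal cones in a chain of intermediate simplicial fans $\Si=\Si_s,\ldots,\Si_0=\Si'$. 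Reproducing the chain of commutative diagrams in the proof of Theorem~\ref{thm:smallK&D-equivalenza}, each flop lifts to a top toric variety given by the divisorial blowup of the common facet $\tau_i\cap\tau'_i$, and iterated fibered products yield a dominating toric variety $\widehat{\XX}$ with birational morphisms
\begin{equation*}
  \xymatrix{&\widehat{\XX}\ar[dl]_-f\ar[dr]^-g&\\
              \widehat{\XX}^{A\cap A'}(\Si)\ar@{-->}[rr]^-\vf&&\widehat{\XX}^{A\cap A'}(\Si')}
\end{equation*}
satisfying $f^*K_{\widehat{\XX}^{A\cap A'}(\Si)}\sim_\Q g^*K_{\widehat{\XX}^{A\cap A'}(\Si')}$.

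Second, I would restrict the whole picture to the embedded mirror models. By the construction in \S\ref{ssez:mirrors}(2), $Y^\vee_A\subset\XX^A_\aa$ and $Y^\vee_{A'}\subset\XX^{A'}_\aa$ both arise as strict transforms of the common LT-model $Y^\vee_{LT}\subset X$, hence their further strict transforms $\widehat{Y}^\vee_A$ and $\widehat{Y}^\vee_{A'}$ are birationally identified under $\vf$ in codimension 1, giving a well-defined common strict transform $Z:=f^{-1}_*(\widehat{Y}^\vee_A)=g^{-1}_*(\widehat{Y}^\vee_{A'})\subset\widehat{\XX}$. The adjunction computation performed in item (2) of the proof of Theorem~\ref{thm:smallK&D-equivalenza} then applies verbatim on each intermediate step: for a suitable $r\in\N$ one has
\begin{equation*}
  \omega_{\widehat{Y}^\vee_A}^{\otimes r}\cong\cO_{\widehat{\XX}^{A\cap A'}(\Si)}\left(r\sum_j(a_j-1)D_j\right)\bigg|_{\widehat{Y}^\vee_A}
\end{equation*}
and analogously on the $\Si'$ side, with the same coefficients $a_j-1$ determined by the partitioned framing $\aa$. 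Since $f^*D_j=g^*D_j$ for every torus invariant prime divisor $D_j$, one concludes $f^*K_{\widehat{Y}^\vee_A}\sim_\Q g^*K_{\widehat{Y}^\vee_{A'}}$, giving the required $K$-equivalence.

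The main obstacle, compared with the Calabi--Yau Theorem~\ref{thm:Kequivalenza3}, is the presence of non-trivial discrepancy coefficients $a_j-1\neq 0$ in the adjunction formula, which in the CY case simply vanish. The key observation making the argument still work is that these coefficients depend only on the partitioned framing $\aa=\sum_k\aa_k$ of $\P^n$, which is fixed throughout, and not on the particular $\Si,\Si'\in\PSF(\widehat{\L}^{A\cap A'})$ chosen; consequently they cancel identically in the difference $f^*K_{\widehat{Y}^\vee_A}-g^*K_{\widehat{Y}^\vee_{A'}}$, which is precisely the reason why working inside the common refinement level $\PSF(\widehat{\L}^{A\cap A'})$, rather than in separate $\PSF(\widehat{\L}^A)$ and $\PSF(\widehat{\L}^{A'})$, allows the Calabi--Yau hypothesis to be dropped.
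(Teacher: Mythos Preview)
Your proposal is correct and follows essentially the same route as the paper: exploit that $\Si,\Si'\in\PSF(\widehat{\L}^{A\cap A'})$ give an s$\Q$m of $\Q$-factorial projective toric varieties, decompose it into wall-crossings as in the proofs of Theorems~\ref{thm:smallK&D-equivalenza} and~\ref{thm:Kequivalenza3}, build the dominating toric variety by iterated fibred products, and restrict to the embedded mirror models via adjunction. Your additional explanation of \emph{why} the Calabi--Yau hypothesis may be dropped---namely that the adjunction coefficients coming from the partitioned framing are fixed by the common $1$-skeleton and hence cancel in $f^*K-g^*K$---is a helpful unpacking of what the paper leaves implicit in the phrase ``as explained in proving Theorem~\ref{thm:smallK&D-equivalenza}''. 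One small notational remark: the coefficients appearing in the adjunction formula for $\omega_{\widehat{Y}^\vee_A}$ are the entries of the \emph{mirror} framing (the pullback of $\bb^{A\cap A'}$), not literally the $a_j$ of the original framing $\aa$ on $\P^n$; since $\bb$ is determined by $\aa$ through $f$-duality your phrasing is defensible, but it is cleaner to refer to the framing on $\widehat{\XX}^{A\cap A'}$ directly.
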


   \begin{proof}
     Starting with the LT-mirror model $Y^\vee_{LT}\subset X$ and the choice of subsets $A$ and $A'$ in $\cI^W$, consider the blowups
  \begin{equation*}
    \xymatrix{\XX^A\ar[dr]_-{\phi^A}&&\XX^{A'}\ar[dl]^-{\phi^{A'}}\\
                    &X&}
  \end{equation*}
  The choice of $\Si,\Si'\in\PSF(\widehat{\L}^{A\cap A'})$ gives two further birational morphisms
  \begin{equation*}
    \xymatrix{\psi_\Si:\widehat{\XX}^{A\cap A'}(\Si)\ar[r]&\XX^A}\ ,\ \xymatrix{\psi_{\Si'}:\widehat{\XX}^{A\cap A'}(\Si')\ar[r]&\XX^{A'}}
  \end{equation*}
  which, composed with the previous ones, give a commutative diagram of birational maps between toric varieties descending to give an analogous diagram between embedded mirror models
  \begin{equation*}
    \xymatrix{\widehat{\XX}^{A\cap A'}(\Si)\ar@{-->}[rr]^-{\vf}_-{\cong}
    \ar[dr]_-{\phi^A\circ\psi_\Si}&&\widehat{\XX}^{A\cap A'}(\Si')
    \ar[dl]^-{\phi^{A'}\circ\psi_{\Si'}}\\
                    &X&}\quad \Longrightarrow\quad
                    \xymatrix{\widehat{Y}^\vee_A\ar@{-->}[rr]^-{\vf}_-{\cong}\ar[dr]_-{\phi^A\circ\psi_\Si}&&\widehat{Y}^\vee_{A'}
    \ar[dl]^-{\phi^{A'}\circ\psi_{\Si'}}\\
                    &Y^\vee_{LT}&}
  \end{equation*}
  The proof now goes on by decomposing $\vf$ in a finite sequence of wall-crossings, as in the proof of Theorem~\ref{thm:Kequivalenza3}, so getting a dominant toric variety $\widehat{\XX}$ with birational morphisms $f$ and $g$
  \begin{equation*}
    \xymatrix{&\widehat{\XX}\ar[dr]^g\ar[dl]_f&\\
               \widehat{\XX}^{A\cap A'}(\Si)\ar@{-->}[rr]^-{\vf}_-{\cong}&&
                \widehat{\XX}^{A\cap A'}(\Si')}
  \end{equation*}
  Consider the strict transform $Z=f^{-1}_*(\widehat{Y}^\vee_A)= g^{-1}_*(\widehat{Y}^\vee_{A'})$ and restricted morphisms $f|_Z$ and $g|_Z$ so getting
     \begin{equation*}
    \xymatrix{&Z\ar[dl]_{f}\ar[dr]^{g}&\\
                \widehat{Y}^\vee_A\ar@{-->}[rr]^-{\vf}_-{\cong}
                &&\widehat{Y}^\vee_{A'}
                    }
  \end{equation*}
  Then, as explained in proving Theorem~\ref{thm:smallK&D-equivalenza} and recalling Remark~\ref{rem:triangolo},
  $$f^*K_{\widehat{Y}^\vee_A}\sim_\Q g^*K_{\widehat{Y}^\vee_{A'}}$$
   \end{proof}

   Same considerations driving to state Conjectures~\ref{conj:K<=>D,d} and \ref{conj:sing'}, give the following one, implying a clear restatement of Conjecture~\ref{conj:sing} in the present context.

   \begin{conjecture}[$D$-equivalence]\label{conj:D}
     Under the same hypotheses of previous Theo\-rem~\ref{thm:Kequiv-generale}, let $\cY_A$ and $\cY_{A'}$ be the canonical covering stacks of $\widehat{Y}^\vee_{A}$ and $\widehat{Y}^\vee_{A'}$, respectively. Then there exist equivalences of triangulated categories
  \begin{eqnarray*}
    \cD_{sg}(\widehat{Y}^\vee_A)&\cong&\cD_{sg}(\widehat{Y}^\vee_{A'})\\
    \cD^b(\cY_A)&\cong&\cD^b(\cY_{A'})
  \end{eqnarray*}
  In particular, if both $\widehat{Y}^\vee_{A}$ and $\widehat{Y}^\vee_{A'}$ are smooth, then there is an equivalence of triangulated categories $\cD^b(\widehat{Y}^\vee_{A})\cong\cD^b(\widehat{Y}^\vee_{A'})$\,.
   \end{conjecture}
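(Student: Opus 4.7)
The plan is to lift the $K$-equivalence supplied by Theorem~\ref{thm:Kequiv-generale} to a $D$-equivalence at the level of canonical covering stacks, following the pattern of Theorems~\ref{thm:Dequivalenza} and \ref{thm:Dequivalenza^A} but without the dimensional restriction. First, I would recall from the proof of Theorem~\ref{thm:Kequiv-generale} that the birational map
\begin{equation*}
\vf : \widehat{\XX}^{A\cap A'}(\Si) \dashrightarrow \widehat{\XX}^{A\cap A'}(\Si')
\end{equation*}
is a small $\Q$-factorial modification between $\Q$-factorial projective toric varieties and, by the geometry of the secondary fan, decomposes into a finite sequence $\vf = \vf_1 \circ \cdots \circ \vf_s$ of toric wall-crossings, each $\vf_i$ being a toric flop in the sense of \cite[Def.~4.5]{Kawamata}. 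Restricting to the embedded strict transforms gives a corresponding chain of birational maps $\widehat{Y}^{(i)} \dashrightarrow \widehat{Y}^{(i-1)}$ between the embedded mirror models, and I would verify that each restricted map is again a flop by reusing the adjunction/crepancy computation carried out in items (1) and (2) of the proof of Theorem~\ref{thm:smallK&D-equivalenza}.

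Next, I would pass to canonical covering stacks. Each flop $\widehat{Y}^{(i)} \dashrightarrow \widehat{Y}^{(i-1)}$ between normal $\Q$-Gorenstein projective varieties lifts canonically to a flop between the covering stacks $\widetilde{\cY}^{(i)} \dashrightarrow \widetilde{\cY}^{(i-1)}$ (Def.~\ref{def:K-stack}), because the locus where canonical covering is non-trivial is preserved by the restriction of the ambient toric flop. Applying Kawamata's stacky derived-equivalence theorem \cite[Thm.~6.5]{Kawamata}, one gets $\cD^b(\widetilde{\cY}^{(i)}) \cong \cD^b(\widetilde{\cY}^{(i-1)})$, and composing the chain produces the desired $\cD^b(\cY_A) \cong \cD^b(\cY_{A'})$. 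The smooth specialization then follows automatically, since for smooth $\widehat{Y}^\vee_A$ the canonical covering stack $\cY_A$ coincides with the variety itself.

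For the equivalence of singularity categories $\cD_{sg}(\widehat{Y}^\vee_A) \cong \cD_{sg}(\widehat{Y}^\vee_{A'})$, I would invoke the variation-of-GIT framework of Favero--Kelly \cite{Favero-Kelly}, exactly as Malter does in proving Theorem~\ref{thm:Malter 2.23}: the common resolution $\widehat{\XX}$ constructed in the proof of Theorem~\ref{thm:Kequiv-generale} presents both $\widehat{Y}^\vee_A$ and $\widehat{Y}^\vee_{A'}$ as GIT quotients of the same master space by different linearizations, and each wall-crossing $\vf_i$ is precisely the kind of elementary VGIT step to which \cite[Prop.~4.7]{Favero-Kelly} applies, yielding term-by-term equivalences of singularity categories that compose into the required one.

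The hard part is the dimension-independent version of \cite[Thm.~6.5]{Kawamata}: Kawamata's decomposition of crepant birational maps into flops and the derived equivalence under each flop are established in full generality only in dimension three (using Bridgeland's theorem), so in higher dimensions the argument above is genuinely conjectural unless one can identify each $\vf_i$ with a flop of a class for which a Fourier--Mukai kernel is known explicitly. A realistic route would be to exploit the toric nature of every $\vf_i$: each toric wall-crossing is modelled on a standard $(k, n-k)$ flop over a toric base, for which derived equivalences have been constructed by Kawamata, Namikawa and Segal via tilting bundles. Reducing Conjecture~\ref{conj:D} to a combinatorial check on the chambers of the secondary fan of $\widehat{\L}^{A\cap A'}$, and then invoking these explicit kernels chamber-by-chamber, would sidestep the lack of a general Bridgeland-type theorem and give an unconditional proof in the toric flop setting.
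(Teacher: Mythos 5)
The statement you are attempting to prove is labeled a \emph{conjecture} in the paper, and the paper does not claim to prove it. The paper only establishes the orbifold-stack half when $\dim Y=3$ (Theorem~\ref{thm:D}), via Kawamata's results \cite[Thm.~4.6, Thm.~6.5]{Kawamata}, and in the remark immediately after Theorem~\ref{thm:D} it explicitly states that the singularity-category half ``cannot be done in general'' because Malter's variation-of-GIT argument requires an ideal containment hypothesis $\cI\subseteq\sqrt{\partial w, \mathcal{J}}$ that fails outside special cases (citing \cite[Rem.~4.9]{Malter}).

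Your proposal has the right architecture --- decompose the ambient map into toric wall-crossings, restrict to the embedded strict transforms, verify crepancy as in the proof of Theorem~\ref{thm:smallK&D-equivalenza}, and try to apply Kawamata's stacky flop equivalence chamber by chamber --- but the two gaps you would need to close are genuinely open, and your suggested ``realistic route'' does not close the first. The flops between the embedded varieties $\widehat{Y}^{(i)}$ are \emph{not} toric flops: the ambient wall-crossing of $\widehat{\XX}^{A\cap A'}(\widetilde{\Si}_i)$ is toric, but its restriction to a generic complete intersection is a flop of a non-toric, possibly singular projective variety. Known Fourier--Mukai kernels for toric, standard, Mukai, Atiyah, or Grassmannian flops concern the ambient flop, and they do not automatically induce derived equivalences on arbitrary complete intersections living inside those ambients. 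The only unconditional theorem that derived equivalence persists under (stacky) flops is Kawamata's \cite[Thm.~6.5]{Kawamata}, which rests on Bridgeland's $3$-fold result; there is no dimension-free replacement, which is exactly why Theorem~\ref{thm:D} carries the hypothesis $\dim Y=3$ and why Conjecture~\ref{conj:D} is stated as a conjecture and not a theorem. For the second equivalence $\cD_{sg}(\widehat{Y}^\vee_A)\cong\cD_{sg}(\widehat{Y}^\vee_{A'})$, invoking \cite[Prop.~4.7]{Favero-Kelly} term-by-term, as you propose, breaks at the level of a single wall-crossing whenever the ideal containment condition is not satisfied --- this is precisely the obstruction the paper flags in the remark after Theorem~\ref{thm:D}. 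In short, you have correctly reproduced the skeleton of the argument that motivates the conjecture and correctly diagnosed the missing ingredient, but a proof would require either a higher-dimensional analogue of Bridgeland's theorem for flops of the embedded (non-toric) varieties, or an unconditional version of the Favero--Kelly/Herbst--Walcher wall-crossing equivalence without the ideal containment restriction; neither is available, and the paper does not provide them.
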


   If $\dim Y=3$, that is $n=3+l$, then such a conjecture partially holds. Namely

   \begin{theorem}\label{thm:D}
     Under the same hypotheses of previous Conjecture~\ref{conj:D}, with the further assumption that $\dim Y=3$, then there exists an equivalence of triangulated categories
  \begin{equation*}
    \cD^b(\cY_A)\cong\cD^b(\cY_{A'})
  \end{equation*}
  In particular, if both $\widehat{Y}^\vee_{A}$ and $\widehat{Y}^\vee_{A'}$ are smooth, then there is an equivalence of triangulated categories $\cD^b(\widehat{Y}^\vee_{A})\cong\cD^b(\widehat{Y}^\vee_{A'})$\,.
   \end{theorem}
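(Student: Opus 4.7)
The plan is to follow the argument used for Theorem~\ref{thm:Dequivalenza}, now available in the extended intermediate-mirror setup thanks to Theorem~\ref{thm:Kequiv-generale}. First I would invoke Theorem~\ref{thm:Kequiv-generale} (more precisely, its proof, cf.\ also Remark~\ref{rem:triangolo}) to produce a smooth projective variety $Z$ together with birational morphisms $f:Z\to\widehat{Y}^\vee_A$ and $g:Z\to\widehat{Y}^\vee_{A'}$ satisfying $f^*K_{\widehat{Y}^\vee_A}\sim_\Q g^*K_{\widehat{Y}^\vee_{A'}}$; equivalently, a crepant birational map $\vf:\widehat{Y}^\vee_A\dashrightarrow\widehat{Y}^\vee_{A'}$ in the sense of Definition~\ref{def:crepante}.

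Next I would check that $\widehat{Y}^\vee_A$ and $\widehat{Y}^\vee_{A'}$ are projective $3$-folds with at worst canonical singularities. Projectivity and the dimension equal to $3$ are immediate from the hypothesis $\dim Y=3$ and from the choices $\Si,\Si'\in\PSF(\widehat{\L}^{A\cap A'})$; canonicity of singularities would follow from \cite[Prop.~2.2.2, Thm.~2.2.24]{Batyrev94}, applied as in Proposition~\ref{prop:resol_L} to the $\Q$-Fano ambient toric varieties $\XX^A$ and $\XX^{A'}$, combined with the adjunction-based restriction computation performed in the proof of Theorem~\ref{thm:smallK&D-equivalenza}(2), which guarantees that canonicity passes from the ambient MPCP-type partial resolutions to the strict transforms of the quasi-smooth complete intersections $Y^\vee_A\subset\XX^A$ and $Y^\vee_{A'}\subset\XX^{A'}$.

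Finally, I would apply Kawamata's 3-dimensional machinery: by \cite[Thm.~4.6]{Kawamata} the crepant birational map $\vf$ decomposes into a finite sequence of flops in the sense of \cite[Def.~4.5]{Kawamata}, and \cite[Thm.~6.5]{Kawamata} then yields the desired equivalence $\cD^b(\cY_A)\cong\cD^b(\cY_{A'})$ of bounded derived categories of coherent orbifold sheaves on the canonical covering stacks of $\widehat{Y}^\vee_A$ and $\widehat{Y}^\vee_{A'}$. When both of these partial desingularizations happen to be smooth, the canonical covering stacks collapse onto the underlying varieties, and the previous equivalence reduces to the expected $\cD^b(\widehat{Y}^\vee_A)\cong\cD^b(\widehat{Y}^\vee_{A'})$.

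The main obstacle I anticipate is the canonical-singularity verification on $\widehat{Y}^\vee_A$ and $\widehat{Y}^\vee_{A'}$: in the general non-\cy setting the restriction of an MPCP partial resolution of the $\Q$-Fano toric ambient is not automatically crepant onto the embedded mirror, and hence canonicity does not transfer for free. The restriction $\Si,\Si'\in\PSF(\widehat{\L}^{A\cap A'})$, rather than the more permissive $\Si\in\PSF(\widehat{\L}^{A})$, $\Si'\in\PSF(\widehat{\L}^{A'})$ used in Conjecture~\ref{conj:D}, is precisely what makes the relevant restricted morphisms crepant via the adjunction identity exploited in Theorem~\ref{thm:smallK&D-equivalenza}(2); this constraint is what permits an unconditional proof of Theorem~\ref{thm:D} while Conjecture~\ref{conj:D} remains open in its full generality.
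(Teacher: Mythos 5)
Your proposal is correct and follows essentially the same strategy as the paper: produce a crepant birational map between $\widehat{Y}^\vee_A$ and $\widehat{Y}^\vee_{A'}$, observe that both are projective threefolds with at worst canonical singularities (citing Batyrev), and then apply Kawamata's flop decomposition \cite[Thm.~4.6]{Kawamata} together with \cite[Thm.~6.5]{Kawamata}. The only cosmetic difference is that the paper does not route through Theorem~\ref{thm:Kequiv-generale} but instead notes directly that $\psi_\Si$ and $\psi_{\Si'}$ factor through the common blow-up $\XX_\aa^{A\cap A'}$ via small (hence crepant) resolutions $\check\psi_\Si,\check\psi_{\Si'}$, so that both $\widehat{Y}^\vee_A$ and $\widehat{Y}^\vee_{A'}$ map crepantly to $Y^\vee_{A\cap A'}$ and the roof can be taken to be the explicit fiber product $\widehat{Y}^\vee_{A}\times_{Y^\vee_{A\cap A'}}\widehat{Y}^\vee_{A'}$; your closing observation about the role of the restriction to $\PSF(\widehat{\L}^{A\cap A'})$ correctly identifies why this crepancy holds.
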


   \begin{proof}
     Resolutions $\psi_\Si:\widehat{\XX}_\aa^{A\cap A'}(\Si)\to \XX_\aa^A$ and $\psi_{\Si'}:\widehat{\XX}_\aa^{A\cap A'}(\Si')\to \XX_\aa^{A'}$ are obtained as compositions of blowups and small resolutions as follows
   \begin{equation*}
     \xymatrix{\widehat{\XX}_\aa^{A\cap A'}(\Si)\ar[dr]_-{\check{\psi}_\Si}^-{\text{small}}\ar[rr]^-{\psi_\Si}&&\XX_\aa^A\\
                        &\XX_\aa^{A\cap A'}\ar[ur]_-{\phi^{A\cap A'}_A}^-{\text{blowup}}&}\quad,\quad\xymatrix{\widehat{\XX}_\aa^{A\cap A'}(\Si')\ar[dr]_-{\check{\psi}_{\Si'}}^-{\text{small}}\ar[rr]^-{\psi_{\Si'}}
                        &&\XX_\aa^{A'}\\
                        &\XX_\aa^{A\cap A'}\ar[ur]_-{\phi^{A\cap A'}_{A'}}^-{\text{blowup}}&}
   \end{equation*}
   Then we have crepant birational morphisms
\begin{equation*}
  \xymatrix{\widehat{Y}^\vee_{A}\ar[dr]_-{\check{\psi}_\Si}&&
  \widehat{Y}^\vee_{A'}\ar[dl]^-{\check{\psi}_{\Si'}}\\
                &Y^\vee_{A\cap A'}&}
\end{equation*}
giving rise to a crepant birational map $f:\widehat{Y}^\vee_{A}\dashrightarrow\widehat{Y}^\vee_{A'}$ between 3-dimensional projective varieties admitting at most canonical singularities, by \cite[Prop.~2.2.2,\,2.2.4]{Batyrev94}: the variety $Z$ in Definition~\ref{def:crepante} is given by $\widehat{Y}^\vee_{A}\times_{Y^\vee_{A\cap A'}}\widehat{Y}^\vee_{A'}$. Then, by \cite[Thm.~4.6]{Kawamata}, $f$ decomposes into a sequence of flops  and the statement follows immediately by applying \cite[Thm.~6.5]{Kawamata}.
   \end{proof}

   \begin{remark}
     The conjectured equivalence $\cD_{sg}(\widehat{Y}^\vee_A)\cong\cD_{sg}(\widehat{Y}^\vee_{A'})$ could be proved by means of Malter's techniques when the ideal containment condition $\cI\subseteq\sqrt{\partial w, \mathcal{J}}$ is guaranteed: unfortunately this  cannot be done in general (see Remark~4.9 in \cite{Malter}).
   \end{remark}

   \begin{remark}\label{rem:noCY}
     The mirror models construction given in \S~\ref{ssez:mirrors} and Theorems~\ref{thm:Kequiv-generale} and \ref{thm:D} give a specialization of Theorem~\ref{thm:metateorema} to a certain level of resolution (determined by $A\cap A'$) of the involved mirror models. In particular, the \cy assumption in item (iii) can be dropped if a suitable level of resolution is considered. The latter gives, consequently, an analogous specialization of the mirror theorem~\ref{thm:mirror}.
   \end{remark}

   \section{Multiple mirrors of the \cy complete intersection $Y_{2,2,3}\subset\P^6$}\label{sez:2,2,3}
We are going to end up the present paper by giving an application to the family of projective \cy threefolds obtained as the complete intersection of two hyperquadrics and a cubic hypersurface in $\P^6$\,.

To describe this family we consider the partitioned ftv $(\P^6,\aa=\aa_1+\aa_2+\aa_3)$ with
\[
\aa_1=(1,1,0,0,0,0,0)\ ,\quad \aa_2=(0,0,1,1,0,0,0)\ ,\quad \aa_3=(0,0,0,0,1,1,1)
\]
Batyrev-Borisov duality, that is the associated calibrated $f$-process, gives a first mirror family by means of the $f$-dual partioned ftv $(\XX_\aa,\bb=\bb_1+\bb_2+\bb_3)$ where
\begin{itemize}
    \item by Theorem \ref{thm:BB-general}, $\XX_\aa$ is the complete toric variety spanned by the polytope $\conv(\D_{\aa_1},\D_{\aa_2},\D_{\aa_3})$  whose fan matrix is given by
    \begin{equation*}
    \resizebox{1\hsize}{!}{$
        \L_\aa=\left(\begin{array}{ccccccccccccccccccccc}
1 & -1 & -1 & -1 & -1 & -1 & -1 & 2 & 0 & 0 & 0 & 0 & 0 & 0 & 3 & 0 & 0 & 0 & 0 & 0 & 0
\\
 -1 & 1 & -1 & -1 & -1 & -1 & -1 & 0 & 2 & 0 & 0 & 0 & 0 & 0 & 0 & 3 & 0 & 0 & 0 & 0 & 0
\\
 0 & 0 & 2 & 0 & 0 & 0 & 0 & -1 & -1 & 1 & -1 & -1 & -1 & -1 & 0 & 0 & 3 & 0 & 0 & 0 & 0
\\
 0 & 0 & 0 & 2 & 0 & 0 & 0 & -1 & -1 & -1 & 1 & -1 & -1 & -1 & 0 & 0 & 0 & 3 & 0 & 0 & 0
\\
 0 & 0 & 0 & 0 & 2 & 0 & 0 & 0 & 0 & 0 & 0 & 2 & 0 & 0 & -1 & -1 & -1 & -1 & 2 & -1 & -1
\\
 0 & 0 & 0 & 0 & 0 & 2 & 0 & 0 & 0 & 0 & 0 & 0 & 2 & 0 & -1 & -1 & -1 & -1 & -1 & 2 & -1
\end{array}\right) $}
    \end{equation*}
    \item the framing $\bb=\sum\bb_k$ is given by
    \begin{equation*}
        \bb_1=(\1_7,\0_{14})\ ,\quad \bb_2=(\0_7,\1_7,\0_7)\ ,\quad \bb_3=(\0_{14},\1_7)
    \end{equation*}
\end{itemize}
\subsection{Batyrev-Borisov mirror family}\label{ssez:BB_2,2,3} The mirror family $Y^\vee_{BB}$ is obtained as the complete intersection of three hypersurfaces of $\XX_\aa$, assigned by the following polynomials of the Cox ring $\Cox(\XX_\aa)$
\begin{eqnarray*}
    p_{1,\psi}&=&x_{1}^{2} x_{8}^{2} x_{15}^{3}+x_{2}^{2} x_{9}^{2} x_{16}^{3}+\psi x_{1} x_{2} x_{3} x_{4} x_{5} x_{6} x_{7}\\
    p_{2,\psi}&=&x_{3}^{2} x_{10}^{2} x_{17}^{3}+x_{4}^{2} x_{11}^{2} x_{18}^{3}+\psi x_{8} x_{9} x_{10} x_{11} x_{12} x_{13} x_{14}\\
    p_{3,\psi}&=& x_{5}^{2} x_{12}^{2} x_{19}^{3}+x_{6}^{2} x_{13}^{2} x_{20}^{3}+x_{7}^{2} x_{14}^{2} x_{21}^{3}+\psi x_{15} x_{16} x_{17} x_{18} x_{19} x_{20} x_{21}
\end{eqnarray*}
of degree
\begin{eqnarray*}
    \deg( p_{1,\psi})&=& \left[4,4,4,4,4,2,3,2,6,0,0,0,0,0,0\right]\\
    \deg( p_{2,\psi})&=& \left[21,21,23,24,24,15,18,12,36,2,6,3,3,3,0\right]\\
    \deg( p_{3,\psi})&=& \left[26,26,26,26,28,16,26,16,50,0,2,2,6,6,3\right]
\end{eqnarray*}
with respect to the grading determined by $\Cl(\XX_\aa)$\,.
Notice that the generic element $Y^\vee_{BB}$ is quasi-smooth, as critical points of
$$\pp_\psi=(p_{1,\psi},p_{2,\psi},p_{3,\psi})$$
are all contained in the unstable locus $Z_{\L_\aa}$\,. Moreover
\begin{equation*}
    \sum_{i=1}^3\deg(p_{i,\psi})=[51, 51, 53, 54, 56, 33, 47, 30, 92, 2, 8, 5, 9, 9, 3]=[-K_{\XX_\aa}]\in\Cl(\XX_\aa)
\end{equation*}
so that, up to a desingularization, $Y^\vee_{BB}$ is a family of \cy threefolds.

\subsection{Libgober-Teitelbaum mirror families}\label{ssez:LTmirrors} There are 42 possible choices of submatrices $W$ of $\L_\aa$ satisfying assumptions (B) and (C), whose columns are indexed by the following lists
\begin{eqnarray*}
    &[3,4,12,13,15,16,21],[3,4,12,14,15,16,20],[3,4,13,14,15,16,19],[3,5,8,13,16,18,21],&\\
    &[3,5,8,14,16,18,20],[3,5,9,13,15,18,21],[3,5,9,14,15,18,20],[3,5,13,14,15,16,18],&\\
    &[3,6,8,12,16,18,21],[3,6,8,14,16,18,19],[3,6,9,12,15,18,21],[3,6,9,14,15,18,19],&\\
    &[3,6,12,14,15,16,18],[3,7,8,12,16,18,20],[3,7,8,13,16,18,19],[3,7,9,12,15,18,20],&\\
    &[3,7,9,13,15,18,19],[3,7,12,13,15,16,18],[4,5,8,13,16,17,21],[4,5,8,14,16,17,20],&\\
    &[4,5,9,13,15,17,21],[4,5,9,14,15,17,20],[4,5,13,14,15,16,17],[4,6,8,12,16,17,21],&\\
    &[4,6,8,14,16,17,19],[4,6,9,12,15,17,21],[4,6,9,14,15,17,19],[4,6,12,14,15,16,17],&\\
    &[4,7,8,12,16,17,20],[4,7,8,13,16,17,19],[4,7,9,12,15,17,20],[4,7,9,13,15,17,19],&\\
    &[4,7,12,13,15,16,17],[5,6,8,9,17,18,21],[5,6,8,14,16,17,18],[5,6,9,14,15,17,18],&\\
    &[5,7,8,9,17,18,20],[5,7,8,13,16,17,18],[5,7,9,13,15,17,18],[6,7,8,9,17,18,19],&\\
    &[6,7,8,12,16,17,18],[6,7,9,12,15,17,18]
\end{eqnarray*}
For every such list $L$ we get a LT-mirror model for $Y_{2,2,3}$ given by the choice of the fan matrix $W(L)=(\L_\aa)_L$. Since $\conv(W)$ is a simplex, there is a unique simplicial and complete fan admitting $W$ as a fan matrix, giving rise to the $\Q$-factorial, projective toric variety
\begin{equation*}
    X_L=\P(\q)/H\cong \P^6/G\quad\text{with}\quad\begin{array}{c}
         \q=(3,3,3,3,2,2,2)\\
     H\cong\Tors(\Cl(X_L)\\
     G\cong H\times \mu_3^4\times\mu_2^3
    \end{array}
\end{equation*}
For any $L$, evaluating $V^T\cdot W$ one gets $\cc_i=\aa_i$, for $i=1,2,3$\,,
so that the fan matrix defined by $[\conv(\D_{\cc_1},\D_{\cc_2},\D_{\cc_3})]$ is precisely $V$, that is, $(\P^6,\aa_1+\aa_2+\aa_3)$ is the non-calibrated partitioned $f$-dual of $(X_L,\cc_1+\cc_2+\cc_3)$.
By adding the bottom row $(\,\0_6\,|\,1\,)$ one gets that
\begin{equation*}
    \left\{\left|\det \left(
                      \begin{array}{ccc}
                         &  &  \\
                         & W(L) &  \\
                         &  &  \\
                         \hline
                        \0_6 & \vline & 1 \\
                      \end{array}
                    \right)\right|\ :\ \forall\,L\right\}=\{36,48\}
\end{equation*}
More precisely, 16 choices of $L$ give 48 and the remaining 24 choices give 36. Consequently, we can conclude that at least two of those LT-mirror models of $Y_{2,3,2}$ are distinguished\,.
More explicitly, let us consider the first and the fifth lists above and show they give different LT-mirror models. Then set
\begin{equation*}
    W_1:=(\L_\aa)_{[3,4,12,13,15,16,21]}= \left(\begin{array}{ccccccc}
-1 & -1 & 0 & 0 & 3 & 0 & 0
\\
 -1 & -1 & 0 & 0 & 0 & 3 & 0
\\
 2 & 0 & -1 & -1 & 0 & 0 & 0
\\
 0 & 2 & -1 & -1 & 0 & 0 & 0
\\
 0 & 0 & 2 & 0 & -1 & -1 & -1
\\
 0 & 0 & 0 & 2 & -1 & -1 & -1
\end{array}\right)
\end{equation*}
   \begin{equation*}
    W_2:=(\L_\aa)_{[3,5,8,14,16,18,20]}=\left(\begin{array}{ccccccc}
-1 & -1 & 2 & 0 & 0 & 0 & 0
\\
 -1 & -1 & 0 & 0 & 3 & 0 & 0
\\
 2 & 0 & -1 & -1 & 0 & 0 & 0
\\
 0 & 0 & -1 & -1 & 0 & 3 & 0
\\
 0 & 2 & 0 & 0 & -1 & -1 & -1
\\
 0 & 0 & 0 & 0 & -1 & -1 & 2
\end{array}\right)
\end{equation*}
Then, $W_1$ is a fan matrix of
\begin{equation*}
    X_1=\P(\q)/H_1\cong \P^6/G_1\quad\text{with}\quad\begin{array}{c}
         \q=(3,3,3,3,2,2,2)\\
     H_1\cong\mu_2\times\mu_{12}\cong \Tors(\Cl(X_1)\\
     G_1\cong H_1\times \mu_3^4\times\mu_2^3
    \end{array}
\end{equation*}
where the action of $H_1$ is described by the torsion matrix
\begin{equation*}
    T_{W_1}=\left(\begin{array}{ccccccc}
0_2 & 1_2 & 0_2 & 0_2 & 1_2 & 1_2 & 0_2
\\
 0_{12} & 6_{12} & 9_{12} & 3_{12} & 6_{12} & 2_{12} & 10_{12}
\end{array}\right)
\end{equation*}
and the one of $G_1$ is compatible with weight actions defining weighted projective spaces involved in the following diagram
     \begin{equation*}
       \xymatrix{&\P^6\ar[dd]^-{/G_1}\ar[ld]_-{/(\mu_3^4\times\mu_2^3)}\\
                \P(\q)\ar[dr]_-{/H_1}&\\
                &X_1}
                \end{equation*}
                Then, the LT-mirror partner $Y^\vee_{LT,1}\subset X_1$ is given by the quotient of the $H_1$-invariant weighted complete intersection defined in $\P(\q)$ by the following weighted homogeneous polynomials
\begin{eqnarray*}
    q'_{1,\psi}&=&x_{5}^{3}+x_{6}^{3}+\psi x_{1} x_{2}\\
    q'_{2,\psi}&=&x_{1}^{2}+x_{2}^{2}+\psi x_{3} x_{4}\\
    q'_{3,\psi}&=& x_{3}^{2}+x_{4}^{2}+x_{7}^{3}+\psi x_{5} x_{6} x_{7}
\end{eqnarray*}
Then the generic element of the family $Y^\vee_{LT,1}$ is quasi-smooth. Moreover
\begin{equation*}
    \deg(q'_{1,\psi})= \deg(q'_{2,\psi})= \deg((q'_{3.\psi})= 6
\end{equation*}
and $3\times 6=18=|\q|$, so proving that $Y^\vee_{LT,1}$ is, up to a desingularization, a \cy threefold.

For what concerns $W_2$, it is the fan matrix of
\begin{equation*}
    X_2=\P(\q)/H_2\cong \P^6/G_2\quad\text{with}\quad\begin{array}{c}
         \q=(3,3,3,3,2,2,2)\\
     H_2\cong\mu_{18}\cong \Tors(\Cl(X_2)\\
     G_2\cong H_2\times \mu_3^4\times\mu_2^3
    \end{array}
\end{equation*}
where the action of $H_2$ is described by the torsion matrix
\begin{equation*}
    T_{W_2}=\left(\begin{array}{ccccccc}
6_{18} & 12_{18} & 9_{18} & 3_{18} & 6_{18} & 10_{18} & 8_{18}
\end{array}\right)
\end{equation*}
and the one of $G_2$ is compatible with weight actions defining weighted projective spaces involved in the following diagram
     \begin{equation*}
       \xymatrix{&\P^6\ar[dd]^-{/G_2}\ar[ld]_-{/(\mu_3^4\times\mu_2^3)}\\
                \P(\q)\ar[dr]_-{/H_2}&\\
                &X_2}
                \end{equation*}
                Then, the LT-mirror partner $Y^\vee_{LT,2}\subset X_2$ is given by the quotient of the $H_2$-invariant weighted complete intersection defined in $\P(\q)$ by the following weighted homogeneous polynomials
\begin{eqnarray*}
    q''_{1,\psi}&=&x_{3}^{2}+x_{5}^{3}+\psi x_{1} x_{2}\\
    q''_{2,\psi}&=&x_{1}^{2}+x_{6}^{3}+\psi x_{3} x_{4}\\
    q''_{3,\psi}&=& x_{2}^{2}+x_{4}^{2}+x_{7}^{3}+\psi x_{5} x_{6} x_{7}
\end{eqnarray*}
As above, the generic element of the family $Y^\vee_{LT,2}$ is quasi-smooth. Moreover
\begin{equation*}
    \deg(q'_{1,\psi})= \deg(q'_{2,\psi})= \deg((q'_{3.\psi})= 6
\end{equation*}
so proving that $Y^\vee_{LT,2}$ is, up to a desingularization, a \cy threefold.

\subsubsection{Intermediate mirror models}\label{sssez:mirrorintermedi223} Furthermore, there are all the intermediate mirrors determined by the choice of subsets $A'\subset\cI^{W_1}$ and $A''\subset\cI^{W_2}$. We than obtain at least $2^{15}-1= 32\,767$ distinct (but birational) mirror models of $Y_{2,2,3}\subset\P^6$.

\subsection{$D$-equivalence} For every $A'\subset\cI^{W_1}$ and $A''\subset\cI^{W_2}$, the associated mirror models $Y^\vee_{A'}\subset \XX_\aa^{A'}$ and $Y^\vee_{A'}\subset \XX_\aa^{A'}$ are, up to a desingularization, \cy threefolds. Moreover they are all connected each other by means of crepant birational maps, as the two $LT$-mirrors $Y^\vee_{LT,1}$ and $Y^\vee_{LT,2}$ come endowed with embeddings $Y^\vee_{LT,i}\hookrightarrow X_i$ and blowups $\phi_i:\XX_\aa\to X_i$ giving rise to the following diagrams
\begin{equation}\label{diagrammi}
  \xymatrix{&\widehat{\XX}_\aa(\Si)\ar[d]^-{\psi_\Si}\ar[dl]_-f\ar[dr]^-g&\\
            \XX_\aa^{A'}\ar[d]_-{\phi_1^{A'}}&\ar[l]\ar[r]\XX_\aa\ar[dl]_-{\phi_1}
            \ar[dr]^-{\phi_2}&\XX_\aa^{A''}\ar[d]^-{\phi_2^{A''}}\\
            X_1\ar@{-->}[rr]^-\vf&&X_2}\xymatrix{&&\\
            &\Longrightarrow&\\
            &&} \xymatrix{&\widehat{Y}^\vee_{BB}\ar[dl]_-f\ar[dr]^-g&\\
            Y^\vee_{A'}\ar[d]_-{\phi_1^{A'}}\ar@{-->}[rr]&&Y^\vee_{A''}\ar[d]^-{\phi_2^{A''}}\\
            Y^\vee_{LT,1}\ar@{-->}[rr]&&Y^\vee_{LT,2}}
\end{equation}
where the choice of $\Si\in\PSF(\widehat{\L}_\aa)$ determines a suitable desingularization $\widehat{Y}^\vee_{BB}$\,, being $\widehat{\L}_\aa$ the fan matrix whose columns are given by all primitive lattice points contained in $\D_\aa\setminus\{\0\}$. The restriction of $f$ and $g$ to the embedded \cy $\widehat{Y}^\vee_{BB}$ are crepant birational morphisms, so that
\begin{equation*}
  f^*\cO_{Y^\vee_{A'}}\cong f^*K_{Y^\vee_{A'}}\sim K_{\widehat{Y}^\vee_{BB}}\cong\cO_{\widehat{Y}^\vee_{BB}}\sim g^*K_{Y^\vee_{A''}}\cong g^*\cO_{Y^\vee_{A''}}
\end{equation*}
Therefore, Kawamata results \cite[Thm.~4.6, Thm.~6.5]{Kawamata} hold, so that arguments proving Theorems~\ref{thm:Dequivalenza} and \ref{thm:Dequivalenza^A} applies exactly in the same way, to give a proof of the following

\begin{theorem}\label{thm:Dequivalenza^cy}
  Let $A',A''$ be two subsets of $\cI^{W_1}$ and $\cI^{W_2}$, respectively, and let $\widehat{\L}_\aa^{A'}$ be the matrix whose columns are given by all the primitive lattice points contained in $\conv(\L_\aa^{A'})\setminus\{\0\}$, and analogously for $\widehat{\L}_\aa^{A''}$.  For any choice $$\Si'\in\PSF(\widehat{\L}^{A'})\ ,\quad\Si''\in\PSF(\widehat{\L}^{A''})$$
  consider the induced (possibly partial) desingularizations $$\widehat{Y}^\vee_{A'}=(\psi_{\Si'})^{-1}_*(Y^\vee_{A'})\to Y^\vee_{A'}\ ,\quad\widehat{Y}^\vee_{A''}=(\psi_{\Si''})^{-1}_*(Y^\vee_{A''})\to Y^\vee_{A''}$$
  and their canonical covering stacks $\cY_{A'},\,\cY_{A''}$. Then there exists an equivalence of triangulated categories
  \begin{equation*}
    \cD^b(\cY_{A'})\cong\cD^b(\cY_{A''})
  \end{equation*}
  between their derived categories of bounded complexes of coherent orbifold sheaves.
\end{theorem}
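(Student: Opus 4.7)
The plan is to follow, almost verbatim, the argument already worked out for Theorems~\ref{thm:Dequivalenza} and \ref{thm:Dequivalenza^A}, feeding it with the extra geometric input provided by the two diagrams in (\ref{diagrammi}). The crucial observation is that, although the two starting LT-mirror models $Y^\vee_{LT,1}\subset X_1$ and $Y^\vee_{LT,2}\subset X_2$ live in non-isomorphic $\Q$-factorial toric ambients, the calibrated mirror $\XX_\aa$ dominates both $X_1$ and $X_2$ via the blowups $\phi_1,\phi_2$, and hence dominates simultaneously every intermediate $\XX_\aa^{A'}$ and $\XX_\aa^{A''}$ for $A'\subset \cI^{W_1}$ and $A''\subset \cI^{W_2}$.

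The first step is to produce a common dominant toric refinement. By the same stellar-subdivision trick used in the proof of Theorem~\ref{thm:Kequivalenza3}, I would refine the two fans $\Si'\in\PSF(\widehat{\L}_\aa^{A'})$ and $\Si''\in\PSF(\widehat{\L}_\aa^{A''})$ to two fans $\widetilde{\Si}',\widetilde{\Si}''$ lying in $\PSF(\widehat{\L}_\aa^{A'\cap A''})$, then take successive fibred products of the resulting partial resolutions until a toric variety $\widehat{\XX}$ is obtained, with birational morphisms $f,g$ projecting onto $\widehat{\XX}_\aa^{A'}(\Si')$ and $\widehat{\XX}_\aa^{A''}(\Si'')$ respectively, and factorizing the intermediate s$\Q$m as a sequence of wall-crossings. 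This is exactly the content of diagram (\ref{flop-catena}) in the proof of Theorem~\ref{thm:smallK&D-equivalenza}, transplanted to the present setup.

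The second step is to restrict $f$ and $g$ to the strict transforms of the embedded mirror models. Since $Y_{2,2,3}\subset\P^6$ is a \cy complete intersection, the associated framing $\aa=\aa_1+\aa_2+\aa_3$ is anti-canonical, and all the partitioned framings $\bb^{A'},\bb^{A''},\cc$ appearing along the diagrams in (\ref{diagrammi}) differ from anti-canonical divisors only by torus-invariant contributions whose pullbacks cancel in the adjunction formula (compare the computation following display (\ref{flop-catena})). Consequently, the restriction of $f$ and $g$ to the common strict transform $\widetilde{Y}^\vee$ of $\widehat{Y}^\vee_{A'}$ and $\widehat{Y}^\vee_{A''}$ produces a diagram
\begin{equation*}
    \xymatrix{&\widetilde{Y}^\vee\ar[dl]_-{f}\ar[dr]^-{g}&\\
              \widehat{Y}^\vee_{A'}\ar@{-->}[rr]&&\widehat{Y}^\vee_{A''}}
\end{equation*}
with $f^*K_{\widehat{Y}^\vee_{A'}}\sim_\Q g^*K_{\widehat{Y}^\vee_{A''}}$; equivalently, the induced birational map between $\widehat{Y}^\vee_{A'}$ and $\widehat{Y}^\vee_{A''}$ is crepant in the sense of Definition~\ref{def:crepante}. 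By \cite[Prop.~2.2.2, Prop.~2.2.4]{Batyrev94} both $\widehat{Y}^\vee_{A'}$ and $\widehat{Y}^\vee_{A''}$ are $3$-dimensional projective varieties with at worst canonical singularities.

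The third and final step is purely an application of Kawamata's machinery. By \cite[Thm.~4.6]{Kawamata}, a crepant birational map between $3$-dimensional projective varieties with canonical singularities decomposes into a finite sequence of flops in the sense of \cite[Def.~4.5]{Kawamata}, and by \cite[Thm.~6.5]{Kawamata} each such flop induces an equivalence of the derived categories of the canonical covering stacks. Composing these equivalences yields the desired $\cD^b(\cY_{A'})\cong\cD^b(\cY_{A''})$. The only delicate point, where I anticipate the main obstacle, lies in keeping track of crepancy along the chain of wall-crossings when $A'\cap A''$ is small: because the two LT-pictures are anchored at \emph{different} ambient toric varieties $X_1,X_2$, one must be careful to verify that all intermediate toric varieties appearing in the decomposition of the s$\Q$m admit fans in $\PSF(\widehat{\L}_\aa)$, so that the dominating diagram (\ref{diagrammi}) for the calibrated mirror $\widehat{Y}^\vee_{BB}$ ensures that every flop encountered is crepant also after restriction to the embedded $3$-fold. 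Once this bookkeeping is in place, the conclusion follows exactly as in Theorem~\ref{thm:Dequivalenza^A}, and the smooth case stated by the analogue of Corollary~\ref{cor:D-equiv_3,3} is an immediate consequence.
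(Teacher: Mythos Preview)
Your proposal is correct and reaches the same conclusion via the same Kawamata machinery, but it takes a more circuitous route than the paper and manufactures a difficulty that is not really there.

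The paper's argument (laid out in the paragraph preceding the theorem and in diagram~(\ref{diagrammi})) is more direct. Rather than passing through the $A'\cap A''$ level via stellar subdivisions as in Theorem~\ref{thm:Kequivalenza3}, the paper simply takes the common roof at level $A=\emptyset$: for any $\Si\in\PSF(\widehat{\L}_\aa)$ the smooth Calabi--Yau threefold $\widehat{Y}^\vee_{BB}\subset\widehat{\XX}_\aa(\Si)$ dominates both $Y^\vee_{A'}$ and $Y^\vee_{A''}$ through the maps $f,g$ in the right-hand diagram of~(\ref{diagrammi}). Because all the varieties in that diagram are Calabi--Yau (trivial canonical sheaf), the displayed chain
\[
  f^*\cO_{Y^\vee_{A'}}\cong f^*K_{Y^\vee_{A'}}\sim K_{\widehat{Y}^\vee_{BB}}\cong\cO_{\widehat{Y}^\vee_{BB}}\sim g^*K_{Y^\vee_{A''}}\cong g^*\cO_{Y^\vee_{A''}}
\]
shows that \emph{every} birational morphism in sight is automatically crepant. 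The same remark applies to the resolutions $\psi_{\Si'}$ and $\psi_{\Si''}$, so one obtains a crepant birational map $\widehat{Y}^\vee_{A'}\dashrightarrow\widehat{Y}^\vee_{A''}$ without any bookkeeping. From there the proof is verbatim that of Theorem~\ref{thm:Dequivalenza}: Kawamata's \cite[Thm.~4.6]{Kawamata} decomposes the map into flops and \cite[Thm.~6.5]{Kawamata} gives the stacky derived equivalence.

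In short, the ``delicate point'' you anticipate---tracking crepancy through wall-crossings when $A'$ and $A''$ come from different LT-pictures anchored at non-isomorphic $X_1,X_2$---dissolves entirely in the Calabi--Yau setting: crepancy is automatic because all canonical divisors are trivial, and the full $\XX_\aa$ already dominates both $X_1$ and $X_2$. Your $A'\cap A''$ construction is not wrong, just unnecessary; the paper's approach is cleaner and explains why the Calabi--Yau hypothesis is what makes this theorem stronger than the general Theorem~\ref{thm:D}.
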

\begin{corollary}\label{cor:D-equiv_cy}
  If $\widehat{Y}^\vee_{A'}$ and $\widehat{Y}^\vee_{A''}$ are smooth, then there is an equivalence of triangulated categories $\cD^b(\widehat{Y}^\vee_{A'})\cong\cD^b(\widehat{Y}^\vee_{A''})$\,.
\end{corollary}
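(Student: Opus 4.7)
The plan is to follow exactly the argument used to prove Theorem~\ref{thm:Dequivalenza} and its generalization Theorem~\ref{thm:Dequivalenza^A}, exploiting the fact that all mirror models involved here are, up to desingularization, Calabi--Yau threefolds (as verified in \S~\ref{ssez:LTmirrors} by checking $\sum_i\deg(q_{i,\psi})=|\q|$ and the analogous computation in \S~\ref{ssez:BB_2,2,3} for $Y^\vee_{BB}$), hence Kawamata's machinery \cite{Kawamata} applies in full strength.

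First, I would combine diagram (\ref{diagrammi}) with the chosen resolutions. Namely, pick $\Si'\in\PSF(\widehat{\L}_\aa^{A'})$ and $\Si''\in\PSF(\widehat{\L}_\aa^{A''})$ and, by the technique of stellar subdivision already used in the proof of Theorem~\ref{thm:Kequivalenza3} (and in Theorem~\ref{thm:Kequiv-generale}), produce a common refinement $\Si\in\PSF(\widehat{\L}_\aa^{A'\cap A''})$ dominating both, so that the composed morphisms
\begin{equation*}
\widehat{\XX}_\aa^{A'\cap A''}(\Si)\ \longrightarrow\ \XX_\aa^{A'}\ \stackrel{\phi_1^{A'}}{\longrightarrow}\ X_1,\qquad
\widehat{\XX}_\aa^{A'\cap A''}(\Si)\ \longrightarrow\ \XX_\aa^{A''}\ \stackrel{\phi_2^{A''}}{\longrightarrow}\ X_2
\end{equation*}
restrict, on the strict transforms of the embedded mirror models, to crepant birational morphisms landing on $\widehat{Y}^\vee_{A'}$ and $\widehat{Y}^\vee_{A''}$, respectively. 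Crepancy here is guaranteed by the Calabi--Yau condition on the total complete intersection inside $\XX_\aa$, exactly as in the computation given inside the proof of Theorem~\ref{thm:smallK&D-equivalenza}, item (2).

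Next, taking the fibred product of these restricted morphisms one obtains a smooth projective $Z$ sitting above both $\widehat{Y}^\vee_{A'}$ and $\widehat{Y}^\vee_{A''}$ with $f^*K_{\widehat{Y}^\vee_{A'}}\sim_{\Q}g^*K_{\widehat{Y}^\vee_{A''}}$, so the induced birational map $\widehat{Y}^\vee_{A'}\dashrightarrow\widehat{Y}^\vee_{A''}$ is crepant in the sense of Definition~\ref{def:crepante}. Since both target varieties are 3-dimensional projective varieties with at most canonical singularities (by \cite[Prop.~2.2.2,\,2.2.4]{Batyrev94}), \cite[Thm.~4.6]{Kawamata} decomposes this crepant map into a finite sequence of flops (in the sense of \cite[Def.~4.5]{Kawamata}). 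Then \cite[Thm.~6.5]{Kawamata} yields the claimed equivalence $\cD^b(\cY_{A'})\cong\cD^b(\cY_{A''})$ of derived categories of the canonical covering stacks, and Corollary~\ref{cor:D-equiv_cy} follows at once in the smooth case.

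The main potential obstacle is verifying that the restriction to the embedded mirror models is genuinely crepant when $A'\cap A''$ is small, because the corresponding ambient toric resolution may introduce exceptional divisors which, a priori, appear with nonzero discrepancy on the strict transform. In practice, this is resolved by the same adjunction computation performed in item (2) of the proof of Theorem~\ref{thm:smallK&D-equivalenza}: the framing vector $\aa$ associated to $Y_{2,2,3}$ has all entries in $\{0,1\}$, hence the toric exceptional divisors $D_j$ contribute with coefficient $a_j-1=0$ along the mirror, and crepancy is automatic. Once this is clear, the rest of the argument is a routine transcription of the proof of Theorem~\ref{thm:Dequivalenza^A}, and the statement follows.
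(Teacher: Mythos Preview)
Your proposal is correct and lands on the same endgame as the paper (Kawamata \cite[Thm.~4.6]{Kawamata} to decompose a crepant birational map into flops, then \cite[Thm.~6.5]{Kawamata} for the derived equivalence of canonical covering stacks). The route to crepancy, however, is more elaborate than what the paper actually does.

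The paper does not build a common roof via stellar subdivision at level $A'\cap A''$. Instead it uses diagram~(\ref{diagrammi}) directly: since every mirror model here is (up to resolution) Calabi--Yau, the canonical bundles are trivial, and therefore \emph{any} birational morphism between them is automatically crepant. Concretely, one has $f^*K_{Y^\vee_{A'}}\cong f^*\cO_{Y^\vee_{A'}}\cong\cO_{\widehat{Y}^\vee_{BB}}\cong g^*\cO_{Y^\vee_{A''}}\cong g^*K_{Y^\vee_{A''}}$. Composing with the chosen resolutions $\psi_{\Si'},\psi_{\Si''}$ (themselves crepant for the same reason) yields the crepant birational map $\widehat{Y}^\vee_{A'}\dashrightarrow\widehat{Y}^\vee_{A''}$ without any further construction. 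Your adjunction argument with $a_j-1=0$ is simply a rederivation of this triviality of the canonical bundle, so it is correct but unnecessary; likewise the stellar subdivision machinery from Theorem~\ref{thm:Kequivalenza3} is not needed here. One small imprecision in your write-up: there is in general no single fan $\Si\in\PSF(\widehat{\L}_\aa^{A'\cap A''})$ refining both $\Si'$ and $\Si''$; as in the proof of Theorem~\ref{thm:Kequivalenza3} one obtains two such fans $\widetilde{\Si},\widetilde{\Si}'$ related by wall-crossings, and the common roof $Z$ arises only after iterated fibred products. This does not affect the validity of your argument, but the paper's shortcut via trivial canonical bundle avoids the issue entirely.
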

Moreover, the following is an immediate application of a result by Herbst and Walcher.

\begin{theorem}\label{thm:Malter-type}
Let $A',A''$ be two subsets of $\cI^{W_1}$ and $\cI^{W_2}$, respectively.
  Choose fans $\Si',\Si''\in\PSF(\L_\aa)$ determining partial crepant resolutions
  $$\widehat{Y}^{\vee'}=(\psi_{\Si'})^{-1}_*(Y^\vee_{A'})\to Y^\vee_{A'}\ ,\quad\widehat{Y}^{\vee''}=(\psi_{\Si''})^{-1}_*(Y^\vee_{A''})\to Y^\vee_{A''}$$
  Then there exists an equivalence of triangulated categories
  \begin{equation*}
    \cD_{sg}({Y}^{\vee'})\cong\cD_{sg}({Y}^{\vee''})
    \end{equation*}
\end{theorem}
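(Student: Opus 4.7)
The plan is to realise both $\widehat{Y}^{\vee'}$ and $\widehat{Y}^{\vee''}$ as embedded subvarieties of two small $\Q$-factorial projective modifications of the same Fano ambient toric variety $\XX_\aa$, cut out by the \emph{same} Cox-homogeneous equations, and then to conclude by the Herbst-Walcher wall-crossing principle (in essentially the same packaging that underlies Malter's Theorem~\ref{thm:Malter 2.23}, as repackaged by Favero-Kelly in \cite[Prop.~4.7]{Favero-Kelly}).

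First, I would check that the argument of Proposition~\ref{prop:birat3,3}, carried out there for the cubic case, extends verbatim to the present setting: for every $A'\subset\cI^{W_1}$ the blow-down $\XX_\aa\to\XX_\aa^{A'}$ contracts exceptional divisors whose centers are the torus-fixed points indexed by $A'$, and one verifies (through the exact Cox-coordinate computation of \S~\ref{ssez:BB_2,2,3} with the irrelevant ideal $\mathfrak{I}$ of $\XX_\aa$) that these centres are disjoint from the generic BB-mirror $Y^\vee_{BB}$. Consequently the restriction is a crepant birational morphism $Y^\vee_{BB}\to Y^\vee_{A'}$, and $Y^\vee_{BB}$ is itself the strict transform of $Y^\vee_{A'}$ under the blow-down. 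The same holds for every $A''\subset\cI^{W_2}$. Composing with the small resolutions $\psi_{\Si'}:\widehat{\XX}_\aa(\Si')\to\XX_\aa$ and $\psi_{\Si''}:\widehat{\XX}_\aa(\Si'')\to\XX_\aa$, one identifies $\widehat{Y}^{\vee'}=(\psi_{\Si'})^{-1}_*(Y^\vee_{A'})$ with the strict transform of $Y^\vee_{BB}$ under $\psi_{\Si'}$, and analogously for $\widehat{Y}^{\vee''}$. In particular both mirror models are cut out, inside their respective ambient small modifications, by the three $\Cl(\XX_\aa)$-homogeneous polynomials $p_{1,\psi},p_{2,\psi},p_{3,\psi}\in\Cox(\XX_\aa)$ displayed in \S~\ref{ssez:BB_2,2,3}.

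Second, since $\Si',\Si''\in\PSF(\L_\aa)$ share the same $1$-skeleton, the induced birational map $\vf:\widehat{\XX}_\aa(\Si')\dashrightarrow\widehat{\XX}_\aa(\Si'')$ is an iterated small $\Q$-factorial modification. As unfolded in the proof of Theorem~\ref{thm:smallK&D-equivalenza}, it decomposes into a finite sequence of toric wall-crossings corresponding to a chain of pairwise neighbouring full-dimensional chambers of the secondary fan of $\L_\aa$; each such wall-crossing is a standard GIT variation of the Cox quotient presentation of $\XX_\aa$, preserving the complete-intersection equations $p_{i,\psi}$.

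Finally, I would invoke the Herbst-Walcher wall-crossing result, which asserts that the category of singularities of a complete intersection cut out by a fixed set of Cox-homogeneous equations is preserved under such GIT wall-crossings of the ambient toric variety. Applied inductively across the chain of flops constructed above, with fixed defining triple $(p_{1,\psi},p_{2,\psi},p_{3,\psi})$, this directly yields the sought equivalence $\cD_{sg}(\widehat{Y}^{\vee'})\cong\cD_{sg}(\widehat{Y}^{\vee''})$. The only non-formal obstacle is precisely the bookkeeping in the first step: one must check that the generic BB-fibre really misses every exceptional centre of both $\XX_\aa\to\XX_\aa^{A'}$ and $\XX_\aa\to\XX_\aa^{A''}$, so that the strict transforms in $\widehat{\XX}_\aa(\Si')$ and $\widehat{\XX}_\aa(\Si'')$ coincide with the strict transforms of $Y^\vee_{BB}$, and thus are defined by the \emph{same} Cox equations across the wall-crossing. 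Once this identification is in place, the theorem reduces to a direct citation of the Herbst-Walcher principle.
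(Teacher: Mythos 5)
Your proposal is correct and rests on the same pillar as the paper's proof, namely the Herbst--Walcher wall-crossing theorem. The paper's argument is terser: it recalls diagram~(\ref{diagrammi}), observes that $\Si',\Si''\in\PSF(\L_\aa)$ single out two full-dimensional chambers $\g',\g''$ of the secondary fan on $\overline{\Eff}(\XX_\aa)$, and then directly invokes \cite[Thm.~3]{HW} to conclude, without spelling out the chain of wall-crossings or the identification of the two strict transforms with strict transforms of $Y^\vee_{BB}$. You unfold exactly those two implicit steps: you first check that the generic BB-fibre avoids the exceptional loci of $\XX_\aa\to\XX_\aa^{A'}$ and $\XX_\aa\to\XX_\aa^{A''}$, so that $\widehat{Y}^{\vee'}$ and $\widehat{Y}^{\vee''}$ are both cut out inside their respective ambient modifications by the same Cox polynomials $p_{1,\psi},p_{2,\psi},p_{3,\psi}\in\Cox(\XX_\aa)$, and you then describe the birational map $\widehat{\XX}_\aa(\Si')\dashrightarrow\widehat{\XX}_\aa(\Si'')$ as a chain of GIT wall-crossings, applying Herbst--Walcher step by step rather than in one shot. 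Both readings are legitimate; yours is a more explicit filling-in of the paper's appeal to diagram~(\ref{diagrammi}), and it clarifies why the operator $(\psi_{\Si'})^{-1}_*$ can be applied to $Y^\vee_{A'}\subset\XX_\aa^{A'}$ at all, something the paper leaves to the reader.
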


\begin{proof} Recalling picture (\ref{diagrammi}), the choice of the two fans $\Si',\Si''\in\PSF(\L_\aa)$ corresponds to choosing two chambers $\g'$ and $\g''$, respectively, of the secondary fan on the pseudo-effective cone $\overline{\Eff}(\XX_\aa)$. Then, \cite[Thm.~3]{HW} gives the stated equivalence  $\cD_{sg}({Y}^{\vee'})\cong\cD_{sg}({Y}^{\vee''})$\,.
\end{proof}

\begin{remark}
  In the present particular setup, Theorem~\ref{thm:Dequivalenza^cy} and Corollary~\ref{cor:D-equiv_cy} give a proof of items (i) and (iii) in Theorem~\ref{thm:metateorema} and of the mirror theorem~\ref{thm:mirror}, while Theorem~\ref{thm:Malter-type} proves Conjecture~\ref{conj:K<=>D,d}.
\end{remark}

\subsection{$K$-equivalence} Since mirror models involved are, up to a desingularization, \cy threefolds, we are in a position to propose the following result generalizing Theorem~\ref{thm:Kequivalenza3}.
\begin{theorem}\label{thm:Kequivalenza_cy}
  Assume same hypotheses as in Corollary~\ref{cor:D-equiv_cy}. Then $\widehat{Y}^\vee_{A'}$ and $\widehat{Y}^\vee_{A''}$ are $K$-equivalent.
\end{theorem}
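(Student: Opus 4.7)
The plan is to follow the template of Theorem~\ref{thm:Kequivalenza3}, with one essential modification: the two $LT$-mirror models now live in distinct ambient toric varieties $X_1$ and $X_2$, so the common dominating ambient space must be $\XX_\aa$, brought in via diagram (\ref{diagrammi}), rather than a single blowup $X$ as in the $Y_{3,3}$ case. First I would observe that both matrices $\widehat{\L}^{A'}$ and $\widehat{\L}^{A''}$ are submatrices of $\widehat{\L}_\aa$, the matrix whose columns are all primitive lattice points in $\D_\aa\setminus\{\0\}$. By successive stellar subdivisions of $\Si'$ and $\Si''$ along the rays of $\widehat{\L}_\aa$ not appearing in $\widehat{\L}^{A'}$ and $\widehat{\L}^{A''}$ respectively (exactly as in \S~3.4.1 and Lemma~4 of \cite{RT-Pic}), one produces refinements $\widetilde{\Si}',\widetilde{\Si}''\in\PSF(\widehat{\L}_\aa)$ and induced divisorial blowups
\begin{equation*}
  \widetilde{\psi}':\widehat{\XX}_\aa(\widetilde{\Si}')\longrightarrow\widehat{\XX}^{A'}_\aa(\Si')\,,\qquad
  \widetilde{\psi}'':\widehat{\XX}_\aa(\widetilde{\Si}'')\longrightarrow\widehat{\XX}^{A''}_\aa(\Si'')\,.
\end{equation*}

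Next, the birational map $\vf:\widehat{\XX}_\aa(\widetilde{\Si}')\dashrightarrow\widehat{\XX}_\aa(\widetilde{\Si}'')$ is an s$\Q$m between $\Q$-factorial projective toric varieties sharing the same fan matrix $\widehat{\L}_\aa$; by the secondary fan argument already used in the proof of Theorem~\ref{thm:smallK&D-equivalenza} it decomposes into a finite chain of wall-crossings, each of which is a flop. Taking successive fibered products along this chain (as in the diagram (\ref{flop-catena}) and Remark~\ref{rem:triangolo}) produces a dominant toric variety $\widetilde{\XX}$ endowed with birational morphisms $\widetilde{f},\widetilde{g}$ to $\widehat{\XX}_\aa(\widetilde{\Si}')$ and $\widehat{\XX}_\aa(\widetilde{\Si}'')$. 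Setting $\widetilde{Y}$ equal to the common strict transform of $\widehat{Y}^\vee_{A'}$ and $\widehat{Y}^\vee_{A''}$ inside $\widetilde{\XX}$, one obtains the triangle
\begin{equation*}
  \xymatrix{&\widetilde{Y}\ar[dl]_-{\widetilde{\psi}'\circ\widetilde{f}}\ar[dr]^-{\widetilde{\psi}''\circ\widetilde{g}}&\\
            \widehat{Y}^\vee_{A'}\ar@{-->}[rr]&&\widehat{Y}^\vee_{A''}\,.}
\end{equation*}
The crepancy relation $(\widetilde{\psi}'\circ\widetilde{f})^*K_{\widehat{Y}^\vee_{A'}}\sim_\Q(\widetilde{\psi}''\circ\widetilde{g})^*K_{\widehat{Y}^\vee_{A''}}$ then follows by adjunction, exactly as in the chain of isomorphisms computing $(f_i^*\omega_{\widehat{Y}^{(i)}})^{\otimes r}$ in the proof of Theorem~\ref{thm:smallK&D-equivalenza}(2), applied now both to the flop steps inside $\widehat{\XX}_\aa$ and to the divisorial blowups $\widetilde{\psi}',\widetilde{\psi}''$.

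The hard part is the verification that the non-crepant divisorial blowups $\widetilde{\psi}',\widetilde{\psi}''$ do restrict to crepant morphisms on the embedded mirrors; on the ambient toric varieties these blowups have discrepancies coming from the exceptional divisors, so the restriction to embedded strict transforms is where the argument has to do genuine work. The crucial input is the Calabi-Yau condition: since $\aa=(1,\ldots,1)$ is the anticanonical framing of $\P^6$ and, by the $f$-duality bookkeeping, all entries of the pulled-back framings on $\widehat{\XX}_\aa(\widetilde{\Si}'),\,\widehat{\XX}_\aa(\widetilde{\Si}'')$ corresponding to exceptional directions are equal to $1$, the sum $\sum_j(a_j-1)D_j$ in the adjunction computation vanishes identically on $\widetilde{Y}$. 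Consequently the canonical classes of $\widetilde{Y},\,\widehat{Y}^\vee_{A'},\,\widehat{Y}^\vee_{A''}$ are all (torsion) trivial, and their pullbacks along $\widetilde{\psi}'\circ\widetilde{f}$ and $\widetilde{\psi}''\circ\widetilde{g}$ agree up to $\Q$-linear equivalence, giving $K$-equivalence in the sense of Definition~\ref{def:crepante}.
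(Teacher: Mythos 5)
Your proof reproduces the paper's argument: lift $\widehat{Y}^\vee_{A'}$ and $\widehat{Y}^\vee_{A''}$ by stellar subdivisions into a common $\PSF(\cdot)$, decompose the connecting s$\Q$m into flops via fibered products (exactly as in the proof of Theorem~\ref{thm:Kequivalenza3} and Remark~\ref{rem:triangolo}), restrict to strict transforms, and extract crepancy from the fact that the \cy framing and all its MPCP pullbacks have all entries equal to $1$, killing the adjunction term $\sum_j(a_j-1)D_j$. The only, harmless, deviation is the choice of common refinement: you go all the way up to $\PSF(\widehat{\L}_\aa)$, while the paper works at the intermediate level $\PSF(\widehat{\L}_\aa^{A'\cap A''})$ determined by the blowups $\XX_\aa^{A'\cap A''}\to\XX_\aa^{A'}$ and $\XX_\aa^{A'\cap A''}\to\XX_\aa^{A''}$; both produce the required crepant triangle.
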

\begin{proof}
  Consider the proof of Theorem~\ref{thm:Kequivalenza3} and replace the birational map $\vf^A_{A'}$ with the one, say $\widehat{\vf}$, obtained by the birational map $\vf$, in the right diagram of display (\ref{diagrammi}), composed with resolutions $\psi_{\Si'}$ and $\psi_{\Si''}$, that is
  \begin{equation*}
    \xymatrix{\widehat{\XX}_\aa^{A'}(\Si')\ar[d]_-{\psi_{\Si'}}\ar@{-->}[r]^-{\widehat{\vf}}&
                        \widehat{\XX}_\aa^{A''}(\Si'')\ar[d]^-{\psi_{\Si''}}\\
                \XX_\aa^{A'}\ar@{-->}[r]_-\vf&\XX_\aa^{A''}}
                \xymatrix{&&\\
            &\Longrightarrow&\\
            &&}
            \xymatrix{\widehat{Y}_{A'}^\vee\ar[d]_-{\psi_{\Si'}}\ar@{-->}[r]^-{\widehat{\vf}}&
                        \widehat{Y}_{A''}^\vee\ar[d]^-{\psi_{\Si''}}\\
                Y^\vee_{A'}\ar@{-->}[r]_-\vf&Y^\vee_{A''}}
  \end{equation*}
  We still have blowups
  \begin{equation*}
    \xymatrix{&\XX_\aa^{A'\cap A''}\ar[dl]\ar[dr]&\\
                \XX_\aa^{A'}\ar@{-->}[rr]&&\XX_\aa^{A''}}
  \end{equation*}
  allowing us to reproduce the same construction driving to a dominant toric variety $\widehat{\XX}_\aa$ with birational morphisms $f$ and $g$ such that the following diagram commutes
  \begin{equation*}
    \xymatrix{&\widehat{\XX}_\aa\ar[dr]^g\ar[dl]_f&\\
               \widehat{\XX}_\aa^{A'\cap A''}(\widetilde{\Si}')\ar[d]_-{\widetilde{\psi}'}\ar@{-->}[rr]&&
                \widehat{\XX}_\aa^{A'\cap A''}(\widetilde{\Si}'')\ar[d]_-{\widetilde{\psi}''}\\
               \widehat{\XX}_\aa^{A'}(\Si')\ar@{-->}[rr]^-{\widehat{\vf}}
    &&\widehat{\XX}_\aa^{A''}(\Si'') }
    \xymatrix{&&\\
            &\Longrightarrow&\\
            &&}
    \xymatrix{&Z\ar[ddl]_-{\widetilde{\psi}'\circ f}\ar[ddr]^-{\widetilde{\psi}''\circ g}&\\
                &&\\
                \widehat{Y}_{A'}^\vee\ar@{-->}[rr]^-{\widehat{\vf}}&
                        &\widehat{Y}_{A''}^\vee}
  \end{equation*}
  Then $K$-equivalence follows as in proving Theorem~\ref{thm:Kequivalenza3}.
\end{proof}

\begin{remark}
  The previous Theorem~\ref{thm:Kequivalenza_cy} proves item (ii) in Theorem~\ref{thm:metateorema}, in the present particular setup.
\end{remark}

   \newpage

\appendix
\section{Intermediate mirrors of $Y_{2,2}\subset\P^3$}\label{app:A}
In the present appendix we give data defining the fourteen intermediate mirrors described in \S~\ref{sssez:mirrorintermedi} and Proposition~\ref{prop:mirrorintermedi}.\halfline

\noindent\textbf{Legenda.} For any proper subset $A\subset\{1,2,7,8\}$,
\begin{itemize}
  \item \emph{ftv} gives the triad $[\L^A,\aa_1^A,\aa_2^A]$ characterizing the partitioned ftv $(\XX^A,\aa^A=\aa^A_1+\aa^A_2)$
  \item \emph{Polynomials} define the mirror $Y^\vee_A$ as a complete intersections in Cox coordinates
  \item \emph{IrrIdeals} is the irrelevant ideal in $\Cox(\XX^A)$ determined by the fan of $\XX^A$
\end{itemize}

\footnotesize
\begin{maplegroup}
\mapleresult
\begin{maplelatex}
\mapleinline{inert}{2d}{A = {1}, ftv = [Matrix(
\end{maplelatex}
\mapleresult
\begin{maplelatex}
\mapleinline{inert}{2d}{Polynomials = {x[1]^2*x[5]^2+x[1]*x[2]*x[3]+x[4]^2, x[2]^2*x[6]^2+x[3]^2*x[7]^2+x[4]*x[5]*x[6]*x[7]}}{\[\displaystyle {\it Polynomials}= \left\{ {x_{{1}}}^{2}{x_{{5}}}^{2}+x_{{1}}x_{{2}}x_{{3}}\\
\mbox{}+{x_{{4}}}^{2},{x_{{2}}}^{2}{x_{{6}}}^{2}+{x_{{3}}}^{2}{x_{{7}}}^{2}+x_{{4}}x_{{5}}x_{{6}}x_{{7}}\\
\mbox{} \right\} \]}
\end{maplelatex}
\mapleresult
\begin{maplelatex}
\mapleinline{inert}{2d}{IrrIdeal = (x[4]*x[5]*x[6]*x[7], x[1]*x[5]*x[6]*x[7], x[1]*x[2]*x[5]*x[6], x[1]*x[3]*x[5]*x[7], x[2]*x[4]*x[6], x[3]*x[4]*x[7], x[1]*x[2]*x[3]*x[6], x[1]*x[2]*x[3]*x[7])}{\[\displaystyle {\it IrrIdeal}={x_{{4}}x_{{5}}x_{{6}}x_{{7}},x_{{1}}x_{{5}}\\
\mbox{}x_{{6}}x_{{7}},x_{{1}}x_{{2}}x_{{5}}x_{{6}},x_{{1}}x_{{3}}x_{{5}}\\
\mbox{}x_{{7}},x_{{2}}x_{{4}}x_{{6}},x_{{3}}x_{{4}}x_{{7}},x_{{1}}x_{{2}}x_{{3}}\\
\mbox{}x_{{6}},x_{{1}}x_{{2}}x_{{3}}x_{{7}}}\]}
\end{maplelatex}
\mapleresult
\begin{maplelatex}
\mapleinline{inert}{2d}{A = {2}, ftv = [Matrix(
\end{maplelatex}
\mapleresult
\begin{maplelatex}
\mapleinline{inert}{2d}{Polynomials = {x[1]^2*x[4]^2+x[1]*x[2]*x[3]+x[5]^2, x[2]^2*x[6]^2+x[3]^2*x[7]^2+x[4]*x[5]*x[6]*x[7]}}{\[\displaystyle {\it Polynomials}= \left\{ {x_{{1}}}^{2}{x_{{4}}}^{2}+x_{{1}}x_{{2}}x_{{3}}\\
\mbox{}+{x_{{5}}}^{2},{x_{{2}}}^{2}{x_{{6}}}^{2}+{x_{{3}}}^{2}{x_{{7}}}^{2}+x_{{4}}x_{{5}}x_{{6}}x_{{7}}\\
\mbox{} \right\} \]}
\end{maplelatex}
\mapleresult
\begin{maplelatex}
\mapleinline{inert}{2d}{IrrIdeal = (x[1]*x[4]*x[6]*x[7], x[4]*x[5]*x[6]*x[7], x[1]*x[2]*x[4]*x[6], x[1]*x[3]*x[4]*x[7], x[1]*x[2]*x[3]*x[6], x[2]*x[5]*x[6], x[1]*x[2]*x[3]*x[7], x[3]*x[5]*x[7])}{\[\displaystyle {\it IrrIdeal}={x_{{1}}x_{{4}}x_{{6}}x_{{7}},x_{{4}}x_{{5}}\\
\mbox{}x_{{6}}x_{{7}},x_{{1}}x_{{2}}x_{{4}}x_{{6}},x_{{1}}x_{{3}}x_{{4}}\\
\mbox{}x_{{7}},x_{{1}}x_{{2}}x_{{3}}x_{{6}},x_{{2}}x_{{5}}x_{{6}},x_{{1}}x_{{2}}\\
\mbox{}x_{{3}}x_{{7}},x_{{3}}x_{{5}}x_{{7}}}\]}
\end{maplelatex}
\mapleresult
\begin{maplelatex}
\mapleinline{inert}{2d}{A = {7}, ftv = [Matrix(
\end{maplelatex}
\mapleresult
\begin{maplelatex}
\mapleinline{inert}{2d}{Polynomials = {x[4]^2*x[7]^2+x[5]*x[6]*x[7]+x[3]^2, x[1]^2*x[5]^2+x[1]*x[2]*x[3]*x[4]+x[2]^2*x[6]^2}}{\[\displaystyle {\it Polynomials}= \left\{ {x_{{4}}}^{2}{x_{{7}}}^{2}+x_{{5}}x_{{6}}x_{{7}}\\
\mbox{}+{x_{{3}}}^{2},{x_{{1}}}^{2}{x_{{5}}}^{2}+x_{{1}}x_{{2}}x_{{3}}x_{{4}}+{x_{{2}}}^{2}{x_{{6}}}^{2}\\
\mbox{} \right\} \]}
\end{maplelatex}
\mapleresult
\begin{maplelatex}
\mapleinline{inert}{2d}{IrrIdeal = (x[1]*x[5]*x[6]*x[7], x[2]*x[5]*x[6]*x[7], x[1]*x[4]*x[5]*x[7], x[1]*x[2]*x[4]*x[7], x[2]*x[4]*x[6]*x[7], x[1]*x[3]*x[5], x[1]*x[2]*x[3]*x[4], x[2]*x[3]*x[6])}{\[\displaystyle {\it IrrIdeal}={x_{{1}}x_{{5}}x_{{6}}x_{{7}},x_{{2}}x_{{5}}\\
\mbox{}x_{{6}}x_{{7}},x_{{1}}x_{{4}}x_{{5}}x_{{7}},x_{{1}}x_{{2}}x_{{4}}\\
\mbox{}x_{{7}},x_{{2}}x_{{4}}x_{{6}}x_{{7}},x_{{1}}x_{{3}}x_{{5}},x_{{1}}x_{{2}}\\
\mbox{}x_{{3}}x_{{4}},x_{{2}}x_{{3}}x_{{6}}}\]}
\end{maplelatex}
\mapleresult
\begin{maplelatex}
\mapleinline{inert}{2d}{A = {8}, ftv = [Matrix(
\end{maplelatex}
\mapleresult
\begin{maplelatex}
\mapleinline{inert}{2d}{Polynomials = {x[3]^2*x[7]^2+x[5]*x[6]*x[7]+x[4]^2, x[1]^2*x[5]^2+x[1]*x[2]*x[3]*x[4]+x[2]^2*x[6]^2}}{\[\displaystyle {\it Polynomials}= \left\{ {x_{{3}}}^{2}{x_{{7}}}^{2}+x_{{5}}x_{{6}}x_{{7}}\\
\mbox{}+{x_{{4}}}^{2},{x_{{1}}}^{2}{x_{{5}}}^{2}+x_{{1}}x_{{2}}x_{{3}}x_{{4}}+{x_{{2}}}^{2}{x_{{6}}}^{2}\\
\mbox{} \right\} \]}
\end{maplelatex}
\mapleresult
\begin{maplelatex}
\mapleinline{inert}{2d}{IrrIdeal = (x[1]*x[5]*x[6]*x[7], x[2]*x[5]*x[6]*x[7], x[1]*x[3]*x[5]*x[7], x[1]*x[2]*x[3]*x[7], x[2]*x[3]*x[6]*x[7], x[1]*x[4]*x[5], x[1]*x[2]*x[3]*x[4], x[2]*x[4]*x[6])}{\[\displaystyle {\it IrrIdeal}={x_{{1}}x_{{5}}x_{{6}}x_{{7}},x_{{2}}x_{{5}}\\
\mbox{}x_{{6}}x_{{7}},x_{{1}}x_{{3}}x_{{5}}x_{{7}},x_{{1}}x_{{2}}x_{{3}}\\
\mbox{}x_{{7}},x_{{2}}x_{{3}}x_{{6}}x_{{7}},x_{{1}}x_{{4}}x_{{5}},x_{{1}}x_{{2}}\\
\mbox{}x_{{3}}x_{{4}},x_{{2}}x_{{4}}x_{{6}}}\]}
\end{maplelatex}
\mapleresult
\begin{maplelatex}
\mapleinline{inert}{2d}{A = {1, 2}, ftv = [Matrix(
\end{maplelatex}
\mapleresult
\begin{maplelatex}
\mapleinline{inert}{2d}{Polynomials = {x[1]*x[2]+x[3]^2+x[4]^2, x[1]^2*x[5]^2+x[2]^2*x[6]^2+x[3]*x[4]*x[5]*x[6]}}{\[\displaystyle {\it Polynomials}= \left\{ x_{{1}}x_{{2}}+{x_{{3}}}^{2}+{x_{{4}}}^{2},{x_{{1}}}^{2}{x_{{5}}}^{2}\\
\mbox{}+{x_{{2}}}^{2}{x_{{6}}}^{2}+x_{{3}}x_{{4}}x_{{5}}x_{{6}} \right\} \]}
\end{maplelatex}
\mapleresult
\begin{maplelatex}
\mapleinline{inert}{2d}{IrrIdeal = (x[3]*x[5]*x[6], x[4]*x[5]*x[6], x[1]*x[3]*x[5], x[1]*x[4]*x[5], x[2]*x[3]*x[6], x[2]*x[4]*x[6], x[1]*x[2]*x[5], x[1]*x[2]*x[6])}{\[\displaystyle {\it IrrIdeal}={x_{{3}}x_{{5}}x_{{6}},x_{{4}}x_{{5}}x_{{6}}\\
\mbox{},x_{{1}}x_{{3}}x_{{5}},x_{{1}}x_{{4}}x_{{5}},x_{{2}}x_{{3}}x_{{6}},x_{{2}}\\
\mbox{}x_{{4}}x_{{6}},x_{{1}}x_{{2}}x_{{5}},x_{{1}}x_{{2}}x_{{6}}}\]}
\end{maplelatex}
\mapleresult
\begin{maplelatex}
\mapleinline{inert}{2d}{A = {1, 7}, ftv = [Matrix(
\end{maplelatex}
\mapleresult
\begin{maplelatex}
\mapleinline{inert}{2d}{Polynomials = {x[1]^2*x[5]^2+x[1]*x[2]*x[3]+x[4]^2, x[3]^2*x[6]^2+x[4]*x[5]*x[6]+x[2]^2}}{\[\displaystyle {\it Polynomials}= \left\{ {x_{{1}}}^{2}{x_{{5}}}^{2}+x_{{1}}x_{{2}}x_{{3}}\\
\mbox{}+{x_{{4}}}^{2},{x_{{3}}}^{2}{x_{{6}}}^{2}+x_{{4}}x_{{5}}x_{{6}}+{x_{{2}}}^{2} \right\} \]}
\end{maplelatex}
\mapleresult
\begin{maplelatex}
\mapleinline{inert}{2d}{IrrIdeal = (x[4]*x[5]*x[6], x[1]*x[5]*x[6], x[1]*x[2]*x[5], x[3]*x[4]*x[6], x[1]*x[3]*x[6], x[2]*x[4], x[1]*x[2]*x[3])}{\[\displaystyle {\it IrrIdeal}={x_{{4}}x_{{5}}x_{{6}},x_{{1}}x_{{5}}x_{{6}}\\
\mbox{},x_{{1}}x_{{2}}x_{{5}},x_{{3}}x_{{4}}x_{{6}},x_{{1}}x_{{3}}x_{{6}},x_{{2}}\\
\mbox{}x_{{4}},x_{{1}}x_{{2}}x_{{3}}}\]}
\end{maplelatex}
\mapleresult
\begin{maplelatex}
\mapleinline{inert}{2d}{A = {1, 8}, ftv = [Matrix(
\end{maplelatex}
\mapleresult
\begin{maplelatex}
\mapleinline{inert}{2d}{Polynomials = {x[1]^2*x[5]^2+x[1]*x[2]*x[3]+x[4]^2, x[2]^2*x[6]^2+x[4]*x[5]*x[6]+x[3]^2}}{\[\displaystyle {\it Polynomials}= \left\{ {x_{{1}}}^{2}{x_{{5}}}^{2}+x_{{1}}x_{{2}}x_{{3}}\\
\mbox{}+{x_{{4}}}^{2},{x_{{2}}}^{2}{x_{{6}}}^{2}+x_{{4}}x_{{5}}x_{{6}}+{x_{{3}}}^{2} \right\} \]}
\end{maplelatex}
\mapleresult
\begin{maplelatex}
\mapleinline{inert}{2d}{IrrIdeal = (x[4]*x[5]*x[6], x[1]*x[5]*x[6], x[2]*x[4]*x[6], x[1]*x[2]*x[6], x[1]*x[3]*x[5], x[3]*x[4], x[1]*x[2]*x[3])}{\[\displaystyle {\it IrrIdeal}={x_{{4}}x_{{5}}x_{{6}},x_{{1}}x_{{5}}x_{{6}}\\
\mbox{},x_{{2}}x_{{4}}x_{{6}},x_{{1}}x_{{2}}x_{{6}},x_{{1}}x_{{3}}x_{{5}},x_{{3}}\\
\mbox{}x_{{4}},x_{{1}}x_{{2}}x_{{3}}}\]}
\end{maplelatex}
\mapleresult
\begin{maplelatex}
\mapleinline{inert}{2d}{A = {2, 7}, ftv = [Matrix(
\end{maplelatex}
\mapleresult
\begin{maplelatex}
\mapleinline{inert}{2d}{Polynomials = {x[1]^2*x[4]^2+x[1]*x[2]*x[3]+x[5]^2, x[3]^2*x[6]^2+x[4]*x[5]*x[6]+x[2]^2}}{\[\displaystyle {\it Polynomials}= \left\{ {x_{{1}}}^{2}{x_{{4}}}^{2}+x_{{1}}x_{{2}}x_{{3}}\\
\mbox{}+{x_{{5}}}^{2},{x_{{3}}}^{2}{x_{{6}}}^{2}+x_{{4}}x_{{5}}x_{{6}}+{x_{{2}}}^{2} \right\} \]}
\end{maplelatex}
\mapleresult
\begin{maplelatex}
\mapleinline{inert}{2d}{IrrIdeal = (x[1]*x[4]*x[6], x[4]*x[5]*x[6], x[1]*x[2]*x[4], x[1]*x[3]*x[6], x[3]*x[5]*x[6], x[1]*x[2]*x[3], x[2]*x[5])}{\[\displaystyle {\it IrrIdeal}={x_{{1}}x_{{4}}x_{{6}},x_{{4}}x_{{5}}x_{{6}}\\
\mbox{},x_{{1}}x_{{2}}x_{{4}},x_{{1}}x_{{3}}x_{{6}},x_{{3}}x_{{5}}x_{{6}},x_{{1}}\\
\mbox{}x_{{2}}x_{{3}},x_{{2}}x_{{5}}}\]}
\end{maplelatex}
\mapleresult
\begin{maplelatex}
\mapleinline{inert}{2d}{A = {2, 8}, ftv = [Matrix(
\end{maplelatex}
\mapleresult
\begin{maplelatex}
\mapleinline{inert}{2d}{Polynomials = {x[1]^2*x[4]^2+x[1]*x[2]*x[3]+x[5]^2, x[2]^2*x[6]^2+x[4]*x[5]*x[6]+x[3]^2}}{\[\displaystyle {\it Polynomials}= \left\{ {x_{{1}}}^{2}{x_{{4}}}^{2}+x_{{1}}x_{{2}}x_{{3}}\\
\mbox{}+{x_{{5}}}^{2},{x_{{2}}}^{2}{x_{{6}}}^{2}+x_{{4}}x_{{5}}x_{{6}}+{x_{{3}}}^{2} \right\} \]}
\end{maplelatex}
\mapleresult
\begin{maplelatex}
\mapleinline{inert}{2d}{IrrIdeal = (x[1]*x[4]*x[6], x[4]*x[5]*x[6], x[1]*x[2]*x[6], x[2]*x[5]*x[6], x[1]*x[3]*x[4], x[1]*x[2]*x[3], x[3]*x[5])}{\[\displaystyle {\it IrrIdeal}={x_{{1}}x_{{4}}x_{{6}},x_{{4}}x_{{5}}x_{{6}}\\
\mbox{},x_{{1}}x_{{2}}x_{{6}},x_{{2}}x_{{5}}x_{{6}},x_{{1}}x_{{3}}x_{{4}},x_{{1}}\\
\mbox{}x_{{2}}x_{{3}},x_{{3}}x_{{5}}}\]}
\end{maplelatex}
\mapleresult
\begin{maplelatex}
\mapleinline{inert}{2d}{A = {7, 8}, ftv = [Matrix(
\end{maplelatex}
\mapleresult
\begin{maplelatex}
\mapleinline{inert}{2d}{Polynomials = {x[3]^2+x[4]^2+x[5]*x[6], x[1]^2*x[5]^2+x[1]*x[2]*x[3]*x[4]+x[2]^2*x[6]^2}}{\[\displaystyle {\it Polynomials}= \left\{ {x_{{3}}}^{2}+{x_{{4}}}^{2}+x_{{5}}x_{{6}},{x_{{1}}}^{2}{x_{{5}}}^{2}\\
\mbox{}+x_{{1}}x_{{2}}x_{{3}}x_{{4}}+{x_{{2}}}^{2}{x_{{6}}}^{2} \right\} \]}
\end{maplelatex}
\mapleresult
\begin{maplelatex}
\mapleinline{inert}{2d}{IrrIdeal = (x[1]*x[5]*x[6], x[2]*x[5]*x[6], x[1]*x[3]*x[5], x[1]*x[2]*x[3], x[2]*x[3]*x[6], x[1]*x[4]*x[5], x[1]*x[2]*x[4], x[2]*x[4]*x[6])}{\[\displaystyle {\it IrrIdeal}={x_{{1}}x_{{5}}x_{{6}},x_{{2}}x_{{5}}x_{{6}}\\
\mbox{},x_{{1}}x_{{3}}x_{{5}},x_{{1}}x_{{2}}x_{{3}},x_{{2}}x_{{3}}x_{{6}},x_{{1}}\\
\mbox{}x_{{4}}x_{{5}},x_{{1}}x_{{2}}x_{{4}},x_{{2}}x_{{4}}x_{{6}}}\]}
\end{maplelatex}
\mapleresult
\begin{maplelatex}
\mapleinline{inert}{2d}{A = {1, 2, 7}, ftv = [Matrix(
\end{maplelatex}
\mapleresult
\begin{maplelatex}
\mapleinline{inert}{2d}{Polynomials = {x[1]*x[2]+x[3]^2+x[4]^2, x[2]^2*x[5]^2+x[3]*x[4]*x[5]+x[1]^2}}{\[\displaystyle {\it Polynomials}= \left\{ x_{{1}}x_{{2}}+{x_{{3}}}^{2}+{x_{{4}}}^{2},{x_{{2}}}^{2}{x_{{5}}}^{2}\\
\mbox{}+x_{{3}}x_{{4}}x_{{5}}+{x_{{1}}}^{2} \right\} \]}
\end{maplelatex}
\mapleresult
\begin{maplelatex}
\mapleinline{inert}{2d}{IrrIdeal = (x[3]*x[5], x[4]*x[5], x[1]*x[3], x[1]*x[4], x[2]*x[5], x[1]*x[2])}{\[\displaystyle {\it IrrIdeal}={x_{{3}}x_{{5}},x_{{4}}x_{{5}},x_{{1}}x_{{3}},x_{{1}}\\
\mbox{}x_{{4}},x_{{2}}x_{{5}},x_{{1}}x_{{2}}}\]}
\end{maplelatex}
\mapleresult
\begin{maplelatex}
\mapleinline{inert}{2d}{A = {1, 2, 8}, ftv = [Matrix(
\end{maplelatex}
\mapleresult
\begin{maplelatex}
\mapleinline{inert}{2d}{Polynomials = {x[1]*x[2]+x[3]^2+x[4]^2, x[1]^2*x[5]^2+x[3]*x[4]*x[5]+x[2]^2}}{\[\displaystyle {\it Polynomials}= \left\{ x_{{1}}x_{{2}}+{x_{{3}}}^{2}+{x_{{4}}}^{2},{x_{{1}}}^{2}{x_{{5}}}^{2}\\
\mbox{}+x_{{3}}x_{{4}}x_{{5}}+{x_{{2}}}^{2} \right\} \]}
\end{maplelatex}
\mapleresult
\begin{maplelatex}
\mapleinline{inert}{2d}{IrrIdeal = (x[3]*x[5], x[4]*x[5], x[1]*x[5], x[2]*x[3], x[2]*x[4], x[1]*x[2])}{\[\displaystyle {\it IrrIdeal}={x_{{3}}x_{{5}},x_{{4}}x_{{5}},x_{{1}}x_{{5}},x_{{2}}\\
\mbox{}x_{{3}},x_{{2}}x_{{4}},x_{{1}}x_{{2}}}\]}
\end{maplelatex}
\mapleresult
\begin{maplelatex}
\mapleinline{inert}{2d}{A = {1, 7, 8}, ftv = [Matrix(
\end{maplelatex}
\mapleresult
\begin{maplelatex}
\mapleinline{inert}{2d}{Polynomials = {x[2]^2+x[3]^2+x[4]*x[5], x[1]^2*x[5]^2+x[1]*x[2]*x[3]+x[4]^2}}{\[\displaystyle {\it Polynomials}= \left\{ {x_{{2}}}^{2}+{x_{{3}}}^{2}+x_{{4}}x_{{5}},{x_{{1}}}^{2}{x_{{5}}}^{2}\\
\mbox{}+x_{{1}}x_{{2}}x_{{3}}+{x_{{4}}}^{2} \right\} \]}
\end{maplelatex}
\mapleresult
\begin{maplelatex}
\mapleinline{inert}{2d}{IrrIdeal = (x[4]*x[5], x[1]*x[5], x[2]*x[4], x[1]*x[2], x[3]*x[4], x[1]*x[3])}{\[\displaystyle {\it IrrIdeal}={x_{{4}}x_{{5}},x_{{1}}x_{{5}},x_{{2}}x_{{4}},x_{{1}}\\
\mbox{}x_{{2}},x_{{3}}x_{{4}},x_{{1}}x_{{3}}}\]}
\end{maplelatex}
\mapleresult
\begin{maplelatex}
\mapleinline{inert}{2d}{A = {2, 7, 8}, ftv = [Matrix(
\end{maplelatex}
\mapleresult
\begin{maplelatex}
\mapleinline{inert}{2d}{Polynomials = {x[2]^2+x[3]^2+x[4]*x[5], x[1]^2*x[4]^2+x[1]*x[2]*x[3]+x[5]^2}}{\[\displaystyle {\it Polynomials}= \left\{ {x_{{2}}}^{2}+{x_{{3}}}^{2}+x_{{4}}x_{{5}},{x_{{1}}}^{2}{x_{{4}}}^{2}\\
\mbox{}+x_{{1}}x_{{2}}x_{{3}}+{x_{{5}}}^{2} \right\} \]}
\end{maplelatex}
\mapleresult
\begin{maplelatex}
\mapleinline{inert}{2d}{IrrIdeal = (x[1]*x[4], x[4]*x[5], x[1]*x[2], x[2]*x[5], x[1]*x[3], x[3]*x[5])}{\[\displaystyle {\it IrrIdeal}={x_{{1}}x_{{4}},x_{{4}}x_{{5}},x_{{1}}x_{{2}},x_{{2}}\\
\mbox{}x_{{5}},x_{{1}}x_{{3}},x_{{3}}x_{{5}}}\]}
\end{maplelatex}
\end{maplegroup}
\begin{Maple Normal}{
\begin{Maple Normal}{
\mapleinline{inert}{2d}{}{\[\displaystyle \]}
}\end{Maple Normal}
}\end{Maple Normal}

\normalsize
\section{The $5\times 110$ fan matrix $\widehat{\L}$}\label{app:B}
\begin{equation*}
\resizebox{1\hsize}{!}{$
  \widehat{\L}=\left( \begin {array}{cccccccccccccccccccc} -1&-1&-1&-1&-1&-1&-1&-1&-1&-1&-1&-1&-1&-1&-1&-1&-1&-1&-1&-1\\ \noalign{\medskip}-1&-1&-1&-1&-1&-1&-1&-1&-1&-1&-1&-1&-1&-1&-1&-1&-1&-1&-1&-1\\ \noalign{\medskip}-1&-1&-1&-1&-1&-1&-1&-1&-1&-1&0&0&0&0&0&0&1&1&1&2\\ \noalign{\medskip}0&0&0&0&1&1&1&2&2&3&0&0&0&1&1&2&0&0&1&0\\ \noalign{\medskip}0&1&2&3&0&1&2&0&1&0&0&1&2&0&1&0&0&1&0&0\end {array}\right.$}
 \end{equation*}
 \begin{equation*}
   \resizebox{1\hsize}{!}{$
   \begin {array}{cccccccccccccccccccc} -1&-1&-1&-1&-1&-1&-1&-1&-1&-1&-1&-1&-1&-1&-1&0&0&0&0&0\\ \noalign{\medskip}0&0&0&0&0&0&0&0&0&0&1&1&1&1&2&-1&-1&-1&-1&-1\\ \noalign{\medskip}-1&-1&-1&-1&-1&-1&0&0&0&1&-1&-1&-1&0&-1&-1&-1&-1&-1&-1\\ \noalign{\medskip}0&0&0&1&1&2&0&0&1&0&0&0&1&0&0&0&0&0&1&1\\ \noalign{\medskip}0&1&2&0&1&0&0&1&0&0&0&1&0&0&0&0&1&2&0&1\end {array}$}
 \end{equation*}
 \begin{equation*}
 \resizebox{1\hsize}{!}{$
  \begin {array}{cccccccccccccccccccc} 0&0&0&0&0&0&0&0&0&0&0&0&0&0&0&0&0&0&0&0\\ \noalign{\medskip}-1&-1&-1&-1&-1&0&0&0&0&0&0&0&0&0&0&0&0&0&0&0\\ \noalign{\medskip}-1&0&0&0&1&-1&-1&-1&0&0&0&0&0&0&0&0&0&1&1&1\\ \noalign{\medskip}2&0&0&1&0&0&0&1&-1&-1&-1&-1&0&0&1&1&2&-1&-1&-1\\ \noalign{\medskip}0&0&1&0&0&0&1&0&-1&0&1&2&-1&1&-1&0&-1&-1&0&1\end {array}$}
  \end{equation*}
  \begin{equation*}
    \resizebox{1\hsize}{!}{$
    \begin {array}{cccccccccccccccccccc} 0&0&0&0&0&0&0&0&0&0&0&0&0&0&0&0&0&0&0&0\\ \noalign{\medskip}0&0&0&0&0&0&0&1&1&1&1&1&1&1&1&1&1&1&2&2\\ \noalign{\medskip}1&1&1&2&2&2&3&-1&0&0&0&0&0&0&1&1&1&2&0&0\\ \noalign{\medskip}0&0&1&-1&-1&0&-1&0&-1&-1&-1&0&0&1&-1&-1&0&-1&-1&-1\\ \noalign{\medskip}-1&0&-1&-1&0&-1&-1&0&-1&0&1&-1&0&-1&-1&0&-1&-1&-1&0\end {array}$}
  \end{equation*}
  \begin{equation*}
    \resizebox{1\hsize}{!}{$
    \begin {array}{cccccccccccccccccccc} 0&0&0&1&1&1&1&1&1&1&1&1&1&1&1&1&1&1&1&1\\ \noalign{\medskip}2&2&3&-1&-1&-1&-1&0&0&0&0&0&0&0&0&0&0&0&1&1\\ \noalign{\medskip}0&1&0&-1&-1&-1&0&-1&0&0&0&0&0&0&1&1&1&2&0&0\\ \noalign{\medskip}0&-1&-1&0&0&1&0&0&-1&-1&-1&0&0&1&-1&-1&0&-1&-1&-1\\ \noalign{\medskip}-1&-1&-1&0&1&0&0&0&-1&0&1&-1&0&-1&-1&0&-1&-1&-1&0\end {array} $}
  \end{equation*}
  \begin{equation*}
    \resizebox{.55\hsize}{!}{$
    \left. \begin {array}{cccccccccc} 1&1&1&2&2&2&2&2&2&3\\ \noalign{\medskip}1&1&2&-1&0&0&0&0&1&0\\ \noalign{\medskip}0&1&0&-1&0&0&0&1&0&0\\ \noalign{\medskip}0&-1&-1&0&-1&-1&0&-1&-1&-1\\ \noalign{\medskip}-1&-1&-1&0&-1&0&-1&-1&-1&-1\end {array} \right)$}
  \end{equation*}

  \newpage
  \section{Mirrors of $Y_{3,3}\subset\P^5$}\label{app:C}

  In the present appendix we give data defining the 64 mirrors described in \S~\ref{sssez:mirrorintermedi3,3} and Proposition~\ref{prop:mirrorintermedi3,3}. Notation is the same as in App.~\ref{app:A}.\halfline
{\tiny
  \begin{maplegroup}
\mapleresult
\begin{maplelatex}
\mapleinline{inert}{2d}{A = {}, ftv = [Matrix(
\mbox{}],[0,0,0,0,0,0,1,1,1,1,1,1]]\]}
\end{maplelatex}
\mapleresult
\begin{maplelatex}
\mapleinline{inert}{2d}{Polynomials = {x[1]^3*x[7]^3+x[1]*x[2]*x[3]*x[4]*x[5]*x[6]+x[2]^3*x[8]^3+x[3]^3*x[9]^3, x[4]^3*x[10]^3+x[5]^3*x[11]^3+x[6]^3*x[12]^3+x[7]*x[8]*x[9]*x[10]*x[11]*x[12]}}{\[\displaystyle {\it Polynomials}= \left\{ {x_{{1}}}^{3}{x_{{7}}}^{3}+x_{{1}}x_{{2}}x_{{3}}x_{{4}}x_{{5}}x_{{6}}\\
\mbox{}+{x_{{2}}}^{3}{x_{{8}}}^{3}+{x_{{3}}}^{3}{x_{{9}}}^{3},{x_{{4}}}^{3}{x_{{10}}}^{3}+{x_{{5}}}^{3}{x_{{11}}}^{3}\\
\mbox{}+{x_{{6}}}^{3}{x_{{12}}}^{3}+x_{{7}}x_{{8}}x_{{9}}x_{{10}}x_{{11}}x_{{12}}\\
\mbox{} \right\} \]}
\end{maplelatex}
\mapleresult
\begin{maplelatex}
\mapleinline{inert}{2d}{IrrIdeal = (x[1]*x[7]*x[8]*x[9]*x[10]*x[11]*x[12], x[2]*x[7]*x[8]*x[9]*x[10]*x[11]*x[12], x[3]*x[7]*x[8]*x[9]*x[10]*x[11]*x[12], x[1]*x[4]*x[7]*x[10], x[1]*x[5]*x[7]*x[11], x[1]*x[6]*x[7]*x[12], x[1]*x[2]*x[3]*x[4]*x[5]*x[6]*x[10])}{\[\displaystyle {\it IrrIdeal}={x_{{1}}x_{{7}}x_{{8}}x_{{9}}x_{{10}}\\
\mbox{}x_{{11}}x_{{12}},x_{{2}}x_{{7}}x_{{8}}x_{{9}}x_{{10}}x_{{11}}\\
\mbox{}x_{{12}},x_{{3}}x_{{7}}x_{{8}}x_{{9}}x_{{10}}x_{{11}}x_{{12}}\\
\mbox{},x_{{1}}x_{{4}}x_{{7}}x_{{10}},x_{{1}}x_{{5}}x_{{7}}x_{{11}},x_{{1}}\\
\mbox{}x_{{6}}x_{{7}}x_{{12}},x_{{1}}x_{{2}}x_{{3}}x_{{4}}x_{{5}}x_{{6}}\\
\mbox{}x_{{10}}}\]}
\end{maplelatex}
\mapleresult
\begin{maplelatex}
\mapleinline{inert}{2d}{x[2]*x[4]*x[8]*x[10], x[3]*x[4]*x[9]*x[10], x[1]*x[2]*x[3]*x[4]*x[5]*x[6]*x[11], x[2]*x[5]*x[8]*x[11], x[3]*x[5]*x[9]*x[11], x[1]*x[2]*x[3]*x[4]*x[5]*x[6]*x[12], x[2]*x[6]*x[8]*x[12], x[3]*x[6]*x[9]*x[12]}{\[\displaystyle x_{{2}}x_{{4}}x_{{8}}x_{{10}},\,x_{{3}}x_{{4}}x_{{9}}x_{{10}},\,x_{{1}}x_{{2}}x_{{3}}x_{{4}}x_{{5}}x_{{6}}x_{{11}},\,x_{{2}}x_{{5}}x_{{8}}x_{{11}},\,x_{{3}}x_{{5}}x_{{9}}x_{{11}},\,x_{{1}}x_{{2}}x_{{3}}x_{{4}}x_{{5}}x_{{6}}x_{{12}},\,x_{{2}}x_{{6}}x_{{8}}x_{{12}},\,x_{{3}}x_{{6}}x_{{9}}x_{{12}}\]}
\end{maplelatex}
\mapleresult
\begin{maplelatex}
\mapleinline{inert}{2d}{A = {1}, ftv = [Matrix(
\mbox{},0,0,0,0,1,1,1,1,1,1]]\]}
\end{maplelatex}
\mapleresult
\begin{maplelatex}
\mapleinline{inert}{2d}{Polynomials = {x[1]^3*x[7]^3+x[2]^3*x[8]^3+x[1]*x[2]*x[3]*x[4]*x[5]+x[6]^3, x[3]^3*x[9]^3+x[4]^3*x[10]^3+x[5]^3*x[11]^3+x[6]*x[7]*x[8]*x[9]*x[10]*x[11]}}{\[\displaystyle {\it Polynomials}= \left\{ {x_{{1}}}^{3}{x_{{7}}}^{3}+{x_{{2}}}^{3}{x_{{8}}}^{3}+x_{{1}}x_{{2}}x_{{3}}x_{{4}}x_{{5}}\\
\mbox{}+{x_{{6}}}^{3},{x_{{3}}}^{3}{x_{{9}}}^{3}+{x_{{4}}}^{3}{x_{{10}}}^{3}+{x_{{5}}}^{3}{x_{{11}}}^{3}+x_{{6}}x_{{7}}x_{{8}}x_{{9}}x_{{10}}x_{{11}}\\
\mbox{} \right\} \]}
\end{maplelatex}
\mapleresult
\begin{maplelatex}
\mapleinline{inert}{2d}{IrrIdeal = (x[6]*x[7]*x[8]*x[9]*x[10]*x[11], x[1]*x[7]*x[8]*x[9]*x[10]*x[11], x[2]*x[7]*x[8]*x[9]*x[10]*x[11], x[1]*x[3]*x[7]*x[9], x[1]*x[4]*x[7]*x[10], x[1]*x[5]*x[7]*x[11], x[3]*x[6]*x[9])}{\[\displaystyle {\it IrrIdeal}={x_{{6}}x_{{7}}x_{{8}}x_{{9}}x_{{10}}\\
\mbox{}x_{{11}},x_{{1}}x_{{7}}x_{{8}}x_{{9}}x_{{10}}x_{{11}},x_{{2}}x_{{7}}\\
\mbox{}x_{{8}}x_{{9}}x_{{10}}x_{{11}},x_{{1}}x_{{3}}x_{{7}}x_{{9}},x_{{1}}\\
\mbox{}x_{{4}}x_{{7}}x_{{10}},x_{{1}}x_{{5}}x_{{7}}x_{{11}},x_{{3}}x_{{6}}\\
\mbox{}x_{{9}}}\]}
\end{maplelatex}
\mapleresult
\begin{maplelatex}
\mapleinline{inert}{2d}{x[2]*x[3]*x[8]*x[9], x[4]*x[6]*x[10], x[2]*x[4]*x[8]*x[10], x[5]*x[6]*x[11], x[2]*x[5]*x[8]*x[11], x[1]*x[2]*x[3]*x[4]*x[5]*x[9], x[1]*x[2]*x[3]*x[4]*x[5]*x[10], x[1]*x[2]*x[3]*x[4]*x[5]*x[11]}{\[\displaystyle x_{{2}}x_{{3}}x_{{8}}x_{{9}},\,x_{{4}}x_{{6}}x_{{10}},\,x_{{2}}x_{{4}}x_{{8}}x_{{10}},\,x_{{5}}x_{{6}}x_{{11}},\,x_{{2}}x_{{5}}x_{{8}}x_{{11}},\,x_{{1}}x_{{2}}x_{{3}}x_{{4}}x_{{5}}x_{{9}},\,x_{{1}}x_{{2}}x_{{3}}x_{{4}}x_{{5}}x_{{10}},\,x_{{1}}x_{{2}}x_{{3}}x_{{4}}x_{{5}}x_{{11}}\]}
\end{maplelatex}
\mapleresult
\begin{maplelatex}
\mapleinline{inert}{2d}{A = {2}, ftv = [Matrix(
\mbox{},0,0,0,0,1,1,1,1,1,1]]\]}
\end{maplelatex}
\mapleresult
\begin{maplelatex}
\mapleinline{inert}{2d}{Polynomials = {x[1]^3*x[6]^3+x[2]^3*x[8]^3+x[1]*x[2]*x[3]*x[4]*x[5]+x[7]^3, x[3]^3*x[9]^3+x[4]^3*x[10]^3+x[5]^3*x[11]^3+x[6]*x[7]*x[8]*x[9]*x[10]*x[11]}}{\[\displaystyle {\it Polynomials}= \left\{ {x_{{1}}}^{3}{x_{{6}}}^{3}+{x_{{2}}}^{3}{x_{{8}}}^{3}+x_{{1}}x_{{2}}x_{{3}}x_{{4}}x_{{5}}\\
\mbox{}+{x_{{7}}}^{3},{x_{{3}}}^{3}{x_{{9}}}^{3}+{x_{{4}}}^{3}{x_{{10}}}^{3}+{x_{{5}}}^{3}{x_{{11}}}^{3}+x_{{6}}x_{{7}}x_{{8}}x_{{9}}x_{{10}}x_{{11}}\\
\mbox{} \right\} \]}
\end{maplelatex}
\mapleresult
\begin{maplelatex}
\mapleinline{inert}{2d}{IrrIdeal = (x[1]*x[6]*x[8]*x[9]*x[10]*x[11], x[6]*x[7]*x[8]*x[9]*x[10]*x[11], x[2]*x[6]*x[8]*x[9]*x[10]*x[11], x[1]*x[3]*x[6]*x[9], x[1]*x[4]*x[6]*x[10], x[1]*x[5]*x[6]*x[11], x[1]*x[2]*x[3]*x[4]*x[5]*x[9])}{\[\displaystyle {\it IrrIdeal}={x_{{1}}x_{{6}}x_{{8}}x_{{9}}x_{{10}}\\
\mbox{}x_{{11}},x_{{6}}x_{{7}}x_{{8}}x_{{9}}x_{{10}}x_{{11}},x_{{2}}x_{{6}}\\
\mbox{}x_{{8}}x_{{9}}x_{{10}}x_{{11}},x_{{1}}x_{{3}}x_{{6}}x_{{9}},x_{{1}}\\
\mbox{}x_{{4}}x_{{6}}x_{{10}},x_{{1}}x_{{5}}x_{{6}}x_{{11}},x_{{1}}x_{{2}}\\
\mbox{}x_{{3}}x_{{4}}x_{{5}}x_{{9}}}\]}
\end{maplelatex}
\mapleresult
\begin{maplelatex}
\mapleinline{inert}{2d}{x[3]*x[7]*x[9], x[2]*x[3]*x[8]*x[9], x[1]*x[2]*x[3]*x[4]*x[5]*x[10], x[4]*x[7]*x[10], x[2]*x[4]*x[8]*x[10], x[1]*x[2]*x[3]*x[4]*x[5]*x[11], x[5]*x[7]*x[11], x[2]*x[5]*x[8]*x[11]}{\[\displaystyle x_{{3}}x_{{7}}x_{{9}},\,x_{{2}}x_{{3}}x_{{8}}x_{{9}},\,x_{{1}}x_{{2}}x_{{3}}x_{{4}}x_{{5}}x_{{10}},\,x_{{4}}x_{{7}}x_{{10}},\,x_{{2}}x_{{4}}x_{{8}}x_{{10}},\,x_{{1}}x_{{2}}x_{{3}}x_{{4}}x_{{5}}x_{{11}},\,x_{{5}}x_{{7}}x_{{11}},\,x_{{2}}x_{{5}}x_{{8}}x_{{11}}\]}
\end{maplelatex}
\mapleresult
\begin{maplelatex}
\mapleinline{inert}{2d}{A = {3}, ftv = [Matrix(
\mbox{},0,0,0,0,1,1,1,1,1,1]]\]}
\end{maplelatex}
\mapleresult
\begin{maplelatex}
\mapleinline{inert}{2d}{Polynomials = {x[1]^3*x[6]^3+x[2]^3*x[7]^3+x[1]*x[2]*x[3]*x[4]*x[5]+x[8]^3, x[3]^3*x[9]^3+x[4]^3*x[10]^3+x[5]^3*x[11]^3+x[6]*x[7]*x[8]*x[9]*x[10]*x[11]}}{\[\displaystyle {\it Polynomials}= \left\{ {x_{{1}}}^{3}{x_{{6}}}^{3}+{x_{{2}}}^{3}{x_{{7}}}^{3}+x_{{1}}x_{{2}}x_{{3}}x_{{4}}x_{{5}}\\
\mbox{}+{x_{{8}}}^{3},{x_{{3}}}^{3}{x_{{9}}}^{3}+{x_{{4}}}^{3}{x_{{10}}}^{3}+{x_{{5}}}^{3}{x_{{11}}}^{3}+x_{{6}}x_{{7}}x_{{8}}x_{{9}}x_{{10}}x_{{11}}\\
\mbox{} \right\} \]}
\end{maplelatex}
\mapleresult
\begin{maplelatex}
\mapleinline{inert}{2d}{IrrIdeal = (x[1]*x[6]*x[7]*x[9]*x[10]*x[11], x[6]*x[7]*x[8]*x[9]*x[10]*x[11], x[2]*x[6]*x[7]*x[9]*x[10]*x[11], x[1]*x[3]*x[6]*x[9], x[1]*x[4]*x[6]*x[10], x[1]*x[5]*x[6]*x[11], x[1]*x[2]*x[3]*x[4]*x[5]*x[9])}{\[\displaystyle {\it IrrIdeal}={x_{{1}}x_{{6}}x_{{7}}x_{{9}}x_{{10}}\\
\mbox{}x_{{11}},x_{{6}}x_{{7}}x_{{8}}x_{{9}}x_{{10}}x_{{11}},x_{{2}}x_{{6}}\\
\mbox{}x_{{7}}x_{{9}}x_{{10}}x_{{11}},x_{{1}}x_{{3}}x_{{6}}x_{{9}},x_{{1}}\\
\mbox{}x_{{4}}x_{{6}}x_{{10}},x_{{1}}x_{{5}}x_{{6}}x_{{11}},x_{{1}}x_{{2}}\\
\mbox{}x_{{3}}x_{{4}}x_{{5}}x_{{9}}}\]}
\end{maplelatex}
\mapleresult
\begin{maplelatex}
\mapleinline{inert}{2d}{x[2]*x[3]*x[7]*x[9], x[3]*x[8]*x[9], x[1]*x[2]*x[3]*x[4]*x[5]*x[10], x[2]*x[4]*x[7]*x[10], x[4]*x[8]*x[10], x[1]*x[2]*x[3]*x[4]*x[5]*x[11], x[2]*x[5]*x[7]*x[11], x[5]*x[8]*x[11]}{\[\displaystyle x_{{2}}x_{{3}}x_{{7}}x_{{9}},\,x_{{3}}x_{{8}}x_{{9}},\,x_{{1}}x_{{2}}x_{{3}}x_{{4}}x_{{5}}x_{{10}},\,x_{{2}}x_{{4}}x_{{7}}x_{{10}},\,x_{{4}}x_{{8}}x_{{10}},\,x_{{1}}x_{{2}}x_{{3}}x_{{4}}x_{{5}}x_{{11}},\,x_{{2}}x_{{5}}x_{{7}}x_{{11}},\,x_{{5}}x_{{8}}x_{{11}}\]}
\end{maplelatex}
\mapleresult
\begin{maplelatex}
\mapleinline{inert}{2d}{A = {10}, ftv = [Matrix(
\mbox{},0,0,0,0,0,1,1,1,1,1]]\]}
\end{maplelatex}
\mapleresult
\begin{maplelatex}
\mapleinline{inert}{2d}{Polynomials = {x[5]^3*x[10]^3+x[6]^3*x[11]^3+x[7]*x[8]*x[9]*x[10]*x[11]+x[4]^3, x[1]^3*x[7]^3+x[1]*x[2]*x[3]*x[4]*x[5]*x[6]+x[2]^3*x[8]^3+x[3]^3*x[9]^3}}{\[\displaystyle {\it Polynomials}= \left\{ {x_{{5}}}^{3}{x_{{10}}}^{3}+{x_{{6}}}^{3}{x_{{11}}}^{3}+x_{{7}}x_{{8}}x_{{9}}x_{{10}}x_{{11}}\\
\mbox{}+{x_{{4}}}^{3},{x_{{1}}}^{3}{x_{{7}}}^{3}+x_{{1}}x_{{2}}x_{{3}}x_{{4}}x_{{5}}x_{{6}}\\
\mbox{}+{x_{{2}}}^{3}{x_{{8}}}^{3}+{x_{{3}}}^{3}{x_{{9}}}^{3} \right\} \]}
\end{maplelatex}
\mapleresult
\begin{maplelatex}
\mapleinline{inert}{2d}{IrrIdeal = (x[1]*x[7]*x[8]*x[9]*x[10]*x[11], x[2]*x[7]*x[8]*x[9]*x[10]*x[11], x[3]*x[7]*x[8]*x[9]*x[10]*x[11], x[1]*x[5]*x[7]*x[10], x[1]*x[6]*x[7]*x[11], x[1]*x[2]*x[3]*x[5]*x[6]*x[10], x[2]*x[5]*x[8]*x[10])}{\[\displaystyle {\it IrrIdeal}={x_{{1}}x_{{7}}x_{{8}}x_{{9}}x_{{10}}\\
\mbox{}x_{{11}},x_{{2}}x_{{7}}x_{{8}}x_{{9}}x_{{10}}x_{{11}},x_{{3}}x_{{7}}\\
\mbox{}x_{{8}}x_{{9}}x_{{10}}x_{{11}},x_{{1}}x_{{5}}x_{{7}}x_{{10}},x_{{1}}\\
\mbox{}x_{{6}}x_{{7}}x_{{11}},x_{{1}}x_{{2}}x_{{3}}x_{{5}}x_{{6}}x_{{10}}\\
\mbox{},x_{{2}}x_{{5}}x_{{8}}x_{{10}}}\]}
\end{maplelatex}
\mapleresult
\begin{maplelatex}
\mapleinline{inert}{2d}{x[3]*x[5]*x[9]*x[10], x[1]*x[2]*x[3]*x[5]*x[6]*x[11], x[2]*x[6]*x[8]*x[11], x[3]*x[6]*x[9]*x[11], x[1]*x[4]*x[7], x[1]*x[2]*x[3]*x[4]*x[5]*x[6], x[2]*x[4]*x[8], x[3]*x[4]*x[9]}{\[\displaystyle x_{{3}}x_{{5}}x_{{9}}x_{{10}},\,x_{{1}}x_{{2}}x_{{3}}x_{{5}}x_{{6}}x_{{11}},\,x_{{2}}x_{{6}}x_{{8}}x_{{11}},\,x_{{3}}x_{{6}}x_{{9}}x_{{11}},\,x_{{1}}x_{{4}}x_{{7}},\,x_{{1}}x_{{2}}x_{{3}}x_{{4}}x_{{5}}x_{{6}},\,x_{{2}}x_{{4}}x_{{8}},\,x_{{3}}x_{{4}}x_{{9}}\]}
\end{maplelatex}
\mapleresult
\begin{maplelatex}
\mapleinline{inert}{2d}{A = {11}, ftv = [Matrix(
\mbox{},0,0,0,0,0,1,1,1,1,1]]\]}
\end{maplelatex}
\mapleresult
\begin{maplelatex}
\mapleinline{inert}{2d}{Polynomials = {x[4]^3*x[10]^3+x[6]^3*x[11]^3+x[7]*x[8]*x[9]*x[10]*x[11]+x[5]^3, x[1]^3*x[7]^3+x[1]*x[2]*x[3]*x[4]*x[5]*x[6]+x[2]^3*x[8]^3+x[3]^3*x[9]^3}}{\[\displaystyle {\it Polynomials}= \left\{ {x_{{4}}}^{3}{x_{{10}}}^{3}+{x_{{6}}}^{3}{x_{{11}}}^{3}+x_{{7}}x_{{8}}x_{{9}}x_{{10}}x_{{11}}\\
\mbox{}+{x_{{5}}}^{3},{x_{{1}}}^{3}{x_{{7}}}^{3}+x_{{1}}x_{{2}}x_{{3}}x_{{4}}x_{{5}}x_{{6}}\\
\mbox{}+{x_{{2}}}^{3}{x_{{8}}}^{3}+{x_{{3}}}^{3}{x_{{9}}}^{3} \right\} \]}
\end{maplelatex}
\mapleresult
\begin{maplelatex}
\mapleinline{inert}{2d}{IrrIdeal = (x[1]*x[7]*x[8]*x[9]*x[10]*x[11], x[2]*x[7]*x[8]*x[9]*x[10]*x[11], x[3]*x[7]*x[8]*x[9]*x[10]*x[11], x[1]*x[4]*x[7]*x[10], x[1]*x[6]*x[7]*x[11], x[1]*x[2]*x[3]*x[4]*x[6]*x[10], x[2]*x[4]*x[8]*x[10])}{\[\displaystyle {\it IrrIdeal}={x_{{1}}x_{{7}}x_{{8}}x_{{9}}x_{{10}}\\
\mbox{}x_{{11}},x_{{2}}x_{{7}}x_{{8}}x_{{9}}x_{{10}}x_{{11}},x_{{3}}x_{{7}}\\
\mbox{}x_{{8}}x_{{9}}x_{{10}}x_{{11}},x_{{1}}x_{{4}}x_{{7}}x_{{10}},x_{{1}}\\
\mbox{}x_{{6}}x_{{7}}x_{{11}},x_{{1}}x_{{2}}x_{{3}}x_{{4}}x_{{6}}x_{{10}}\\
\mbox{},x_{{2}}x_{{4}}x_{{8}}x_{{10}}}\]}
\end{maplelatex}
\mapleresult
\begin{maplelatex}
\mapleinline{inert}{2d}{x[3]*x[4]*x[9]*x[10], x[1]*x[2]*x[3]*x[4]*x[6]*x[11], x[2]*x[6]*x[8]*x[11], x[3]*x[6]*x[9]*x[11], x[1]*x[5]*x[7], x[1]*x[2]*x[3]*x[4]*x[5]*x[6], x[2]*x[5]*x[8], x[3]*x[5]*x[9]}{\[\displaystyle x_{{3}}x_{{4}}x_{{9}}x_{{10}},\,x_{{1}}x_{{2}}x_{{3}}x_{{4}}x_{{6}}x_{{11}},\,x_{{2}}x_{{6}}x_{{8}}x_{{11}},\,x_{{3}}x_{{6}}x_{{9}}x_{{11}},\,x_{{1}}x_{{5}}x_{{7}},\,x_{{1}}x_{{2}}x_{{3}}x_{{4}}x_{{5}}x_{{6}},\,x_{{2}}x_{{5}}x_{{8}},\,x_{{3}}x_{{5}}x_{{9}}\]}
\end{maplelatex}
\mapleresult
\begin{maplelatex}
\mapleinline{inert}{2d}{A = {12}, ftv = [Matrix(
\mbox{},0,0,0,0,0,1,1,1,1,1]]\]}
\end{maplelatex}
\mapleresult
\begin{maplelatex}
\mapleinline{inert}{2d}{Polynomials = {x[4]^3*x[10]^3+x[5]^3*x[11]^3+x[7]*x[8]*x[9]*x[10]*x[11]+x[6]^3, x[1]^3*x[7]^3+x[1]*x[2]*x[3]*x[4]*x[5]*x[6]+x[2]^3*x[8]^3+x[3]^3*x[9]^3}}{\[\displaystyle {\it Polynomials}= \left\{ {x_{{4}}}^{3}{x_{{10}}}^{3}+{x_{{5}}}^{3}{x_{{11}}}^{3}+x_{{7}}x_{{8}}x_{{9}}x_{{10}}x_{{11}}\\
\mbox{}+{x_{{6}}}^{3},{x_{{1}}}^{3}{x_{{7}}}^{3}+x_{{1}}x_{{2}}x_{{3}}x_{{4}}x_{{5}}x_{{6}}\\
\mbox{}+{x_{{2}}}^{3}{x_{{8}}}^{3}+{x_{{3}}}^{3}{x_{{9}}}^{3} \right\} \]}
\end{maplelatex}
\mapleresult
\begin{maplelatex}
\mapleinline{inert}{2d}{IrrIdeal = (x[1]*x[7]*x[8]*x[9]*x[10]*x[11], x[2]*x[7]*x[8]*x[9]*x[10]*x[11], x[3]*x[7]*x[8]*x[9]*x[10]*x[11], x[1]*x[4]*x[7]*x[10], x[1]*x[5]*x[7]*x[11], x[1]*x[2]*x[3]*x[4]*x[5]*x[10], x[2]*x[4]*x[8]*x[10])}{\[\displaystyle {\it IrrIdeal}={x_{{1}}x_{{7}}x_{{8}}x_{{9}}x_{{10}}\\
\mbox{}x_{{11}},x_{{2}}x_{{7}}x_{{8}}x_{{9}}x_{{10}}x_{{11}},x_{{3}}x_{{7}}\\
\mbox{}x_{{8}}x_{{9}}x_{{10}}x_{{11}},x_{{1}}x_{{4}}x_{{7}}x_{{10}},x_{{1}}\\
\mbox{}x_{{5}}x_{{7}}x_{{11}},x_{{1}}x_{{2}}x_{{3}}x_{{4}}x_{{5}}x_{{10}}\\
\mbox{},x_{{2}}x_{{4}}x_{{8}}x_{{10}}}\]}
\end{maplelatex}
\mapleresult
\begin{maplelatex}
\mapleinline{inert}{2d}{x[3]*x[4]*x[9]*x[10], x[1]*x[2]*x[3]*x[4]*x[5]*x[11], x[2]*x[5]*x[8]*x[11], x[3]*x[5]*x[9]*x[11], x[1]*x[6]*x[7], x[1]*x[2]*x[3]*x[4]*x[5]*x[6], x[2]*x[6]*x[8], x[3]*x[6]*x[9]}{\[\displaystyle x_{{3}}x_{{4}}x_{{9}}x_{{10}},\,x_{{1}}x_{{2}}x_{{3}}x_{{4}}x_{{5}}x_{{11}},\,x_{{2}}x_{{5}}x_{{8}}x_{{11}},\,x_{{3}}x_{{5}}x_{{9}}x_{{11}},\,x_{{1}}x_{{6}}x_{{7}},\,x_{{1}}x_{{2}}x_{{3}}x_{{4}}x_{{5}}x_{{6}},\,x_{{2}}x_{{6}}x_{{8}},\,x_{{3}}x_{{6}}x_{{9}}\]}
\end{maplelatex}
\mapleresult
\begin{maplelatex}
\mapleinline{inert}{2d}{A = {1, 2}, ftv = [Matrix(
\mbox{},0,0,1,1,1,1,1,1]]\]}
\end{maplelatex}
\mapleresult
\begin{maplelatex}
\mapleinline{inert}{2d}{Polynomials = {x[1]^3*x[7]^3+x[1]*x[2]*x[3]*x[4]+x[5]^3+x[6]^3, x[2]^3*x[8]^3+x[3]^3*x[9]^3+x[4]^3*x[10]^3+x[5]*x[6]*x[7]*x[8]*x[9]*x[10]}}{\[\displaystyle {\it Polynomials}= \left\{ {x_{{1}}}^{3}{x_{{7}}}^{3}+x_{{1}}x_{{2}}x_{{3}}x_{{4}}\\
\mbox{}+{x_{{5}}}^{3}+{x_{{6}}}^{3},{x_{{2}}}^{3}{x_{{8}}}^{3}+{x_{{3}}}^{3}{x_{{9}}}^{3}+{x_{{4}}}^{3}{x_{{10}}}^{3}\\
\mbox{}+x_{{5}}x_{{6}}x_{{7}}x_{{8}}x_{{9}}x_{{10}} \right\} \]}
\end{maplelatex}
\mapleresult
\begin{maplelatex}
\mapleinline{inert}{2d}{IrrIdeal = (x[5]*x[7]*x[8]*x[9]*x[10], x[6]*x[7]*x[8]*x[9]*x[10], x[1]*x[7]*x[8]*x[9]*x[10], x[1]*x[2]*x[7]*x[8], x[1]*x[3]*x[7]*x[9], x[1]*x[4]*x[7]*x[10], x[2]*x[5]*x[8])}{\[\displaystyle {\it IrrIdeal}={x_{{5}}x_{{7}}x_{{8}}x_{{9}}x_{{10}},x_{{6}}\\
\mbox{}x_{{7}}x_{{8}}x_{{9}}x_{{10}},x_{{1}}x_{{7}}x_{{8}}x_{{9}}x_{{10}}\\
\mbox{},x_{{1}}x_{{2}}x_{{7}}x_{{8}},x_{{1}}x_{{3}}x_{{7}}x_{{9}},x_{{1}}\\
\mbox{}x_{{4}}x_{{7}}x_{{10}},x_{{2}}x_{{5}}x_{{8}}}\]}
\end{maplelatex}
\mapleresult
\begin{maplelatex}
\mapleinline{inert}{2d}{x[2]*x[6]*x[8], x[3]*x[5]*x[9], x[3]*x[6]*x[9], x[4]*x[5]*x[10], x[4]*x[6]*x[10], x[1]*x[2]*x[3]*x[4]*x[8], x[1]*x[2]*x[3]*x[4]*x[9], x[1]*x[2]*x[3]*x[4]*x[10]}{\[\displaystyle x_{{2}}x_{{6}}x_{{8}},\,x_{{3}}x_{{5}}x_{{9}},\,x_{{3}}x_{{6}}x_{{9}},\,x_{{4}}x_{{5}}x_{{10}},\,x_{{4}}x_{{6}}x_{{10}},\,x_{{1}}x_{{2}}x_{{3}}x_{{4}}x_{{8}},\,x_{{1}}x_{{2}}x_{{3}}x_{{4}}x_{{9}},\,x_{{1}}x_{{2}}x_{{3}}x_{{4}}x_{{10}}\]}
\end{maplelatex}
\mapleresult
\begin{maplelatex}
\mapleinline{inert}{2d}{A = {1, 3}, ftv = [Matrix(
\mbox{},0,0,1,1,1,1,1,1]]\]}
\end{maplelatex}
\mapleresult
\begin{maplelatex}
\mapleinline{inert}{2d}{Polynomials = {x[1]^3*x[6]^3+x[1]*x[2]*x[3]*x[4]+x[5]^3+x[7]^3, x[2]^3*x[8]^3+x[3]^3*x[9]^3+x[4]^3*x[10]^3+x[5]*x[6]*x[7]*x[8]*x[9]*x[10]}}{\[\displaystyle {\it Polynomials}= \left\{ {x_{{1}}}^{3}{x_{{6}}}^{3}+x_{{1}}x_{{2}}x_{{3}}x_{{4}}\\
\mbox{}+{x_{{5}}}^{3}+{x_{{7}}}^{3},{x_{{2}}}^{3}{x_{{8}}}^{3}+{x_{{3}}}^{3}{x_{{9}}}^{3}+{x_{{4}}}^{3}{x_{{10}}}^{3}\\
\mbox{}+x_{{5}}x_{{6}}x_{{7}}x_{{8}}x_{{9}}x_{{10}} \right\} \]}
\end{maplelatex}
\mapleresult
\begin{maplelatex}
\mapleinline{inert}{2d}{IrrIdeal = (x[5]*x[6]*x[8]*x[9]*x[10], x[6]*x[7]*x[8]*x[9]*x[10], x[1]*x[6]*x[8]*x[9]*x[10], x[1]*x[2]*x[6]*x[8], x[1]*x[3]*x[6]*x[9], x[1]*x[4]*x[6]*x[10], x[2]*x[5]*x[8])}{\[\displaystyle {\it IrrIdeal}={x_{{5}}x_{{6}}x_{{8}}x_{{9}}x_{{10}},x_{{6}}\\
\mbox{}x_{{7}}x_{{8}}x_{{9}}x_{{10}},x_{{1}}x_{{6}}x_{{8}}x_{{9}}x_{{10}}\\
\mbox{},x_{{1}}x_{{2}}x_{{6}}x_{{8}},x_{{1}}x_{{3}}x_{{6}}x_{{9}},x_{{1}}\\
\mbox{}x_{{4}}x_{{6}}x_{{10}},x_{{2}}x_{{5}}x_{{8}}}\]}
\end{maplelatex}
\mapleresult
\begin{maplelatex}
\mapleinline{inert}{2d}{x[2]*x[7]*x[8], x[3]*x[5]*x[9], x[3]*x[7]*x[9], x[4]*x[5]*x[10], x[4]*x[7]*x[10], x[1]*x[2]*x[3]*x[4]*x[8], x[1]*x[2]*x[3]*x[4]*x[9], x[1]*x[2]*x[3]*x[4]*x[10]}{\[\displaystyle x_{{2}}x_{{7}}x_{{8}},\,x_{{3}}x_{{5}}x_{{9}},\,x_{{3}}x_{{7}}x_{{9}},\,x_{{4}}x_{{5}}x_{{10}},\,x_{{4}}x_{{7}}x_{{10}},\,x_{{1}}x_{{2}}x_{{3}}x_{{4}}x_{{8}},\,x_{{1}}x_{{2}}x_{{3}}x_{{4}}x_{{9}},\,x_{{1}}x_{{2}}x_{{3}}x_{{4}}x_{{10}}\]}
\end{maplelatex}
\mapleresult
\begin{maplelatex}
\mapleinline{inert}{2d}{A = {1, 10}, ftv = [Matrix(
\mbox{},0,0,0,1,1,1,1,1]]\]}
\end{maplelatex}
\mapleresult
\begin{maplelatex}
\mapleinline{inert}{2d}{Polynomials = {x[1]^3*x[7]^3+x[2]^3*x[8]^3+x[1]*x[2]*x[3]*x[4]*x[5]+x[6]^3, x[4]^3*x[9]^3+x[5]^3*x[10]^3+x[6]*x[7]*x[8]*x[9]*x[10]+x[3]^3}}{\[\displaystyle {\it Polynomials}= \left\{ {x_{{1}}}^{3}{x_{{7}}}^{3}+{x_{{2}}}^{3}{x_{{8}}}^{3}+x_{{1}}x_{{2}}x_{{3}}x_{{4}}x_{{5}}\\
\mbox{}+{x_{{6}}}^{3},{x_{{4}}}^{3}{x_{{9}}}^{3}+{x_{{5}}}^{3}{x_{{10}}}^{3}+x_{{6}}x_{{7}}x_{{8}}x_{{9}}x_{{10}}\\
\mbox{}+{x_{{3}}}^{3} \right\} \]}
\end{maplelatex}
\mapleresult
\begin{maplelatex}
\mapleinline{inert}{2d}{IrrIdeal = (x[6]*x[7]*x[8]*x[9]*x[10], x[1]*x[7]*x[8]*x[9]*x[10], x[2]*x[7]*x[8]*x[9]*x[10], x[1]*x[4]*x[7]*x[9], x[1]*x[5]*x[7]*x[10], x[4]*x[6]*x[9], x[2]*x[4]*x[8]*x[9])}{\[\displaystyle {\it IrrIdeal}={x_{{6}}x_{{7}}x_{{8}}x_{{9}}x_{{10}},x_{{1}}\\
\mbox{}x_{{7}}x_{{8}}x_{{9}}x_{{10}},x_{{2}}x_{{7}}x_{{8}}x_{{9}}x_{{10}}\\
\mbox{},x_{{1}}x_{{4}}x_{{7}}x_{{9}},x_{{1}}x_{{5}}x_{{7}}x_{{10}},x_{{4}}\\
\mbox{}x_{{6}}x_{{9}},x_{{2}}x_{{4}}x_{{8}}x_{{9}}}\]}
\end{maplelatex}
\mapleresult
\begin{maplelatex}
\mapleinline{inert}{2d}{x[5]*x[6]*x[10], x[2]*x[5]*x[8]*x[10], x[1]*x[2]*x[4]*x[5]*x[9], x[1]*x[2]*x[4]*x[5]*x[10], x[1]*x[3]*x[7], x[3]*x[6], x[2]*x[3]*x[8], x[1]*x[2]*x[3]*x[4]*x[5]}{\[\displaystyle x_{{5}}x_{{6}}x_{{10}},\,x_{{2}}x_{{5}}x_{{8}}x_{{10}},\,x_{{1}}x_{{2}}x_{{4}}x_{{5}}x_{{9}},\,x_{{1}}x_{{2}}x_{{4}}x_{{5}}x_{{10}},\,x_{{1}}x_{{3}}x_{{7}},\,x_{{3}}x_{{6}},\,x_{{2}}x_{{3}}x_{{8}},\,x_{{1}}x_{{2}}x_{{3}}x_{{4}}x_{{5}}\]}
\end{maplelatex}
\mapleresult
\begin{maplelatex}
\mapleinline{inert}{2d}{A = {1, 11}, ftv = [Matrix(
\mbox{},0,0,0,1,1,1,1,1]]\]}
\end{maplelatex}
\mapleresult
\begin{maplelatex}
\mapleinline{inert}{2d}{Polynomials = {x[1]^3*x[7]^3+x[2]^3*x[8]^3+x[1]*x[2]*x[3]*x[4]*x[5]+x[6]^3, x[3]^3*x[9]^3+x[5]^3*x[10]^3+x[6]*x[7]*x[8]*x[9]*x[10]+x[4]^3}}{\[\displaystyle {\it Polynomials}= \left\{ {x_{{1}}}^{3}{x_{{7}}}^{3}+{x_{{2}}}^{3}{x_{{8}}}^{3}+x_{{1}}x_{{2}}x_{{3}}x_{{4}}x_{{5}}\\
\mbox{}+{x_{{6}}}^{3},{x_{{3}}}^{3}{x_{{9}}}^{3}+{x_{{5}}}^{3}{x_{{10}}}^{3}+x_{{6}}x_{{7}}x_{{8}}x_{{9}}x_{{10}}\\
\mbox{}+{x_{{4}}}^{3} \right\} \]}
\end{maplelatex}
\mapleresult
\begin{maplelatex}
\mapleinline{inert}{2d}{IrrIdeal = (x[6]*x[7]*x[8]*x[9]*x[10], x[1]*x[7]*x[8]*x[9]*x[10], x[2]*x[7]*x[8]*x[9]*x[10], x[1]*x[3]*x[7]*x[9], x[1]*x[5]*x[7]*x[10], x[3]*x[6]*x[9], x[2]*x[3]*x[8]*x[9])}{\[\displaystyle {\it IrrIdeal}={x_{{6}}x_{{7}}x_{{8}}x_{{9}}x_{{10}},x_{{1}}\\
\mbox{}x_{{7}}x_{{8}}x_{{9}}x_{{10}},x_{{2}}x_{{7}}x_{{8}}x_{{9}}x_{{10}}\\
\mbox{},x_{{1}}x_{{3}}x_{{7}}x_{{9}},x_{{1}}x_{{5}}x_{{7}}x_{{10}},x_{{3}}\\
\mbox{}x_{{6}}x_{{9}},x_{{2}}x_{{3}}x_{{8}}x_{{9}}}\]}
\end{maplelatex}
\mapleresult
\begin{maplelatex}
\mapleinline{inert}{2d}{x[5]*x[6]*x[10], x[2]*x[5]*x[8]*x[10], x[1]*x[2]*x[3]*x[5]*x[9], x[1]*x[2]*x[3]*x[5]*x[10], x[1]*x[4]*x[7], x[4]*x[6], x[2]*x[4]*x[8], x[1]*x[2]*x[3]*x[4]*x[5]}{\[\displaystyle x_{{5}}x_{{6}}x_{{10}},\,x_{{2}}x_{{5}}x_{{8}}x_{{10}},\,x_{{1}}x_{{2}}x_{{3}}x_{{5}}x_{{9}},\,x_{{1}}x_{{2}}x_{{3}}x_{{5}}x_{{10}},\,x_{{1}}x_{{4}}x_{{7}},\,x_{{4}}x_{{6}},\,x_{{2}}x_{{4}}x_{{8}},\,x_{{1}}x_{{2}}x_{{3}}x_{{4}}x_{{5}}\]}
\end{maplelatex}
\mapleresult
\begin{maplelatex}
\mapleinline{inert}{2d}{A = {1, 12}, ftv = [Matrix(
\mbox{},0,0,0,1,1,1,1,1]]\]}
\end{maplelatex}
\mapleresult
\begin{maplelatex}
\mapleinline{inert}{2d}{Polynomials = {x[1]^3*x[7]^3+x[2]^3*x[8]^3+x[1]*x[2]*x[3]*x[4]*x[5]+x[6]^3, x[3]^3*x[9]^3+x[4]^3*x[10]^3+x[6]*x[7]*x[8]*x[9]*x[10]+x[5]^3}}{\[\displaystyle {\it Polynomials}= \left\{ {x_{{1}}}^{3}{x_{{7}}}^{3}+{x_{{2}}}^{3}{x_{{8}}}^{3}+x_{{1}}x_{{2}}x_{{3}}x_{{4}}x_{{5}}\\
\mbox{}+{x_{{6}}}^{3},{x_{{3}}}^{3}{x_{{9}}}^{3}+{x_{{4}}}^{3}{x_{{10}}}^{3}+x_{{6}}x_{{7}}x_{{8}}x_{{9}}x_{{10}}\\
\mbox{}+{x_{{5}}}^{3} \right\} \]}
\end{maplelatex}
\mapleresult
\begin{maplelatex}
\mapleinline{inert}{2d}{IrrIdeal = (x[6]*x[7]*x[8]*x[9]*x[10], x[1]*x[7]*x[8]*x[9]*x[10], x[2]*x[7]*x[8]*x[9]*x[10], x[1]*x[3]*x[7]*x[9], x[1]*x[4]*x[7]*x[10], x[3]*x[6]*x[9], x[2]*x[3]*x[8]*x[9])}{\[\displaystyle {\it IrrIdeal}={x_{{6}}x_{{7}}x_{{8}}x_{{9}}x_{{10}},x_{{1}}\\
\mbox{}x_{{7}}x_{{8}}x_{{9}}x_{{10}},x_{{2}}x_{{7}}x_{{8}}x_{{9}}x_{{10}}\\
\mbox{},x_{{1}}x_{{3}}x_{{7}}x_{{9}},x_{{1}}x_{{4}}x_{{7}}x_{{10}},x_{{3}}\\
\mbox{}x_{{6}}x_{{9}},x_{{2}}x_{{3}}x_{{8}}x_{{9}}}\]}
\end{maplelatex}
\mapleresult
\begin{maplelatex}
\mapleinline{inert}{2d}{x[4]*x[6]*x[10], x[2]*x[4]*x[8]*x[10], x[1]*x[2]*x[3]*x[4]*x[9], x[1]*x[2]*x[3]*x[4]*x[10], x[1]*x[5]*x[7], x[5]*x[6], x[2]*x[5]*x[8], x[1]*x[2]*x[3]*x[4]*x[5]}{\[\displaystyle x_{{4}}x_{{6}}x_{{10}},\,x_{{2}}x_{{4}}x_{{8}}x_{{10}},\,x_{{1}}x_{{2}}x_{{3}}x_{{4}}x_{{9}},\,x_{{1}}x_{{2}}x_{{3}}x_{{4}}x_{{10}},\,x_{{1}}x_{{5}}x_{{7}},\,x_{{5}}x_{{6}},\,x_{{2}}x_{{5}}x_{{8}},\,x_{{1}}x_{{2}}x_{{3}}x_{{4}}x_{{5}}\]}
\end{maplelatex}
\mapleresult
\begin{maplelatex}
\mapleinline{inert}{2d}{A = {2, 3}, ftv = [Matrix(
\mbox{},0,0,1,1,1,1,1,1]]\]}
\end{maplelatex}
\mapleresult
\begin{maplelatex}
\mapleinline{inert}{2d}{Polynomials = {x[1]^3*x[5]^3+x[1]*x[2]*x[3]*x[4]+x[6]^3+x[7]^3, x[2]^3*x[8]^3+x[3]^3*x[9]^3+x[4]^3*x[10]^3+x[5]*x[6]*x[7]*x[8]*x[9]*x[10]}}{\[\displaystyle {\it Polynomials}= \left\{ {x_{{1}}}^{3}{x_{{5}}}^{3}+x_{{1}}x_{{2}}x_{{3}}x_{{4}}\\
\mbox{}+{x_{{6}}}^{3}+{x_{{7}}}^{3},{x_{{2}}}^{3}{x_{{8}}}^{3}+{x_{{3}}}^{3}{x_{{9}}}^{3}+{x_{{4}}}^{3}{x_{{10}}}^{3}\\
\mbox{}+x_{{5}}x_{{6}}x_{{7}}x_{{8}}x_{{9}}x_{{10}} \right\} \]}
\end{maplelatex}
\mapleresult
\begin{maplelatex}
\mapleinline{inert}{2d}{IrrIdeal = (x[1]*x[5]*x[8]*x[9]*x[10], x[5]*x[6]*x[8]*x[9]*x[10], x[5]*x[7]*x[8]*x[9]*x[10], x[1]*x[2]*x[5]*x[8], x[1]*x[3]*x[5]*x[9], x[1]*x[4]*x[5]*x[10], x[1]*x[2]*x[3]*x[4]*x[8])}{\[\displaystyle {\it IrrIdeal}={x_{{1}}x_{{5}}x_{{8}}x_{{9}}x_{{10}},x_{{5}}\\
\mbox{}x_{{6}}x_{{8}}x_{{9}}x_{{10}},x_{{5}}x_{{7}}x_{{8}}x_{{9}}x_{{10}}\\
\mbox{},x_{{1}}x_{{2}}x_{{5}}x_{{8}},x_{{1}}x_{{3}}x_{{5}}x_{{9}},x_{{1}}\\
\mbox{}x_{{4}}x_{{5}}x_{{10}},x_{{1}}x_{{2}}x_{{3}}x_{{4}}x_{{8}}}\]}
\end{maplelatex}
\mapleresult
\begin{maplelatex}
\mapleinline{inert}{2d}{x[2]*x[6]*x[8], x[2]*x[7]*x[8], x[1]*x[2]*x[3]*x[4]*x[9], x[3]*x[6]*x[9], x[3]*x[7]*x[9], x[1]*x[2]*x[3]*x[4]*x[10], x[4]*x[6]*x[10], x[4]*x[7]*x[10]}{\[\displaystyle x_{{2}}x_{{6}}x_{{8}},\,x_{{2}}x_{{7}}x_{{8}},\,x_{{1}}x_{{2}}x_{{3}}x_{{4}}x_{{9}},\,x_{{3}}x_{{6}}x_{{9}},\,x_{{3}}x_{{7}}x_{{9}},\,x_{{1}}x_{{2}}x_{{3}}x_{{4}}x_{{10}},\,x_{{4}}x_{{6}}x_{{10}},\,x_{{4}}x_{{7}}x_{{10}}\]}
\end{maplelatex}
\mapleresult
\begin{maplelatex}
\mapleinline{inert}{2d}{A = {2, 10}, ftv = [Matrix(
\mbox{},0,0,0,1,1,1,1,1]]\]}
\end{maplelatex}
\mapleresult
\begin{maplelatex}
\mapleinline{inert}{2d}{Polynomials = {x[1]^3*x[6]^3+x[2]^3*x[8]^3+x[1]*x[2]*x[3]*x[4]*x[5]+x[7]^3, x[4]^3*x[9]^3+x[5]^3*x[10]^3+x[6]*x[7]*x[8]*x[9]*x[10]+x[3]^3}}{\[\displaystyle {\it Polynomials}= \left\{ {x_{{1}}}^{3}{x_{{6}}}^{3}+{x_{{2}}}^{3}{x_{{8}}}^{3}+x_{{1}}x_{{2}}x_{{3}}x_{{4}}x_{{5}}\\
\mbox{}+{x_{{7}}}^{3},{x_{{4}}}^{3}{x_{{9}}}^{3}+{x_{{5}}}^{3}{x_{{10}}}^{3}+x_{{6}}x_{{7}}x_{{8}}x_{{9}}x_{{10}}\\
\mbox{}+{x_{{3}}}^{3} \right\} \]}
\end{maplelatex}
\mapleresult
\begin{maplelatex}
\mapleinline{inert}{2d}{IrrIdeal = (x[1]*x[6]*x[8]*x[9]*x[10], x[6]*x[7]*x[8]*x[9]*x[10], x[2]*x[6]*x[8]*x[9]*x[10], x[1]*x[4]*x[6]*x[9], x[1]*x[5]*x[6]*x[10], x[1]*x[2]*x[4]*x[5]*x[9], x[4]*x[7]*x[9])}{\[\displaystyle {\it IrrIdeal}={x_{{1}}x_{{6}}x_{{8}}x_{{9}}x_{{10}},x_{{6}}\\
\mbox{}x_{{7}}x_{{8}}x_{{9}}x_{{10}},x_{{2}}x_{{6}}x_{{8}}x_{{9}}x_{{10}}\\
\mbox{},x_{{1}}x_{{4}}x_{{6}}x_{{9}},x_{{1}}x_{{5}}x_{{6}}x_{{10}},x_{{1}}\\
\mbox{}x_{{2}}x_{{4}}x_{{5}}x_{{9}},x_{{4}}x_{{7}}x_{{9}}}\]}
\end{maplelatex}
\mapleresult
\begin{maplelatex}
\mapleinline{inert}{2d}{x[2]*x[4]*x[8]*x[9], x[1]*x[2]*x[4]*x[5]*x[10], x[5]*x[7]*x[10], x[2]*x[5]*x[8]*x[10], x[1]*x[3]*x[6], x[1]*x[2]*x[3]*x[4]*x[5], x[3]*x[7], x[2]*x[3]*x[8]}{\[\displaystyle x_{{2}}x_{{4}}x_{{8}}x_{{9}},\,x_{{1}}x_{{2}}x_{{4}}x_{{5}}x_{{10}},\,x_{{5}}x_{{7}}x_{{10}},\,x_{{2}}x_{{5}}x_{{8}}x_{{10}},\,x_{{1}}x_{{3}}x_{{6}},\,x_{{1}}x_{{2}}x_{{3}}x_{{4}}x_{{5}},\,x_{{3}}x_{{7}},\,x_{{2}}x_{{3}}x_{{8}}\]}
\end{maplelatex}
\mapleresult
\begin{maplelatex}
\mapleinline{inert}{2d}{A = {2, 11}, ftv = [Matrix(
\mbox{},0,0,0,1,1,1,1,1]]\]}
\end{maplelatex}
\mapleresult
\begin{maplelatex}
\mapleinline{inert}{2d}{Polynomials = {x[1]^3*x[6]^3+x[2]^3*x[8]^3+x[1]*x[2]*x[3]*x[4]*x[5]+x[7]^3, x[3]^3*x[9]^3+x[5]^3*x[10]^3+x[6]*x[7]*x[8]*x[9]*x[10]+x[4]^3}}{\[\displaystyle {\it Polynomials}= \left\{ {x_{{1}}}^{3}{x_{{6}}}^{3}+{x_{{2}}}^{3}{x_{{8}}}^{3}+x_{{1}}x_{{2}}x_{{3}}x_{{4}}x_{{5}}\\
\mbox{}+{x_{{7}}}^{3},{x_{{3}}}^{3}{x_{{9}}}^{3}+{x_{{5}}}^{3}{x_{{10}}}^{3}+x_{{6}}x_{{7}}x_{{8}}x_{{9}}x_{{10}}\\
\mbox{}+{x_{{4}}}^{3} \right\} \]}
\end{maplelatex}
\mapleresult
\begin{maplelatex}
\mapleinline{inert}{2d}{IrrIdeal = (x[1]*x[6]*x[8]*x[9]*x[10], x[6]*x[7]*x[8]*x[9]*x[10], x[2]*x[6]*x[8]*x[9]*x[10], x[1]*x[3]*x[6]*x[9], x[1]*x[5]*x[6]*x[10], x[1]*x[2]*x[3]*x[5]*x[9], x[3]*x[7]*x[9])}{\[\displaystyle {\it IrrIdeal}={x_{{1}}x_{{6}}x_{{8}}x_{{9}}x_{{10}},x_{{6}}\\
\mbox{}x_{{7}}x_{{8}}x_{{9}}x_{{10}},x_{{2}}x_{{6}}x_{{8}}x_{{9}}x_{{10}}\\
\mbox{},x_{{1}}x_{{3}}x_{{6}}x_{{9}},x_{{1}}x_{{5}}x_{{6}}x_{{10}},x_{{1}}\\
\mbox{}x_{{2}}x_{{3}}x_{{5}}x_{{9}},x_{{3}}x_{{7}}x_{{9}}}\]}
\end{maplelatex}
\mapleresult
\begin{maplelatex}
\mapleinline{inert}{2d}{x[2]*x[3]*x[8]*x[9], x[1]*x[2]*x[3]*x[5]*x[10], x[5]*x[7]*x[10], x[2]*x[5]*x[8]*x[10], x[1]*x[4]*x[6], x[1]*x[2]*x[3]*x[4]*x[5], x[4]*x[7], x[2]*x[4]*x[8]}{\[\displaystyle x_{{2}}x_{{3}}x_{{8}}x_{{9}},\,x_{{1}}x_{{2}}x_{{3}}x_{{5}}x_{{10}},\,x_{{5}}x_{{7}}x_{{10}},\,x_{{2}}x_{{5}}x_{{8}}x_{{10}},\,x_{{1}}x_{{4}}x_{{6}},\,x_{{1}}x_{{2}}x_{{3}}x_{{4}}x_{{5}},\,x_{{4}}x_{{7}},\,x_{{2}}x_{{4}}x_{{8}}\]}
\end{maplelatex}
\mapleresult
\begin{maplelatex}
\mapleinline{inert}{2d}{A = {2, 12}, ftv = [Matrix(
\mbox{},0,0,0,1,1,1,1,1]]\]}
\end{maplelatex}
\mapleresult
\begin{maplelatex}
\mapleinline{inert}{2d}{Polynomials = {x[1]^3*x[6]^3+x[2]^3*x[8]^3+x[1]*x[2]*x[3]*x[4]*x[5]+x[7]^3, x[3]^3*x[9]^3+x[4]^3*x[10]^3+x[6]*x[7]*x[8]*x[9]*x[10]+x[5]^3}}{\[\displaystyle {\it Polynomials}= \left\{ {x_{{1}}}^{3}{x_{{6}}}^{3}+{x_{{2}}}^{3}{x_{{8}}}^{3}+x_{{1}}x_{{2}}x_{{3}}x_{{4}}x_{{5}}\\
\mbox{}+{x_{{7}}}^{3},{x_{{3}}}^{3}{x_{{9}}}^{3}+{x_{{4}}}^{3}{x_{{10}}}^{3}+x_{{6}}x_{{7}}x_{{8}}x_{{9}}x_{{10}}\\
\mbox{}+{x_{{5}}}^{3} \right\} \]}
\end{maplelatex}
\mapleresult
\begin{maplelatex}
\mapleinline{inert}{2d}{IrrIdeal = (x[1]*x[6]*x[8]*x[9]*x[10], x[6]*x[7]*x[8]*x[9]*x[10], x[2]*x[6]*x[8]*x[9]*x[10], x[1]*x[3]*x[6]*x[9], x[1]*x[4]*x[6]*x[10], x[1]*x[2]*x[3]*x[4]*x[9], x[3]*x[7]*x[9])}{\[\displaystyle {\it IrrIdeal}={x_{{1}}x_{{6}}x_{{8}}x_{{9}}x_{{10}},x_{{6}}\\
\mbox{}x_{{7}}x_{{8}}x_{{9}}x_{{10}},x_{{2}}x_{{6}}x_{{8}}x_{{9}}x_{{10}}\\
\mbox{},x_{{1}}x_{{3}}x_{{6}}x_{{9}},x_{{1}}x_{{4}}x_{{6}}x_{{10}},x_{{1}}\\
\mbox{}x_{{2}}x_{{3}}x_{{4}}x_{{9}},x_{{3}}x_{{7}}x_{{9}}}\]}
\end{maplelatex}
\mapleresult
\begin{maplelatex}
\mapleinline{inert}{2d}{x[2]*x[3]*x[8]*x[9], x[1]*x[2]*x[3]*x[4]*x[10], x[4]*x[7]*x[10], x[2]*x[4]*x[8]*x[10], x[1]*x[5]*x[6], x[1]*x[2]*x[3]*x[4]*x[5], x[5]*x[7], x[2]*x[5]*x[8]}{\[\displaystyle x_{{2}}x_{{3}}x_{{8}}x_{{9}},\,x_{{1}}x_{{2}}x_{{3}}x_{{4}}x_{{10}},\,x_{{4}}x_{{7}}x_{{10}},\,x_{{2}}x_{{4}}x_{{8}}x_{{10}},\,x_{{1}}x_{{5}}x_{{6}},\,x_{{1}}x_{{2}}x_{{3}}x_{{4}}x_{{5}},\,x_{{5}}x_{{7}},\,x_{{2}}x_{{5}}x_{{8}}\]}
\end{maplelatex}
\mapleresult
\begin{maplelatex}
\mapleinline{inert}{2d}{A = {3, 10}, ftv = [Matrix(
\mbox{},0,0,0,1,1,1,1,1]]\]}
\end{maplelatex}
\mapleresult
\begin{maplelatex}
\mapleinline{inert}{2d}{Polynomials = {x[1]^3*x[6]^3+x[2]^3*x[7]^3+x[1]*x[2]*x[3]*x[4]*x[5]+x[8]^3, x[4]^3*x[9]^3+x[5]^3*x[10]^3+x[6]*x[7]*x[8]*x[9]*x[10]+x[3]^3}}{\[\displaystyle {\it Polynomials}= \left\{ {x_{{1}}}^{3}{x_{{6}}}^{3}+{x_{{2}}}^{3}{x_{{7}}}^{3}+x_{{1}}x_{{2}}x_{{3}}x_{{4}}x_{{5}}\\
\mbox{}+{x_{{8}}}^{3},{x_{{4}}}^{3}{x_{{9}}}^{3}+{x_{{5}}}^{3}{x_{{10}}}^{3}+x_{{6}}x_{{7}}x_{{8}}x_{{9}}x_{{10}}\\
\mbox{}+{x_{{3}}}^{3} \right\} \]}
\end{maplelatex}
\mapleresult
\begin{maplelatex}
\mapleinline{inert}{2d}{IrrIdeal = (x[1]*x[6]*x[7]*x[9]*x[10], x[6]*x[7]*x[8]*x[9]*x[10], x[2]*x[6]*x[7]*x[9]*x[10], x[1]*x[4]*x[6]*x[9], x[1]*x[5]*x[6]*x[10], x[1]*x[2]*x[4]*x[5]*x[9], x[2]*x[4]*x[7]*x[9])}{\[\displaystyle {\it IrrIdeal}={x_{{1}}x_{{6}}x_{{7}}x_{{9}}x_{{10}},x_{{6}}\\
\mbox{}x_{{7}}x_{{8}}x_{{9}}x_{{10}},x_{{2}}x_{{6}}x_{{7}}x_{{9}}x_{{10}}\\
\mbox{},x_{{1}}x_{{4}}x_{{6}}x_{{9}},x_{{1}}x_{{5}}x_{{6}}x_{{10}},x_{{1}}\\
\mbox{}x_{{2}}x_{{4}}x_{{5}}x_{{9}},x_{{2}}x_{{4}}x_{{7}}x_{{9}}}\]}
\end{maplelatex}
\mapleresult
\begin{maplelatex}
\mapleinline{inert}{2d}{x[4]*x[8]*x[9], x[1]*x[2]*x[4]*x[5]*x[10], x[2]*x[5]*x[7]*x[10], x[5]*x[8]*x[10], x[1]*x[3]*x[6], x[1]*x[2]*x[3]*x[4]*x[5], x[2]*x[3]*x[7], x[3]*x[8]}{\[\displaystyle x_{{4}}x_{{8}}x_{{9}},\,x_{{1}}x_{{2}}x_{{4}}x_{{5}}x_{{10}},\,x_{{2}}x_{{5}}x_{{7}}x_{{10}},\,x_{{5}}x_{{8}}x_{{10}},\,x_{{1}}x_{{3}}x_{{6}},\,x_{{1}}x_{{2}}x_{{3}}x_{{4}}x_{{5}},\,x_{{2}}x_{{3}}x_{{7}},\,x_{{3}}x_{{8}}\]}
\end{maplelatex}
\mapleresult
\begin{maplelatex}
\mapleinline{inert}{2d}{A = {3, 11}, ftv = [Matrix(
\mbox{},0,0,0,1,1,1,1,1]]\]}
\end{maplelatex}
\mapleresult
\begin{maplelatex}
\mapleinline{inert}{2d}{Polynomials = {x[1]^3*x[6]^3+x[2]^3*x[7]^3+x[1]*x[2]*x[3]*x[4]*x[5]+x[8]^3, x[3]^3*x[9]^3+x[5]^3*x[10]^3+x[6]*x[7]*x[8]*x[9]*x[10]+x[4]^3}}{\[\displaystyle {\it Polynomials}= \left\{ {x_{{1}}}^{3}{x_{{6}}}^{3}+{x_{{2}}}^{3}{x_{{7}}}^{3}+x_{{1}}x_{{2}}x_{{3}}x_{{4}}x_{{5}}\\
\mbox{}+{x_{{8}}}^{3},{x_{{3}}}^{3}{x_{{9}}}^{3}+{x_{{5}}}^{3}{x_{{10}}}^{3}+x_{{6}}x_{{7}}x_{{8}}x_{{9}}x_{{10}}\\
\mbox{}+{x_{{4}}}^{3} \right\} \]}
\end{maplelatex}
\mapleresult
\begin{maplelatex}
\mapleinline{inert}{2d}{IrrIdeal = (x[1]*x[6]*x[7]*x[9]*x[10], x[6]*x[7]*x[8]*x[9]*x[10], x[2]*x[6]*x[7]*x[9]*x[10], x[1]*x[3]*x[6]*x[9], x[1]*x[5]*x[6]*x[10], x[1]*x[2]*x[3]*x[5]*x[9], x[2]*x[3]*x[7]*x[9])}{\[\displaystyle {\it IrrIdeal}={x_{{1}}x_{{6}}x_{{7}}x_{{9}}x_{{10}},x_{{6}}\\
\mbox{}x_{{7}}x_{{8}}x_{{9}}x_{{10}},x_{{2}}x_{{6}}x_{{7}}x_{{9}}x_{{10}}\\
\mbox{},x_{{1}}x_{{3}}x_{{6}}x_{{9}},x_{{1}}x_{{5}}x_{{6}}x_{{10}},x_{{1}}\\
\mbox{}x_{{2}}x_{{3}}x_{{5}}x_{{9}},x_{{2}}x_{{3}}x_{{7}}x_{{9}}}\]}
\end{maplelatex}
\mapleresult
\begin{maplelatex}
\mapleinline{inert}{2d}{x[3]*x[8]*x[9], x[1]*x[2]*x[3]*x[5]*x[10], x[2]*x[5]*x[7]*x[10], x[5]*x[8]*x[10], x[1]*x[4]*x[6], x[1]*x[2]*x[3]*x[4]*x[5], x[2]*x[4]*x[7], x[4]*x[8]}{\[\displaystyle x_{{3}}x_{{8}}x_{{9}},\,x_{{1}}x_{{2}}x_{{3}}x_{{5}}x_{{10}},\,x_{{2}}x_{{5}}x_{{7}}x_{{10}},\,x_{{5}}x_{{8}}x_{{10}},\,x_{{1}}x_{{4}}x_{{6}},\,x_{{1}}x_{{2}}x_{{3}}x_{{4}}x_{{5}},\,x_{{2}}x_{{4}}x_{{7}},\,x_{{4}}x_{{8}}\]}
\end{maplelatex}
\mapleresult
\begin{maplelatex}
\mapleinline{inert}{2d}{A = {3, 12}, ftv = [Matrix(
\mbox{},0,0,0,1,1,1,1,1]]\]}
\end{maplelatex}
\mapleresult
\begin{maplelatex}
\mapleinline{inert}{2d}{Polynomials = {x[1]^3*x[6]^3+x[2]^3*x[7]^3+x[1]*x[2]*x[3]*x[4]*x[5]+x[8]^3, x[3]^3*x[9]^3+x[4]^3*x[10]^3+x[6]*x[7]*x[8]*x[9]*x[10]+x[5]^3}}{\[\displaystyle {\it Polynomials}= \left\{ {x_{{1}}}^{3}{x_{{6}}}^{3}+{x_{{2}}}^{3}{x_{{7}}}^{3}+x_{{1}}x_{{2}}x_{{3}}x_{{4}}x_{{5}}\\
\mbox{}+{x_{{8}}}^{3},{x_{{3}}}^{3}{x_{{9}}}^{3}+{x_{{4}}}^{3}{x_{{10}}}^{3}+x_{{6}}x_{{7}}x_{{8}}x_{{9}}x_{{10}}\\
\mbox{}+{x_{{5}}}^{3} \right\} \]}
\end{maplelatex}
\mapleresult
\begin{maplelatex}
\mapleinline{inert}{2d}{IrrIdeal = (x[1]*x[6]*x[7]*x[9]*x[10], x[6]*x[7]*x[8]*x[9]*x[10], x[2]*x[6]*x[7]*x[9]*x[10], x[1]*x[3]*x[6]*x[9], x[1]*x[4]*x[6]*x[10], x[1]*x[2]*x[3]*x[4]*x[9], x[2]*x[3]*x[7]*x[9])}{\[\displaystyle {\it IrrIdeal}={x_{{1}}x_{{6}}x_{{7}}x_{{9}}x_{{10}},x_{{6}}\\
\mbox{}x_{{7}}x_{{8}}x_{{9}}x_{{10}},x_{{2}}x_{{6}}x_{{7}}x_{{9}}x_{{10}}\\
\mbox{},x_{{1}}x_{{3}}x_{{6}}x_{{9}},x_{{1}}x_{{4}}x_{{6}}x_{{10}},x_{{1}}\\
\mbox{}x_{{2}}x_{{3}}x_{{4}}x_{{9}},x_{{2}}x_{{3}}x_{{7}}x_{{9}}}\]}
\end{maplelatex}
\mapleresult
\begin{maplelatex}
\mapleinline{inert}{2d}{x[3]*x[8]*x[9], x[1]*x[2]*x[3]*x[4]*x[10], x[2]*x[4]*x[7]*x[10], x[4]*x[8]*x[10], x[1]*x[5]*x[6], x[1]*x[2]*x[3]*x[4]*x[5], x[2]*x[5]*x[7], x[5]*x[8]}{\[\displaystyle x_{{3}}x_{{8}}x_{{9}},\,x_{{1}}x_{{2}}x_{{3}}x_{{4}}x_{{10}},\,x_{{2}}x_{{4}}x_{{7}}x_{{10}},\,x_{{4}}x_{{8}}x_{{10}},\,x_{{1}}x_{{5}}x_{{6}},\,x_{{1}}x_{{2}}x_{{3}}x_{{4}}x_{{5}},\,x_{{2}}x_{{5}}x_{{7}},\,x_{{5}}x_{{8}}\]}
\end{maplelatex}
\mapleresult
\begin{maplelatex}
\mapleinline{inert}{2d}{A = {10, 11}, ftv = [Matrix(
\mbox{},0,0,0,0,0,1,1,1,1]]\]}
\end{maplelatex}
\mapleresult
\begin{maplelatex}
\mapleinline{inert}{2d}{Polynomials = {x[6]^3*x[10]^3+x[7]*x[8]*x[9]*x[10]+x[4]^3+x[5]^3, x[1]^3*x[7]^3+x[1]*x[2]*x[3]*x[4]*x[5]*x[6]+x[2]^3*x[8]^3+x[3]^3*x[9]^3}}{\[\displaystyle {\it Polynomials}= \left\{ {x_{{6}}}^{3}{x_{{10}}}^{3}+x_{{7}}x_{{8}}x_{{9}}x_{{10}}\\
\mbox{}+{x_{{4}}}^{3}+{x_{{5}}}^{3},{x_{{1}}}^{3}{x_{{7}}}^{3}+x_{{1}}x_{{2}}x_{{3}}x_{{4}}x_{{5}}x_{{6}}\\
\mbox{}+{x_{{2}}}^{3}{x_{{8}}}^{3}+{x_{{3}}}^{3}{x_{{9}}}^{3} \right\} \]}
\end{maplelatex}
\mapleresult
\begin{maplelatex}
\mapleinline{inert}{2d}{IrrIdeal = (x[1]*x[7]*x[8]*x[9]*x[10], x[2]*x[7]*x[8]*x[9]*x[10], x[3]*x[7]*x[8]*x[9]*x[10], x[1]*x[6]*x[7]*x[10], x[1]*x[2]*x[3]*x[6]*x[10], x[2]*x[6]*x[8]*x[10], x[3]*x[6]*x[9]*x[10])}{\[\displaystyle {\it IrrIdeal}={x_{{1}}x_{{7}}x_{{8}}x_{{9}}x_{{10}},x_{{2}}\\
\mbox{}x_{{7}}x_{{8}}x_{{9}}x_{{10}},x_{{3}}x_{{7}}x_{{8}}x_{{9}}x_{{10}}\\
\mbox{},x_{{1}}x_{{6}}x_{{7}}x_{{10}},x_{{1}}x_{{2}}x_{{3}}x_{{6}}x_{{10}}\\
\mbox{},x_{{2}}x_{{6}}x_{{8}}x_{{10}},x_{{3}}x_{{6}}x_{{9}}x_{{10}}}\]}
\end{maplelatex}
\mapleresult
\begin{maplelatex}
\mapleinline{inert}{2d}{x[1]*x[4]*x[7], x[1]*x[2]*x[3]*x[4]*x[6], x[2]*x[4]*x[8], x[3]*x[4]*x[9], x[1]*x[5]*x[7], x[1]*x[2]*x[3]*x[5]*x[6], x[2]*x[5]*x[8], x[3]*x[5]*x[9]}{\[\displaystyle x_{{1}}x_{{4}}x_{{7}},\,x_{{1}}x_{{2}}x_{{3}}x_{{4}}x_{{6}},\,x_{{2}}x_{{4}}x_{{8}},\,x_{{3}}x_{{4}}x_{{9}},\,x_{{1}}x_{{5}}x_{{7}},\,x_{{1}}x_{{2}}x_{{3}}x_{{5}}x_{{6}},\,x_{{2}}x_{{5}}x_{{8}},\,x_{{3}}x_{{5}}x_{{9}}\]}
\end{maplelatex}
\mapleresult
\begin{maplelatex}
\mapleinline{inert}{2d}{A = {10, 12}, ftv = [Matrix(
\mbox{},0,0,0,0,0,1,1,1,1]]\]}
\end{maplelatex}
\mapleresult
\begin{maplelatex}
\mapleinline{inert}{2d}{Polynomials = {x[5]^3*x[10]^3+x[7]*x[8]*x[9]*x[10]+x[4]^3+x[6]^3, x[1]^3*x[7]^3+x[1]*x[2]*x[3]*x[4]*x[5]*x[6]+x[2]^3*x[8]^3+x[3]^3*x[9]^3}}{\[\displaystyle {\it Polynomials}= \left\{ {x_{{5}}}^{3}{x_{{10}}}^{3}+x_{{7}}x_{{8}}x_{{9}}x_{{10}}\\
\mbox{}+{x_{{4}}}^{3}+{x_{{6}}}^{3},{x_{{1}}}^{3}{x_{{7}}}^{3}+x_{{1}}x_{{2}}x_{{3}}x_{{4}}x_{{5}}x_{{6}}\\
\mbox{}+{x_{{2}}}^{3}{x_{{8}}}^{3}+{x_{{3}}}^{3}{x_{{9}}}^{3} \right\} \]}
\end{maplelatex}
\mapleresult
\begin{maplelatex}
\mapleinline{inert}{2d}{IrrIdeal = (x[1]*x[7]*x[8]*x[9]*x[10], x[2]*x[7]*x[8]*x[9]*x[10], x[3]*x[7]*x[8]*x[9]*x[10], x[1]*x[5]*x[7]*x[10], x[1]*x[2]*x[3]*x[5]*x[10], x[2]*x[5]*x[8]*x[10], x[3]*x[5]*x[9]*x[10])}{\[\displaystyle {\it IrrIdeal}={x_{{1}}x_{{7}}x_{{8}}x_{{9}}x_{{10}},x_{{2}}\\
\mbox{}x_{{7}}x_{{8}}x_{{9}}x_{{10}},x_{{3}}x_{{7}}x_{{8}}x_{{9}}x_{{10}}\\
\mbox{},x_{{1}}x_{{5}}x_{{7}}x_{{10}},x_{{1}}x_{{2}}x_{{3}}x_{{5}}x_{{10}}\\
\mbox{},x_{{2}}x_{{5}}x_{{8}}x_{{10}},x_{{3}}x_{{5}}x_{{9}}x_{{10}}}\]}
\end{maplelatex}
\mapleresult
\begin{maplelatex}
\mapleinline{inert}{2d}{x[1]*x[4]*x[7], x[1]*x[2]*x[3]*x[4]*x[5], x[2]*x[4]*x[8], x[3]*x[4]*x[9], x[1]*x[6]*x[7], x[1]*x[2]*x[3]*x[5]*x[6], x[2]*x[6]*x[8], x[3]*x[6]*x[9]}{\[\displaystyle x_{{1}}x_{{4}}x_{{7}},\,x_{{1}}x_{{2}}x_{{3}}x_{{4}}x_{{5}},\,x_{{2}}x_{{4}}x_{{8}},\,x_{{3}}x_{{4}}x_{{9}},\,x_{{1}}x_{{6}}x_{{7}},\,x_{{1}}x_{{2}}x_{{3}}x_{{5}}x_{{6}},\,x_{{2}}x_{{6}}x_{{8}},\,x_{{3}}x_{{6}}x_{{9}}\]}
\end{maplelatex}
\mapleresult
\begin{maplelatex}
\mapleinline{inert}{2d}{A = {11, 12}, ftv = [Matrix(
\mbox{},0,0,0,0,0,1,1,1,1]]\]}
\end{maplelatex}
\mapleresult
\begin{maplelatex}
\mapleinline{inert}{2d}{Polynomials = {x[4]^3*x[10]^3+x[7]*x[8]*x[9]*x[10]+x[5]^3+x[6]^3, x[1]^3*x[7]^3+x[1]*x[2]*x[3]*x[4]*x[5]*x[6]+x[2]^3*x[8]^3+x[3]^3*x[9]^3}}{\[\displaystyle {\it Polynomials}= \left\{ {x_{{4}}}^{3}{x_{{10}}}^{3}+x_{{7}}x_{{8}}x_{{9}}x_{{10}}\\
\mbox{}+{x_{{5}}}^{3}+{x_{{6}}}^{3},{x_{{1}}}^{3}{x_{{7}}}^{3}+x_{{1}}x_{{2}}x_{{3}}x_{{4}}x_{{5}}x_{{6}}\\
\mbox{}+{x_{{2}}}^{3}{x_{{8}}}^{3}+{x_{{3}}}^{3}{x_{{9}}}^{3} \right\} \]}
\end{maplelatex}
\mapleresult
\begin{maplelatex}
\mapleinline{inert}{2d}{IrrIdeal = (x[1]*x[7]*x[8]*x[9]*x[10], x[2]*x[7]*x[8]*x[9]*x[10], x[3]*x[7]*x[8]*x[9]*x[10], x[1]*x[4]*x[7]*x[10], x[1]*x[2]*x[3]*x[4]*x[10], x[2]*x[4]*x[8]*x[10], x[3]*x[4]*x[9]*x[10])}{\[\displaystyle {\it IrrIdeal}={x_{{1}}x_{{7}}x_{{8}}x_{{9}}x_{{10}},x_{{2}}\\
\mbox{}x_{{7}}x_{{8}}x_{{9}}x_{{10}},x_{{3}}x_{{7}}x_{{8}}x_{{9}}x_{{10}}\\
\mbox{},x_{{1}}x_{{4}}x_{{7}}x_{{10}},x_{{1}}x_{{2}}x_{{3}}x_{{4}}x_{{10}}\\
\mbox{},x_{{2}}x_{{4}}x_{{8}}x_{{10}},x_{{3}}x_{{4}}x_{{9}}x_{{10}}}\]}
\end{maplelatex}
\mapleresult
\begin{maplelatex}
\mapleinline{inert}{2d}{x[1]*x[5]*x[7], x[1]*x[2]*x[3]*x[4]*x[5], x[2]*x[5]*x[8], x[3]*x[5]*x[9], x[1]*x[6]*x[7], x[1]*x[2]*x[3]*x[4]*x[6], x[2]*x[6]*x[8], x[3]*x[6]*x[9]}{\[\displaystyle x_{{1}}x_{{5}}x_{{7}},\,x_{{1}}x_{{2}}x_{{3}}x_{{4}}x_{{5}},\,x_{{2}}x_{{5}}x_{{8}},\,x_{{3}}x_{{5}}x_{{9}},\,x_{{1}}x_{{6}}x_{{7}},\,x_{{1}}x_{{2}}x_{{3}}x_{{4}}x_{{6}},\,x_{{2}}x_{{6}}x_{{8}},\,x_{{3}}x_{{6}}x_{{9}}\]}
\end{maplelatex}
\mapleresult
\begin{maplelatex}
\mapleinline{inert}{2d}{A = {1, 2, 3}, ftv = [Matrix(
\mbox{},1,1,1,1,1]]\]}
\end{maplelatex}
\mapleresult
\begin{maplelatex}
\mapleinline{inert}{2d}{Polynomials = {x[1]*x[2]*x[3]+x[4]^3+x[5]^3+x[6]^3, x[1]^3*x[7]^3+x[2]^3*x[8]^3+x[3]^3*x[9]^3+x[4]*x[5]*x[6]*x[7]*x[8]*x[9]}}{\[\displaystyle {\it Polynomials}= \left\{ x_{{1}}x_{{2}}x_{{3}}+{x_{{4}}}^{3}+{x_{{5}}}^{3}+{x_{{6}}}^{3}\\
\mbox{},{x_{{1}}}^{3}{x_{{7}}}^{3}+{x_{{2}}}^{3}{x_{{8}}}^{3}+{x_{{3}}}^{3}{x_{{9}}}^{3}+x_{{4}}x_{{5}}x_{{6}}x_{{7}}x_{{8}}x_{{9}}\\
\mbox{} \right\} \]}
\end{maplelatex}
\mapleresult
\begin{maplelatex}
\mapleinline{inert}{2d}{IrrIdeal = (x[4]*x[7]*x[8]*x[9], x[5]*x[7]*x[8]*x[9], x[6]*x[7]*x[8]*x[9], x[1]*x[4]*x[7], x[1]*x[5]*x[7], x[1]*x[6]*x[7], x[2]*x[4]*x[8])}{\[\displaystyle {\it IrrIdeal}={x_{{4}}x_{{7}}x_{{8}}x_{{9}},x_{{5}}x_{{7}}\\
\mbox{}x_{{8}}x_{{9}},x_{{6}}x_{{7}}x_{{8}}x_{{9}},x_{{1}}x_{{4}}x_{{7}},x_{{1}}\\
\mbox{}x_{{5}}x_{{7}},x_{{1}}x_{{6}}x_{{7}},x_{{2}}x_{{4}}x_{{8}}}\]}
\end{maplelatex}
\mapleresult
\begin{maplelatex}
\mapleinline{inert}{2d}{x[2]*x[5]*x[8], x[2]*x[6]*x[8], x[3]*x[4]*x[9], x[3]*x[5]*x[9], x[3]*x[6]*x[9], x[1]*x[2]*x[3]*x[7], x[1]*x[2]*x[3]*x[8], x[1]*x[2]*x[3]*x[9]}{\[\displaystyle x_{{2}}x_{{5}}x_{{8}},\,x_{{2}}x_{{6}}x_{{8}},\,x_{{3}}x_{{4}}x_{{9}},\,x_{{3}}x_{{5}}x_{{9}},\,x_{{3}}x_{{6}}x_{{9}},\,x_{{1}}x_{{2}}x_{{3}}x_{{7}},\,x_{{1}}x_{{2}}x_{{3}}x_{{8}},\,x_{{1}}x_{{2}}x_{{3}}x_{{9}}\]}
\end{maplelatex}
\mapleresult
\begin{maplelatex}
\mapleinline{inert}{2d}{A = {1, 2, 10}, ftv = [Matrix(
\mbox{},1,1,1,1,1]]\]}
\end{maplelatex}
\mapleresult
\begin{maplelatex}
\mapleinline{inert}{2d}{Polynomials = {x[1]^3*x[7]^3+x[1]*x[2]*x[3]*x[4]+x[5]^3+x[6]^3, x[3]^3*x[8]^3+x[4]^3*x[9]^3+x[5]*x[6]*x[7]*x[8]*x[9]+x[2]^3}}{\[\displaystyle {\it Polynomials}= \left\{ {x_{{1}}}^{3}{x_{{7}}}^{3}+x_{{1}}x_{{2}}x_{{3}}x_{{4}}\\
\mbox{}+{x_{{5}}}^{3}+{x_{{6}}}^{3},{x_{{3}}}^{3}{x_{{8}}}^{3}+{x_{{4}}}^{3}{x_{{9}}}^{3}+x_{{5}}x_{{6}}x_{{7}}x_{{8}}x_{{9}}\\
\mbox{}+{x_{{2}}}^{3} \right\} \]}
\end{maplelatex}
\mapleresult
\begin{maplelatex}
\mapleinline{inert}{2d}{IrrIdeal = (x[5]*x[7]*x[8]*x[9], x[6]*x[7]*x[8]*x[9], x[1]*x[7]*x[8]*x[9], x[1]*x[3]*x[7]*x[8], x[1]*x[4]*x[7]*x[9], x[1]*x[2]*x[7], x[3]*x[5]*x[8])}{\[\displaystyle {\it IrrIdeal}={x_{{5}}x_{{7}}x_{{8}}x_{{9}},x_{{6}}x_{{7}}\\
\mbox{}x_{{8}}x_{{9}},x_{{1}}x_{{7}}x_{{8}}x_{{9}},x_{{1}}x_{{3}}x_{{7}}\\
\mbox{}x_{{8}},x_{{1}}x_{{4}}x_{{7}}x_{{9}},x_{{1}}x_{{2}}x_{{7}},x_{{3}}x_{{5}}\\
\mbox{}x_{{8}}}\]}
\end{maplelatex}
\mapleresult
\begin{maplelatex}
\mapleinline{inert}{2d}{x[3]*x[6]*x[8], x[4]*x[5]*x[9], x[4]*x[6]*x[9], x[1]*x[3]*x[4]*x[8], x[1]*x[3]*x[4]*x[9], x[2]*x[5], x[2]*x[6], x[1]*x[2]*x[3]*x[4]}{\[\displaystyle x_{{3}}x_{{6}}x_{{8}},\,x_{{4}}x_{{5}}x_{{9}},\,x_{{4}}x_{{6}}x_{{9}},\,x_{{1}}x_{{3}}x_{{4}}x_{{8}},\,x_{{1}}x_{{3}}x_{{4}}x_{{9}},\,x_{{2}}x_{{5}},\,x_{{2}}x_{{6}},\,x_{{1}}x_{{2}}x_{{3}}x_{{4}}\]}
\end{maplelatex}
\mapleresult
\begin{maplelatex}
\mapleinline{inert}{2d}{A = {1, 2, 11}, ftv = [Matrix(
\mbox{},1,1,1,1,1]]\]}
\end{maplelatex}
\mapleresult
\begin{maplelatex}
\mapleinline{inert}{2d}{Polynomials = {x[1]^3*x[7]^3+x[1]*x[2]*x[3]*x[4]+x[5]^3+x[6]^3, x[2]^3*x[8]^3+x[4]^3*x[9]^3+x[5]*x[6]*x[7]*x[8]*x[9]+x[3]^3}}{\[\displaystyle {\it Polynomials}= \left\{ {x_{{1}}}^{3}{x_{{7}}}^{3}+x_{{1}}x_{{2}}x_{{3}}x_{{4}}\\
\mbox{}+{x_{{5}}}^{3}+{x_{{6}}}^{3},{x_{{2}}}^{3}{x_{{8}}}^{3}+{x_{{4}}}^{3}{x_{{9}}}^{3}+x_{{5}}x_{{6}}x_{{7}}x_{{8}}x_{{9}}\\
\mbox{}+{x_{{3}}}^{3} \right\} \]}
\end{maplelatex}
\mapleresult
\begin{maplelatex}
\mapleinline{inert}{2d}{IrrIdeal = (x[5]*x[7]*x[8]*x[9], x[6]*x[7]*x[8]*x[9], x[1]*x[7]*x[8]*x[9], x[1]*x[2]*x[7]*x[8], x[1]*x[4]*x[7]*x[9], x[2]*x[5]*x[8], x[2]*x[6]*x[8])}{\[\displaystyle {\it IrrIdeal}={x_{{5}}x_{{7}}x_{{8}}x_{{9}},x_{{6}}x_{{7}}\\
\mbox{}x_{{8}}x_{{9}},x_{{1}}x_{{7}}x_{{8}}x_{{9}},x_{{1}}x_{{2}}x_{{7}}\\
\mbox{}x_{{8}},x_{{1}}x_{{4}}x_{{7}}x_{{9}},x_{{2}}x_{{5}}x_{{8}},x_{{2}}x_{{6}}\\
\mbox{}x_{{8}}}\]}
\end{maplelatex}
\mapleresult
\begin{maplelatex}
\mapleinline{inert}{2d}{x[4]*x[5]*x[9], x[4]*x[6]*x[9], x[1]*x[2]*x[4]*x[8], x[1]*x[2]*x[4]*x[9], x[1]*x[3]*x[7], x[3]*x[5], x[3]*x[6], x[1]*x[2]*x[3]*x[4]}{\[\displaystyle x_{{4}}x_{{5}}x_{{9}},\,x_{{4}}x_{{6}}x_{{9}},\,x_{{1}}x_{{2}}x_{{4}}x_{{8}},\,x_{{1}}x_{{2}}x_{{4}}x_{{9}},\,x_{{1}}x_{{3}}x_{{7}},\,x_{{3}}x_{{5}},\,x_{{3}}x_{{6}},\,x_{{1}}x_{{2}}x_{{3}}x_{{4}}\]}
\end{maplelatex}
\mapleresult
\begin{maplelatex}
\mapleinline{inert}{2d}{A = {1, 2, 12}, ftv = [Matrix(
\mbox{},1,1,1,1]]\]}
\end{maplelatex}
\mapleresult
\begin{maplelatex}
\mapleinline{inert}{2d}{Polynomials = {x[1]^3*x[7]^3+x[1]*x[2]*x[3]*x[4]+x[5]^3+x[6]^3, x[2]^3*x[8]^3+x[3]^3*x[9]^3+x[5]*x[6]*x[7]*x[8]*x[9]+x[4]^3}}{\[\displaystyle {\it Polynomials}= \left\{ {x_{{1}}}^{3}{x_{{7}}}^{3}+x_{{1}}x_{{2}}x_{{3}}x_{{4}}\\
\mbox{}+{x_{{5}}}^{3}+{x_{{6}}}^{3},{x_{{2}}}^{3}{x_{{8}}}^{3}+{x_{{3}}}^{3}{x_{{9}}}^{3}+x_{{5}}x_{{6}}x_{{7}}x_{{8}}x_{{9}}\\
\mbox{}+{x_{{4}}}^{3} \right\} \]}
\end{maplelatex}
\mapleresult
\begin{maplelatex}
\mapleinline{inert}{2d}{IrrIdeal = (x[5]*x[7]*x[8]*x[9], x[6]*x[7]*x[8]*x[9], x[1]*x[7]*x[8]*x[9], x[1]*x[2]*x[7]*x[8], x[1]*x[3]*x[7]*x[9], x[2]*x[5]*x[8], x[2]*x[6]*x[8])}{\[\displaystyle {\it IrrIdeal}={x_{{5}}x_{{7}}x_{{8}}x_{{9}},x_{{6}}x_{{7}}\\
\mbox{}x_{{8}}x_{{9}},x_{{1}}x_{{7}}x_{{8}}x_{{9}},x_{{1}}x_{{2}}x_{{7}}\\
\mbox{}x_{{8}},x_{{1}}x_{{3}}x_{{7}}x_{{9}},x_{{2}}x_{{5}}x_{{8}},x_{{2}}x_{{6}}\\
\mbox{}x_{{8}}}\]}
\end{maplelatex}
\mapleresult
\begin{maplelatex}
\mapleinline{inert}{2d}{x[3]*x[5]*x[9], x[3]*x[6]*x[9], x[1]*x[2]*x[3]*x[8], x[1]*x[2]*x[3]*x[9], x[1]*x[4]*x[7], x[4]*x[5], x[4]*x[6], x[1]*x[2]*x[3]*x[4]}{\[\displaystyle x_{{3}}x_{{5}}x_{{9}},\,x_{{3}}x_{{6}}x_{{9}},\,x_{{1}}x_{{2}}x_{{3}}x_{{8}},\,x_{{1}}x_{{2}}x_{{3}}x_{{9}},\,x_{{1}}x_{{4}}x_{{7}},\,x_{{4}}x_{{5}},\,x_{{4}}x_{{6}},\,x_{{1}}x_{{2}}x_{{3}}x_{{4}}\]}
\end{maplelatex}
\mapleresult
\begin{maplelatex}
\mapleinline{inert}{2d}{A = {1, 3, 10}, ftv = [Matrix(
\mbox{},1,1,1,1,1]]\]}
\end{maplelatex}
\mapleresult
\begin{maplelatex}
\mapleinline{inert}{2d}{Polynomials = {x[1]^3*x[6]^3+x[1]*x[2]*x[3]*x[4]+x[5]^3+x[7]^3, x[3]^3*x[8]^3+x[4]^3*x[9]^3+x[5]*x[6]*x[7]*x[8]*x[9]+x[2]^3}}{\[\displaystyle {\it Polynomials}= \left\{ {x_{{1}}}^{3}{x_{{6}}}^{3}+x_{{1}}x_{{2}}x_{{3}}x_{{4}}\\
\mbox{}+{x_{{5}}}^{3}+{x_{{7}}}^{3},{x_{{3}}}^{3}{x_{{8}}}^{3}+{x_{{4}}}^{3}{x_{{9}}}^{3}+x_{{5}}x_{{6}}x_{{7}}x_{{8}}x_{{9}}\\
\mbox{}+{x_{{2}}}^{3} \right\} \]}
\end{maplelatex}
\mapleresult
\begin{maplelatex}
\mapleinline{inert}{2d}{IrrIdeal = (x[5]*x[6]*x[8]*x[9], x[6]*x[7]*x[8]*x[9], x[1]*x[6]*x[8]*x[9], x[1]*x[3]*x[6]*x[8], x[1]*x[4]*x[6]*x[9], x[1]*x[2]*x[6], x[3]*x[5]*x[8])}{\[\displaystyle {\it IrrIdeal}={x_{{5}}x_{{6}}x_{{8}}x_{{9}},x_{{6}}x_{{7}}\\
\mbox{}x_{{8}}x_{{9}},x_{{1}}x_{{6}}x_{{8}}x_{{9}},x_{{1}}x_{{3}}x_{{6}}\\
\mbox{}x_{{8}},x_{{1}}x_{{4}}x_{{6}}x_{{9}},x_{{1}}x_{{2}}x_{{6}},x_{{3}}x_{{5}}\\
\mbox{}x_{{8}}}\]}
\end{maplelatex}
\mapleresult
\begin{maplelatex}
\mapleinline{inert}{2d}{x[3]*x[7]*x[8], x[4]*x[5]*x[9], x[4]*x[7]*x[9], x[1]*x[3]*x[4]*x[8], x[1]*x[3]*x[4]*x[9], x[2]*x[5], x[2]*x[7], x[1]*x[2]*x[3]*x[4]}{\[\displaystyle x_{{3}}x_{{7}}x_{{8}},\,x_{{4}}x_{{5}}x_{{9}},\,x_{{4}}x_{{7}}x_{{9}},\,x_{{1}}x_{{3}}x_{{4}}x_{{8}},\,x_{{1}}x_{{3}}x_{{4}}x_{{9}},\,x_{{2}}x_{{5}},\,x_{{2}}x_{{7}},\,x_{{1}}x_{{2}}x_{{3}}x_{{4}}\]}
\end{maplelatex}
\mapleresult
\begin{maplelatex}
\mapleinline{inert}{2d}{A = {1, 3, 11}, ftv = [Matrix(
\mbox{},1,1,1,1,1]]\]}
\end{maplelatex}
\mapleresult
\begin{maplelatex}
\mapleinline{inert}{2d}{Polynomials = {x[1]^3*x[6]^3+x[1]*x[2]*x[3]*x[4]+x[5]^3+x[7]^3, x[2]^3*x[8]^3+x[4]^3*x[9]^3+x[5]*x[6]*x[7]*x[8]*x[9]+x[3]^3}}{\[\displaystyle {\it Polynomials}= \left\{ {x_{{1}}}^{3}{x_{{6}}}^{3}+x_{{1}}x_{{2}}x_{{3}}x_{{4}}\\
\mbox{}+{x_{{5}}}^{3}+{x_{{7}}}^{3},{x_{{2}}}^{3}{x_{{8}}}^{3}+{x_{{4}}}^{3}{x_{{9}}}^{3}+x_{{5}}x_{{6}}x_{{7}}x_{{8}}x_{{9}}\\
\mbox{}+{x_{{3}}}^{3} \right\} \]}
\end{maplelatex}
\mapleresult
\begin{maplelatex}
\mapleinline{inert}{2d}{IrrIdeal = (x[5]*x[6]*x[8]*x[9], x[6]*x[7]*x[8]*x[9], x[1]*x[6]*x[8]*x[9], x[1]*x[2]*x[6]*x[8], x[1]*x[4]*x[6]*x[9], x[2]*x[5]*x[8], x[2]*x[7]*x[8])}{\[\displaystyle {\it IrrIdeal}={x_{{5}}x_{{6}}x_{{8}}x_{{9}},x_{{6}}x_{{7}}\\
\mbox{}x_{{8}}x_{{9}},x_{{1}}x_{{6}}x_{{8}}x_{{9}},x_{{1}}x_{{2}}x_{{6}}\\
\mbox{}x_{{8}},x_{{1}}x_{{4}}x_{{6}}x_{{9}},x_{{2}}x_{{5}}x_{{8}},x_{{2}}x_{{7}}\\
\mbox{}x_{{8}}}\]}
\end{maplelatex}
\mapleresult
\begin{maplelatex}
\mapleinline{inert}{2d}{x[4]*x[5]*x[9], x[4]*x[7]*x[9], x[1]*x[2]*x[4]*x[8], x[1]*x[2]*x[4]*x[9], x[1]*x[3]*x[6], x[3]*x[5], x[3]*x[7], x[1]*x[2]*x[3]*x[4]}{\[\displaystyle x_{{4}}x_{{5}}x_{{9}},\,x_{{4}}x_{{7}}x_{{9}},\,x_{{1}}x_{{2}}x_{{4}}x_{{8}},\,x_{{1}}x_{{2}}x_{{4}}x_{{9}},\,x_{{1}}x_{{3}}x_{{6}},\,x_{{3}}x_{{5}},\,x_{{3}}x_{{7}},\,x_{{1}}x_{{2}}x_{{3}}x_{{4}}\]}
\end{maplelatex}
\mapleresult
\begin{maplelatex}
\mapleinline{inert}{2d}{A = {1, 3, 12}, ftv = [Matrix(
\mbox{},1,1,1,1]]\]}
\end{maplelatex}
\mapleresult
\begin{maplelatex}
\mapleinline{inert}{2d}{Polynomials = {x[1]^3*x[6]^3+x[1]*x[2]*x[3]*x[4]+x[5]^3+x[7]^3, x[2]^3*x[8]^3+x[3]^3*x[9]^3+x[5]*x[6]*x[7]*x[8]*x[9]+x[4]^3}}{\[\displaystyle {\it Polynomials}= \left\{ {x_{{1}}}^{3}{x_{{6}}}^{3}+x_{{1}}x_{{2}}x_{{3}}x_{{4}}\\
\mbox{}+{x_{{5}}}^{3}+{x_{{7}}}^{3},{x_{{2}}}^{3}{x_{{8}}}^{3}+{x_{{3}}}^{3}{x_{{9}}}^{3}+x_{{5}}x_{{6}}x_{{7}}x_{{8}}x_{{9}}\\
\mbox{}+{x_{{4}}}^{3} \right\} \]}
\end{maplelatex}
\mapleresult
\begin{maplelatex}
\mapleinline{inert}{2d}{IrrIdeal = (x[5]*x[6]*x[8]*x[9], x[6]*x[7]*x[8]*x[9], x[1]*x[6]*x[8]*x[9], x[1]*x[2]*x[6]*x[8], x[1]*x[3]*x[6]*x[9], x[2]*x[5]*x[8], x[2]*x[7]*x[8])}{\[\displaystyle {\it IrrIdeal}={x_{{5}}x_{{6}}x_{{8}}x_{{9}},x_{{6}}x_{{7}}\\
\mbox{}x_{{8}}x_{{9}},x_{{1}}x_{{6}}x_{{8}}x_{{9}},x_{{1}}x_{{2}}x_{{6}}\\
\mbox{}x_{{8}},x_{{1}}x_{{3}}x_{{6}}x_{{9}},x_{{2}}x_{{5}}x_{{8}},x_{{2}}x_{{7}}\\
\mbox{}x_{{8}}}\]}
\end{maplelatex}
\mapleresult
\begin{maplelatex}
\mapleinline{inert}{2d}{x[3]*x[5]*x[9], x[3]*x[7]*x[9], x[1]*x[2]*x[3]*x[8], x[1]*x[2]*x[3]*x[9], x[1]*x[4]*x[6], x[4]*x[5], x[4]*x[7], x[1]*x[2]*x[3]*x[4]}{\[\displaystyle x_{{3}}x_{{5}}x_{{9}},\,x_{{3}}x_{{7}}x_{{9}},\,x_{{1}}x_{{2}}x_{{3}}x_{{8}},\,x_{{1}}x_{{2}}x_{{3}}x_{{9}},\,x_{{1}}x_{{4}}x_{{6}},\,x_{{4}}x_{{5}},\,x_{{4}}x_{{7}},\,x_{{1}}x_{{2}}x_{{3}}x_{{4}}\]}
\end{maplelatex}
\mapleresult
\begin{maplelatex}
\mapleinline{inert}{2d}{A = {1, 10, 11}, ftv = [Matrix(
\mbox{},0,0,1,1,1,1]]\]}
\end{maplelatex}
\mapleresult
\begin{maplelatex}
\mapleinline{inert}{2d}{Polynomials = {x[5]^3*x[9]^3+x[6]*x[7]*x[8]*x[9]+x[3]^3+x[4]^3, x[1]^3*x[7]^3+x[2]^3*x[8]^3+x[1]*x[2]*x[3]*x[4]*x[5]+x[6]^3}}{\[\displaystyle {\it Polynomials}= \left\{ {x_{{5}}}^{3}{x_{{9}}}^{3}+x_{{6}}x_{{7}}x_{{8}}x_{{9}}\\
\mbox{}+{x_{{3}}}^{3}+{x_{{4}}}^{3},{x_{{1}}}^{3}{x_{{7}}}^{3}+{x_{{2}}}^{3}{x_{{8}}}^{3}+x_{{1}}x_{{2}}x_{{3}}x_{{4}}x_{{5}}\\
\mbox{}+{x_{{6}}}^{3} \right\} \]}
\end{maplelatex}
\mapleresult
\begin{maplelatex}
\mapleinline{inert}{2d}{IrrIdeal = (x[6]*x[7]*x[8]*x[9], x[1]*x[7]*x[8]*x[9], x[2]*x[7]*x[8]*x[9], x[1]*x[5]*x[7]*x[9], x[5]*x[6]*x[9], x[2]*x[5]*x[8]*x[9], x[1]*x[2]*x[5]*x[9])}{\[\displaystyle {\it IrrIdeal}={x_{{6}}x_{{7}}x_{{8}}x_{{9}},x_{{1}}x_{{7}}\\
\mbox{}x_{{8}}x_{{9}},x_{{2}}x_{{7}}x_{{8}}x_{{9}},x_{{1}}x_{{5}}x_{{7}}\\
\mbox{}x_{{9}},x_{{5}}x_{{6}}x_{{9}},x_{{2}}x_{{5}}x_{{8}}x_{{9}},x_{{1}}x_{{2}}\\
\mbox{}x_{{5}}x_{{9}}}\]}
\end{maplelatex}
\mapleresult
\begin{maplelatex}
\mapleinline{inert}{2d}{x[1]*x[3]*x[7], x[3]*x[6], x[2]*x[3]*x[8], x[1]*x[2]*x[3]*x[5], x[1]*x[4]*x[7], x[4]*x[6], x[2]*x[4]*x[8], x[1]*x[2]*x[4]*x[5]}{\[\displaystyle x_{{1}}x_{{3}}x_{{7}},\,x_{{3}}x_{{6}},\,x_{{2}}x_{{3}}x_{{8}},\,x_{{1}}x_{{2}}x_{{3}}x_{{5}},\,x_{{1}}x_{{4}}x_{{7}},\,x_{{4}}x_{{6}},\,x_{{2}}x_{{4}}x_{{8}},\,x_{{1}}x_{{2}}x_{{4}}x_{{5}}\]}
\end{maplelatex}
\mapleresult
\begin{maplelatex}
\mapleinline{inert}{2d}{A = {1, 10, 12}, ftv = [Matrix(
\mbox{},0,0,1,1,1,1]]\]}
\end{maplelatex}
\mapleresult
\begin{maplelatex}
\mapleinline{inert}{2d}{Polynomials = {x[4]^3*x[9]^3+x[6]*x[7]*x[8]*x[9]+x[3]^3+x[5]^3, x[1]^3*x[7]^3+x[2]^3*x[8]^3+x[1]*x[2]*x[3]*x[4]*x[5]+x[6]^3}}{\[\displaystyle {\it Polynomials}= \left\{ {x_{{4}}}^{3}{x_{{9}}}^{3}+x_{{6}}x_{{7}}x_{{8}}x_{{9}}\\
\mbox{}+{x_{{3}}}^{3}+{x_{{5}}}^{3},{x_{{1}}}^{3}{x_{{7}}}^{3}+{x_{{2}}}^{3}{x_{{8}}}^{3}+x_{{1}}x_{{2}}x_{{3}}x_{{4}}x_{{5}}\\
\mbox{}+{x_{{6}}}^{3} \right\} \]}
\end{maplelatex}
\mapleresult
\begin{maplelatex}
\mapleinline{inert}{2d}{IrrIdeal = (x[6]*x[7]*x[8]*x[9], x[1]*x[7]*x[8]*x[9], x[2]*x[7]*x[8]*x[9], x[1]*x[4]*x[7]*x[9], x[4]*x[6]*x[9], x[2]*x[4]*x[8]*x[9], x[1]*x[2]*x[4]*x[9])}{\[\displaystyle {\it IrrIdeal}={x_{{6}}x_{{7}}x_{{8}}x_{{9}},x_{{1}}x_{{7}}\\
\mbox{}x_{{8}}x_{{9}},x_{{2}}x_{{7}}x_{{8}}x_{{9}},x_{{1}}x_{{4}}x_{{7}}\\
\mbox{}x_{{9}},x_{{4}}x_{{6}}x_{{9}},x_{{2}}x_{{4}}x_{{8}}x_{{9}},x_{{1}}x_{{2}}\\
\mbox{}x_{{4}}x_{{9}}}\]}
\end{maplelatex}
\mapleresult
\begin{maplelatex}
\mapleinline{inert}{2d}{x[1]*x[3]*x[7], x[3]*x[6], x[2]*x[3]*x[8], x[1]*x[2]*x[3]*x[4], x[1]*x[5]*x[7], x[5]*x[6], x[2]*x[5]*x[8], x[1]*x[2]*x[4]*x[5]}{\[\displaystyle x_{{1}}x_{{3}}x_{{7}},\,x_{{3}}x_{{6}},\,x_{{2}}x_{{3}}x_{{8}},\,x_{{1}}x_{{2}}x_{{3}}x_{{4}},\,x_{{1}}x_{{5}}x_{{7}},\,x_{{5}}x_{{6}},\,x_{{2}}x_{{5}}x_{{8}},\,x_{{1}}x_{{2}}x_{{4}}x_{{5}}\]}
\end{maplelatex}
\mapleresult
\begin{maplelatex}
\mapleinline{inert}{2d}{A = {1, 11, 12}, ftv = [Matrix(
\mbox{},0,0,1,1,1,1]]\]}
\end{maplelatex}
\mapleresult
\begin{maplelatex}
\mapleinline{inert}{2d}{Polynomials = {x[3]^3*x[9]^3+x[6]*x[7]*x[8]*x[9]+x[4]^3+x[5]^3, x[1]^3*x[7]^3+x[2]^3*x[8]^3+x[1]*x[2]*x[3]*x[4]*x[5]+x[6]^3}}{\[\displaystyle {\it Polynomials}= \left\{ {x_{{3}}}^{3}{x_{{9}}}^{3}+x_{{6}}x_{{7}}x_{{8}}x_{{9}}\\
\mbox{}+{x_{{4}}}^{3}+{x_{{5}}}^{3},{x_{{1}}}^{3}{x_{{7}}}^{3}+{x_{{2}}}^{3}{x_{{8}}}^{3}+x_{{1}}x_{{2}}x_{{3}}x_{{4}}x_{{5}}\\
\mbox{}+{x_{{6}}}^{3} \right\} \]}
\end{maplelatex}
\mapleresult
\begin{maplelatex}
\mapleinline{inert}{2d}{IrrIdeal = (x[6]*x[7]*x[8]*x[9], x[1]*x[7]*x[8]*x[9], x[2]*x[7]*x[8]*x[9], x[1]*x[3]*x[7]*x[9], x[3]*x[6]*x[9], x[2]*x[3]*x[8]*x[9], x[1]*x[2]*x[3]*x[9])}{\[\displaystyle {\it IrrIdeal}={x_{{6}}x_{{7}}x_{{8}}x_{{9}},x_{{1}}x_{{7}}\\
\mbox{}x_{{8}}x_{{9}},x_{{2}}x_{{7}}x_{{8}}x_{{9}},x_{{1}}x_{{3}}x_{{7}}\\
\mbox{}x_{{9}},x_{{3}}x_{{6}}x_{{9}},x_{{2}}x_{{3}}x_{{8}}x_{{9}},x_{{1}}x_{{2}}\\
\mbox{}x_{{3}}x_{{9}}}\]}
\end{maplelatex}
\mapleresult
\begin{maplelatex}
\mapleinline{inert}{2d}{x[1]*x[4]*x[7], x[4]*x[6], x[2]*x[4]*x[8], x[1]*x[2]*x[3]*x[4], x[1]*x[5]*x[7], x[5]*x[6], x[2]*x[5]*x[8], x[1]*x[2]*x[3]*x[5]}{\[\displaystyle x_{{1}}x_{{4}}x_{{7}},\,x_{{4}}x_{{6}},\,x_{{2}}x_{{4}}x_{{8}},\,x_{{1}}x_{{2}}x_{{3}}x_{{4}},\,x_{{1}}x_{{5}}x_{{7}},\,x_{{5}}x_{{6}},\,x_{{2}}x_{{5}}x_{{8}},\,x_{{1}}x_{{2}}x_{{3}}x_{{5}}\]}
\end{maplelatex}
\mapleresult
\begin{maplelatex}
\mapleinline{inert}{2d}{A = {2, 3, 10}, ftv = [Matrix(
\mbox{},1,1,1,1,1]]\]}
\end{maplelatex}
\mapleresult
\begin{maplelatex}
\mapleinline{inert}{2d}{Polynomials = {x[1]^3*x[5]^3+x[1]*x[2]*x[3]*x[4]+x[6]^3+x[7]^3, x[3]^3*x[8]^3+x[4]^3*x[9]^3+x[5]*x[6]*x[7]*x[8]*x[9]+x[2]^3}}{\[\displaystyle {\it Polynomials}= \left\{ {x_{{1}}}^{3}{x_{{5}}}^{3}+x_{{1}}x_{{2}}x_{{3}}x_{{4}}\\
\mbox{}+{x_{{6}}}^{3}+{x_{{7}}}^{3},{x_{{3}}}^{3}{x_{{8}}}^{3}+{x_{{4}}}^{3}{x_{{9}}}^{3}+x_{{5}}x_{{6}}x_{{7}}x_{{8}}x_{{9}}\\
\mbox{}+{x_{{2}}}^{3} \right\} \]}
\end{maplelatex}
\mapleresult
\begin{maplelatex}
\mapleinline{inert}{2d}{IrrIdeal = (x[1]*x[5]*x[8]*x[9], x[5]*x[6]*x[8]*x[9], x[5]*x[7]*x[8]*x[9], x[1]*x[3]*x[5]*x[8], x[1]*x[4]*x[5]*x[9], x[1]*x[2]*x[5], x[1]*x[3]*x[4]*x[8])}{\[\displaystyle {\it IrrIdeal}={x_{{1}}x_{{5}}x_{{8}}x_{{9}},x_{{5}}x_{{6}}\\
\mbox{}x_{{8}}x_{{9}},x_{{5}}x_{{7}}x_{{8}}x_{{9}},x_{{1}}x_{{3}}x_{{5}}\\
\mbox{}x_{{8}},x_{{1}}x_{{4}}x_{{5}}x_{{9}},x_{{1}}x_{{2}}x_{{5}},x_{{1}}x_{{3}}\\
\mbox{}x_{{4}}x_{{8}}}\]}
\end{maplelatex}
\mapleresult
\begin{maplelatex}
\mapleinline{inert}{2d}{x[3]*x[6]*x[8], x[3]*x[7]*x[8], x[1]*x[3]*x[4]*x[9], x[4]*x[6]*x[9], x[4]*x[7]*x[9], x[1]*x[2]*x[3]*x[4], x[2]*x[6], x[2]*x[7]}{\[\displaystyle x_{{3}}x_{{6}}x_{{8}},\,x_{{3}}x_{{7}}x_{{8}},\,x_{{1}}x_{{3}}x_{{4}}x_{{9}},\,x_{{4}}x_{{6}}x_{{9}},\,x_{{4}}x_{{7}}x_{{9}},\,x_{{1}}x_{{2}}x_{{3}}x_{{4}},\,x_{{2}}x_{{6}},\,x_{{2}}x_{{7}}\]}
\end{maplelatex}
\mapleresult
\begin{maplelatex}
\mapleinline{inert}{2d}{A = {2, 3, 11}, ftv = [Matrix(
\mbox{},1,1,1,1,1]]\]}
\end{maplelatex}
\mapleresult
\begin{maplelatex}
\mapleinline{inert}{2d}{Polynomials = {x[1]^3*x[5]^3+x[1]*x[2]*x[3]*x[4]+x[6]^3+x[7]^3, x[2]^3*x[8]^3+x[4]^3*x[9]^3+x[5]*x[6]*x[7]*x[8]*x[9]+x[3]^3}}{\[\displaystyle {\it Polynomials}= \left\{ {x_{{1}}}^{3}{x_{{5}}}^{3}+x_{{1}}x_{{2}}x_{{3}}x_{{4}}\\
\mbox{}+{x_{{6}}}^{3}+{x_{{7}}}^{3},{x_{{2}}}^{3}{x_{{8}}}^{3}+{x_{{4}}}^{3}{x_{{9}}}^{3}+x_{{5}}x_{{6}}x_{{7}}x_{{8}}x_{{9}}\\
\mbox{}+{x_{{3}}}^{3} \right\} \]}
\end{maplelatex}
\mapleresult
\begin{maplelatex}
\mapleinline{inert}{2d}{IrrIdeal = (x[1]*x[5]*x[8]*x[9], x[5]*x[6]*x[8]*x[9], x[5]*x[7]*x[8]*x[9], x[1]*x[2]*x[5]*x[8], x[1]*x[4]*x[5]*x[9], x[1]*x[2]*x[4]*x[8], x[2]*x[6]*x[8])}{\[\displaystyle {\it IrrIdeal}={x_{{1}}x_{{5}}x_{{8}}x_{{9}},x_{{5}}x_{{6}}\\
\mbox{}x_{{8}}x_{{9}},x_{{5}}x_{{7}}x_{{8}}x_{{9}},x_{{1}}x_{{2}}x_{{5}}\\
\mbox{}x_{{8}},x_{{1}}x_{{4}}x_{{5}}x_{{9}},x_{{1}}x_{{2}}x_{{4}}x_{{8}},x_{{2}}\\
\mbox{}x_{{6}}x_{{8}}}\]}
\end{maplelatex}
\mapleresult
\begin{maplelatex}
\mapleinline{inert}{2d}{x[2]*x[7]*x[8], x[1]*x[2]*x[4]*x[9], x[4]*x[6]*x[9], x[4]*x[7]*x[9], x[1]*x[3]*x[5], x[1]*x[2]*x[3]*x[4], x[3]*x[6], x[3]*x[7]}{\[\displaystyle x_{{2}}x_{{7}}x_{{8}},\,x_{{1}}x_{{2}}x_{{4}}x_{{9}},\,x_{{4}}x_{{6}}x_{{9}},\,x_{{4}}x_{{7}}x_{{9}},\,x_{{1}}x_{{3}}x_{{5}},\,x_{{1}}x_{{2}}x_{{3}}x_{{4}},\,x_{{3}}x_{{6}},\,x_{{3}}x_{{7}}\]}
\end{maplelatex}
\mapleresult
\begin{maplelatex}
\mapleinline{inert}{2d}{A = {2, 3, 12}, ftv = [Matrix(
\mbox{},1,1,1,1]]\]}
\end{maplelatex}
\mapleresult
\begin{maplelatex}
\mapleinline{inert}{2d}{Polynomials = {x[1]^3*x[5]^3+x[1]*x[2]*x[3]*x[4]+x[6]^3+x[7]^3, x[2]^3*x[8]^3+x[3]^3*x[9]^3+x[5]*x[6]*x[7]*x[8]*x[9]+x[4]^3}}{\[\displaystyle {\it Polynomials}= \left\{ {x_{{1}}}^{3}{x_{{5}}}^{3}+x_{{1}}x_{{2}}x_{{3}}x_{{4}}\\
\mbox{}+{x_{{6}}}^{3}+{x_{{7}}}^{3},{x_{{2}}}^{3}{x_{{8}}}^{3}+{x_{{3}}}^{3}{x_{{9}}}^{3}+x_{{5}}x_{{6}}x_{{7}}x_{{8}}x_{{9}}\\
\mbox{}+{x_{{4}}}^{3} \right\} \]}
\end{maplelatex}
\mapleresult
\begin{maplelatex}
\mapleinline{inert}{2d}{IrrIdeal = (x[1]*x[5]*x[8]*x[9], x[5]*x[6]*x[8]*x[9], x[5]*x[7]*x[8]*x[9], x[1]*x[2]*x[5]*x[8], x[1]*x[3]*x[5]*x[9], x[1]*x[2]*x[3]*x[8], x[2]*x[6]*x[8])}{\[\displaystyle {\it IrrIdeal}={x_{{1}}x_{{5}}x_{{8}}x_{{9}},x_{{5}}x_{{6}}\\
\mbox{}x_{{8}}x_{{9}},x_{{5}}x_{{7}}x_{{8}}x_{{9}},x_{{1}}x_{{2}}x_{{5}}\\
\mbox{}x_{{8}},x_{{1}}x_{{3}}x_{{5}}x_{{9}},x_{{1}}x_{{2}}x_{{3}}x_{{8}},x_{{2}}\\
\mbox{}x_{{6}}x_{{8}}}\]}
\end{maplelatex}
\mapleresult
\begin{maplelatex}
\mapleinline{inert}{2d}{x[2]*x[7]*x[8], x[1]*x[2]*x[3]*x[9], x[3]*x[6]*x[9], x[3]*x[7]*x[9], x[1]*x[4]*x[5], x[1]*x[2]*x[3]*x[4], x[4]*x[6], x[4]*x[7]}{\[\displaystyle x_{{2}}x_{{7}}x_{{8}},\,x_{{1}}x_{{2}}x_{{3}}x_{{9}},\,x_{{3}}x_{{6}}x_{{9}},\,x_{{3}}x_{{7}}x_{{9}},\,x_{{1}}x_{{4}}x_{{5}},\,x_{{1}}x_{{2}}x_{{3}}x_{{4}},\,x_{{4}}x_{{6}},\,x_{{4}}x_{{7}}\]}
\end{maplelatex}
\mapleresult
\begin{maplelatex}
\mapleinline{inert}{2d}{A = {2, 10, 11}, ftv = [Matrix(
\mbox{},0,0,1,1,1,1]]\]}
\end{maplelatex}
\mapleresult
\begin{maplelatex}
\mapleinline{inert}{2d}{Polynomials = {x[5]^3*x[9]^3+x[6]*x[7]*x[8]*x[9]+x[3]^3+x[4]^3, x[1]^3*x[6]^3+x[2]^3*x[8]^3+x[1]*x[2]*x[3]*x[4]*x[5]+x[7]^3}}{\[\displaystyle {\it Polynomials}= \left\{ {x_{{5}}}^{3}{x_{{9}}}^{3}+x_{{6}}x_{{7}}x_{{8}}x_{{9}}\\
\mbox{}+{x_{{3}}}^{3}+{x_{{4}}}^{3},{x_{{1}}}^{3}{x_{{6}}}^{3}+{x_{{2}}}^{3}{x_{{8}}}^{3}+x_{{1}}x_{{2}}x_{{3}}x_{{4}}x_{{5}}\\
\mbox{}+{x_{{7}}}^{3} \right\} \]}
\end{maplelatex}
\mapleresult
\begin{maplelatex}
\mapleinline{inert}{2d}{IrrIdeal = (x[1]*x[6]*x[8]*x[9], x[6]*x[7]*x[8]*x[9], x[2]*x[6]*x[8]*x[9], x[1]*x[5]*x[6]*x[9], x[1]*x[2]*x[5]*x[9], x[5]*x[7]*x[9], x[2]*x[5]*x[8]*x[9])}{\[\displaystyle {\it IrrIdeal}={x_{{1}}x_{{6}}x_{{8}}x_{{9}},x_{{6}}x_{{7}}\\
\mbox{}x_{{8}}x_{{9}},x_{{2}}x_{{6}}x_{{8}}x_{{9}},x_{{1}}x_{{5}}x_{{6}}\\
\mbox{}x_{{9}},x_{{1}}x_{{2}}x_{{5}}x_{{9}},x_{{5}}x_{{7}}x_{{9}},x_{{2}}x_{{5}}\\
\mbox{}x_{{8}}x_{{9}}}\]}
\end{maplelatex}
\mapleresult
\begin{maplelatex}
\mapleinline{inert}{2d}{x[1]*x[3]*x[6], x[1]*x[2]*x[3]*x[5], x[3]*x[7], x[2]*x[3]*x[8], x[1]*x[4]*x[6], x[1]*x[2]*x[4]*x[5], x[4]*x[7], x[2]*x[4]*x[8]}{\[\displaystyle x_{{1}}x_{{3}}x_{{6}},\,x_{{1}}x_{{2}}x_{{3}}x_{{5}},\,x_{{3}}x_{{7}},\,x_{{2}}x_{{3}}x_{{8}},\,x_{{1}}x_{{4}}x_{{6}},\,x_{{1}}x_{{2}}x_{{4}}x_{{5}},\,x_{{4}}x_{{7}},\,x_{{2}}x_{{4}}x_{{8}}\]}
\end{maplelatex}
\mapleresult
\begin{maplelatex}
\mapleinline{inert}{2d}{A = {2, 10, 12}, ftv = [Matrix(
\mbox{},0,0,1,1,1,1]]\]}
\end{maplelatex}
\mapleresult
\begin{maplelatex}
\mapleinline{inert}{2d}{Polynomials = {x[4]^3*x[9]^3+x[6]*x[7]*x[8]*x[9]+x[3]^3+x[5]^3, x[1]^3*x[6]^3+x[2]^3*x[8]^3+x[1]*x[2]*x[3]*x[4]*x[5]+x[7]^3}}{\[\displaystyle {\it Polynomials}= \left\{ {x_{{4}}}^{3}{x_{{9}}}^{3}+x_{{6}}x_{{7}}x_{{8}}x_{{9}}\\
\mbox{}+{x_{{3}}}^{3}+{x_{{5}}}^{3},{x_{{1}}}^{3}{x_{{6}}}^{3}+{x_{{2}}}^{3}{x_{{8}}}^{3}+x_{{1}}x_{{2}}x_{{3}}x_{{4}}x_{{5}}\\
\mbox{}+{x_{{7}}}^{3} \right\} \]}
\end{maplelatex}
\mapleresult
\begin{maplelatex}
\mapleinline{inert}{2d}{IrrIdeal = (x[1]*x[6]*x[8]*x[9], x[6]*x[7]*x[8]*x[9], x[2]*x[6]*x[8]*x[9], x[1]*x[4]*x[6]*x[9], x[1]*x[2]*x[4]*x[9], x[4]*x[7]*x[9], x[2]*x[4]*x[8]*x[9])}{\[\displaystyle {\it IrrIdeal}={x_{{1}}x_{{6}}x_{{8}}x_{{9}},x_{{6}}x_{{7}}\\
\mbox{}x_{{8}}x_{{9}},x_{{2}}x_{{6}}x_{{8}}x_{{9}},x_{{1}}x_{{4}}x_{{6}}\\
\mbox{}x_{{9}},x_{{1}}x_{{2}}x_{{4}}x_{{9}},x_{{4}}x_{{7}}x_{{9}},x_{{2}}x_{{4}}\\
\mbox{}x_{{8}}x_{{9}}}\]}
\end{maplelatex}
\mapleresult
\begin{maplelatex}
\mapleinline{inert}{2d}{x[1]*x[3]*x[6], x[1]*x[2]*x[3]*x[4], x[3]*x[7], x[2]*x[3]*x[8], x[1]*x[5]*x[6], x[1]*x[2]*x[4]*x[5], x[5]*x[7], x[2]*x[5]*x[8]}{\[\displaystyle x_{{1}}x_{{3}}x_{{6}},\,x_{{1}}x_{{2}}x_{{3}}x_{{4}},\,x_{{3}}x_{{7}},\,x_{{2}}x_{{3}}x_{{8}},\,x_{{1}}x_{{5}}x_{{6}},\,x_{{1}}x_{{2}}x_{{4}}x_{{5}},\,x_{{5}}x_{{7}},\,x_{{2}}x_{{5}}x_{{8}}\]}
\end{maplelatex}
\mapleresult
\begin{maplelatex}
\mapleinline{inert}{2d}{A = {2, 11, 12}, ftv = [Matrix(
\mbox{},0,0,1,1,1,1]]\]}
\end{maplelatex}
\mapleresult
\begin{maplelatex}
\mapleinline{inert}{2d}{Polynomials = {x[3]^3*x[9]^3+x[6]*x[7]*x[8]*x[9]+x[4]^3+x[5]^3, x[1]^3*x[6]^3+x[2]^3*x[8]^3+x[1]*x[2]*x[3]*x[4]*x[5]+x[7]^3}}{\[\displaystyle {\it Polynomials}= \left\{ {x_{{3}}}^{3}{x_{{9}}}^{3}+x_{{6}}x_{{7}}x_{{8}}x_{{9}}\\
\mbox{}+{x_{{4}}}^{3}+{x_{{5}}}^{3},{x_{{1}}}^{3}{x_{{6}}}^{3}+{x_{{2}}}^{3}{x_{{8}}}^{3}+x_{{1}}x_{{2}}x_{{3}}x_{{4}}x_{{5}}\\
\mbox{}+{x_{{7}}}^{3} \right\} \]}
\end{maplelatex}
\mapleresult
\begin{maplelatex}
\mapleinline{inert}{2d}{IrrIdeal = (x[1]*x[6]*x[8]*x[9], x[6]*x[7]*x[8]*x[9], x[2]*x[6]*x[8]*x[9], x[1]*x[3]*x[6]*x[9], x[1]*x[2]*x[3]*x[9], x[3]*x[7]*x[9], x[2]*x[3]*x[8]*x[9])}{\[\displaystyle {\it IrrIdeal}={x_{{1}}x_{{6}}x_{{8}}x_{{9}},x_{{6}}x_{{7}}\\
\mbox{}x_{{8}}x_{{9}},x_{{2}}x_{{6}}x_{{8}}x_{{9}},x_{{1}}x_{{3}}x_{{6}}\\
\mbox{}x_{{9}},x_{{1}}x_{{2}}x_{{3}}x_{{9}},x_{{3}}x_{{7}}x_{{9}},x_{{2}}x_{{3}}\\
\mbox{}x_{{8}}x_{{9}}}\]}
\end{maplelatex}
\mapleresult
\begin{maplelatex}
\mapleinline{inert}{2d}{x[1]*x[4]*x[6], x[1]*x[2]*x[3]*x[4], x[4]*x[7], x[2]*x[4]*x[8], x[1]*x[5]*x[6], x[1]*x[2]*x[3]*x[5], x[5]*x[7], x[2]*x[5]*x[8]}{\[\displaystyle x_{{1}}x_{{4}}x_{{6}},\,x_{{1}}x_{{2}}x_{{3}}x_{{4}},\,x_{{4}}x_{{7}},\,x_{{2}}x_{{4}}x_{{8}},\,x_{{1}}x_{{5}}x_{{6}},\,x_{{1}}x_{{2}}x_{{3}}x_{{5}},\,x_{{5}}x_{{7}},\,x_{{2}}x_{{5}}x_{{8}}\]}
\end{maplelatex}
\mapleresult
\begin{maplelatex}
\mapleinline{inert}{2d}{A = {3, 10, 11}, ftv = [Matrix(
\mbox{},0,0,1,1,1,1]]\]}
\end{maplelatex}
\mapleresult
\begin{maplelatex}
\mapleinline{inert}{2d}{Polynomials = {x[5]^3*x[9]^3+x[6]*x[7]*x[8]*x[9]+x[3]^3+x[4]^3, x[1]^3*x[6]^3+x[2]^3*x[7]^3+x[1]*x[2]*x[3]*x[4]*x[5]+x[8]^3}}{\[\displaystyle {\it Polynomials}= \left\{ {x_{{5}}}^{3}{x_{{9}}}^{3}+x_{{6}}x_{{7}}x_{{8}}x_{{9}}\\
\mbox{}+{x_{{3}}}^{3}+{x_{{4}}}^{3},{x_{{1}}}^{3}{x_{{6}}}^{3}+{x_{{2}}}^{3}{x_{{7}}}^{3}+x_{{1}}x_{{2}}x_{{3}}x_{{4}}x_{{5}}\\
\mbox{}+{x_{{8}}}^{3} \right\} \]}
\end{maplelatex}
\mapleresult
\begin{maplelatex}
\mapleinline{inert}{2d}{IrrIdeal = (x[1]*x[6]*x[7]*x[9], x[6]*x[7]*x[8]*x[9], x[2]*x[6]*x[7]*x[9], x[1]*x[5]*x[6]*x[9], x[1]*x[2]*x[5]*x[9], x[2]*x[5]*x[7]*x[9], x[5]*x[8]*x[9])}{\[\displaystyle {\it IrrIdeal}={x_{{1}}x_{{6}}x_{{7}}x_{{9}},x_{{6}}x_{{7}}\\
\mbox{}x_{{8}}x_{{9}},x_{{2}}x_{{6}}x_{{7}}x_{{9}},x_{{1}}x_{{5}}x_{{6}}\\
\mbox{}x_{{9}},x_{{1}}x_{{2}}x_{{5}}x_{{9}},x_{{2}}x_{{5}}x_{{7}}x_{{9}},x_{{5}}\\
\mbox{}x_{{8}}x_{{9}}}\]}
\end{maplelatex}
\mapleresult
\begin{maplelatex}
\mapleinline{inert}{2d}{x[1]*x[3]*x[6], x[1]*x[2]*x[3]*x[5], x[2]*x[3]*x[7], x[3]*x[8], x[1]*x[4]*x[6], x[1]*x[2]*x[4]*x[5], x[2]*x[4]*x[7], x[4]*x[8]}{\[\displaystyle x_{{1}}x_{{3}}x_{{6}},\,x_{{1}}x_{{2}}x_{{3}}x_{{5}},\,x_{{2}}x_{{3}}x_{{7}},\,x_{{3}}x_{{8}},\,x_{{1}}x_{{4}}x_{{6}},\,x_{{1}}x_{{2}}x_{{4}}x_{{5}},\,x_{{2}}x_{{4}}x_{{7}},\,x_{{4}}x_{{8}}\]}
\end{maplelatex}
\mapleresult
\begin{maplelatex}
\mapleinline{inert}{2d}{A = {3, 10, 12}, ftv = [Matrix(
\mbox{},0,0,1,1,1,1]]\]}
\end{maplelatex}
\mapleresult
\begin{maplelatex}
\mapleinline{inert}{2d}{Polynomials = {x[4]^3*x[9]^3+x[6]*x[7]*x[8]*x[9]+x[3]^3+x[5]^3, x[1]^3*x[6]^3+x[2]^3*x[7]^3+x[1]*x[2]*x[3]*x[4]*x[5]+x[8]^3}}{\[\displaystyle {\it Polynomials}= \left\{ {x_{{4}}}^{3}{x_{{9}}}^{3}+x_{{6}}x_{{7}}x_{{8}}x_{{9}}\\
\mbox{}+{x_{{3}}}^{3}+{x_{{5}}}^{3},{x_{{1}}}^{3}{x_{{6}}}^{3}+{x_{{2}}}^{3}{x_{{7}}}^{3}+x_{{1}}x_{{2}}x_{{3}}x_{{4}}x_{{5}}\\
\mbox{}+{x_{{8}}}^{3} \right\} \]}
\end{maplelatex}
\mapleresult
\begin{maplelatex}
\mapleinline{inert}{2d}{IrrIdeal = (x[1]*x[6]*x[7]*x[9], x[6]*x[7]*x[8]*x[9], x[2]*x[6]*x[7]*x[9], x[1]*x[4]*x[6]*x[9], x[1]*x[2]*x[4]*x[9], x[2]*x[4]*x[7]*x[9], x[4]*x[8]*x[9])}{\[\displaystyle {\it IrrIdeal}={x_{{1}}x_{{6}}x_{{7}}x_{{9}},x_{{6}}x_{{7}}\\
\mbox{}x_{{8}}x_{{9}},x_{{2}}x_{{6}}x_{{7}}x_{{9}},x_{{1}}x_{{4}}x_{{6}}\\
\mbox{}x_{{9}},x_{{1}}x_{{2}}x_{{4}}x_{{9}},x_{{2}}x_{{4}}x_{{7}}x_{{9}},x_{{4}}\\
\mbox{}x_{{8}}x_{{9}}}\]}
\end{maplelatex}
\mapleresult
\begin{maplelatex}
\mapleinline{inert}{2d}{x[1]*x[3]*x[6], x[1]*x[2]*x[3]*x[4], x[2]*x[3]*x[7], x[3]*x[8], x[1]*x[5]*x[6], x[1]*x[2]*x[4]*x[5], x[2]*x[5]*x[7], x[5]*x[8]}{\[\displaystyle x_{{1}}x_{{3}}x_{{6}},\,x_{{1}}x_{{2}}x_{{3}}x_{{4}},\,x_{{2}}x_{{3}}x_{{7}},\,x_{{3}}x_{{8}},\,x_{{1}}x_{{5}}x_{{6}},\,x_{{1}}x_{{2}}x_{{4}}x_{{5}},\,x_{{2}}x_{{5}}x_{{7}},\,x_{{5}}x_{{8}}\]}
\end{maplelatex}
\mapleresult
\begin{maplelatex}
\mapleinline{inert}{2d}{A = {3, 11, 12}, ftv = [Matrix(
\mbox{},0,0,1,1,1,1]]\]}
\end{maplelatex}
\mapleresult
\begin{maplelatex}
\mapleinline{inert}{2d}{Polynomials = {x[3]^3*x[9]^3+x[6]*x[7]*x[8]*x[9]+x[4]^3+x[5]^3, x[1]^3*x[6]^3+x[2]^3*x[7]^3+x[1]*x[2]*x[3]*x[4]*x[5]+x[8]^3}}{\[\displaystyle {\it Polynomials}= \left\{ {x_{{3}}}^{3}{x_{{9}}}^{3}+x_{{6}}x_{{7}}x_{{8}}x_{{9}}\\
\mbox{}+{x_{{4}}}^{3}+{x_{{5}}}^{3},{x_{{1}}}^{3}{x_{{6}}}^{3}+{x_{{2}}}^{3}{x_{{7}}}^{3}+x_{{1}}x_{{2}}x_{{3}}x_{{4}}x_{{5}}\\
\mbox{}+{x_{{8}}}^{3} \right\} \]}
\end{maplelatex}
\mapleresult
\begin{maplelatex}
\mapleinline{inert}{2d}{IrrIdeal = (x[1]*x[6]*x[7]*x[9], x[6]*x[7]*x[8]*x[9], x[2]*x[6]*x[7]*x[9], x[1]*x[3]*x[6]*x[9], x[1]*x[2]*x[3]*x[9], x[2]*x[3]*x[7]*x[9], x[3]*x[8]*x[9])}{\[\displaystyle {\it IrrIdeal}={x_{{1}}x_{{6}}x_{{7}}x_{{9}},x_{{6}}x_{{7}}\\
\mbox{}x_{{8}}x_{{9}},x_{{2}}x_{{6}}x_{{7}}x_{{9}},x_{{1}}x_{{3}}x_{{6}}\\
\mbox{}x_{{9}},x_{{1}}x_{{2}}x_{{3}}x_{{9}},x_{{2}}x_{{3}}x_{{7}}x_{{9}},x_{{3}}\\
\mbox{}x_{{8}}x_{{9}}}\]}
\end{maplelatex}
\mapleresult
\begin{maplelatex}
\mapleinline{inert}{2d}{x[1]*x[4]*x[6], x[1]*x[2]*x[3]*x[4], x[2]*x[4]*x[7], x[4]*x[8], x[1]*x[5]*x[6], x[1]*x[2]*x[3]*x[5], x[2]*x[5]*x[7], x[5]*x[8]}{\[\displaystyle x_{{1}}x_{{4}}x_{{6}},\,x_{{1}}x_{{2}}x_{{3}}x_{{4}},\,x_{{2}}x_{{4}}x_{{7}},\,x_{{4}}x_{{8}},\,x_{{1}}x_{{5}}x_{{6}},\,x_{{1}}x_{{2}}x_{{3}}x_{{5}},\,x_{{2}}x_{{5}}x_{{7}},\,x_{{5}}x_{{8}}\]}
\end{maplelatex}
\mapleresult
\begin{maplelatex}
\mapleinline{inert}{2d}{A = {10, 11, 12}, ftv = [Matrix(
\mbox{},0,0,0,0,0,1,1,1]]\]}
\end{maplelatex}
\mapleresult
\begin{maplelatex}
\mapleinline{inert}{2d}{Polynomials = {x[4]^3+x[5]^3+x[6]^3+x[7]*x[8]*x[9], x[1]^3*x[7]^3+x[1]*x[2]*x[3]*x[4]*x[5]*x[6]+x[2]^3*x[8]^3+x[3]^3*x[9]^3}}{\[\displaystyle {\it Polynomials}= \left\{ {x_{{4}}}^{3}+{x_{{5}}}^{3}+{x_{{6}}}^{3}+x_{{7}}x_{{8}}x_{{9}}\\
\mbox{},{x_{{1}}}^{3}{x_{{7}}}^{3}+x_{{1}}x_{{2}}x_{{3}}x_{{4}}x_{{5}}x_{{6}}+{x_{{2}}}^{3}{x_{{8}}}^{3}\\
\mbox{}+{x_{{3}}}^{3}{x_{{9}}}^{3} \right\} \]}
\end{maplelatex}
\mapleresult
\begin{maplelatex}
\mapleinline{inert}{2d}{IrrIdeal = (x[1]*x[7]*x[8]*x[9], x[2]*x[7]*x[8]*x[9], x[3]*x[7]*x[8]*x[9], x[1]*x[4]*x[7], x[1]*x[2]*x[3]*x[4], x[2]*x[4]*x[8], x[3]*x[4]*x[9])}{\[\displaystyle {\it IrrIdeal}={x_{{1}}x_{{7}}x_{{8}}x_{{9}},x_{{2}}x_{{7}}\\
\mbox{}x_{{8}}x_{{9}},x_{{3}}x_{{7}}x_{{8}}x_{{9}},x_{{1}}x_{{4}}x_{{7}},x_{{1}}\\
\mbox{}x_{{2}}x_{{3}}x_{{4}},x_{{2}}x_{{4}}x_{{8}},x_{{3}}x_{{4}}x_{{9}}}\]}
\end{maplelatex}
\mapleresult
\begin{maplelatex}
\mapleinline{inert}{2d}{x[1]*x[5]*x[7], x[1]*x[2]*x[3]*x[5], x[2]*x[5]*x[8], x[3]*x[5]*x[9], x[1]*x[6]*x[7], x[1]*x[2]*x[3]*x[6], x[2]*x[6]*x[8], x[3]*x[6]*x[9]}{\[\displaystyle x_{{1}}x_{{5}}x_{{7}},\,x_{{1}}x_{{2}}x_{{3}}x_{{5}},\,x_{{2}}x_{{5}}x_{{8}},\,x_{{3}}x_{{5}}x_{{9}},\,x_{{1}}x_{{6}}x_{{7}},\,x_{{1}}x_{{2}}x_{{3}}x_{{6}},\,x_{{2}}x_{{6}}x_{{8}},\,x_{{3}}x_{{6}}x_{{9}}\]}
\end{maplelatex}
\mapleresult
\begin{maplelatex}
\mapleinline{inert}{2d}{A = {1, 2, 3, 10}, ftv = [Matrix(
\mbox{},1,1,1]]\]}
\end{maplelatex}
\mapleresult
\begin{maplelatex}
\mapleinline{inert}{2d}{Polynomials = {x[1]*x[2]*x[3]+x[4]^3+x[5]^3+x[6]^3, x[2]^3*x[7]^3+x[3]^3*x[8]^3+x[4]*x[5]*x[6]*x[7]*x[8]+x[1]^3}}{\[\displaystyle {\it Polynomials}= \left\{ x_{{1}}x_{{2}}x_{{3}}+{x_{{4}}}^{3}+{x_{{5}}}^{3}+{x_{{6}}}^{3}\\
\mbox{},{x_{{2}}}^{3}{x_{{7}}}^{3}+{x_{{3}}}^{3}{x_{{8}}}^{3}+x_{{4}}x_{{5}}x_{{6}}x_{{7}}x_{{8}}\\
\mbox{}+{x_{{1}}}^{3} \right\} \]}
\end{maplelatex}
\mapleresult
\begin{maplelatex}
\mapleinline{inert}{2d}{IrrIdeal = (x[4]*x[7]*x[8], x[5]*x[7]*x[8], x[6]*x[7]*x[8], x[2]*x[4]*x[7], x[2]*x[5]*x[7], x[2]*x[6]*x[7], x[3]*x[4]*x[8], x[3]*x[5]*x[8], x[3]*x[6]*x[8], x[1]*x[4], x[1]*x[5], x[1]*x[6], x[2]*x[3]*x[7], x[2]*x[3]*x[8], x[1]*x[2]*x[3])}{\[\displaystyle {\it IrrIdeal}={x_{{4}}x_{{7}}x_{{8}},x_{{5}}x_{{7}}x_{{8}}\\
\mbox{},x_{{6}}x_{{7}}x_{{8}},x_{{2}}x_{{4}}x_{{7}},x_{{2}}x_{{5}}x_{{7}},x_{{2}}\\
\mbox{}x_{{6}}x_{{7}},x_{{3}}x_{{4}}x_{{8}},x_{{3}}x_{{5}}x_{{8}},x_{{3}}x_{{6}}\\
\mbox{}x_{{8}},x_{{1}}x_{{4}},x_{{1}}x_{{5}},x_{{1}}x_{{6}},x_{{2}}x_{{3}}x_{{7}}\\
\mbox{},x_{{2}}x_{{3}}x_{{8}},x_{{1}}x_{{2}}x_{{3}}}\]}
\end{maplelatex}
\mapleresult
\begin{maplelatex}
\mapleinline{inert}{2d}{A = {1, 2, 3, 11}, ftv = [Matrix(
\mbox{},1,1,1]]\]}
\end{maplelatex}
\mapleresult
\begin{maplelatex}
\mapleinline{inert}{2d}{Polynomials = {x[1]*x[2]*x[3]+x[4]^3+x[5]^3+x[6]^3, x[1]^3*x[7]^3+x[3]^3*x[8]^3+x[4]*x[5]*x[6]*x[7]*x[8]+x[2]^3}}{\[\displaystyle {\it Polynomials}= \left\{ x_{{1}}x_{{2}}x_{{3}}+{x_{{4}}}^{3}+{x_{{5}}}^{3}+{x_{{6}}}^{3}\\
\mbox{},{x_{{1}}}^{3}{x_{{7}}}^{3}+{x_{{3}}}^{3}{x_{{8}}}^{3}+x_{{4}}x_{{5}}x_{{6}}x_{{7}}x_{{8}}\\
\mbox{}+{x_{{2}}}^{3} \right\} \]}
\end{maplelatex}
\mapleresult
\begin{maplelatex}
\mapleinline{inert}{2d}{IrrIdeal = (x[4]*x[7]*x[8], x[5]*x[7]*x[8], x[6]*x[7]*x[8], x[1]*x[4]*x[7], x[1]*x[5]*x[7], x[1]*x[6]*x[7], x[3]*x[4]*x[8], x[3]*x[5]*x[8], x[3]*x[6]*x[8], x[1]*x[3]*x[7], x[1]*x[3]*x[8], x[2]*x[4], x[2]*x[5], x[2]*x[6], x[1]*x[2]*x[3])}{\[\displaystyle {\it IrrIdeal}={x_{{4}}x_{{7}}x_{{8}},x_{{5}}x_{{7}}x_{{8}}\\
\mbox{},x_{{6}}x_{{7}}x_{{8}},x_{{1}}x_{{4}}x_{{7}},x_{{1}}x_{{5}}x_{{7}},x_{{1}}\\
\mbox{}x_{{6}}x_{{7}},x_{{3}}x_{{4}}x_{{8}},x_{{3}}x_{{5}}x_{{8}},x_{{3}}x_{{6}}\\
\mbox{}x_{{8}},x_{{1}}x_{{3}}x_{{7}},x_{{1}}x_{{3}}x_{{8}},x_{{2}}x_{{4}},x_{{2}}\\
\mbox{}x_{{5}},x_{{2}}x_{{6}},x_{{1}}x_{{2}}x_{{3}}}\]}
\end{maplelatex}
\mapleresult
\begin{maplelatex}
\mapleinline{inert}{2d}{A = {1, 2, 3, 12}, ftv = [Matrix(
\mbox{},1,1]]\]}
\end{maplelatex}
\mapleresult
\begin{maplelatex}
\mapleinline{inert}{2d}{Polynomials = {x[1]*x[2]*x[3]+x[4]^3+x[5]^3+x[6]^3, x[1]^3*x[7]^3+x[2]^3*x[8]^3+x[4]*x[5]*x[6]*x[7]*x[8]+x[3]^3}}{\[\displaystyle {\it Polynomials}= \left\{ x_{{1}}x_{{2}}x_{{3}}+{x_{{4}}}^{3}+{x_{{5}}}^{3}+{x_{{6}}}^{3}\\
\mbox{},{x_{{1}}}^{3}{x_{{7}}}^{3}+{x_{{2}}}^{3}{x_{{8}}}^{3}+x_{{4}}x_{{5}}x_{{6}}x_{{7}}x_{{8}}\\
\mbox{}+{x_{{3}}}^{3} \right\} \]}
\end{maplelatex}
\mapleresult
\begin{maplelatex}
\mapleinline{inert}{2d}{IrrIdeal = (x[4]*x[7]*x[8], x[5]*x[7]*x[8], x[6]*x[7]*x[8], x[1]*x[4]*x[7], x[1]*x[5]*x[7], x[1]*x[6]*x[7], x[2]*x[4]*x[8], x[2]*x[5]*x[8], x[2]*x[6]*x[8], x[1]*x[2]*x[7], x[1]*x[2]*x[8], x[3]*x[4], x[3]*x[5], x[3]*x[6], x[1]*x[2]*x[3])}{\[\displaystyle {\it IrrIdeal}={x_{{4}}x_{{7}}x_{{8}},x_{{5}}x_{{7}}x_{{8}}\\
\mbox{},x_{{6}}x_{{7}}x_{{8}},x_{{1}}x_{{4}}x_{{7}},x_{{1}}x_{{5}}x_{{7}},x_{{1}}\\
\mbox{}x_{{6}}x_{{7}},x_{{2}}x_{{4}}x_{{8}},x_{{2}}x_{{5}}x_{{8}},x_{{2}}x_{{6}}\\
\mbox{}x_{{8}},x_{{1}}x_{{2}}x_{{7}},x_{{1}}x_{{2}}x_{{8}},x_{{3}}x_{{4}},x_{{3}}\\
\mbox{}x_{{5}},x_{{3}}x_{{6}},x_{{1}}x_{{2}}x_{{3}}}\]}
\end{maplelatex}
\mapleresult
\begin{maplelatex}
\mapleinline{inert}{2d}{A = {1, 2, 10, 11}, ftv = [Matrix(
\mbox{},1,1]]\]}
\end{maplelatex}
\mapleresult
\begin{maplelatex}
\mapleinline{inert}{2d}{Polynomials = {x[1]^3*x[7]^3+x[1]*x[2]*x[3]*x[4]+x[5]^3+x[6]^3, x[4]^3*x[8]^3+x[5]*x[6]*x[7]*x[8]+x[2]^3+x[3]^3}}{\[\displaystyle {\it Polynomials}= \left\{ {x_{{1}}}^{3}{x_{{7}}}^{3}+x_{{1}}x_{{2}}x_{{3}}x_{{4}}\\
\mbox{}+{x_{{5}}}^{3}+{x_{{6}}}^{3},{x_{{4}}}^{3}{x_{{8}}}^{3}+x_{{5}}x_{{6}}x_{{7}}x_{{8}}+{x_{{2}}}^{3}\\
\mbox{}+{x_{{3}}}^{3} \right\} \]}
\end{maplelatex}
\mapleresult
\begin{maplelatex}
\mapleinline{inert}{2d}{IrrIdeal = (x[5]*x[7]*x[8], x[6]*x[7]*x[8], x[1]*x[7]*x[8], x[1]*x[2]*x[7], x[1]*x[3]*x[7], x[4]*x[5]*x[8], x[4]*x[6]*x[8], x[1]*x[4]*x[8], x[2]*x[5], x[2]*x[6], x[1]*x[2]*x[4], x[3]*x[5], x[3]*x[6], x[1]*x[3]*x[4])}{\[\displaystyle {\it IrrIdeal}={x_{{5}}x_{{7}}x_{{8}},x_{{6}}x_{{7}}x_{{8}}\\
\mbox{},x_{{1}}x_{{7}}x_{{8}},x_{{1}}x_{{2}}x_{{7}},x_{{1}}x_{{3}}x_{{7}},x_{{4}}\\
\mbox{}x_{{5}}x_{{8}},x_{{4}}x_{{6}}x_{{8}},x_{{1}}x_{{4}}x_{{8}},x_{{2}}x_{{5}}\\
\mbox{},x_{{2}}x_{{6}},x_{{1}}x_{{2}}x_{{4}},x_{{3}}x_{{5}},x_{{3}}x_{{6}},x_{{1}}\\
\mbox{}x_{{3}}x_{{4}}}\]}
\end{maplelatex}
\mapleresult
\begin{maplelatex}
\mapleinline{inert}{2d}{A = {1, 2, 10, 12}, ftv = [Matrix(
\mbox{},1,1]]\]}
\end{maplelatex}
\mapleresult
\begin{maplelatex}
\mapleinline{inert}{2d}{Polynomials = {x[1]^3*x[7]^3+x[1]*x[2]*x[3]*x[4]+x[5]^3+x[6]^3, x[3]^3*x[8]^3+x[5]*x[6]*x[7]*x[8]+x[2]^3+x[4]^3}}{\[\displaystyle {\it Polynomials}= \left\{ {x_{{1}}}^{3}{x_{{7}}}^{3}+x_{{1}}x_{{2}}x_{{3}}x_{{4}}\\
\mbox{}+{x_{{5}}}^{3}+{x_{{6}}}^{3},{x_{{3}}}^{3}{x_{{8}}}^{3}+x_{{5}}x_{{6}}x_{{7}}x_{{8}}+{x_{{2}}}^{3}\\
\mbox{}+{x_{{4}}}^{3} \right\} \]}
\end{maplelatex}
\mapleresult
\begin{maplelatex}
\mapleinline{inert}{2d}{IrrIdeal = (x[5]*x[7]*x[8], x[6]*x[7]*x[8], x[1]*x[7]*x[8], x[1]*x[2]*x[7], x[3]*x[5]*x[8], x[3]*x[6]*x[8], x[1]*x[3]*x[8], x[2]*x[5], x[2]*x[6], x[1]*x[2]*x[3], x[1]*x[4]*x[7], x[4]*x[5], x[4]*x[6], x[1]*x[3]*x[4])}{\[\displaystyle {\it IrrIdeal}={x_{{5}}x_{{7}}x_{{8}},x_{{6}}x_{{7}}x_{{8}}\\
\mbox{},x_{{1}}x_{{7}}x_{{8}},x_{{1}}x_{{2}}x_{{7}},x_{{3}}x_{{5}}x_{{8}},x_{{3}}\\
\mbox{}x_{{6}}x_{{8}},x_{{1}}x_{{3}}x_{{8}},x_{{2}}x_{{5}},x_{{2}}x_{{6}},x_{{1}}\\
\mbox{}x_{{2}}x_{{3}},x_{{1}}x_{{4}}x_{{7}},x_{{4}}x_{{5}},x_{{4}}x_{{6}},x_{{1}}\\
\mbox{}x_{{3}}x_{{4}}}\]}
\end{maplelatex}
\mapleresult
\begin{maplelatex}
\mapleinline{inert}{2d}{A = {1, 2, 11, 12}, ftv = [Matrix(
\mbox{},1,1]]\]}
\end{maplelatex}
\mapleresult
\begin{maplelatex}
\mapleinline{inert}{2d}{Polynomials = {x[1]^3*x[7]^3+x[1]*x[2]*x[3]*x[4]+x[5]^3+x[6]^3, x[2]^3*x[8]^3+x[5]*x[6]*x[7]*x[8]+x[3]^3+x[4]^3}}{\[\displaystyle {\it Polynomials}= \left\{ {x_{{1}}}^{3}{x_{{7}}}^{3}+x_{{1}}x_{{2}}x_{{3}}x_{{4}}\\
\mbox{}+{x_{{5}}}^{3}+{x_{{6}}}^{3},{x_{{2}}}^{3}{x_{{8}}}^{3}+x_{{5}}x_{{6}}x_{{7}}x_{{8}}+{x_{{3}}}^{3}\\
\mbox{}+{x_{{4}}}^{3} \right\} \]}
\end{maplelatex}
\mapleresult
\begin{maplelatex}
\mapleinline{inert}{2d}{IrrIdeal = (x[5]*x[7]*x[8], x[6]*x[7]*x[8], x[1]*x[7]*x[8], x[2]*x[5]*x[8], x[2]*x[6]*x[8], x[1]*x[2]*x[8], x[1]*x[3]*x[7], x[3]*x[5], x[3]*x[6], x[1]*x[2]*x[3], x[1]*x[4]*x[7], x[4]*x[5], x[4]*x[6], x[1]*x[2]*x[4])}{\[\displaystyle {\it IrrIdeal}={x_{{5}}x_{{7}}x_{{8}},x_{{6}}x_{{7}}x_{{8}}\\
\mbox{},x_{{1}}x_{{7}}x_{{8}},x_{{2}}x_{{5}}x_{{8}},x_{{2}}x_{{6}}x_{{8}},x_{{1}}\\
\mbox{}x_{{2}}x_{{8}},x_{{1}}x_{{3}}x_{{7}},x_{{3}}x_{{5}},x_{{3}}x_{{6}},x_{{1}}\\
\mbox{}x_{{2}}x_{{3}},x_{{1}}x_{{4}}x_{{7}},x_{{4}}x_{{5}},x_{{4}}x_{{6}},x_{{1}}\\
\mbox{}x_{{2}}x_{{4}}}\]}
\end{maplelatex}
\mapleresult
\begin{maplelatex}
\mapleinline{inert}{2d}{A = {1, 3, 10, 11}, ftv = [Matrix(
\mbox{},1,1]]\]}
\end{maplelatex}
\mapleresult
\begin{maplelatex}
\mapleinline{inert}{2d}{Polynomials = {x[1]^3*x[6]^3+x[1]*x[2]*x[3]*x[4]+x[5]^3+x[7]^3, x[4]^3*x[8]^3+x[5]*x[6]*x[7]*x[8]+x[2]^3+x[3]^3}}{\[\displaystyle {\it Polynomials}= \left\{ {x_{{1}}}^{3}{x_{{6}}}^{3}+x_{{1}}x_{{2}}x_{{3}}x_{{4}}\\
\mbox{}+{x_{{5}}}^{3}+{x_{{7}}}^{3},{x_{{4}}}^{3}{x_{{8}}}^{3}+x_{{5}}x_{{6}}x_{{7}}x_{{8}}+{x_{{2}}}^{3}\\
\mbox{}+{x_{{3}}}^{3} \right\} \]}
\end{maplelatex}
\mapleresult
\begin{maplelatex}
\mapleinline{inert}{2d}{IrrIdeal = (x[5]*x[6]*x[8], x[6]*x[7]*x[8], x[1]*x[6]*x[8], x[1]*x[2]*x[6], x[1]*x[3]*x[6], x[4]*x[5]*x[8], x[4]*x[7]*x[8], x[1]*x[4]*x[8], x[2]*x[5], x[2]*x[7], x[1]*x[2]*x[4], x[3]*x[5], x[3]*x[7], x[1]*x[3]*x[4])}{\[\displaystyle {\it IrrIdeal}={x_{{5}}x_{{6}}x_{{8}},x_{{6}}x_{{7}}x_{{8}}\\
\mbox{},x_{{1}}x_{{6}}x_{{8}},x_{{1}}x_{{2}}x_{{6}},x_{{1}}x_{{3}}x_{{6}},x_{{4}}\\
\mbox{}x_{{5}}x_{{8}},x_{{4}}x_{{7}}x_{{8}},x_{{1}}x_{{4}}x_{{8}},x_{{2}}x_{{5}}\\
\mbox{},x_{{2}}x_{{7}},x_{{1}}x_{{2}}x_{{4}},x_{{3}}x_{{5}},x_{{3}}x_{{7}},x_{{1}}\\
\mbox{}x_{{3}}x_{{4}}}\]}
\end{maplelatex}
\mapleresult
\begin{maplelatex}
\mapleinline{inert}{2d}{A = {1, 3, 10, 12}, ftv = [Matrix(
\mbox{},1,1]]\]}
\end{maplelatex}
\mapleresult
\begin{maplelatex}
\mapleinline{inert}{2d}{Polynomials = {x[1]^3*x[6]^3+x[1]*x[2]*x[3]*x[4]+x[5]^3+x[7]^3, x[3]^3*x[8]^3+x[5]*x[6]*x[7]*x[8]+x[2]^3+x[4]^3}}{\[\displaystyle {\it Polynomials}= \left\{ {x_{{1}}}^{3}{x_{{6}}}^{3}+x_{{1}}x_{{2}}x_{{3}}x_{{4}}\\
\mbox{}+{x_{{5}}}^{3}+{x_{{7}}}^{3},{x_{{3}}}^{3}{x_{{8}}}^{3}+x_{{5}}x_{{6}}x_{{7}}x_{{8}}+{x_{{2}}}^{3}\\
\mbox{}+{x_{{4}}}^{3} \right\} \]}
\end{maplelatex}
\mapleresult
\begin{maplelatex}
\mapleinline{inert}{2d}{IrrIdeal = (x[5]*x[6]*x[8], x[6]*x[7]*x[8], x[1]*x[6]*x[8], x[1]*x[2]*x[6], x[3]*x[5]*x[8], x[3]*x[7]*x[8], x[1]*x[3]*x[8], x[2]*x[5], x[2]*x[7], x[1]*x[2]*x[3], x[1]*x[4]*x[6], x[4]*x[5], x[4]*x[7], x[1]*x[3]*x[4])}{\[\displaystyle {\it IrrIdeal}={x_{{5}}x_{{6}}x_{{8}},x_{{6}}x_{{7}}x_{{8}}\\
\mbox{},x_{{1}}x_{{6}}x_{{8}},x_{{1}}x_{{2}}x_{{6}},x_{{3}}x_{{5}}x_{{8}},x_{{3}}\\
\mbox{}x_{{7}}x_{{8}},x_{{1}}x_{{3}}x_{{8}},x_{{2}}x_{{5}},x_{{2}}x_{{7}},x_{{1}}\\
\mbox{}x_{{2}}x_{{3}},x_{{1}}x_{{4}}x_{{6}},x_{{4}}x_{{5}},x_{{4}}x_{{7}},x_{{1}}\\
\mbox{}x_{{3}}x_{{4}}}\]}
\end{maplelatex}
\mapleresult
\begin{maplelatex}
\mapleinline{inert}{2d}{A = {1, 3, 11, 12}, ftv = [Matrix(
\mbox{},1,1]]\]}
\end{maplelatex}
\mapleresult
\begin{maplelatex}
\mapleinline{inert}{2d}{Polynomials = {x[1]^3*x[6]^3+x[1]*x[2]*x[3]*x[4]+x[5]^3+x[7]^3, x[2]^3*x[8]^3+x[5]*x[6]*x[7]*x[8]+x[3]^3+x[4]^3}}{\[\displaystyle {\it Polynomials}= \left\{ {x_{{1}}}^{3}{x_{{6}}}^{3}+x_{{1}}x_{{2}}x_{{3}}x_{{4}}\\
\mbox{}+{x_{{5}}}^{3}+{x_{{7}}}^{3},{x_{{2}}}^{3}{x_{{8}}}^{3}+x_{{5}}x_{{6}}x_{{7}}x_{{8}}+{x_{{3}}}^{3}\\
\mbox{}+{x_{{4}}}^{3} \right\} \]}
\end{maplelatex}
\mapleresult
\begin{maplelatex}
\mapleinline{inert}{2d}{IrrIdeal = (x[5]*x[6]*x[8], x[6]*x[7]*x[8], x[1]*x[6]*x[8], x[2]*x[5]*x[8], x[2]*x[7]*x[8], x[1]*x[2]*x[8], x[1]*x[3]*x[6], x[3]*x[5], x[3]*x[7], x[1]*x[2]*x[3], x[1]*x[4]*x[6], x[4]*x[5], x[4]*x[7], x[1]*x[2]*x[4])}{\[\displaystyle {\it IrrIdeal}={x_{{5}}x_{{6}}x_{{8}},x_{{6}}x_{{7}}x_{{8}}\\
\mbox{},x_{{1}}x_{{6}}x_{{8}},x_{{2}}x_{{5}}x_{{8}},x_{{2}}x_{{7}}x_{{8}},x_{{1}}\\
\mbox{}x_{{2}}x_{{8}},x_{{1}}x_{{3}}x_{{6}},x_{{3}}x_{{5}},x_{{3}}x_{{7}},x_{{1}}\\
\mbox{}x_{{2}}x_{{3}},x_{{1}}x_{{4}}x_{{6}},x_{{4}}x_{{5}},x_{{4}}x_{{7}},x_{{1}}\\
\mbox{}x_{{2}}x_{{4}}}\]}
\end{maplelatex}
\mapleresult
\begin{maplelatex}
\mapleinline{inert}{2d}{A = {1, 10, 11, 12}, ftv = [Matrix(
\mbox{},0,1,1,1]]\]}
\end{maplelatex}
\mapleresult
\begin{maplelatex}
\mapleinline{inert}{2d}{Polynomials = {x[3]^3+x[4]^3+x[5]^3+x[6]*x[7]*x[8], x[1]^3*x[7]^3+x[2]^3*x[8]^3+x[1]*x[2]*x[3]*x[4]*x[5]+x[6]^3}}{\[\displaystyle {\it Polynomials}= \left\{ {x_{{3}}}^{3}+{x_{{4}}}^{3}+{x_{{5}}}^{3}+x_{{6}}x_{{7}}x_{{8}}\\
\mbox{},{x_{{1}}}^{3}{x_{{7}}}^{3}+{x_{{2}}}^{3}{x_{{8}}}^{3}+x_{{1}}x_{{2}}x_{{3}}x_{{4}}x_{{5}}\\
\mbox{}+{x_{{6}}}^{3} \right\} \]}
\end{maplelatex}
\mapleresult
\begin{maplelatex}
\mapleinline{inert}{2d}{IrrIdeal = (x[6]*x[7]*x[8], x[1]*x[7]*x[8], x[2]*x[7]*x[8], x[1]*x[3]*x[7], x[3]*x[6], x[2]*x[3]*x[8], x[1]*x[2]*x[3], x[1]*x[4]*x[7], x[4]*x[6], x[2]*x[4]*x[8], x[1]*x[2]*x[4], x[1]*x[5]*x[7], x[5]*x[6], x[2]*x[5]*x[8], x[1]*x[2]*x[5])}{\[\displaystyle {\it IrrIdeal}={x_{{6}}x_{{7}}x_{{8}},x_{{1}}x_{{7}}x_{{8}}\\
\mbox{},x_{{2}}x_{{7}}x_{{8}},x_{{1}}x_{{3}}x_{{7}},x_{{3}}x_{{6}},x_{{2}}\\
\mbox{}x_{{3}}x_{{8}},x_{{1}}x_{{2}}x_{{3}},x_{{1}}x_{{4}}x_{{7}},x_{{4}}x_{{6}}\\
\mbox{},x_{{2}}x_{{4}}x_{{8}},x_{{1}}x_{{2}}x_{{4}},x_{{1}}x_{{5}}x_{{7}},x_{{5}}\\
\mbox{}x_{{6}},x_{{2}}x_{{5}}x_{{8}},x_{{1}}x_{{2}}x_{{5}}}\]}
\end{maplelatex}
\mapleresult
\begin{maplelatex}
\mapleinline{inert}{2d}{A = {2, 3, 10, 11}, ftv = [Matrix(
\mbox{},1,1]]\]}
\end{maplelatex}
\mapleresult
\begin{maplelatex}
\mapleinline{inert}{2d}{Polynomials = {x[1]^3*x[5]^3+x[1]*x[2]*x[3]*x[4]+x[6]^3+x[7]^3, x[4]^3*x[8]^3+x[5]*x[6]*x[7]*x[8]+x[2]^3+x[3]^3}}{\[\displaystyle {\it Polynomials}= \left\{ {x_{{1}}}^{3}{x_{{5}}}^{3}+x_{{1}}x_{{2}}x_{{3}}x_{{4}}\\
\mbox{}+{x_{{6}}}^{3}+{x_{{7}}}^{3},{x_{{4}}}^{3}{x_{{8}}}^{3}+x_{{5}}x_{{6}}x_{{7}}x_{{8}}+{x_{{2}}}^{3}\\
\mbox{}+{x_{{3}}}^{3} \right\} \]}
\end{maplelatex}
\mapleresult
\begin{maplelatex}
\mapleinline{inert}{2d}{IrrIdeal = (x[1]*x[5]*x[8], x[5]*x[6]*x[8], x[5]*x[7]*x[8], x[1]*x[2]*x[5], x[1]*x[3]*x[5], x[1]*x[4]*x[8], x[4]*x[6]*x[8], x[4]*x[7]*x[8], x[1]*x[2]*x[4], x[2]*x[6], x[2]*x[7], x[1]*x[3]*x[4], x[3]*x[6], x[3]*x[7])}{\[\displaystyle {\it IrrIdeal}={x_{{1}}x_{{5}}x_{{8}},x_{{5}}x_{{6}}x_{{8}}\\
\mbox{},x_{{5}}x_{{7}}x_{{8}},x_{{1}}x_{{2}}x_{{5}},x_{{1}}x_{{3}}x_{{5}},x_{{1}}\\
\mbox{}x_{{4}}x_{{8}},x_{{4}}x_{{6}}x_{{8}},x_{{4}}x_{{7}}x_{{8}},x_{{1}}x_{{2}}\\
\mbox{}x_{{4}},x_{{2}}x_{{6}},x_{{2}}x_{{7}},x_{{1}}x_{{3}}x_{{4}},x_{{3}}x_{{6}}\\
\mbox{},x_{{3}}x_{{7}}}\]}
\end{maplelatex}
\mapleresult
\begin{maplelatex}
\mapleinline{inert}{2d}{A = {2, 3, 10, 12}, ftv = [Matrix(
\mbox{},1,1]]\]}
\end{maplelatex}
\mapleresult
\begin{maplelatex}
\mapleinline{inert}{2d}{Polynomials = {x[1]^3*x[5]^3+x[1]*x[2]*x[3]*x[4]+x[6]^3+x[7]^3, x[3]^3*x[8]^3+x[5]*x[6]*x[7]*x[8]+x[2]^3+x[4]^3}}{\[\displaystyle {\it Polynomials}= \left\{ {x_{{1}}}^{3}{x_{{5}}}^{3}+x_{{1}}x_{{2}}x_{{3}}x_{{4}}\\
\mbox{}+{x_{{6}}}^{3}+{x_{{7}}}^{3},{x_{{3}}}^{3}{x_{{8}}}^{3}+x_{{5}}x_{{6}}x_{{7}}x_{{8}}+{x_{{2}}}^{3}\\
\mbox{}+{x_{{4}}}^{3} \right\} \]}
\end{maplelatex}
\mapleresult
\begin{maplelatex}
\mapleinline{inert}{2d}{IrrIdeal = (x[1]*x[5]*x[8], x[5]*x[6]*x[8], x[5]*x[7]*x[8], x[1]*x[2]*x[5], x[1]*x[3]*x[8], x[3]*x[6]*x[8], x[3]*x[7]*x[8], x[1]*x[2]*x[3], x[2]*x[6], x[2]*x[7], x[1]*x[4]*x[5], x[1]*x[3]*x[4], x[4]*x[6], x[4]*x[7])}{\[\displaystyle {\it IrrIdeal}={x_{{1}}x_{{5}}x_{{8}},x_{{5}}x_{{6}}x_{{8}}\\
\mbox{},x_{{5}}x_{{7}}x_{{8}},x_{{1}}x_{{2}}x_{{5}},x_{{1}}x_{{3}}x_{{8}},x_{{3}}\\
\mbox{}x_{{6}}x_{{8}},x_{{3}}x_{{7}}x_{{8}},x_{{1}}x_{{2}}x_{{3}},x_{{2}}x_{{6}}\\
\mbox{},x_{{2}}x_{{7}},x_{{1}}x_{{4}}x_{{5}},x_{{1}}x_{{3}}x_{{4}},x_{{4}}\\
\mbox{}x_{{6}},x_{{4}}x_{{7}}}\]}
\end{maplelatex}
\mapleresult
\begin{maplelatex}
\mapleinline{inert}{2d}{A = {2, 3, 11, 12}, ftv = [Matrix(
\mbox{},1,1]]\]}
\end{maplelatex}
\mapleresult
\begin{maplelatex}
\mapleinline{inert}{2d}{Polynomials = {x[1]^3*x[5]^3+x[1]*x[2]*x[3]*x[4]+x[6]^3+x[7]^3, x[2]^3*x[8]^3+x[5]*x[6]*x[7]*x[8]+x[3]^3+x[4]^3}}{\[\displaystyle {\it Polynomials}= \left\{ {x_{{1}}}^{3}{x_{{5}}}^{3}+x_{{1}}x_{{2}}x_{{3}}x_{{4}}\\
\mbox{}+{x_{{6}}}^{3}+{x_{{7}}}^{3},{x_{{2}}}^{3}{x_{{8}}}^{3}+x_{{5}}x_{{6}}x_{{7}}x_{{8}}+{x_{{3}}}^{3}\\
\mbox{}+{x_{{4}}}^{3} \right\} \]}
\end{maplelatex}
\mapleresult
\begin{maplelatex}
\mapleinline{inert}{2d}{IrrIdeal = (x[1]*x[5]*x[8], x[5]*x[6]*x[8], x[5]*x[7]*x[8], x[1]*x[2]*x[8], x[2]*x[6]*x[8], x[2]*x[7]*x[8], x[1]*x[3]*x[5], x[1]*x[2]*x[3], x[3]*x[6], x[3]*x[7], x[1]*x[4]*x[5], x[1]*x[2]*x[4], x[4]*x[6], x[4]*x[7])}{\[\displaystyle {\it IrrIdeal}={x_{{1}}x_{{5}}x_{{8}},x_{{5}}x_{{6}}x_{{8}}\\
\mbox{},x_{{5}}x_{{7}}x_{{8}},x_{{1}}x_{{2}}x_{{8}},x_{{2}}x_{{6}}x_{{8}},x_{{2}}\\
\mbox{}x_{{7}}x_{{8}},x_{{1}}x_{{3}}x_{{5}},x_{{1}}x_{{2}}x_{{3}},x_{{3}}x_{{6}}\\
\mbox{},x_{{3}}x_{{7}},x_{{1}}x_{{4}}x_{{5}},x_{{1}}x_{{2}}x_{{4}},x_{{4}}\\
\mbox{}x_{{6}},x_{{4}}x_{{7}}}\]}
\end{maplelatex}
\mapleresult
\begin{maplelatex}
\mapleinline{inert}{2d}{A = {2, 10, 11, 12}, ftv = [Matrix(
\mbox{},0,1,1,1]]\]}
\end{maplelatex}
\mapleresult
\begin{maplelatex}
\mapleinline{inert}{2d}{Polynomials = {x[3]^3+x[4]^3+x[5]^3+x[6]*x[7]*x[8], x[1]^3*x[6]^3+x[2]^3*x[8]^3+x[1]*x[2]*x[3]*x[4]*x[5]+x[7]^3}}{\[\displaystyle {\it Polynomials}= \left\{ {x_{{3}}}^{3}+{x_{{4}}}^{3}+{x_{{5}}}^{3}+x_{{6}}x_{{7}}x_{{8}}\\
\mbox{},{x_{{1}}}^{3}{x_{{6}}}^{3}+{x_{{2}}}^{3}{x_{{8}}}^{3}+x_{{1}}x_{{2}}x_{{3}}x_{{4}}x_{{5}}\\
\mbox{}+{x_{{7}}}^{3} \right\} \]}
\end{maplelatex}
\mapleresult
\begin{maplelatex}
\mapleinline{inert}{2d}{IrrIdeal = (x[1]*x[6]*x[8], x[6]*x[7]*x[8], x[2]*x[6]*x[8], x[1]*x[3]*x[6], x[1]*x[2]*x[3], x[3]*x[7], x[2]*x[3]*x[8], x[1]*x[4]*x[6], x[1]*x[2]*x[4], x[4]*x[7], x[2]*x[4]*x[8], x[1]*x[5]*x[6], x[1]*x[2]*x[5], x[5]*x[7], x[2]*x[5]*x[8])}{\[\displaystyle {\it IrrIdeal}={x_{{1}}x_{{6}}x_{{8}},x_{{6}}x_{{7}}x_{{8}}\\
\mbox{},x_{{2}}x_{{6}}x_{{8}},x_{{1}}x_{{3}}x_{{6}},x_{{1}}x_{{2}}x_{{3}},x_{{3}}\\
\mbox{}x_{{7}},x_{{2}}x_{{3}}x_{{8}},x_{{1}}x_{{4}}x_{{6}},x_{{1}}x_{{2}}x_{{4}}\\
\mbox{},x_{{4}}x_{{7}},x_{{2}}x_{{4}}x_{{8}},x_{{1}}x_{{5}}x_{{6}},x_{{1}}\\
\mbox{}x_{{2}}x_{{5}},x_{{5}}x_{{7}},x_{{2}}x_{{5}}x_{{8}}}\]}
\end{maplelatex}
\mapleresult
\begin{maplelatex}
\mapleinline{inert}{2d}{A = {3, 10, 11, 12}, ftv = [Matrix(
\mbox{},0,1,1,1]]\]}
\end{maplelatex}
\mapleresult
\begin{maplelatex}
\mapleinline{inert}{2d}{Polynomials = {x[3]^3+x[4]^3+x[5]^3+x[6]*x[7]*x[8], x[1]^3*x[6]^3+x[2]^3*x[7]^3+x[1]*x[2]*x[3]*x[4]*x[5]+x[8]^3}}{\[\displaystyle {\it Polynomials}= \left\{ {x_{{3}}}^{3}+{x_{{4}}}^{3}+{x_{{5}}}^{3}+x_{{6}}x_{{7}}x_{{8}}\\
\mbox{},{x_{{1}}}^{3}{x_{{6}}}^{3}+{x_{{2}}}^{3}{x_{{7}}}^{3}+x_{{1}}x_{{2}}x_{{3}}x_{{4}}x_{{5}}\\
\mbox{}+{x_{{8}}}^{3} \right\} \]}
\end{maplelatex}
\mapleresult
\begin{maplelatex}
\mapleinline{inert}{2d}{IrrIdeal = (x[1]*x[6]*x[7], x[6]*x[7]*x[8], x[2]*x[6]*x[7], x[1]*x[3]*x[6], x[1]*x[2]*x[3], x[2]*x[3]*x[7], x[3]*x[8], x[1]*x[4]*x[6], x[1]*x[2]*x[4], x[2]*x[4]*x[7], x[4]*x[8], x[1]*x[5]*x[6], x[1]*x[2]*x[5], x[2]*x[5]*x[7], x[5]*x[8])}{\[\displaystyle {\it IrrIdeal}={x_{{1}}x_{{6}}x_{{7}},x_{{6}}x_{{7}}x_{{8}}\\
\mbox{},x_{{2}}x_{{6}}x_{{7}},x_{{1}}x_{{3}}x_{{6}},x_{{1}}x_{{2}}x_{{3}},x_{{2}}\\
\mbox{}x_{{3}}x_{{7}},x_{{3}}x_{{8}},x_{{1}}x_{{4}}x_{{6}},x_{{1}}x_{{2}}x_{{4}}\\
\mbox{},x_{{2}}x_{{4}}x_{{7}},x_{{4}}x_{{8}},x_{{1}}x_{{5}}x_{{6}},x_{{1}}\\
\mbox{}x_{{2}}x_{{5}},x_{{2}}x_{{5}}x_{{7}},x_{{5}}x_{{8}}}\]}
\end{maplelatex}
\mapleresult
\begin{maplelatex}
\mapleinline{inert}{2d}{A = {1, 2, 3, 10, 11}, ftv = [Matrix(
\mbox{}]]\]}
\end{maplelatex}
\mapleresult
\begin{maplelatex}
\mapleinline{inert}{2d}{Polynomials = {x[1]*x[2]*x[3]+x[4]^3+x[5]^3+x[6]^3, x[3]^3*x[7]^3+x[4]*x[5]*x[6]*x[7]+x[1]^3+x[2]^3}}{\[\displaystyle {\it Polynomials}= \left\{ x_{{1}}x_{{2}}x_{{3}}+{x_{{4}}}^{3}+{x_{{5}}}^{3}+{x_{{6}}}^{3}\\
\mbox{},{x_{{3}}}^{3}{x_{{7}}}^{3}+x_{{4}}x_{{5}}x_{{6}}x_{{7}}+{x_{{1}}}^{3}+{x_{{2}}}^{3} \right\} \]}
\end{maplelatex}
\mapleresult
\begin{maplelatex}
\mapleinline{inert}{2d}{IrrIdeal = (x[4]*x[7], x[5]*x[7], x[6]*x[7], x[1]*x[4], x[1]*x[5], x[1]*x[6], x[2]*x[4], x[2]*x[5], x[2]*x[6], x[3]*x[7], x[1]*x[3], x[2]*x[3])}{\[\displaystyle {\it IrrIdeal}={x_{{4}}x_{{7}},x_{{5}}x_{{7}},x_{{6}}x_{{7}},x_{{1}}\\
\mbox{}x_{{4}},x_{{1}}x_{{5}},x_{{1}}x_{{6}},x_{{2}}x_{{4}},x_{{2}}x_{{5}},x_{{2}}\\
\mbox{}x_{{6}},x_{{3}}x_{{7}},x_{{1}}x_{{3}},x_{{2}}x_{{3}}}\]}
\end{maplelatex}
\mapleresult
\begin{maplelatex}
\mapleinline{inert}{2d}{A = {1, 2, 3, 10, 12}, ftv = [Matrix(
\mbox{}]]\]}
\end{maplelatex}
\mapleresult
\begin{maplelatex}
\mapleinline{inert}{2d}{Polynomials = {x[1]*x[2]*x[3]+x[4]^3+x[5]^3+x[6]^3, x[2]^3*x[7]^3+x[4]*x[5]*x[6]*x[7]+x[1]^3+x[3]^3}}{\[\displaystyle {\it Polynomials}= \left\{ x_{{1}}x_{{2}}x_{{3}}+{x_{{4}}}^{3}+{x_{{5}}}^{3}+{x_{{6}}}^{3}\\
\mbox{},{x_{{2}}}^{3}{x_{{7}}}^{3}+x_{{4}}x_{{5}}x_{{6}}x_{{7}}+{x_{{1}}}^{3}+{x_{{3}}}^{3} \right\} \]}
\end{maplelatex}
\mapleresult
\begin{maplelatex}
\mapleinline{inert}{2d}{IrrIdeal = (x[4]*x[7], x[5]*x[7], x[6]*x[7], x[1]*x[4], x[1]*x[5], x[1]*x[6], x[2]*x[7], x[1]*x[2], x[3]*x[4], x[3]*x[5], x[3]*x[6], x[2]*x[3])}{\[\displaystyle {\it IrrIdeal}={x_{{4}}x_{{7}},x_{{5}}x_{{7}},x_{{6}}x_{{7}},x_{{1}}\\
\mbox{}x_{{4}},x_{{1}}x_{{5}},x_{{1}}x_{{6}},x_{{2}}x_{{7}},x_{{1}}x_{{2}},x_{{3}}\\
\mbox{}x_{{4}},x_{{3}}x_{{5}},x_{{3}}x_{{6}},x_{{2}}x_{{3}}}\]}
\end{maplelatex}
\mapleresult
\begin{maplelatex}
\mapleinline{inert}{2d}{A = {1, 2, 3, 11, 12}, ftv = [Matrix(
\mbox{}]]\]}
\end{maplelatex}
\mapleresult
\begin{maplelatex}
\mapleinline{inert}{2d}{Polynomials = {x[1]*x[2]*x[3]+x[4]^3+x[5]^3+x[6]^3, x[1]^3*x[7]^3+x[4]*x[5]*x[6]*x[7]+x[2]^3+x[3]^3}}{\[\displaystyle {\it Polynomials}= \left\{ x_{{1}}x_{{2}}x_{{3}}+{x_{{4}}}^{3}+{x_{{5}}}^{3}+{x_{{6}}}^{3}\\
\mbox{},{x_{{1}}}^{3}{x_{{7}}}^{3}+x_{{4}}x_{{5}}x_{{6}}x_{{7}}+{x_{{2}}}^{3}+{x_{{3}}}^{3} \right\} \]}
\end{maplelatex}
\mapleresult
\begin{maplelatex}
\mapleinline{inert}{2d}{IrrIdeal = (x[4]*x[7], x[5]*x[7], x[6]*x[7], x[1]*x[7], x[2]*x[4], x[2]*x[5], x[2]*x[6], x[1]*x[2], x[3]*x[4], x[3]*x[5], x[3]*x[6], x[1]*x[3])}{\[\displaystyle {\it IrrIdeal}={x_{{4}}x_{{7}},x_{{5}}x_{{7}},x_{{6}}x_{{7}},x_{{1}}\\
\mbox{}x_{{7}},x_{{2}}x_{{4}},x_{{2}}x_{{5}},x_{{2}}x_{{6}},x_{{1}}x_{{2}},x_{{3}}\\
\mbox{}x_{{4}},x_{{3}}x_{{5}},x_{{3}}x_{{6}},x_{{1}}x_{{3}}}\]}
\end{maplelatex}
\mapleresult
\begin{maplelatex}
\mapleinline{inert}{2d}{A = {1, 2, 10, 11, 12}, ftv = [Matrix(
\mbox{}]]\]}
\end{maplelatex}
\mapleresult
\begin{maplelatex}
\mapleinline{inert}{2d}{Polynomials = {x[2]^3+x[3]^3+x[4]^3+x[5]*x[6]*x[7], x[1]^3*x[7]^3+x[1]*x[2]*x[3]*x[4]+x[5]^3+x[6]^3}}{\[\displaystyle {\it Polynomials}= \left\{ {x_{{2}}}^{3}+{x_{{3}}}^{3}+{x_{{4}}}^{3}+x_{{5}}x_{{6}}x_{{7}}\\
\mbox{},{x_{{1}}}^{3}{x_{{7}}}^{3}+x_{{1}}x_{{2}}x_{{3}}x_{{4}}+{x_{{5}}}^{3}+{x_{{6}}}^{3} \right\} \]}
\end{maplelatex}
\mapleresult
\begin{maplelatex}
\mapleinline{inert}{2d}{IrrIdeal = (x[5]*x[7], x[6]*x[7], x[1]*x[7], x[2]*x[5], x[2]*x[6], x[1]*x[2], x[3]*x[5], x[3]*x[6], x[1]*x[3], x[4]*x[5], x[4]*x[6], x[1]*x[4])}{\[\displaystyle {\it IrrIdeal}={x_{{5}}x_{{7}},x_{{6}}x_{{7}},x_{{1}}x_{{7}},x_{{2}}\\
\mbox{}x_{{5}},x_{{2}}x_{{6}},x_{{1}}x_{{2}},x_{{3}}x_{{5}},x_{{3}}x_{{6}},x_{{1}}\\
\mbox{}x_{{3}},x_{{4}}x_{{5}},x_{{4}}x_{{6}},x_{{1}}x_{{4}}}\]}
\end{maplelatex}
\mapleresult
\begin{maplelatex}
\mapleinline{inert}{2d}{A = {1, 3, 10, 11, 12}, ftv = [Matrix(
\mbox{}]]\]}
\end{maplelatex}
\mapleresult
\begin{maplelatex}
\mapleinline{inert}{2d}{Polynomials = {x[2]^3+x[3]^3+x[4]^3+x[5]*x[6]*x[7], x[1]^3*x[6]^3+x[1]*x[2]*x[3]*x[4]+x[5]^3+x[7]^3}}{\[\displaystyle {\it Polynomials}= \left\{ {x_{{2}}}^{3}+{x_{{3}}}^{3}+{x_{{4}}}^{3}+x_{{5}}x_{{6}}x_{{7}}\\
\mbox{},{x_{{1}}}^{3}{x_{{6}}}^{3}+x_{{1}}x_{{2}}x_{{3}}x_{{4}}+{x_{{5}}}^{3}+{x_{{7}}}^{3} \right\} \]}
\end{maplelatex}
\mapleresult
\begin{maplelatex}
\mapleinline{inert}{2d}{IrrIdeal = (x[5]*x[6], x[6]*x[7], x[1]*x[6], x[2]*x[5], x[2]*x[7], x[1]*x[2], x[3]*x[5], x[3]*x[7], x[1]*x[3], x[4]*x[5], x[4]*x[7], x[1]*x[4])}{\[\displaystyle {\it IrrIdeal}={x_{{5}}x_{{6}},x_{{6}}x_{{7}},x_{{1}}x_{{6}},x_{{2}}\\
\mbox{}x_{{5}},x_{{2}}x_{{7}},x_{{1}}x_{{2}},x_{{3}}x_{{5}},x_{{3}}x_{{7}},x_{{1}}\\
\mbox{}x_{{3}},x_{{4}}x_{{5}},x_{{4}}x_{{7}},x_{{1}}x_{{4}}}\]}
\end{maplelatex}
\mapleresult
\begin{maplelatex}
\mapleinline{inert}{2d}{A = {2, 3, 10, 11, 12}, ftv = [Matrix(
\mbox{}]]\]}
\end{maplelatex}
\mapleresult
\begin{maplelatex}
\mapleinline{inert}{2d}{Polynomials = {x[2]^3+x[3]^3+x[4]^3+x[5]*x[6]*x[7], x[1]^3*x[5]^3+x[1]*x[2]*x[3]*x[4]+x[6]^3+x[7]^3}}{\[\displaystyle {\it Polynomials}= \left\{ {x_{{2}}}^{3}+{x_{{3}}}^{3}+{x_{{4}}}^{3}+x_{{5}}x_{{6}}x_{{7}}\\
\mbox{},{x_{{1}}}^{3}{x_{{5}}}^{3}+x_{{1}}x_{{2}}x_{{3}}x_{{4}}+{x_{{6}}}^{3}+{x_{{7}}}^{3} \right\} \]}
\end{maplelatex}
\mapleresult
\begin{maplelatex}
\mapleinline{inert}{2d}{IrrIdeal = (x[1]*x[5], x[5]*x[6], x[5]*x[7], x[1]*x[2], x[2]*x[6], x[2]*x[7], x[1]*x[3], x[3]*x[6], x[3]*x[7], x[1]*x[4], x[4]*x[6], x[4]*x[7])}{\[\displaystyle {\it IrrIdeal}={x_{{1}}x_{{5}},x_{{5}}x_{{6}},x_{{5}}x_{{7}},x_{{1}}\\
\mbox{}x_{{2}},x_{{2}}x_{{6}},x_{{2}}x_{{7}},x_{{1}}x_{{3}},x_{{3}}x_{{6}},x_{{3}}\\
\mbox{}x_{{7}},x_{{1}}x_{{4}},x_{{4}}x_{{6}},x_{{4}}x_{{7}}}\]}
\end{maplelatex}
\mapleresult
\begin{maplelatex}
\mapleinline{inert}{2d}{A = {1, 2, 3, 10, 11, 12}, ftv = [Matrix(
\end{maplelatex}
\mapleresult
\begin{maplelatex}
\mapleinline{inert}{2d}{Polynomials = {x[1]*x[2]*x[3]+x[4]^3+x[5]^3+x[6]^3, x[1]^3+x[2]^3+x[3]^3+x[4]*x[5]*x[6]}}{\[\displaystyle {\it Polynomials}= \left\{ x_{{1}}x_{{2}}x_{{3}}+{x_{{4}}}^{3}+{x_{{5}}}^{3}+{x_{{6}}}^{3}\\
\mbox{},{x_{{1}}}^{3}+{x_{{2}}}^{3}+{x_{{3}}}^{3}+x_{{4}}x_{{5}}x_{{6}} \right\} \]}
\end{maplelatex}
\mapleresult
\begin{maplelatex}
\mapleinline{inert}{2d}{IrrIdeal = (x[4], x[5], x[6], x[1], x[2], x[3])}{\[\displaystyle {\it IrrIdeal}={x_{{4}},x_{{5}},x_{{6}},x_{{1}},x_{{2}},x_{{3}}}\]}
\end{maplelatex}
\end{maplegroup}
}
\newpage

\end{document}